\pdfoutput=1
\documentclass[letterpaper, 11pt,  reqno]{amsart}

\usepackage[margin=1.2in,marginparwidth=1.5cm, marginparsep=0.5cm]{geometry}

\usepackage{amsmath,amssymb,amscd,amsthm,amsxtra, esint, xcolor}

\usepackage[implicit=true]{hyperref}

\allowdisplaybreaks[2]

\usepackage{enumitem}
\usepackage{cancel}

\usepackage[mathscr]{euscript}

\usepackage{color}

\newtheorem{theorem}{Theorem} [section]

\newtheorem{lemma}[theorem]{Lemma}
\newtheorem{proposition}[theorem]{Proposition}

\newtheorem{corollary}[theorem]{Corollary}

\theoremstyle{definition}
\newtheorem{definition}[theorem]{Definition}
\newtheorem{remark}[theorem]{Remark}

\DeclareMathOperator*{\intt}{\int}

\DeclareMathOperator*{\supp}{supp}

\newcommand{\noi}{\noindent}

\newcommand{\R}{\mathbb{R}}

\newcommand{\T}{\mathbb{T}}
\newcommand{\N}{\mathbb{N}}

\newcommand{\ph}{\varphi}

\newcommand{\Law}{\mathrm{Law}}

\let\P= \undefined
\newcommand{\P}{\mathbf{P}}

\newcommand{\Q}{\mathbf{Q}}

\newcommand{\U}{\Upsilon}

\newcommand{\E}{\mathbf{E}}

\DeclareMathOperator{\rad}{rad}

\newcommand{\M}{\mathcal{M}}

\newcommand{\F}{\mathcal{F}}

\newcommand{\al}{\alpha}
\newcommand{\be}{\beta}

\newcommand{\dl}{\delta}

\newcommand{\eps}{\varepsilon}

\newcommand{\g}{\gamma}
\newcommand{\G}{\Gamma}
\newcommand{\ld}{\lambda}
\newcommand{\Ld}{\Lambda}
\newcommand{\s}{\sigma}

\newcommand{\ft}{\widehat}

\newcommand{\wt}{\widetilde}
\newcommand{\cj}{\overline}

\newcommand{\ta}{\theta}

\renewcommand{\o}{\omega}
\renewcommand{\O}{\Omega}

\newcommand{\les}{\lesssim}
\newcommand{\ges}{\gtrsim}

\newcommand{\jb}[1]
{\langle #1 \rangle}

\newcommand{\ind}{\mathbf{1}}

\newcommand{\NN}{\mathcal{N}}

\numberwithin{equation}{section}
\numberwithin{theorem}{section}

\makeatletter
\@namedef{subjclassname@2020}{\textup{2020} Mathematics Subject Classification}
\makeatother

\begin{document}
\baselineskip = 14pt

\title[Radial $\Phi^4$ models in supercritical regimes]
{Radial $\Phi^4$ models in supercritical regimes}

\author[T.~Oh, L.~Tolomeo, N.~Tzvetkov, Y.~Wang]
{Tadahiro Oh, Leonardo Tolomeo, Nikolay Tzvetkov, and Yuzhao Wang}

\address{
Tadahiro Oh, School of Mathematics\\
The University of Edinburgh\\
and The Maxwell Institute for the Mathematical Sciences\\
James Clerk Maxwell Building\\
The King's Buildings\\
Peter Guthrie Tait Road\\
Edinburgh\\
EH9 3FD\\
United Kingdom, and School of Mathematics and Statistics, Beijing Institute of Technology, Beijing 100081, China}

\email{hiro.oh@ed.ac.uk}

\address{
Leonardo Tolomeo,  School of Mathematics\\
The University of Edinburgh\\
and The Maxwell Institute for the Mathematical Sciences\\
James Clerk Maxwell Building\\
The King's Buildings\\
Peter Guthrie Tait Road\\
Edinburgh\\
EH9 3FD\\
United Kingdom}

\email{l.tolomeo@ed.ac.uk}

\address{Nikolay Tzvetkov, Ecole Normale Sup\'erieure de Lyon, UMPA, UMR CNRS-ENSL 5669, 46, all\'ee d'Italie, 69364-Lyon Cedex 07, France}
\email{nikolay.tzvetkov@ens-lyon.fr}

\address{
Yuzhao Wang\\
School of Mathematics\\
Watson Building\\
University of Birmingham\\
Edgbaston\\
Birmingham\\
B15 2TT\\ United Kingdom}

\email{y.wang.14@bham.ac.uk}

\begin{abstract}
We consider the $\Phi^4_d$ measure under radial symmetry on the unit ball, formally given by
$$d\rho(u) = \frac 1 Z \exp\Big( - \frac 14 \int_{B_1} |u|^4 dx - \frac 12 \int_{B_1} |\nabla u|^2 dx\Big) d u, $$
where $u$ is restricted to radially symmetric functions with 0 boundary conditions.
We prove that for every dimension $d$, there exists such a measure, which is concentrated on $L^p(B_1)$ for every $p < \frac{2d}{d-2}$ (for every $p < \infty$ if $d \le 2$).
Furthermore, we show that these measures are not Gaussian, and when $d \ge 4$, they are singular with respect to the underlying Gaussian measure.

We achieve these results by first defining the measure $\rho^+_\dl[\lambda]$ on the shell $\{\dl \le |x| \le 1\}$, $0 < \dl < 1$, with a boundary condition on the sphere of radius $\dl$ of the form $u|_{\{|x| = \delta\}} = \lambda$. Then we prove that this measure satisfies a variational formula, and use it to show moment bounds which are uniform in the boundary value $\lambda$.  A crucial ingredient in this argument is the fact that every solution of the elliptic equation $-\Delta u + u^3 = 0$ on the annulus $\{ \delta \le |x| \le 1 \}$ satisfies uniform bounds inside the domain, independently of the boundary conditions.
\end{abstract}

\subjclass[2020]{Primary 81T08; Secondary 60G60, 60H30, 35J61}

\keywords{$\Phi^4$ measure, Gibbs measure, radial symmetry, supercritical, Markov property, Bou\'e--Dupuis variational formula, non-triviality}

\maketitle

\tableofcontents

\newpage

\section{Introduction}

We consider the $\Phi^4_d$ measure defined on distributions over the ball {$B_1 = \cj{B(0,1)} = \{x \in \R^d: |x| \le 1\} \subset \R^d$} of radius $1$ centered at $0$ with radial symmetry, formally given by
\begin{align}
\label{Phi44}d \rho = \frac 1 Z \exp\Big( - \frac 14 \int_{B_1} |u|^4 dx - \frac 12 \int_{B_1} |\nabla u|^2 dx\Big) d u,
\end{align}

\noi
where $u$ is (formally) a radially symmetric function with zero Dirichlet boundary condition on {$\partial B_1 = \{x\in \R^d: |x| = 1\}$}.
Here, $\mu$ is the radial Gaussian free field on $B_1$, formally given by
$$d \mu = ``\frac 1 {Z_\mu}  \exp\bigg(- \frac12 \int_{B_1} |\nabla u|^2 dx\bigg) d u" .$$
See \eqref{GM_B} and \eqref{law}.

\subsection{Background}

The $\Phi_d^4$ measures play important roles in constructive quantum field theory, singular SPDEs, and dispersive equations with random data.
We consider the Hamiltonian
\begin{align}
\label{Hamiltonian}
H(u) = \int_{\mathcal M}  \bigg( \frac{1}{2} |\nabla u (x)|^2 + \frac{1}{4} |u (x)|^4\bigg) dx ,
\end{align}

\noi
where $\mathcal M$ is a $d$-dimensional manifold (possibly with boundary), such as the torus $\T^d$, the Euclidean space $\R^d$, or the unit ball $B_1 \subset \R^d$.
In this paper, we focus on the case $\M = B_1$.
We formally define the $\Phi_d^4$ measures as
\begin{align}
\label{Phi4d_f}
d \Phi_d^4 (u) : = \frac1Z e^{-H(u)} du = \frac1Z \exp \bigg( - \int_{\M} \bigg( \frac{1}{2} |\nabla u (x)|^2 + \frac{1}{4} |u (x)|^4\bigg) dx \bigg) d u.
\end{align}

\noi
The construction and properties of $\Phi_d^4$ measures have been extensively studied since the 1970s by the constructive quantum field theory community.
The construction of the $\Phi_1^4$ measure is straightforward; see \cite{Bo94}.
The construction of $\Phi_d^4$ measures for $d = 2,3$ is more involved, and renormalisations are needed; see \cite{Nel66} for $d = 2$, and \cite{GJ73, Fe74, FO76, MS76} for $d = 3$ (see also \cite{BarGub20, GH21} for more recent constructions).
However, when $d \ge 4$, the $\Phi_d^4$ measure \eqref{Phi4d_f} is expected to be trivial: for $d \ge 5$, the triviality of the scaling limits of lattice $\Phi^4_d$ models was proved by Aizenman \cite{Aiz} and Fr\"ohlich \cite{Fro}, and for $d = 4$, their marginal triviality was established in the recent breakthrough of Aizenman--Duminil-Copin \cite{ADC21}.
(Marginal) triviality is a strong non-existence result for the object \eqref{Phi4d_f}.

In this paper, we focus on the construction of the $\Phi^4_d$ measure \eqref{Phi44},
which is supported on distributions on the $d$-dimensional ball $B_1$, i.e., \eqref{Phi4d_f} with $\M = B_1$ and $d \ge 4$.
In view of the (marginal) triviality results in \cite{Aiz,Fro,ADC21} when $d \ge 4$,
we propose to study the $\Phi^4_d$ measure supported on distributions under extra symmetry, as the original measure is expected to display (marginal) triviality.
In the current paper,
we are interested in the $\Phi^4_d$ measure \eqref{Phi44} supported on {\it radially symmetric} distributions on the ball $B_1$.

This question can be seen as a one-dimensional case with non-flat geometry $r^{d-1} dr$, exhibiting distinct difficulties from the flat cases.
Thanks to the radial symmetry, locally outside the origin, the Gaussian free field $\mu$ is supported on functions of positive regularity (see Lemma~\ref{LEM:Xal}), which makes the radial $\Phi^4_d$ measure easier to study. We point out, however, that this property does not extend all the way to the origin, and the scaling of the radial $\Phi^4_d$ measure is identical to that of the non-radial one. In the non-radial case, this scaling makes the problem (super-)critical for $d \ge 4$, leading to the (marginal) triviality results \cite{Aiz,Fro,ADC21}. The main point of this paper is to study the behaviour of these measures when the singularity concentrates on a low-dimensional submanifold, which in this case is $0$-dimensional.

We now state a formal version of the main theorem. See Theorem \ref{THM:main} below for a rigorous statement.
\begin{theorem}[Existence and non-triviality of $\Phi^4_d$ measure]
For every $d \ge 1$, there exists a non-Gaussian $\Phi^4_d$ measure \eqref{Phi4d_f}, supported on $L^2_{\rm rad}(B_1)$, the {\it radially symmetric functions} in $L^2 (B_1)$.
\end{theorem}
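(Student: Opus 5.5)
The plan follows the strategy sketched in the abstract: build the measure away from the origin, where the radial Gaussian free field has positive regularity, and then remove the cutoff $\dl \to 0$ with bounds that do not depend on what happens near the origin.

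\textbf{Step 1: the measure on a shell.} Fix $0 < \dl < 1$ and work on the shell $A_\dl = \{\dl \le |x| \le 1\}$. There the radial problem is one-dimensional, with weight $r^{d-1}dr$ on $[\dl, 1]$. Let $\mu_\dl[\ld]$ be the Gaussian measure with boundary values $u(\dl) = \ld$ and $u(1) = 0$. Writing $u = \ld h_\dl + v$, with $h_\dl$ the radial harmonic interpolant and $v$ a Dirichlet bridge, Lemma~\ref{LEM:Xal} shows that $v$ is H\"older continuous. Hence $\int_{A_\dl} |u|^4 dx < \infty$ almost surely, and
\[
d\rho^+_\dl[\ld] = Z_\dl(\ld)^{-1} e^{-\frac14 \int_{A_\dl} |u|^4 dx}\, d\mu_\dl[\ld]
\]
is a well-defined probability measure; no renormalisation is needed.

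\textbf{Step 2: uniform moment bounds.} By the Bou\'e--Dupuis formula,
\[
-\log \E_{\mu_\dl[\ld]}\big[e^{-F(u)}\big] = \inf_{\theta} \E\Big[ F(u + I(\theta)) + \tfrac12 \|\theta\|_{L^2}^2 \Big].
\]
I would apply it to $F = \frac14\int|u|^4 - t\,G(u)$, where $G$ is a norm of $u$ on an inner shell $A_{\dl'}$ with $\dl' > \dl$ fixed. This gives exponential moments of $G$ under $\rho^+_\dl[\ld]$ that are uniform in $\ld$ and in $\dl < \dl'$. The mechanism is that the drift $I(\theta)$ can absorb the boundary value $\ld h_\dl$, so one compares with the minimiser $w$ of $\frac12\int |\nabla w|^2 + \frac14\int w^4$ with $w(\dl) = \ld$. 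This minimiser solves $-\Delta w + w^3 = 0$, and I will use the key fact stated in the abstract: such solutions obey interior bounds on $A_{\dl'}$ independent of the boundary data, by a Keller--Osserman type comparison with the radial blow-up solution $c\,(|x| - \dl)^{-1}$. Consequently $\log Z_\dl(\ld)$ equals the energy of $w$ up to terms of size $O(1)$, and the $\ld$-dependence cancels when computing moments on $A_{\dl'}$. I expect this step to be the main obstacle. Making the cancellation rigorous requires controlling the cross terms $\int w^3 v$ and $\int w^2 v^2$ near the inner boundary, where $w$ is large, using only the $L^2$ cost of the drift.

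\textbf{Step 3: consistency and the limit $\dl \to 0$.} The Gaussian free field is Markov with respect to spheres, so $\rho^+_\dl[\ld]$ disintegrated over $u(\dl')$ reproduces $\rho^+_{\dl'}[\cdot]$. For the full measure, define $\rho_\dl$ by placing a finite-dimensional or Gaussian prescription inside $B_\dl$, for instance the $\Phi^4$ measure of the one-dimensional radial problem with a cutoff, and gluing it to $\rho^+_\dl[u(\dl)]$. The uniform bounds of Step 2 give tightness of $\{\rho_\dl\}$ on $L^p(A_{\dl'})$ for every $\dl'$. Near the origin I would combine the trivial bound $\rho \ll$ tilt with a coercive estimate on $\int_{B_{\dl'}}|u|^2$, which has small mass. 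Together these give tightness in $L^2_{\rm rad}(B_1)$, and a subsequential limit $\rho$ exists.

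\textbf{Step 4: the limit is non-Gaussian.} I would argue by contradiction. If $\rho$ were Gaussian, its restriction to a shell would be Gaussian, conditionally on boundary values, with a quadratic density relative to $\mu$. The shell description from Step 3 shows instead that its conditional law is $\rho^+_{\dl'}$, whose density contains $e^{-\frac14\int u^4}$. Comparing fourth cumulants, or the relation between the second and fourth moments of $\int_{A} u\,\ph$, gives a nonzero connected four-point function on the shell, which is impossible for a Gaussian measure.
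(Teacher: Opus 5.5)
\textbf{The missing idea in Step 2.} Your architecture is the paper's: shell measures $\rho^+_\dl[\ld]$, the Markov decomposition over $u(\dl)$, the Bou\'e--Dupuis formula, and a shift of the drift by the minimiser $w=z_{\dl,\ld}$ using the bound $|z_{\dl,\ld}(r)|\le\sqrt2/(r-\dl)$. But the step everything rests on is not supplied: exponential moment bounds for $\rho^+_\dl[\ld]$ that are uniform in $\ld$. The mechanism you sketch would not give it.

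It is not true that $-\log Z_\dl(\ld)$ equals the (suitably shifted) energy of $w$ up to $O(1)$ uniformly in $\ld$. The Gaussian fluctuations around $w$ feel the potential $3w^2\approx 6(r-\dl)^{-2}$ on the region $|\ld|^{-1}\ll r-\dl\ll\dl$, where $\E|X^+_\dl(r)|^2\approx (r-\dl)\dl^{1-d}$. Hence quantities such as $\frac32\int w^2\,\E|X^+_\dl|^2\,dx$ and $\E\|X^+_\dl w^2\|_{\dot H^{-1}}^2$ grow like $\log|\ld|$.

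After shifting by $w$ alone (the Euler--Lagrange equation removes $\int Ww^3$), the remaining functional contains $3\int W X^+_\dl w^2$, which is linear in both the drift and the field. Young's inequality against $\frac12\|W\|_{\dot H^1}^2$ or $\frac32\int W^2w^2$ controls it only up to such a $\log|\ld|$ constant. This loss is genuine: the infimum of the remaining functional tends to $-\infty$ like $-c\log|\ld|$. The reason is that an adapted drift which partially cancels $X^+_\dl$ where $w^2$ is large (for example a small multiple of the paper's $\Xi$ below) lowers it by that amount. Comparing the two infima through the drift energy alone therefore only gives bounds that deteriorate like a power of $|\ld|$. That is useless here, because nothing controls the law $\nu_{\eps,\dl}$ of $u(\dl)$ a priori.

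What is missing is a second-order renormalisation that places these divergent constants identically in both infima, so that they cancel. The paper writes $\Theta=t(z_{\dl,\ld}-\ld h_\dl)+\Xi+\Upsilon+W$, with $\Delta\dot\Xi=3X^+_\dl(t)z_{\dl,\ld}^2$ and $\Upsilon$ solving \eqref{Upsilon}. The $W$-linear terms $3\int WX^+_\dl z_{\dl,\ld}^2$ and $3\int W(\Xi+\Upsilon)z_{\dl,\ld}^2$ then drop out by the martingale identities \eqref{CoV12}--\eqref{CoV13}. The paper next proves $\E|\Xi(r)|^2+\E|\Upsilon(r)|^2\les(r-\dl)/r^{d-1}$, using the vanishing of $K_\dl$ at $r=\dl$ and the comparison principle. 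This decay is exactly what compensates the $(r-\dl)^{-2}$ singularity of $z_{\dl,\ld}^2$ in the leftover terms (Proposition~\ref{PROP:opt}, Lemma~\ref{LEM:P0}).

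\textbf{Steps 3 and 4.} Step 3 has a second gap near the origin. For $d\ge4$ no comparison with $\mu$ can help there: $Z_\dl\to0$ because $\int_{B_1}|u|^4=\infty$ $\mu$-a.s., and the limit is in fact singular with respect to $\mu$ (Theorem~\ref{THM:main_singular}). What is needed is the Step-2 bound with explicit dependence on the inner radius. The paper proves $\int\exp\big(\eta\|u\|^4_{L^4(B_1\setminus B_r)}/(1+\Ld(r))\big)\,d\rho_\eps\le C$ uniformly in $\eps\le r/2$ (Proposition~\ref{PROP:PW}, which uses the $\Ld(\dl)$-dependence in Proposition~\ref{PROP:opt}). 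A dyadic summation then gives $\E_{\rho_0}\int_{B_1}|u|^p<\infty$ for $p<\frac{2d}{d-2}$. Note that the paper proves tightness only in $\mathbb X^\al$ together with $L^p$-concentration of the limit, not tightness in $L^2$. Likewise, uniformity in $\dl<\dl'$ should come from the Markov property plus $\ld$-uniformity at the fixed radius $\dl'/2$, not from estimates on $B_1\setminus B_\dl$, whose constants grow like $\Ld(\dl)$.

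Your Step 4, by contrast, is a valid route different from the paper's (which uses the $\ld$-uniform bound of Proposition~\ref{PROP:SG} plus non-constancy), once the Markov decomposition of the limit (Proposition~\ref{PROP:Markov0}) is available. Conditional laws of a Gaussian measure given $u(\dl')$ are Gaussian, so it suffices that no $\rho^+_{\dl'}[\ld]$ is Gaussian. Rather than fourth cumulants, whose non-vanishing you do not justify, take $\ph$ supported in the shell. Then $\eta\jb{u,\ph}^4\le\frac14\int_{B_1\setminus B_{\dl'}}u^4$ for small $\eta$, so $\int e^{\eta\jb{u,\ph}^4}d\rho^+_{\dl'}[\ld]\le 1/Z^+_{\dl'}[\ld]<\infty$. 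Meanwhile $\jb{u,\ph}$ is non-degenerate, since $\rho^+_{\dl'}[\ld]$ is equivalent to $\mu^+_{\dl'}[\ld]$, and no non-degenerate Gaussian has such an exponential moment. This avoids any $\ld$-uniform estimate in the final step, but it yields no tail bound for $\rho_0$ itself. It also still depends on Step 2 through Proposition~\ref{PROP:Markov0}.
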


When $1 \le d \le 3$, this result follows from the discussion in \cite{AT08}; see Proposition~\ref{PROP:conv} below.

\subsection{Gaussian free field on general compact domains}
\label{SUB:GFFg}
This subsection reviews the construction of the Gaussian measures $\mu$ defined on distributions on a general compact set $D$.
We start by recalling some basic properties of the Laplacian $-\Delta$ defined on a compact radial domain $D$ with a regular boundary.
In this paper, we mainly consider two types of radial domains:
\begin{itemize}
\item centred ball with radius $\dl$, $D = \cj{B_\dl} = \{x \in \R^d: |x| \le \dl\}$;

\item centred shell, $D = \cj{B_1 \backslash B_\dl} = \{x \in \R^d: \dl \le |x| \le 1\}$.
\end{itemize}

\noi
These domains are suitable for imposing radial assumptions.
Under the radial assumption, i.e., all functions under consideration are radial functions,
the operator $-\Delta$ with Dirichlet boundary conditions on $D$ has a sequence of eigenvalues $\ld_n^2$ with
\[
0 < \ld_0 \le \ld_1 \le \cdots \le \ld_n \le \cdots \longrightarrow \infty
\]
and the corresponding  normalised eigenfunctions $e_n$, satisfying
\begin{align}
\label{eigenD}
-\Delta e_n = \ld_n^2 e_n, \quad e_n(x) = 0 \textup{ for } x \in \partial D,
\end{align}
form an orthonormal basis of $L^2_{\rm rad} (D)$.
Throughout the paper, for radial $f$, we identify $\int_D f(x)\, dx$ with $\int f(r)\, r^{d-1} dr$, that is, we drop the surface area $\o_d = |\mathbb S^{d-1}|$ of the unit sphere, which only affects harmless multiplicative constants.
This convention also applies to the inner product of $L^2_{\rm rad} (D)$.
Where $\o_d$ appears explicitly, as in Remark~\ref{RMK:radial}, $dx$ denotes the Lebesgue measure.
Here, eigenvalues and eigenfunctions are dependent on the domain.
We will not explicitly present this dependence if it is clear from the context.
With the above notation, we define the free energy as
\begin{align}
\label{freeE}
H_0 (u) = \frac12 \int_D |\nabla u (x)|^2 dx.
\end{align}

\noi
Using the eigenbasis $\{e_n\}_{n \ge 0}$,
we can decompose any $u \in \mathcal D'_{\rm rad} (D)$ as
\[
u = \sum_{n=0}^\infty u_n e_n,
\]

\noi
where
\[
u_n = \jb{u,e_n} = \int_D u(x) e_n(x) dx.
\]

\noi
Then, under the coordinates $u = (u_n)_{n \ge 0}$, the energy functional \eqref{freeE} has the form
\[
H_0 (u) = H_0 \bigg(\sum_{n=0}^\infty u_n e_n \bigg) = \frac12 \sum_{n=0}^\infty \ld_n^2 |u_n|^2.
\]
In particular, $\|u\|_{\dot H^1(D)}^2 := \int_D |\nabla u|^2 dx = \sum_{n=0}^\infty \ld_n^2 |u_n|^2$. We denote by $H^1_0(D)$ the completion of the space of smooth functions with compact support in $D$ with respect to the norm $\|\cdot\|_{H^1(D)}$.

\noi
With the above notations, we may define the Gaussian measure with Cameron--Martin space $H^1_0 (D)$ formally given by
\begin{align}
d \mu = Z^{-1} e^{-\frac12 \|u\|^2_{\dot H^1}} du = Z^{-1} \prod_{n=0}^\infty e^{-\frac12 \ld_n^2 |u_n|^2} du_n,
\label{GM}
\end{align}

\noi
where $du_n$ is the Lebesgue measure on $\R$.
The Gaussian measure $\mu$ defined in \eqref{GM} is the induced probability measure under the map
\begin{align}
\label{GMap}
\o \in \O \longmapsto u^\o = \sum_{n=0}^\infty \frac{g_n (\o)}{\ld_n} e_n,
\end{align}

\noi
where $\{g_n\}_{n \ge 0}$ is a sequence of independent standard real-valued Gaussian random variables, i.e.\ $g_n \sim \NN (0,1)$, on a probability space $(\O,\mathcal F, \mathbb P)$.
From \eqref{GMap}, we see that
\begin{align}
\label{GMap2}
\E \big[ \|u^\o\|_{L^2 (D)}^2 \big] = \sum_{n=0}^\infty \ld_n^{-2} < \infty,
\end{align}

\noi
where in the last step, we used the radial Weyl law, i.e. $\ld_n \sim n$. From \cite[Lemma 2.2]{AT08}, we have that
\begin{align}
\label{GMap3}
\E \big[ \|u^\o\|_{L^p (D)}^p \big] = C_p \bigg\| \sum_{n \ge 0} \frac{e_n^2(x)}{\ld_n^2} \bigg\|_{L^{p/2} (D)}^{\frac{p}2} ,
\end{align}

\noi
where $C_p$ is a constant that only depends on finite $p \ge 2$.
The convergence property of \eqref{GMap3} depends on the choice of $D$, which will be made clear in the following subsections.

The formal definition \eqref{GM} can be made rigorous through a finite-dimensional approximation.
For this purpose, we define the spectral projection $P_N$ by
\begin{align}
\label{PN}
P_N u = \sum_{n=0}^N u_n e_n.
\end{align}

\noi
The image of $P_N$ is the finite-dimensional space
\[
E_N = {\rm span} \{e_0, \cdots, e_N\}.
\]

\noi
Consider the Gaussian measure on $\R^{N+1}$ given by
\[
d \mu_N = \prod_{n=0}^N \frac{\ld_n}{\sqrt{2\pi}} e^{-\frac12 \ld_n^2 |u_n|^2} du_n.
\]

\noi
By identifying $E_N$ with $\R^{N+1}$, this Gaussian measure also defines a probability measure on the finite-dimensional space $E_N$, which will be also denoted by $\mu_N$.
Similarly to \eqref{GMap},
the measure $\mu_N$ can also be viewed as the induced probability measure under the map
\begin{align}
\label{GMapN}
\o \in \O \longmapsto u^\o_N = \sum_{n=0}^N \frac{g_n (\o)}{\ld_n} e_n.
\end{align}

\noi
Given $s < \frac12$, the sequence $\{u_N^\o\}$ is a Cauchy sequence in $L^2 (\O; H^s_0(D))$ converging to $u^\o$ given in \eqref{GMap}.
The distribution of the random variable $u^\o \in H^s_0 (D)$ is the Gaussian measure $\mu$, which can be decomposed as
\[
\mu = \mu_N \otimes \mu_N^{\perp}.
\]

\noi
Here $\mu_N^\perp$ is the distribution of the random variable given by
\[
u^{\o,\perp}_N (x) = \sum_{n=N+1}^\infty \frac{g_n (\o)}{\ld_n} e_n.
\]

\subsection{Measure construction on unit balls}
In this subsection, we consider a particular bounded radial domain $D = B_1$, where $B_1 = \{x \in \R^d: |x| \le 1\}$ is the unit ball of $\R^d$.
A radial function on $B_1$ has values that depend only on $r = |x|$.
For $p \ge 1$ we write $L^p_{\rm rad}(B_1)$ for the closed subspace of radial functions in $L^p(B_1)$.
Under radial coordinates, $L^p_{\rm rad}(B_1)$ is the same as $L^p ([0,1], r^{d-1}dr)$.
We recall the radial eigenfunctions of $-\Delta$ on the unit ball from \cite{AT08}. We define radial functions
\begin{align}
\label{ef_n}
e_{n,d} (r) = \be_{n,d}^{-1} r^{-\frac{d-2}2} J_{\frac{d-2}2} (z_{n,d} r),
\end{align}

\noi
where $J_{\frac{d-2}2}$ is the Bessel function of order $\frac{d-2}2$, $z_{n,d}$ is the increasing sequence of its simple zeros on $(0, \infty)$, and
\begin{align}
\label{be_n}
\beta_{n,d} = \bigg( \int_0^1 \big|J_{\frac{d-2}2} (z_{n,d} r) \big|^2 r dr\bigg)^{\frac12}.
\end{align}

\noi
Then, $e_{n,d}$ for $n \ge 0$ are the eigenfunctions of the Laplace operator, corresponding to eigenvalues $z_{n,d}^2$, which form an orthonormal basis of $L^2 ([0,1], r^{d-1}dr)$.

With the above notations, we may define the Gaussian measure with Cameron--Martin space $H^1_0 (B_1)$ formally given by
\begin{align}
d \mu = Z^{-1} \prod_{n=0}^\infty e^{-\frac12 z_{n,d}^2 |u_n|^2} du_n,
\label{GM_B}
\end{align}

\noi
where $du_n$ is the Lebesgue measure on $\R$. See \eqref{GM}.
As in \eqref{GMap},
the Gaussian measure $\mu$ defined in \eqref{GM_B} is the induced probability measure under the map
\begin{align}
\label{GMap_B}
\o \in \O \longmapsto u^\o = \sum_{n=0}^\infty \frac{g_n (\o)}{z_{n,d}} e_{n,d}.
\end{align}

\noi
From \eqref{GMap_B}, we see that
\begin{align}
\label{GMap2_B}
\E \big[ \|u^\o\|_{L^2 (B_1)}^2 \big] = \sum_{n=0}^\infty z_{n,d}^{-2} < \infty,
\end{align}

\noi
where in the last step, we used again the radial Weyl law, i.e. $z_{n,d} \sim n$; see also \cite[(2.6)]{AT08}.
Define $p_c$ by setting
\begin{align}
\label{pc1}
p_c =
\begin{cases}
\frac{2d}{d-2}, & \text{if } d \ge 3,\\
\infty, & \text{if } d \le 2.
\end{cases}
\end{align}

\noi
Recall from \cite[Theorem 4]{AT08} that one has
\begin{align}
\label{GMap4_B}
\E \big[ \|u^\o\|_{L^p (B_1)}^p \big]
\begin{cases}
    < \infty, & 2 \le p < p_c,\\
    = \infty, & p > p_c.
\end{cases}
\end{align}

As to the critical case when $p = p_c$, we have the following result.
\begin{lemma}
\label{LEM:GMap_B}
Let $p_c$ be as in \eqref{pc1}. Then, with $u^\o$ defined in \eqref{GMap_B} and $d \ge 3$, we have
\begin{align}
\label{es:GMap_B}
\E \bigg[ \|u^\o\|_{L^{p_c} (B_1)}^{p_c} \bigg] = \infty.
\end{align}
As a consequence, we have
\begin{align}
\label{es:GMap_B2}
\|u^\o\|_{L^{p_c} (B_1)}=\infty,
\end{align}
almost surely.
\end{lemma}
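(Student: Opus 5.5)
The plan is to reduce the expectation to a one-dimensional integral of the pointwise variance, using \eqref{GMap3}. Since $u^\o(x)$ is a centred Gaussian random variable for each $x$, we have $\E|u^\o(x)|^p = C_p (\E|u^\o(x)|^2)^{p/2}$. By Tonelli this gives \eqref{GMap3} for every finite $p\ge 2$, including $p=p_c$, with no convergence issue since everything is nonnegative. The variance is
\[
\E|u^\o(r)|^2=\sum_{n\ge0}\frac{e_{n,d}(r)^2}{z_{n,d}^2}=G(r,r),
\]
where $G$ is the Green's function of the radial Dirichlet Laplacian on $B_1$ with respect to $r^{d-1}dr$. That is, $G$ is the kernel of the inverse of $-r^{1-d}\partial_r(r^{d-1}\partial_r)$ with $G(\cdot,s)$ regular at $0$ and vanishing at $r=1$. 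Solving the ODE explicitly (the homogeneous solutions are $1$ and $r^{2-d}$) gives
\[
G(r,s)=\frac{\min(r,s)^{2-d}-1}{d-2}.
\]
I would double-check this identification by verifying that $\sum_n e_{n,d}(r)e_{n,d}(s)z_{n,d}^{-2}$ is the integral kernel of $(-\Delta)^{-1}$ in the basis $\{e_{n,d}\}$. Hence $\E|u^\o(r)|^2=(r^{2-d}-1)/(d-2)\ge c\,r^{2-d}$ for $r\le \frac12$.

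Next, plugging into \eqref{GMap3} with $p=p_c=\frac{2d}{d-2}$ gives
\[
\E\|u^\o\|_{L^{p_c}}^{p_c}\ge c\int_0^{1/2}\big(r^{2-d}\big)^{\frac{d}{d-2}}r^{d-1}\,dr=c\int_0^{1/2}\frac{dr}{r}=\infty .
\]
This proves \eqref{es:GMap_B}. The divergence is only logarithmic, which is consistent with \eqref{GMap4_B} holding for $p<p_c$.

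For the almost sure statement \eqref{es:GMap_B2}, I would argue by contradiction using a zero-one law and Fernique's theorem. View $\mu$ as a Gaussian measure on the separable Banach space $H^s_0(B_1)$ with $s<\frac12$ (or on $L^2_{\rm rad}$). The functional $u\mapsto \|u\|_{L^{p_c}(B_1)}\in[0,\infty]$ is a measurable seminorm on this space, being the supremum of $|\jb{u,\ph}|$ over a countable dense family of test functions $\ph$ in the unit ball of $L^{p_c'}$. The set $\{\|u\|_{L^{p_c}}<\infty\}$ is therefore a measurable linear subspace. By the Gaussian zero-one law for measurable subspaces, it has $\mu$-measure $0$ or $1$. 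If it had measure $1$, Fernique's theorem applied to this measurable seminorm would give $\E\exp(\eps\|u^\o\|_{L^{p_c}}^2)<\infty$ for some $\eps>0$, and in particular $\E\|u^\o\|_{L^{p_c}}^{p_c}<\infty$. This contradicts \eqref{es:GMap_B}, so the norm is infinite almost surely.

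I expect the main obstacle to be this last step: one must make sure that Fernique and the zero-one law apply to a seminorm that may take the value $\infty$ and is not continuous on the support space. The countable-supremum representation should handle this. As a backup, there is a direct argument. Under the change of variables $t=(r^{2-d}-1)/(d-2)$, the radial GFF is a Brownian motion $B_t$. Then $\|u\|_{L^{p_c}}^{p_c}\approx\int^\infty |B_t|^{p_c}\,t^{-p_c}\,\frac{dt}{t}$, and this integral diverges almost surely by scaling and the ergodicity of $B_{e^\tau}e^{-\tau/2}$ (an Ornstein--Uhlenbeck process).
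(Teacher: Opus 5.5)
Your argument is correct. For the almost sure statement it is the same as the paper's, which just invokes Fernique's theorem as in \cite[Theorem~3]{AT08}. Your zero-one law is the step that turns ``not almost surely finite'' into ``almost surely infinite''. A cheaper way to get that step is Kolmogorov's zero-one law: each $e_{n,d}$ is bounded, so $\{\|u^\o\|_{L^{p_c}(B_1)}<\infty\}$ is a tail event for $(g_n)$.

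For \eqref{es:GMap_B} you take a genuinely different route.
\begin{itemize}
\item The paper stays with the eigenfunction expansion. It truncates $\sum_n e_{n,d}^2(r)/z_{n,d}^2$ to $n\le \gamma/r$ and uses the Bessel bound $|e_{n,d}(r)|\ges n^{\frac{d-1}{2}}$ for $rn\le\gamma$ from \cite[Lemma~2.6]{AT08}, together with $z_{n,d}\sim n$, to get a lower bound $\ges r^{2-d}$.
\item You identify the series with the diagonal of the Green's function, which gives the exact variance $(r^{2-d}-1)/(d-2)$ with no Bessel asymptotics. The identity you want to double-check follows from the reproducing property $\jb{K(r,\cdot),f}_{\dot H^1}=f(r)$ and the fact that $\{e_{n,d}/z_{n,d}\}$ is an orthonormal basis of radial $\dot H^1_0(B_1)$. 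It is exactly what the paper itself uses later in \eqref{Green_k}, Lemma~\ref{LEM:Green_r} and Lemma~\ref{LEM:GFFX}.
\end{itemize}
Your route is more self-contained and sharper: it is an equality, and it makes the logarithmic nature of the divergence transparent. The paper's route works directly from eigenfunction bounds, so it would apply where no closed-form kernel is available.

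There are two slips, neither of which affects the main line.
\begin{itemize}
\item The kernel regular at the origin is $K(r,s)=\frac{1}{d-2}\big(\max(r,s)^{2-d}-1\big)$, not with $\min$, since it must be constant in $r$ for $r<s$. You only use $r=s$, so nothing changes.
\item In your backup argument the weight is off. With $t=(r^{2-d}-1)/(d-2)$ one has $dr=-r^{d-1}\,dt$, hence $r^{d-1}dr\approx c\,t^{-\frac{d}{d-2}-1}dt=c\,t^{-\frac{p_c}{2}}\frac{dt}{t}$. The correct integral is therefore $\int^\infty |B_t|^{p_c}\,t^{-\frac{p_c}{2}}\frac{dt}{t}$; with your $t^{-p_c}$ it would converge almost surely. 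With the correct exponent, the substitution $t=e^\tau$ gives $\int^\infty |B_{e^\tau}e^{-\tau/2}|^{p_c}\,d\tau$, which diverges by the ergodic theorem for the stationary Ornstein--Uhlenbeck process, as you intended.
\end{itemize}
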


\begin{proof}
The proof follows the same strategy as that of \cite[Theorem 3 and Theorem 4]{AT08}.
We establish only the estimate \eqref{es:GMap_B};
the bound \eqref{es:GMap_B2} then follows from Fernique's integrability theorem, as in the proof of \cite[Theorem 3]{AT08}.
From \eqref{GMap3}, we note that
\begin{align}
\label{GMap3_B}
\E \big[ \|u^\o\|_{L^p (B_1)}^p \big] = C_p \bigg\| \sum_{n \ge 0} \frac{e_{n,d}^2(x)}{z_{n,d}^2} \bigg\|_{L^{p/2} (B_1)}^{\frac{p}2} ,
\end{align}

\noi
where $C_p$ is a constant that only depends on finite $p \ge 2$.
It suffices to show that
\begin{align}
\label{es1}
\bigg\| \sum_{n \ge 0} \frac{e_{n,d}^2(x)}{z_{n,d}^2} \bigg\|_{L^{\frac{d}{d-2}} (B_1)} = \infty.
\end{align}

\noi
Recall from \cite[(2.6)]{AT08} that $z_{n,d} \sim n$, and from \cite[Lemma 2.6]{AT08} that
\[
|e_{n,d} (r) | \ges n^{\frac{d-1}2},
\]

\noi
provided  $n \ge 1$ and $rn \le \gamma$, for some constant $\gamma \in (0,1)$.
Therefore, given  $r \in (0,\gamma]$, we have
\[
\sum_{n =1}^\infty \frac{e_{n,d}^2(r)}{z_{n,d}^2} \ge \sum_{n =1}^{[\frac{\gamma}{r}]} \frac{e_{n,d}^2(r)}{z_{n,d}^2} \ges  \sum_{n =1}^{[\frac{\gamma}{r}]} n^{d-3} \ges r^{-d+2},
\]

\noi
provided $d \ge 3$.
It then follows that
\[
\bigg\| \sum_{n \ge 0} \frac{e_{n,d}^2(x)}{z_{n,d}^2} \bigg\|_{L^{\frac{d}{d-2}} (B_1)}^{\frac{d}{d-2}} \ges \int_0^{\gamma} r^{-d} r^{d-1} dr = \infty.
\]

\noi
Thus, we finish the proof.
\end{proof}

We now turn to the Gibbs measure defined on $B_1$.
Consider the nonlinear energy functional
\begin{align}
\label{energy}
H(u) = \frac12 \int_{B_1} |\nabla u|^2 dx + \frac1{p} \int_{B_1} |u|^{p} dx,
\end{align}

\noi
for $p > 2$.
To define the Gibbs measure
\begin{align}
\label{deGibbs}
d\rho (u) = \frac1Z \exp \bigg( - \frac12 \int_{B_1} |\nabla u|^2 dx - \frac1{p} \int_{B_1} |u|^{p} dx\bigg) du,
\end{align}

\noi
it suffices to define the weighted Gaussian measure
\begin{align}
\label{deGibbs2}
d\rho (u) = \frac1Z \exp \bigg( - \frac1{p} \int_{B_1} |u|^{p} dx\bigg) d\mu (u),
\end{align}

\noi
where $Z$ is a  normalising constant. Note that we abused the notation $Z$, which may differ from line to line.
We start with a frequency-truncated approximation of \eqref{deGibbs2}:
\begin{align}
\label{deGibbs2_N}
d\rho_N (u) = \frac1{Z_N} \exp \bigg( - \frac1{p} \int_{B_1} |u_N|^{p} dx\bigg) d\mu_N (u_N) \otimes d\mu_N^\perp (u_N^\perp),
\end{align}

\noi
where $u_N = P_N u$, $u_N^\perp = P^\perp_N u = u - u_N$, and the partition function $Z_N$ is given by
\begin{align}
\label{deGibbs3_N}
Z_N = \int \exp \bigg( - \frac1{p} \int_{B_1} |u_N|^{p} dx\bigg) d\mu (u) .
\end{align}

We show that $p_c$ is critical in the following sense.

\begin{proposition}\label{PROP:conv}
Let $d \ge 1$ and $p_c$ be as in \eqref{pc1}. Then the following statements hold:
\begin{enumerate}
    \item[{\rm (i)}]
    If $2 < p < p_c$ and $q \ge 1$, then we have
    \begin{align}
    \label{conv}
    \lim_{N \to \infty} \exp \bigg( - \frac1{p} \int_{B_1} |u_N|^{p} dx\bigg) = \exp \bigg( - \frac1{p} \int_{B_1} |u|^{p} dx\bigg)
    \end{align}

    \noi
    in $L^q (\mu)$.
    As a consequence, the Gibbs measure $\rho_N$ in \eqref{deGibbs2_N} converges, in total variation, to the defocusing Gibbs measure $\rho$ defined by \eqref{deGibbs2}. Furthermore, the limiting measure $\rho$ is absolutely continuous with respect to the base Gaussian free measure $\mu$ in \eqref{GM_B}.

    \item[{\rm (ii)}]
    If $d \ge 3$ and $p \ge p_c$, then we have
    \begin{align}
    \label{conv1}
    \lim_{N \to \infty} Z_N = \lim_{N \to \infty} \int \exp \bigg( - \frac1{p} \int_{B_1} |u_N|^{p} dx\bigg) d\mu (u) = 0,
    \end{align}
    where $Z_N$ is the partition function defined in \eqref{deGibbs3_N}. As a consequence, the defocusing Gibbs measure \eqref{deGibbs2} cannot be defined via the approximation \eqref{deGibbs2_N} as a weighted Gaussian measure, since the weights $\exp(- \frac1p \int_{B_1} |u_N|^p dx)$ converge to $0$ in $L^1(\mu)$.
\end{enumerate}
\end{proposition}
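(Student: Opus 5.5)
For part (i), the plan is to first show that $u_N \to u$ in $L^p(\mu; L^p(B_1))$ for every $2 \le p < p_c$. Since $u - u_N = u_N^\perp$ is Gaussian, the identity \eqref{GMap3} gives
\[
\E\big[\|u-u_N\|_{L^p(B_1)}^p\big] = C_p \Big\| \sum_{n > N} \frac{e_{n,d}^2}{z_{n,d}^2}\Big\|_{L^{p/2}(B_1)}^{p/2}.
\]
By \eqref{GMap4_B}, the full series $\sum_n e_{n,d}^2 z_{n,d}^{-2}$ lies in $L^{p/2}(B_1)$. The tails are nonnegative and decrease pointwise to $0$, so dominated convergence shows that the right-hand side tends to $0$. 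In particular, $\int_{B_1}|u_N|^p\,dx \to \int_{B_1}|u|^p\,dx$ in $L^1(\mu)$, by the elementary bound $\big||a|^p-|b|^p\big| \les |a-b|(|a|^{p-1}+|b|^{p-1})$ together with H\"older. Hence there is convergence in $\mu$-probability.

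The map $x \mapsto e^{-x/p}$ is bounded by $1$ and Lipschitz on $[0,\infty)$. Therefore the weights $e^{-\frac1p\int|u_N|^p}$ converge in probability and are uniformly bounded, and bounded convergence gives \eqref{conv} in $L^q(\mu)$ for every finite $q$. No integrability issue arises because the interaction is defocusing.

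The limit weight is strictly positive $\mu$-a.s., since $\int|u|^p<\infty$ a.s. by \eqref{GMap4_B}. So $Z_N \to Z>0$. Convergence of the densities in $L^1(\mu)$ then yields convergence of $\rho_N$ to $\rho$ in total variation, and $\rho \ll \mu$ holds by construction. For $d \le 2$ every finite $p>2$ is allowed, and the argument is identical.

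For part (ii), I would reduce to showing that $\int_{B_1}|u_N|^p\,dx \to \infty$ in $\mu$-probability. Once this holds, bounded convergence gives $Z_N \to 0$. The case $p>p_c$ can be reduced to $p=p_c$: by H\"older on the finite-measure set $B_1$, $\|u_N\|_{L^{p_c}} \les \|u_N\|_{L^p}$. So it suffices to show $\|u_N\|_{L^{p_c}} \to \infty$ in probability.

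The natural route is to use Lemma~\ref{LEM:GMap_B}, which gives $\|u\|_{L^{p_c}}=\infty$ a.s., together with lower semicontinuity. Along a subsequence $u_{N_k}\to u$ a.e. in $x$; this holds a.s. in $\o$, since $u_N\to u$ in $L^2(\O;L^2)$ and we can pass to a further subsequence. Fatou then gives $\liminf_k \|u_{N_k}\|_{L^{p_c}} \ge \|u\|_{L^{p_c}} = \infty$ a.s., so $Z_{N_k}\to 0$.

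To upgrade from a subsequence to the full sequence $N\to\infty$, I would argue that every subsequence has a further subsequence along which $u_N\to u$ a.e. The same Fatou argument then applies, so every subsequence of $Z_N$ has a further subsequence tending to $0$, and hence $Z_N \to 0$.

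The main obstacle is this a.e. convergence step and the careful use of \eqref{es:GMap_B2}. One could instead argue quantitatively: $u_N$ is Gaussian with pointwise variance $\s_N(r) = \sum_{n\le N} e_{n,d}^2(r)z_{n,d}^{-2} \ges r^{2-d}$ for $r \ge \g/N$, by the computation in the proof of Lemma~\ref{LEM:GMap_B}. This gives $\E\|u_N\|_{L^{p_c}}^{p_c} \ges \log N$. Gaussian concentration, or a second-moment (Paley--Zygmund) bound using hypercontractivity to control $\E\|u_N\|_{L^{p_c}}^{2p_c}$ by the square of the first moment, then gives divergence in probability. I would keep this as a fallback if the soft argument needs justification.
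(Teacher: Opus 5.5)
Your proposal is correct and is essentially the standard argument the paper defers to. For (i) it uses $L^p(\mu)$-convergence of $u_N$ via \eqref{GMap3}--\eqref{GMap4_B} together with bounded convergence of the weights; for (ii) it uses lower semicontinuity of the $L^{p_c}$ norm under $u_N \to u$ together with the almost sure divergence \eqref{es:GMap_B2}, which is the consequence of \eqref{es:GMap_B}. One caveat concerns only your unused fallback: a Paley--Zygmund bound gives $\P\big(\|u_N\|_{L^{p_c}} \ge M\big) \ge c > 0$ rather than a probability tending to $1$, so it would not yield $Z_N \to 0$; Gaussian concentration would, since the weak variance is bounded uniformly in $N$ by the embedding $L^{p_c'} \subset H^{-1}$.
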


The proof of Proposition \ref{PROP:conv} follows from \eqref{GMap4_B} and \eqref{es:GMap_B} via standard arguments; see \cite[Section 3]{BTT10}.
In particular,
Proposition \ref{PROP:conv} states that,
when $d \ge 3$ and $p \ge p_c$,
the Gibbs measure \eqref{deGibbs2} cannot be constructed as a weighted Gaussian measure via the approximation \eqref{deGibbs2_N}.
One may wonder if a renormalisation, such as Wick ordering, may help us to obtain a non-trivial limit in \eqref{conv1}.
However, the answer is negative.
To explain this further,
given $x \in B_1$, let $u^\o_N = P_N u^\o$ as in \eqref{GMapN} with $D = B_1$.
Then $u^\o_N (x)$ is a mean-zero Gaussian random variable with variance
\[
\s_N (x) = \E \big[ |u_N^\o (x)|^2 \big] = \sum_{n=0}^N \frac{e_{n,d}^2 (x)}{z_{n,d}^2}.
\]

\noi
Therefore, from \eqref{GMap2_B}, we have
\[
\E \big[ \|u^\o_N\|^2_{L^2(B_1)}\big] = \int_{B_1} \s_N (x) dx = \sum_{n=0}^N \frac1{z_{n,d}^2} \le C < \infty,
\]

\noi
as $N \to \infty$.
This indicates that, unlike in the singular case, $u^\o_N$ does not diverge in $L^2(\O \times B_1)$, and its variance $\s_N$ does not diverge in $L^1(B_1)$. In fact, $\s_N (x)$ increases to $\E \big[ |u^\o (x)|^2 \big] < \infty$ for every $x \neq 0$; see \eqref{GFFX_e}.
This convergence does not provide uniform control near the origin, where the limiting variance is singular.
Thus, away from the origin there is no divergent constant for the Wick ordering to remove, and it may not help in removing the singularity at the origin from the nonlinear interaction.
To see this,
let us define the `Wick-ordered' polynomial $:(P_N u (x))^4:$, in analogy with the singular case \cite[Subsection 1.3]{OT18}, by
\[
:(P_N u (x))^4: = H_4 (P_N u (x); \s_N (x)) = (P_N u (x))^4 - 6 \s_N(x) (P_N u (x))^2 + 3 (\s_N(x))^2,
\]

\noi
where $H_k$ is the  Hermite polynomial of degree $k$.
For $N \in \mathbb N$, let
\begin{align}
\label{GNu}
G_N(u) = \frac14 \int_{B_1} :(P_N u (x))^4: dx  = \frac{1}4 \int_0^1  H_4 (P_N u (r); \s_N (r)) r^{d-1} dr,
\end{align}

\noi
where $u$ is a radial distribution.
If the sequence $G_N (u)$ converges in $L^2 (\mu)$ as $N \to \infty$, its limit would be a candidate for the Wick renormalisation of the Gibbs measure.
However, the following lemma shows that the functional $G_N(u)$ defined in \eqref{GNu} diverges in $L^2 (\mu)$.

\begin{lemma}
\label{LEM:GNu}
Let $d \ge 4$.
Then, $\{G_N\}_{N \in \mathbb N}$ diverges in $L^2 (\mu)$, where $\mu$ is the Gaussian measure given in \eqref{GM_B}.
\end{lemma}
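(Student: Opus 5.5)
The plan is to compute $\|G_N\|_{L^2(\mu)}^2$ explicitly through the Wick/Hermite product formula and show that it tends to infinity. If $G_N$ converged in $L^2(\mu)$, its norms would stay bounded, so this suffices. Since $P_N u(x)$ and $P_N u(y)$ are jointly Gaussian with variances $\s_N(x)$, $\s_N(y)$ and covariance
\[
\G_N(x,y) = \E\big[P_N u(x) P_N u(y)\big] = \sum_{n=0}^N \frac{e_{n,d}(x)e_{n,d}(y)}{z_{n,d}^2},
\]
the standard identity $\E[H_4(X;\s_X)H_4(Y;\s_Y)] = 4!\,(\E XY)^4$ gives
\[
\|G_N\|_{L^2(\mu)}^2 = \frac{4!}{16}\int_{B_1}\int_{B_1} \G_N(x,y)^4\,dx\,dy .
\]
In radial coordinates this is a double integral over $[0,1]^2$ with weight $(rs)^{d-1}$. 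The integrand is nonnegative, so it is enough to bound it from below on a region near the origin.

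The lower bound will come from \cite[Lemma 2.6]{AT08}, as in the proof of Lemma \ref{LEM:GMap_B}. I would use the precise small-argument asymptotics of the Bessel function: for $rn \le \g$ one has $e_{n,d}(r) \approx c\, n^{\frac{d-1}2}$ with a fixed sign (for $n\ge1$, using $\be_{n,d}\sim n^{-1/2}$ and $J_\nu(z)\sim c z^\nu$). Then for $r,s \le \g/M$ with $M \le N$, every term with $1\le n\le M$ is positive and of size $\ges n^{d-1}/n^2$. Ignoring the other terms is not allowed, since they can have either sign. Instead I would choose $r,s \in [\g/(2M), \g/M]$ and show directly that $\G_N(r,s) \ges M^{d-2}$ for $N \ge M$.

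To control the tail $\sum_{n>M}$, I would use the representation of $\G_N$ as a truncation of the Dirichlet Green's function $G(x,y)$ on $B_1$. The limit $\G_\infty(r,s)$ is the spherical average of $G$, which is explicit and of size $\max(r,s)^{-(d-2)}$ near the origin. So for $r,s \sim 1/M$ one gets $\G_\infty \sim M^{d-2}$. I would then handle the error $\G_\infty - \G_N$ by taking $N$ large relative to $M$ at fixed $r,s>0$, where convergence holds since $\s_N(r)$ converges for $r \neq 0$. Either route gives
\[
\liminf_{N\to\infty}\|G_N\|_{L^2(\mu)}^2 \ges \sum_{M=2^k} \int_{[\g/2M,\g/M]^2} M^{4(d-2)} (rs)^{d-1}\,dr\,ds \sim \sum_k 2^{k(4d-8-2d)} = \sum_k 2^{k(2d-8)}.
\]

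For $d\ge5$ this diverges after summing over dyadic $M$. Via Fatou, the limit integral $\int\!\!\int \G_\infty^4$ is infinite and the norms blow up. For $d=4$ each dyadic block contributes a constant, so the sum is logarithmically divergent, which is still infinite. The main obstacle is the pointwise lower bound $\G_\infty(r,s)\ges \max(r,s)^{-(d-2)}$ near the origin. I would obtain it most cleanly from the explicit radial Green's function,
\[
\G_\infty(r,s)=c_d\big(\max(r,s)^{2-d}-1\big),
\]
which holds because the radial Laplacian's Green's function on $[0,1]$ with weight $r^{d-1}$ solves $(r^{d-1}\phi')' = -\delta$. Then I would conclude by monotone convergence of $\int\!\!\int\G_N^4 \to \int\!\!\int\G_\infty^4$. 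Pointwise convergence plus Fatou suffices for this, since it only needs the liminf.
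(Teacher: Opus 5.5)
Your proposal is correct and is essentially the paper's proof: the Wick identity reduces $\|G_N\|_{L^2(\mu)}^2$ to $\frac{4!}{16}\iint \Gamma_N^4$, and Fatou's lemma with the pointwise limit $\Gamma_N \to K$ and the explicit radial Green's function $K(r,s)=\frac{1}{d-2}\big(\max(r,s)^{2-d}-1\big)$ shows that $\iint K^4 (rs)^{d-1}\,dr\,ds=\infty$ exactly when $d\ge 4$ (your dyadic blocks of size $2^{k(2d-8)}$ match the paper's $\int_0^{1/2} r^{-2d+7}\,dr$). The Bessel-asymptotics detour is unnecessary, and you should drop the phrase ``monotone convergence'' because $\Gamma_N^4$ is not monotone in $N$; as you yourself note, Fatou's lemma alone gives the needed $\liminf$.
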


\begin{proof}
From \cite[Lemma 3.1]{ORT23}, we note that
\[
\begin{split}
\liminf_{N \to \infty} \E & \bigg| \int_{B_1} H_4  (P_N u (x); \s_N (x)) dx \bigg|^2 \\
& = \liminf_{N \to \infty} \int_{B_1} \int_{B_1} \E \big[ H_4  (P_N u (x); \s_N (x)) H_4  (P_N u (y); \s_N (y)) \big] dxdy \\
& =  4! \liminf_{N \to \infty} \int_{B_1} \int_{B_1} \big(\E [P_N u (x)P_N u (y)] \big)^4 dxdy\\
& = 4! c \liminf_{N \to \infty} \int_{0}^1 \int_{0}^1 \bigg( \sum_{n=0}^N \frac{e_{n,d}(r) e_{n,d}(s)}{z_{n,d}^2} \bigg)^4 (rs)^{d-1} drds\\
& \ge 4! c \int_{0}^1 \int_{0}^1 K(r,s)^4 (rs)^{d-1} drds\\
& = \frac{4! c}{(d-2)^4} \int_0^1 \int_0^1 \bigg( \frac{1}{\max (r,s)^{d-2}} - 1 \bigg)^4 r^{d-1} s^{d-1} drds\\
& \ge \frac{4! c}{(d-2)^4} \int_0^1 \int_0^r \bigg( \frac{1}{r^{d-2}} - 1 \bigg)^4 r^{d-1} s^{d-1} \, ds dr \\
& \ges \int_0^{1/2} r^{-2d+ 7} dr = \infty,
\end{split}
\]

\noi
provided $d \ge 4$.
Here $K(r,s)$ denotes the radial Green's function defined in \eqref{Green_k}, and Lemma~\ref{LEM:Green_r} is applied in the fifth step.
\end{proof}

The above computation shows that the Wick ordering does not remove the singularity from the interaction,
and thus is not sufficient for renormalising the Gibbs measure \eqref{deGibbs2} with $p =  4$.
A similar argument applies in the critical and supercritical regimes $p \ge p_c$.

\medskip

We conclude this subsection by demonstrating a result that highlights how the singularity of the radial Gaussian free field arises exclusively from the origin of the ball,
 in sharp contrast with Lemma \ref{LEM:GMap_B}.

\begin{lemma}\label{LEM:GMap}
Let $\dl \in (0,1)$, $2 \le p < \infty$, and $u^\o$ be as in \eqref{GMap_B}.
Then we have
\begin{align}
\label{GMap5}
\E \big[ \|u^\o\|_{L^p (B_1 \backslash B_\dl)}^p \big] \les 1,
\end{align}

\noi
where the implicit constant depends on $\dl$ and $p$.
\end{lemma}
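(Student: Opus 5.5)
The plan is to reduce \eqref{GMap5} to a pointwise bound on the variance away from the origin, and then integrate. As in the proof of Lemma~\ref{LEM:GMap_B}, restricting \eqref{GMap3} to the shell gives
\[
\E \big[ \|u^\o\|_{L^p (B_1 \backslash B_\dl)}^p \big] = C_p \int_\dl^1 \s(r)^{\frac p2} r^{d-1} dr,
\qquad
\s(r) := \sum_{n \ge 0} \frac{e_{n,d}^2(r)}{z_{n,d}^2} = \E\big[|u^\o(r)|^2\big].
\]
Indeed, $u^\o(x)$ is a centred Gaussian for each $x$. So $\E|u^\o(x)|^p = C_p \s(x)^{p/2}$, and the identity follows from Fubini. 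It therefore suffices to show that $\s$ is bounded on $[\dl,1]$ by a constant depending only on $\dl$.

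I will do this in one of two ways.

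The first route uses the Green's function. The series $\sum_n e_{n,d}(r)e_{n,d}(s) z_{n,d}^{-2}$ is the kernel of $(-\Delta_{\rm Dir})^{-1}$ on radial functions, namely $K(r,s)$ from \eqref{Green_k}. Lemma~\ref{LEM:Green_r}, already used in Lemma~\ref{LEM:GNu}, gives for $d\ge3$
\[
K(r,s) = \frac{1}{d-2}\Big(\max(r,s)^{-(d-2)} - 1\Big),
\]
with the logarithmic or linear analogue for $d\le 2$. Then $\s(r) = K(r,r) = \frac{1}{d-2}(r^{2-d}-1) \le \frac{\dl^{2-d}}{d-2}$ for $r \ge \dl$, and the claim follows at once.

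The only point needing care is identifying $K(r,r)$ with the diagonal of the series, since the series is only known a priori to converge in $L^2$. To handle this I will use that the partial sums $\s_N(r)$ are nonnegative and increase monotonically. I will also use that
\[
\int_\dl^1 \s_N(r)\ph(r) r^{d-1}dr = \E\langle (P_N u)^2,\ph\rangle \to \int_\dl^1 K(r,r)\ph(r) r^{d-1} dr
\]
for nonnegative test functions $\ph$, which is what \eqref{GFFX_e} records. By monotone convergence, $\s = K(r,r)$ almost everywhere, and that is enough for the integral.

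The second route is a backup using the Bessel asymptotics directly. For $r\ge \dl$ and $n$ large, the standard bound
\[
|J_\nu(z)| \les z^{-1/2}
\]
combined with $\be_{n,d}\sim n^{-1/2}$ (from \cite{AT08}) gives $|e_{n,d}(r)| \les r^{-\frac{d-1}{2}} \le \dl^{-\frac{d-1}{2}}$ uniformly in $n$. Then $\s(r) \les \dl^{1-d}\sum_n z_{n,d}^{-2}<\infty$ by the radial Weyl law. This avoids the Green's function identification entirely.

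Either way, plugging the bound into the displayed identity yields \eqref{GMap5} with a constant of order $\dl^{-(d-1)p/2}$ (or $\dl^{-(d-2)p/2}$ via the Green's function). I expect the main obstacle to be only the justification of the pointwise variance bound: either the diagonal identification, or the uniformity of the Bessel bound in the regime $z_{n,d} r\gtrsim z_{n,d}\dl$, which holds for $n\ge n_0(\dl)$, with the finitely many remaining terms bounded trivially.
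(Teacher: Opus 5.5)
Your first route is the paper's argument: the paper says the lemma follows from \eqref{GMap3_B} restricted to the shell together with the variance identity $\E[|X(r)|^2] = K(r,r) \les r^{-(d-2)}$ of Lemma~\ref{LEM:GFFX}, and this is bounded by a $\dl$-dependent constant for $r \ge \dl$. The paper takes $\sum_n e_{n,d}^2(r) z_{n,d}^{-2} = K(r,r)$ for granted, so your monotone-convergence justification (which, as written, leans on \eqref{GFFX_e} itself) is optional; a self-contained proof valid for every $r > 0$ uses $\jb{K(r,\cdot), f}_{\dot H^1} = f(r)$ for radial $f \in H^1_0$, so Parseval in the orthonormal basis $\{e_{n,d}/z_{n,d}\}$ of $\dot H^1_0$ gives $\sum_n e_{n,d}^2(r) z_{n,d}^{-2} = \|K(r,\cdot)\|_{\dot H^1}^2 = K(r,r)$, and your Bessel-asymptotics backup also works and gives the pointwise bound directly.
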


Lemma \ref{LEM:GMap} directly follows from \eqref{GMap3_B} and Lemma \ref{LEM:GFFX} with \eqref{GMap_B}.
We omit its proof.

\medskip

Inspired by Lemma~\ref{LEM:GMap},
we propose a novel approximation procedure to construct the Gibbs measure \eqref{deGibbs2}.
More precisely,
we approximate \eqref{deGibbs2} by removing the singularity concentrated at the origin, as opposed to the frequency truncation method employed in \eqref{deGibbs2_N}.
This suggests constructing \eqref{deGibbs2} by approximating the potential energy $\|u\|_{L^p(B_1)}^p$ in a non-perturbative way,
rather than treating it as a perturbation of the Gaussian measure as described in Proposition~\ref{PROP:conv}~(i) for $p < p_c$.

\subsection{Radial \texorpdfstring{$\Phi_d^4$}{Phi4d} measures and main results}

Let $\{e_n\}_{n\ge0}$ be an orthonormal basis of $L^2_{\mathrm{rad}} (B_1) $ made of eigenfunctions of $-\Delta$ (with Dirichlet boundary condition) with eigenvalues $0< \ld_0^2 \le \ld_1^2 \le \dotsb \le \ld_n^2 \le \dotsb$; namely, $e_n = e_{n,d}$ and $\ld_n = z_{n,d}$, see \eqref{ef_n}.
Then, define the random variable
\begin{align}
\label{gaussian} X := u^\o = \sum_{n \ge 0} \frac{g_n}{\ld_n} e_n,
\end{align}

\noi
where $\{g_n\}_{n \ge 0}$ is a sequence of independent standard real-valued Gaussian random variables on the probability space $(\O, \mathcal F, \mathbb P)$, as in \eqref{GMap} (see also \eqref{GMap_B}). Given any radial test function $\ph$ supported on $B_1$, it follows
\begin{align}
    \label{test}
    \E \big[ \jb{X,\ph}_{\dot H^1}^2 \big] = \|\ph\|_{\dot H^1}^2 = \sum_{n \ge 0} \ld_n^2 |\jb{\ph, e_n}|^2.
\end{align}
We define
\begin{align} \label{law}
\mu:= \Law(X).
\end{align}
See \eqref{GM} and \eqref{GM_B}.
Given $0 < \eps < 1$, we then consider the following weighted Gaussian measure:
\begin{equation}\label{rho_eps}
d \rho_\eps(u) = \frac 1 {Z_\eps} \exp\Big(- \frac 14 \int_{B_1\backslash B_\eps} |u|^4 dx \Big)d\mu(u),
\end{equation}
where $B_\eps = \{x \in \R^d: |x| < \eps\}$ denotes the open ball of radius $\eps$.
Here $Z_\eps$ is the partition function defined by
\begin{align}
\label{partition_eps}
Z_\eps = \int \exp\Big(- \frac 14 \int_{B_1\backslash B_\eps} |u|^4 dx \Big) d\mu(u).
\end{align}
From Lemma \ref{LEM:GMap},
we see that $Z_\eps$ is non-zero and thus the measure \eqref{rho_eps} is well defined for any $\eps \in (0,1)$.
Furthermore, at least formally, we expect that
\begin{align} \label{G_limit} \lim_{\eps \to 0^+} \rho_{\eps} = \rho_0,
\end{align}
for some well-defined probability measure $\rho_0$.
The main purpose of this paper is to justify the above limit \eqref{G_limit} (up to a subsequence) and study the properties of the limiting measure $\rho_0$.

Before stating our results, we specify the space in which the limit \eqref{G_limit} is taken.
We write $C^\al (E)$, $\al \in (0,1)$, for the H\"older space on a compact set $E$, and $c^\al (E)$ for the associated {\it little H\"older space}, namely the closure of $C^\infty (E)$ in $C^\al (E)$.
Given now $\al \in (0,\frac12)$, we let
\begin{align}
\label{Xp}
\mathbb X^{\al} := c^\al_{\rm loc, \rad} (B_1 \backslash \{0\})
\end{align}

\noi
be the space of radial functions $u$ on $B_1 \backslash \{0\}$ such that $u \in c^\al (B_1 \backslash B_\dl)$ for every $\dl \in (0,1)$, endowed with the topology induced by the family of seminorms
\begin{align}
\label{Xpsemi}
u \longmapsto \|u\|_{C^\al (B_1 \backslash B_{1/k})}, \qquad k \ge 2.
\end{align}

\noi
It is fairly standard to check that $\mathbb X^\al$ is a separable Fr\'echet space, and thus a Polish space.
Furthermore, when $d \ge 3$, the radial Gaussian free field $X$ in \eqref{gaussian} has a version with values in $\mathbb X^\al$, $0<\al<1/2$; see  Lemma \ref{LEM:Xal}.

We have the following results.

\begin{theorem}[Existence of the limiting measure] \label{THM:main}
Let $d \ge 1$ and $\rho_\eps$ be the Gibbs measures defined in \eqref{rho_eps}.
Then we have
\begin{enumerate}
    \item[{\rm (i)}] When $1 \le d \le 3$, the measures $\rho_\eps$ converge in total variation, as $\eps \to 0$, to the measure $\rho_0$ given by \eqref{deGibbs2} with $p = 4$, which is constructed in Proposition \ref{PROP:conv}~(i).
    \item[{\rm (ii)}] When $d \ge 4$,
    the sequence $\rho_\eps$ has a weak limit point $\rho_0$ in the space of probability measures over $\mathbb X^{\al}$. Moreover, if $\rho_0$ is any such limit point, then $\rho_0$ is concentrated in the space $L^{\frac{2d}{d-2}-}(B_1)$\footnote{Hereafter, we use $a-$ to denote $a - \varepsilon$ for arbitrarily small $\varepsilon > 0$.}.
\end{enumerate}
\end{theorem}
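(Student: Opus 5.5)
For part (i), I would work directly with the densities. When $1 \le d \le 3$ we have $4 < p_c$, so \eqref{GMap4_B} gives $\int_{B_1}|u|^4dx < \infty$ $\mu$-almost surely. Set
\[
F_\eps(u) = \exp\Big(-\tfrac14\int_{B_1\backslash B_\eps}|u|^4dx\Big).
\]
As $\eps \to 0$, $F_\eps(u)$ decreases monotonically to $F_0(u) = \exp(-\frac14\int_{B_1}|u|^4dx) > 0$ almost surely, and $0 \le F_\eps \le 1$. Dominated convergence then gives $F_\eps \to F_0$ in $L^1(\mu)$ and $Z_\eps \to Z_0 > 0$. The total variation distance satisfies
\[
\|\rho_\eps - \rho_0\|_{TV} \le \int |F_\eps/Z_\eps - F_0/Z_0|\, d\mu \to 0,
\]
and $\rho_0$ coincides with the measure constructed in Proposition~\ref{PROP:conv}~(i) with $p=4$.

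For part (ii), I would use tightness in $\mathbb X^\al$ plus Prokhorov, since $\mathbb X^\al$ is Polish. By the definition of the seminorms \eqref{Xpsemi}, it suffices to show that for every $\dl \in (0,1)$ and some $\al<\al'<\frac12$, the moments $\E_{\rho_\eps}\|u\|_{C^{\al'}(B_1\backslash B_\dl)}^q$ are bounded uniformly in $\eps<\dl$. Compactness of $C^{\al'}\hookrightarrow c^\al$ on each shell, combined with a diagonal argument over $\dl=1/k$, then yields tightness.

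To get the uniform bounds I would use the Markov (domain) property of the Gaussian free field across the sphere $|x|=\dl$. Conditioned on the value $u(\dl)=\ld$, the field on the shell $\{\dl\le|x|\le1\}$ is the harmonic extension of the boundary data plus an independent Dirichlet GFF on the shell. For $\eps<\dl$ the weight factorises across the sphere, so the conditional law of $u$ on the shell under $\rho_\eps$ is the shell measure $\rho^+_\dl[\ld]$. I would therefore prove bounds on $\rho^+_\dl[\ld]$ that hold uniformly in $\ld\in\R$, and average over $\ld$ at the end.

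Those uniform bounds would come from the Bou\'e--Dupuis variational formula. For $\rho^+_\dl[\ld]$ it writes $-\log\E[e^{-V}]$ as an infimum over drifts $\Theta$ of
\[
\E\big[V(Y+h_\ld+\Theta)+\tfrac12\|\Theta\|_{H^1_0}^2\big],
\]
where $Y$ is the Dirichlet GFF on the shell and $h_\ld$ is the harmonic extension. Testing exponential moments of $\|u\|_{C^{\al'}(B_1\backslash B_{\dl'})}$ for $\dl'>\dl$ against the quartic energy, the near-optimal drift makes $h_\ld+\Theta$ close to a minimiser of $\frac12\|\nabla w\|^2+\frac14\int|w|^4$ with $w(\dl)=\ld$. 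By the key deterministic input, every solution of $-\Delta w+w^3=0$ on the annulus is bounded on $\{|x|\ge\dl'\}$ independently of the boundary value $\ld$ (an Osgood/Keller--Osserman type estimate, giving $|w(r)|\les (r-\dl)^{-1}$). This yields $\ld$-independent control of the interior part, while $Y$ is handled by Lemma~\ref{LEM:Xal}/Lemma~\ref{LEM:GMap}. I expect this to be the main obstacle: making the variational argument rigorous and showing that the cost of deviating from the $\ld$-dependent minimiser is controlled uniformly as $|\ld|\to\infty$.

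Finally, for the concentration in $L^{\frac{2d}{d-2}-}$, I would apply scaling. The radial $\Phi^4_d$ structure is invariant under $u(x)\mapsto \dl^{\frac{d-2}2}u(\dl x)$ (the quartic term is also scale-invariant, since $\frac{d-2}{2}\cdot 4 - d = d-4 \ge 0$ and the shell estimate is uniform). Applied to dyadic shells $\{2^{-k-1}\le|x|\le2^{-k}\}$, the shell bounds give $\E_{\rho_0}\|u\|_{L^\infty(\text{shell}_k)}^p\les 2^{k\frac{d-2}2p}$, uniformly in the conditioning data. Summing $2^{-kd}2^{k\frac{d-2}{2}p}$ converges for $p<\frac{2d}{d-2}$. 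These bounds pass to weak limits by lower semicontinuity (Fatou) of $\|u\|_{L^p(B_1\backslash B_{1/k})}$ on $\mathbb X^\al$, followed by monotone convergence in $k$.
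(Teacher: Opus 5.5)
Part (i) is correct, and the architecture you propose for (ii) matches the paper: the Markov property across $|x|=\dl$, bounds on $\rho^+_\dl[\ld]$ uniform in $\ld$ via Bou\'e--Dupuis, the estimate $|z_{\dl,\ld}(r)|\les (r-\dl)^{-1}$, H\"older moments on shells, and Prokhorov in $\mathbb X^\al$. Your last step, however, fails for $d\ge5$.

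Under $u_\dl(x)=\dl^{\frac{d-2}2}u(\dl x)$ the Dirichlet energy is invariant, but $\int_{B_{\dl R}}|u|^4\,dy=\dl^{4-d}\int_{B_R}|u_\dl|^4\,dx$. The exponent $d-4$ you computed would have to vanish for invariance. So for $d>4$ the rescaled problem at scale $\dl$ has coupling constant $\dl^{4-d}\to\infty$.

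The variational bounds are not uniform in the coupling. The errors in $P_{\dl,\ld}$ are of the size of $\E\int|X^+_\dl|^4$, i.e.\ linear in the coupling, and this is exactly the factor $\Ld(\dl)=\dl^{4-d}$ in Proposition~\ref{PROP:opt} and Lemma~\ref{LEM:P0}. The method therefore only gives $\E\|W_2\|_{\dot H^1}^2\les 1+\Ld(r)$, hence $r^{d-2}u(r)^2\les 1+\Ld(r)$ (Proposition~\ref{PROP:PW}). That is $|u|\les r^{3-d}$ at scale $r$, not your $r^{-\frac{d-2}2}$. Even used pointwise, this makes the dyadic sum converge only for $p<\frac{d}{d-3}$, which is strictly below $\frac{2d}{d-2}$ when $d\ge5$. (For $d=4$ the loss is only $|\log r|$, and a pointwise route would suffice.)

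Scaling is also obstructed by the Dirichlet condition at $|x|=1$. You would need to condition on both spheres of annuli of fixed aspect ratio, with bounds uniform in both boundary values, and your outline does not set this up.

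The paper avoids $L^\infty$ here. Working on $B_1\setminus B_{r/2}$, it proves $\int\exp\big(\eta\|u\|_{L^4(B_1\setminus B_r)}^4/(1+\Ld(r))\big)\,d\rho_\eps\le C$ for $\eps\le r/2$. The coercive quartic term gives $\E\int W_2^4\les 1+\Ld(r)$, which matches the Gaussian size of $\E\int_{B_1\setminus B_r}|X|^4$, so nothing is lost. H\"older on $B_{2r}\setminus B_r$ then gives $\E\|u\|^p_{L^p(B_{2r}\setminus B_r)}\les(1+\Ld(r))^{p/4}r^{d(1-p/4)}$, which sums dyadically exactly when $p<\frac{2d}{d-2}$ (Lemma~\ref{LEM:L4-}).

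Second, the step you flag as the main obstacle is a genuinely missing idea, not a technicality. Take the naive shift $\Theta(t)=t(z_{\dl,\ld}-\ld h_\dl)+W$. The term $\int W z_{\dl,\ld}^3$ cancels against the Dirichlet cross term, but $3\int W X^+_\dl z_{\dl,\ld}^2$ survives. Any Young-type absorption of it into the coercive terms costs $\E\int (X^+_\dl)^2 z_{\dl,\ld}^2\,dx$. For $r$ close to $\dl$, $\E|X^+_\dl(r)|^2\sim (r-\dl)\dl^{1-d}$ and $|z_{\dl,\ld}(r)|$ is of size $\min(|\ld|,(r-\dl)^{-1})$. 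This cost therefore grows like $\log|\ld|$, and uniformity in $\ld$ is lost.

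The paper removes this by also shifting by $\Xi+\U$, as in \eqref{CoV}:
\begin{itemize}
\item $\Xi$, with $\Delta\dot\Xi=3X^+_\dl(t)z_{\dl,\ld}^2$, cancels this cross term in expectation by the martingale property (see \eqref{CoV12}).
\item $\U$, solving \eqref{Upsilon}, cancels the next term $3\int W(\Xi+\U)z_{\dl,\ld}^2$ (see \eqref{CoV13}).
\item The bounds $\E|\Xi(r)|^2,\ \E|\U(r)|^2\les (r-\dl)/r^{d-1}$, uniform in $\ld$ (Lemmas~\ref{LEM:Xi} and~\ref{LEM:Upsilon}), supply the decay at $r=\dl$ that balances $z_{\dl,\ld}^2\les (r-\dl)^{-2}$ in all remaining terms.
\end{itemize}
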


See also Lemma \ref{LEM:limit} for further details on the convergence in {\rm (ii)}.

The proof of Theorem \ref{THM:main}~(i) is similar to that of Proposition \ref{PROP:conv}~(i), as we have
\begin{align}\label{d123}
\lim_{\eps \to 0}  \int_{B_1\backslash B_\eps} |u|^{4} dx =  \int_{B_1} |u|^{4} dx , \,\, a.e.
\end{align}
with respect to the Gaussian measure $\mu$, and the right-hand side of \eqref{d123} is finite $\mu$-almost surely when $d = 1,2,3$, by \eqref{GMap4_B}.
However, when $d \ge 4$, the right-hand side of \eqref{d123} is infinite $\mu$-almost surely: for $d = 4$, this is \eqref{es:GMap_B2}, and for $d \ge 5$, it follows from \eqref{es:GMap_B2} and H\"older's inequality, since then $p_c < 4$. Thus, Theorem \ref{THM:main}~(ii) is non-trivial.
In what follows, we shall focus on Theorem \ref{THM:main}~(ii).

 \begin{remark}\rm
 In higher dimensions,
 the quartic interaction becomes increasingly energy-supercritical.
 This  suggests that, as the dimension grows, typical functions sampled from the limiting measure exhibit stronger singularities at zero.
 Nevertheless, independently of the dimension, the limiting measure is supported on
 $L^{2} (B_1)$,
 and thus on distributions over the ball.
 \end{remark}

\begin{remark}\rm
We expect that a similar argument should apply for general interactions $\int |u|^p\,dx$, for any $p > 2$. We expect the limiting measure to be always supported on the space
$L^{\frac{2d}{d-2}-}$.
However, the argument would be much more involved due to the non-algebraic
nonlinearity when $p \not\in 2\N$. Even when $p \in 2\N$, the high
power in the nonlinearity introduces many additional difficulties when characterising
the measure $\rho_{\delta}^+ [\lambda]$ in Section~\ref{SEC:rho+}. For simplicity, we
focus on the quartic interaction $\int |u|^4 dx$ in dimensions $d \geq 4$
throughout this work.
\end{remark}

We also have the following non-triviality result for the limiting measure constructed in Theorem \ref{THM:main}.

\begin{theorem}
\label{THM:main_sub}
If $\rho_0$ is any weak limit point of $\rho_\eps$ obtained in Theorem \ref{THM:main},
then $\rho_0$ has sub-Gaussian tails, in the following sense: for every radial $\ph \in C^\infty_c (B_1 \backslash B_{3/4})$, there exists $\eta > 0$ such that
\begin{align*}
\int \exp \big( \eta \jb{u,\ph}^4 \big) \, d \rho_0 (u) < \infty.
\end{align*}
Moreover, if $\ph \not\equiv 0$, then $\jb{u,\ph}$ is not $\rho_0$-almost surely constant. In particular, $\rho_0$ is not Gaussian.
\end{theorem}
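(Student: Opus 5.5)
The plan is to prove both claims at the level of the approximating measures $\rho_\eps$, with bounds uniform in $\eps$, and then pass to the weak limit. Since $\ph \in C^\infty_c(B_1\backslash B_{3/4})$, the functional $u \mapsto \jb{u,\ph}$ is continuous on $\mathbb X^\al$: it only sees $u$ on the shell $\{3/4\le |x|\le 1\}$, where the seminorm $\|u\|_{C^\al(B_1\backslash B_{3/4})}$ controls it. Hence $\exp(\eta\jb{u,\ph}^4)\wedge M$ is bounded and continuous. Fatou along the subsequence $\rho_{\eps_k}\to\rho_0$, followed by monotone convergence in $M$, reduces the tail claim to
\[
\sup_{0<\eps<3/4}\int \exp\big(\eta \jb{u,\ph}^4\big)\, d\rho_\eps(u) < \infty .
\]

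To obtain this bound I will use the Markov property and the shell measures $\rho^+_\dl[\ld]$ announced in the abstract. Fix $\dl = 1/2$. Conditionally on the boundary value $u(\dl)=\ld$, the restriction of $\rho_\eps$ (for $\eps<\dl$) to the shell $\{\dl\le|x|\le1\}$ is $\rho^+_\dl[\ld]$. This holds because the Gaussian free field is Markov and the density factorises as $\exp(-\frac14\int_{B_\dl\backslash B_\eps}|u|^4)\exp(-\frac14\int_{B_1\backslash B_\dl}|u|^4)$. It therefore suffices to show
\[
\sup_{\ld\in\R}\int \exp(\eta\jb{u,\ph}^4)\, d\rho^+_\dl[\ld] <\infty .
\]
For this I will use the Bou\'e--Dupuis variational formula for $\rho^+_\dl[\ld]$. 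Write $u = h_\ld + Y + V$, where $h_\ld$ is the harmonic extension of the boundary data, $Y$ is the shell Gaussian free field, and $V=\int_0^1 \dot v$ is the drift. The quantity $-\log\int e^{\eta\jb{u,\ph}^4 - \frac14\int|u|^4}d\mu^+$ is then an infimum of
\[
\E\Big[\tfrac14\int|h_\ld+Y+V|^4 - \eta\jb{h_\ld+Y+V,\ph}^4 + \tfrac12\int_0^1\|\dot v\|_{H^1_0}^2\Big].
\]
Both this quantity and the partition function must be compared uniformly in $\ld$.

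The key deterministic input is the interior a priori bound: every solution of $-\Delta w + w^3=0$ on the annulus is bounded on $\{3/4\le|x|\le1\}$ independently of its boundary data. I will use it to recentre the minimisation around the minimiser $w_\ld$ of $\frac12\int|\nabla w|^2+\frac14\int|w|^4$ with $w(\dl)=\ld$, $w(1)=0$. Shifting $V\mapsto V + (w_\ld-h_\ld)$ costs exactly the energy of $w_\ld$ in both the numerator and the partition function, so these large terms cancel in the ratio. What remains involves $w_\ld+Y+V'$. On the support of $\ph$, $w_\ld$ is bounded uniformly in $\ld$, so $\eta\jb{w_\ld+Y+V',\ph}^4$ is bounded by $C\eta(1+\jb{Y,\ph}^4 + \|V'\|_{L^4(\supp\ph)}^4)$. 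For $\eta$ small the drift part is absorbed by the $\frac14\int|w_\ld+Y+V'|^4$ term. The Gaussian part is harmless because $\jb{Y,\ph}$ is Gaussian and $\E e^{C\eta\jb{Y,\ph}^4}$ is handled by restricting to the cutoff set, or more simply by splitting $e^{\eta x^4}$ via the quartic potential, since the quartic confinement dominates. The main obstacle is justifying the recentring: the cross terms between $w_\ld$, $Y$ and $V'$ in $|w_\ld+Y+V'|^4$ must be controlled uniformly in $\ld$ near $|x|=\dl$, where $w_\ld$ is large. My first attempt is to use the Euler--Lagrange equation to cancel the linear term in $V'$, and convexity of $x\mapsto x^4$ to bound the remainder below by a positive quantity minus terms involving $Y$ that are integrable uniformly.

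For non-degeneracy, suppose $\jb{u,\ph}=c$ holds $\rho_0$-a.s. I will show a uniform lower bound $\mathrm{Var}_{\rho_\eps}(\jb{u,\ph})\ge\kappa>0$ that survives the limit; the uniform fourth moments from the first part give uniform integrability, so variances converge. Conditionally on $u(\dl)$, it suffices to bound $\mathrm{Var}_{\rho^+_\dl[\ld]}\jb{u,\ph}$ below uniformly in $\ld$. This follows from a Cameron--Martin shift argument: for a fixed $\psi\in H^1_0$ of the shell with $\jb{\psi,\ph}\neq0$, the law of $u+t\psi$ under $\rho^+_\dl[\ld]$ has density relative to $\rho^+_\dl[\ld]$ whose moments are bounded uniformly in $\ld$ for $|t|\le1$, again using the interior bounds together with the moment bounds. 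This prevents concentration of $\jb{u,\ph}$ at a point. A non-degenerate variable with $\E e^{\eta X^4}<\infty$ cannot be Gaussian, so $\rho_0$ is not Gaussian.
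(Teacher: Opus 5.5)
Your overall architecture matches the paper: Markov reduction to bounds on $\rho^+_{1/2}[\ld]$ uniform in $\ld$, the Bou\'e--Dupuis formula, recentring at the minimiser $w_\ld=z_{\dl,\ld}$, the interior bound on $\supp\ph$, and passage to the limit by truncation. However, there is a genuine gap. The step you flag as the main obstacle is not handled by the mechanism you propose, and it is exactly where the paper needs a new idea.

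After the shift $V=(z_{\dl,\ld}-\ld h_\dl)+V'$, the Euler--Lagrange equation only cancels $\int z_{\dl,\ld}^3V'$. The expansion still contains the linear term $3\int z_{\dl,\ld}^2X^+_\dl V'\,dx$. Convexity of $x\mapsto x^4$ only supplies the coercive quantities $\frac12\int_0^1\|\dot V'\|_{\dot H^1}^2dt$, $\frac32\int(z_{\dl,\ld}+X^+_\dl)^2V'^2$, and so on, and these cannot dominate that term uniformly in $\ld$. For large $|\ld|$ one has $z_{\dl,\ld}^2\approx 2(r-\dl)^{-2}$ down to $r-\dl\sim|\ld|^{-1}$, while $\E|X^+_\dl(r)|^2\sim r-\dl$ near $r=\dl$. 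Consequently $\E\int z_{\dl,\ld}^2(X^+_\dl)^2dx$, and the energy $\E\int_0^1\|\dot\Xi\|_{\dot H^1}^2dt$ of the process $\Xi$ in \eqref{Xidef}, both grow like $\log|\ld|$.

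Test your recentred functional with $V'=\theta\,\Xi$ for small $\theta>0$. The martingale identity gives $\E\int 3z_{\dl,\ld}^2X^+_\dl(1)\Xi(1)\,dx=-\E\int_0^1\|\dot\Xi(t)\|_{\dot H^1}^2dt$. So the gain, of order $-\theta\log|\ld|$, beats the remaining costs, which are $O(\theta^2\log|\ld|+1)$. The infimum of your functional therefore tends to $-\infty$ as $|\ld|\to\infty$.

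Hence no lower bound of the form $\E[P(V')]\ge c\,\E[\int V'^4+\|V'\|_{\dot H^1}^2]-C$ with $C$ independent of $\ld$ can hold. The almost-optimiser argument, which you need to bound $\E\|V'\|^4_{L^4(\supp\ph)}$ (the only non-trivial term in your upper bound), then breaks down. A smaller point: $\E e^{C\eta\jb{Y,\ph}^4}=\infty$ for Gaussian $Y$. In the variational route you only need $\E\jb{X^+_\dl,\ph}^4<\infty$.

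This divergence is precisely why the paper makes two further explicit shifts, writing $V'=\Xi+\U+W$.
\begin{itemize}
\item $\Xi$, defined by \eqref{Z}--\eqref{Xidef}, cancels $\int W X^+_\dl z_{\dl,\ld}^2$ exactly through the martingale identity \eqref{CoV12}.
\item $\U$, the solution of \eqref{Upsilon}, cancels the next offending term $\int W(\Xi+\U)z_{\dl,\ld}^2$ through \eqref{CoV13}.
\end{itemize}
Their divergent energies become $W$-independent constants that cancel between the two infima. Meanwhile Lemmas~\ref{LEM:Xi} and~\ref{LEM:Upsilon} give $\E|\Xi|^2,\E|\U|^2\les (r-\dl)r^{1-d}$ uniformly in $\ld$. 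This balances $z_{\dl,\ld}^2\les(r-\dl)^{-2}$ and yields the uniform coercivity of Proposition~\ref{PROP:opt} and the bound of Lemma~\ref{LEM:P0}. Only after this does your almost-optimiser argument go through, and that is Proposition~\ref{PROP:SG}.

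Your non-degeneracy argument inherits the same gap. Moments of the Cameron--Martin density that are uniform in $\ld$ require exponential moments of cubic or quartic functionals on the shell, again uniformly in $\ld$. Uniformity is also unnecessary here. Pass the Markov decomposition to the limit (Proposition~\ref{PROP:Markov0}). If $\jb{u,\ph}=c$ held $\rho_0$-almost surely, it would hold $\rho^+_\dl[\ld]$-almost surely for $\nu_{0,\dl}$-a.e.\ $\ld$. Since $\rho^+_\dl[\ld]$ and $\mu^+_\dl[\ld]$ are equivalent, it would then hold $\mu^+_\dl[\ld]$-almost surely. That contradicts the fact that $\jb{u,\ph}$ is a non-degenerate Gaussian under $\mu^+_\dl[\ld]$.
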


In the case of the $\Phi^4_3$ measure on $\T^3$, a similar sub-Gaussian tail estimate was obtained by Hairer and Steele \cite{HS22}.

\begin{remark}\rm
The analogue of Theorem \ref{THM:main_sub} is known to fail without symmetry assumptions: Aizenman and Duminil-Copin \cite{ADC21} established the marginal triviality of the $\Phi^4_4$ model, namely that its scaling limits on $\R^4$ are Gaussian (for $d \ge 5$, this goes back to \cite{Aiz, Fro}).
A similar failure is expected on $B_1$ without the radial assumption for $d \ge 4$.
\end{remark}

In low dimensions, the Gibbs measure is mutually absolutely continuous with respect to the base Gaussian measure: this is well known for $d \le 2$, and under radial symmetry it also holds for $d = 3$, by Proposition \ref{PROP:conv}~(i).
However, although our measure $\rho_0$ is supported on radial functions (which can be viewed as one-dimensional objects),
it exhibits stronger singularities.
More precisely, we have:

\begin{theorem}
\label{THM:main_singular}
Let $d \ge 4$.
If $\rho_0$ is any weak limit point of $\rho_\eps$ obtained in Theorem \ref{THM:main},
then $\rho_0$ is singular with respect to the Gaussian measure $\mu$.
\end{theorem}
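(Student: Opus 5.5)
We will exhibit a Borel set $S \subset \mathbb X^\al$ with $\mu(S) = 1$ and $\rho_0(S) = 0$. The natural candidate is the set on which the Gaussian field is too large near the origin. Lemma~\ref{LEM:GMap_B} and \eqref{d123} show that $\int_{B_1}|u|^4dx = \infty$ $\mu$-almost surely when $d \ge 4$. Our task is to show that $\rho_0$ is concentrated on functions that are much smaller near the origin, in a quantitative, scale-by-scale way.

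First, we quantify the Gaussian behaviour near $0$. For dyadic shells $A_k = \{2^{-k-1} \le |x| \le 2^{-k}\}$, the variance of $X$ at radius $r$ is comparable to $r^{-(d-2)}$ (by the lower bound in the proof of Lemma~\ref{LEM:GMap_B} and Lemma~\ref{LEM:Green_r}). Set $Y_k = 2^{-kd}\int_{A_k}|X|^4dx$, the normalised quartic average on $A_k$. Then $\E[Y_k] \sim 2^{2k(d-2)}$, and $Y_k$ concentrates at this size. Indeed, the radial field on $A_k$ decorrelates across dyadic shells, so $\mathrm{Var}(Y_k) \les \E[Y_k]^2$, and a Paley--Zygmund/second-moment argument combined with approximate independence of well-separated shells (via the Markov property of the radial GFF) gives
\[
S := \Big\{ u : \limsup_{k\to\infty} 2^{-2k(d-2)}\, 2^{-kd}\!\int_{A_k}|u|^4dx \ge c \Big\}, \qquad \mu(S) = 1 .
\]
A cleaner alternative is to use the statistic $\sum_k k^{-1} 2^{-2k(d-2)} Y_k$, which diverges $\mu$-a.s.

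Second, we bound the same statistic under $\rho_0$. Theorem~\ref{THM:main}(ii) gives concentration in $L^{\frac{2d}{d-2}-}$, but that is not enough for $d\ge 4$. Instead we use the mechanism behind it: the uniform-in-$\ld$ moment bounds for $\rho^+_\dl[\ld]$, which come from the variational formula and the interior bounds for $-\Delta u + u^3 = 0$ that are independent of the boundary data. Rescaling the shell $A_k$ to unit size, the quartic term $\int |u|^4$ becomes supercritical with coupling $2^{k(d-4)}$ or larger. The interior a priori bound then forces $|u| \les r^{-1}$ on $A_k$ (the scaling of $-\Delta u + u^3=0$), uniformly in the outer data. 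This yields
\[
\sup_{\eps}\ \E_{\rho_\eps}\Big[2^{-kd}\int_{A_k}|u|^4dx\Big] \les 2^{4k}.
\]
Since $2^{4k} \ll 2^{2k(d-2)}$ exactly when $d > 4$, the normalised statistic tends to $0$ in $L^1(\rho_\eps)$ uniformly in $\eps$. For $d=4$ the two scalings coincide, so we need the sharper fact that the Gibbs field on $A_k$ has smaller quartic moment by a factor tending to $0$, or else we use a $\log$-weighted sum; there the statistic must be chosen with care. Passing to the weak limit is done by lower semicontinuity of $u\mapsto \int_{A_k}|u|^4$ on $\mathbb X^\al$, since $A_k$ stays away from $0$. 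This gives $\rho_0$-a.s. convergence along a subsequence, hence $\rho_0(S) = 0$.

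The main obstacle is the second step: obtaining quartic moment bounds on $A_k$ under $\rho_\eps$ that are uniform in $\eps$ and exhibit the $r^{-1}$ (rather than $r^{-(d-2)/2}$) scaling. We would derive them from the shell measures $\rho^+_{\dl}[\ld]$ with $\dl = 2^{-k-1}$, conditioning via the Markov property on the value at radius $\dl$. The uniformity in $\ld$ of their moment bounds, after scaling, is what decouples the estimate from the singular behaviour closer to the origin. The borderline case $d=4$ is where we expect the most delicate work.
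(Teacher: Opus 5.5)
\textbf{There is a genuine gap, and it sits in your second step, which carries the whole proof.}

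Lemma~\ref{LEM:z} bounds only the deterministic minimiser $z_{\dl,\ld}$ of \eqref{z}. It gives no pointwise control on a sample of $\rho^+_\dl[\ld]$. In the representation of Proposition~\ref{prop:rhoVar}, such a sample is $X^+_\dl + z_{\dl,\ld} + W + \Xi + \U$. By \eqref{kernel0}, $\E|X^+_\dl(r)|^2 = K_\dl(r,r) \sim r^{2-d}$ for $2\dl \le r \le \frac12$, which is the full Gaussian size. Accordingly, the best bound the paper's machinery gives is $\E_{\rho_\eps}\|u\|^4_{L^4(B_1\setminus B_r)} \les \Ld(r)$ (Proposition~\ref{PROP:PW}). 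That is exactly the order of the same quantity under $\mu$, so it separates nothing.

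Your claim amounts to $\int_{A_k}|u|^4 \les 2^{-k(d-4)}$ instead of the Gaussian $2^{k(d-4)}$. Proving it would require showing that the Gibbs weight suppresses the free-field fluctuations near the origin, i.e.\ that near-optimal drifts cancel $X^+_\dl$. Nothing in your sketch addresses this.

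Heuristically the bound is also not what one expects. After rescaling $A_k$ to unit size the coupling is $g = 2^{k(d-4)}$. The elliptic bound gives the mean-field scale $g^{-1/2}$, that is $r^{-1}$. But the fluctuations are only confined at the scale $g^{-1/6}$, where the quartic potential balances the Gaussian fluctuation scale. This gives $|u| \sim r^{-(d-1)/3} \gg r^{-1}$, consistent with the Feynman--Kac picture in Remark~\ref{RMK:radial}.
\begin{itemize}
\item For $d>4$, at most a smaller power gain over the Gaussian order can be hoped for, and it would need a genuinely new estimate.
\item For $d=4$ one has $g=1$, so both fields are of order $r^{-1}$ on every shell. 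No shell-wise statistic gains a vanishing factor, and your proposal leaves this case open.
\end{itemize}
(Minor: the normalisation should be $Y_k = 2^{kd}\int_{A_k}|X|^4\,dx$.)

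\textbf{The missing idea: no estimate on the Gibbs field near the origin is needed.} The paper works with the normalised log-density $F(u,\dl) = -\frac14\int_{B_1\setminus B_\dl}|u|^4\,dx - \log Z^+_\dl[u(\dl)]$ of $\rho^+_\dl[\ld]$ with respect to $\mu^+_\dl[\ld]$. It satisfies $\int e^{F}\, d\mu^+_\dl[\ld] = 1$. Lemma~\ref{LEM:tail} then gives, uniformly in $\ld$ and at no cost,
\[
\mu^+_\dl[\ld](F \ge s) \le e^{-s}, \qquad \rho^+_\dl[\ld](F \le -s) \le e^{-s}.
\]

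The only analytic input is anti-concentration of $\int_{B_1\setminus B_\dl}|u|^4$ under the \emph{Gaussian} measures $\mu^+_\dl[\ld]$, from Propositions~\ref{PROP:div_h} and~\ref{PROP:div}:
\begin{itemize}
\item for $d>4$, van der Corput applied to a single rescaled mode;
\item for $d=4$, a characteristic-function bound over $\sim|\log\dl|$ independent dyadic modes, giving spread of order $|\log\dl|^{1/2}$ for boundary values $|\ld|\le A\dl^{-1}$.
\end{itemize}

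Combined with the tail bounds, this shows $\mu\big(F(\cdot,\dl_n) \le -\frac1n|\log\dl_n|^{1/2}\big) \to 1$. Via the Markov property \eqref{eq:rhoMarkov0}, the $\rho_0$-measure of the same set, intersected with $\{|u(\dl_n)| \le n\dl_n^{1-d/2}\}$, is at most $e^{-n}$. Borel--Cantelli then concludes. This route handles $d=4$ without any scale-by-scale comparison of sizes.
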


\begin{remark}\rm
(i) Theorems~\ref{THM:main}~(ii), \ref{THM:main_sub}, and~\ref{THM:main_singular} hold for every weak limit point $\rho_0$ of $\{\rho_\eps\}_{0 < \eps < 1}$.
We do not address the uniqueness of the limit in this paper, but we expect it to hold.
Indeed, by the Markov property, the restriction of $\rho_0$ to $B_1 \setminus B_\dl$ is a superposition of the measures $\rho^+_\dl[\ld]$ (see Subsection~\ref{SUB:strategy} below), and the bounds of Section~\ref{SEC:proof}, being uniform in the boundary value $\ld$, suggest that the dependence on $\ld$, except for its sign, disappears when taking limits as $\delta \to 0$. More specifically, we expect that
$$ \rho_0 = \frac12 \rho_0^+[+\infty] + \frac12 \rho_0^+[-\infty], $$
where $\rho_0^+[\pm \infty]$ is defined analogously to $\rho^+_\dl[\ld]$, by taking $\ld = \pm \infty$ and $\dl = 0$ in \eqref{z}.

\smallskip
\noi
(ii) One may also replace the potential $\frac14 \int |u|^4 dx$ by $\frac\gamma4 \int |u|^4 dx$ for a coupling constant $\gamma \in \R \setminus \{0\}$.
Since our arguments are non-perturbative and only use the defocusing nature of the interaction, the same results as in this paper hold for every $\gamma > 0$, and we expect the uniqueness discussed in (i) to hold for every $\gamma > 0$ as well.
This is in sharp contrast with renormalised models in infinite volume, where the uniqueness of the limiting measure depends on the size of the coupling constant (see, for instance, \cite{GJS75, CGW22}), and with critical models in the focusing case, where the construction depends on the size of the coupling constant; see \cite{OOT24, OOT25}.
For $\gamma < 0$, the measure is focusing, and it is not normalisable even in the presence of the cut-off $\eps > 0$, since $\int \exp\big(\frac{|\gamma|}{4} \int_{B_1 \setminus B_\eps} |u|^4 dx\big) d\mu(u) = \infty$. Following \cite{LRS88}, one may then add a cut-off on the $L^2$-norm; even so, when $d \ge 3$, we expect the resulting radial measure to be non-normalisable, by adapting the arguments in \cite[Section~4]{LRS88} and \cite[Section~5]{OST22}.
\end{remark}

\subsection{Strategy}

\label{SUB:strategy}

The result of Theorem \ref{THM:main} is strongly non-perturbative: in the energy-critical case $d = 4$ the potential energy has exactly the strength of the kinetic energy, and for $d > 4$ it is the stronger of the two, so that in neither case can the quartic interaction be regarded as a perturbation of the Gaussian measure $\mu$.
As far as we are aware, this is the first positive result on the $\Phi^4_d$ measure in this (super-)critical regime.
The proof rests on three main ingredients, which we now describe.

The first is the Markov property of the radial Gaussian free field \eqref{gaussian}. The Markov property plays a fundamental structural role in constructive quantum field theory, see for instance \cite{Nel73, Simon05}, and has recently been exploited in the context of $\Phi^4$ measures by Barashkov and Gunaratnam \cite{BG25}, who made use of it in order to construct a $\Phi^4_3$ measure on cylinders that satisfies Segal's axioms of quantum field theory. In our setting, the Markov property implies that outside of a small ball of radius $\dl > 0$, one can describe the measure only via the law of the boundary value $u(\dl)$ on the sphere $\partial B_\dl$. More precisely, we show in Section \ref{SEC:Markov} that, for $0 < \eps < \dl < 1$,
\begin{equation*}
(R_\dl)_{\#} \rho_\eps = \int \rho^+_\dl [\ld] \, \nu_{\eps,\dl} (d\ld) ,
\end{equation*}
where $R_\dl$ is the restriction to the annulus $B_1 \setminus B_\dl$, and $ \rho^+_\dl [\ld]$ denotes the radial $\Phi^4$ measure with boundary data $\ld$, formally given by
\begin{equation*}
d\rho^+_\dl [\ld] = \frac1{Z^+_\dl[\ld]} \exp \Big( - \frac14 \int_{B_1 \backslash B_\dl} |u|^4 dx -\frac12 \int_{B_1 \backslash B_\dl} |\nabla u (x)|^2 dx\Big) \ind_{\{u(\dl) = \ld, u(1) = 0\}}\, du.
\end{equation*}
Here, $\nu_{\eps,\dl}$ is the law of the boundary value $u(\dl)$ when $\Law(u) = \rho_\eps$; see \eqref{nued}.
Therefore, in order to prove tightness of the sequence $\rho_\eps$, it is enough to prove that the measure $\rho^+_\dl [\ld]$ enjoys uniform bounds that are independent of the boundary value $\ld$.

This is what brings us to the second ingredient. Heuristically, one would expect that the measure $\rho^+_\dl [\ld]$ concentrates around the minimisers of the expression
\begin{equation}
E_\dl(u) = \frac14 \int_{B_1 \backslash B_\dl} |u|^4 dx +  \frac12 \int_{B_1 \backslash B_\dl} |\nabla u (x)|^2 dx \label{energy_intro}
\end{equation}
with boundary values $u(\dl) = \lambda$, $u(1) = 0$.
By taking variations in the variable $u$, such a minimiser will be the solution $z_{\dl,\ld}$ of
\begin{equation}
\label{zeqintro}
\begin{cases}
\Delta z (x) = z^3 (x), & x \in B_1\backslash B_\dl, \\
z(x) = \ld, & x \in \partial B_\dl, \\
z(x) = 0, & x \in \partial B_1.
\end{cases}
\end{equation}
It turns out that $z_{\dl,\ld}$ itself satisfies a uniform bound which is independent of $\ld$. Indeed, in Lemma \ref{LEM:z} we prove that
\begin{equation}
\label{zestintro}
|z_{\dl,\ld}(r)| \le \frac{\sqrt2}{r - \dl}, \qquad \dl < r \le 1.
\end{equation}
This can be seen as an elliptic instance of the ``coming down from infinity'' property of the dynamic $\Phi^4$ model, which was first observed by Mourrat and Weber in \cite{MW17}, and then further exploited in a series of works \cite{CMW23, GH19, GH21, MoW20a, MoW20b, OTWZ26, TW18}.

However, we point out that the estimate \eqref{zestintro} is a special case of the theory of maximal solutions in general domains for elliptic equations of the form
$$ \Delta u +hu - ku^p =0, $$
where $h,k$ are appropriate continuous functions. We refer to the works by Bandle and Marcus \cite{BM92, BM95}, V\'eron \cite{Ve92}, and Marcus and V\'eron \cite{MV97}.
We nevertheless give a self-contained proof of Lemma \ref{LEM:z}, which in the radial case is a short ODE argument.

The third ingredient is the variational formula of Bou\'e and Dupuis \cite{BD}, which allows us to rigorously connect the measure $\rho^+_\dl [\ld]$ with variational properties of the energy \eqref{energy_intro}. Informally speaking, this says that
\begin{align*}
&\log \int \exp\big(-F(u)\big) d\rho^+_\dl [\ld](u) \\
&= \inf_{V \in {\mathbb H}_\dl} \E\Big[\frac14 \int_{B_1 \backslash B_\dl} |X^+_\dl + \ld h_\dl + V|^4 dx +  \frac12 \int_{B_1 \backslash B_\dl} |\nabla V|^2 dx\Big] \\
&\phantom{=\ }- \inf_{V \in {\mathbb H}_\dl} \E\Big[F(X^+_\dl + \ld h_\dl + V) + \frac14 \int_{B_1 \backslash B_\dl} |X^+_\dl + \ld h_\dl + V|^4 dx \\
&\phantom{=\ - \inf_{V \in {\mathbb H}_\dl} \E\Big[} +  \frac12 \int_{B_1 \backslash B_\dl} |\nabla V|^2 dx\Big],
\end{align*}
where $X^+_\dl$ is the radial Gaussian free field on the shell $B_1 \backslash B_\dl$ with zero boundary values, $h_\dl$ is the radial harmonic function on the shell with $h_\dl = 1$ on $\partial B_\dl$ and $h_\dl = 0$ on $\partial B_1$ (see \eqref{hdelta}), and $\mathbb H_\dl$ is an appropriate space of random processes with $0$ boundary condition on $\partial (B_1 \setminus B_\dl)$. See Proposition~\ref{PROP:rho} for a precise statement.
This approach to constructing $\Phi^4$ measures was introduced by Barashkov and Gubinelli in \cite{BarGub20}, who already noticed that one can perform a change of variable in order to remove diverging terms in the expansion. In our setting, we perform the change of variable
$$ V = (z_{\dl,\ld} - \ld h_\dl) + \Xi + \Upsilon + W,$$
see \eqref{CoV} (with $V = \Theta(1)$) and Proposition \ref{prop:rhoVar}. Here the corrections are added in succession according to the following scheme:
\begin{itemize}
\item First we write $V = (z_{\dl,\ld} - \ld h_\dl) + V_1$, so that $X^+_\dl + \ld h_\dl + V = X^+_\dl + z_{\dl,\ld} + V_1$, to isolate the fact that the measure should concentrate around minimisers of the energy \eqref{energy_intro}.
\item Because of the divergence rate $|z_{\dl,\ld}| \les \frac{1}{r-\dl}$, this creates some diverging terms in the expression of the form $\int V_1 X^+_\dl z_{\dl,\ld}^2$, which we further remove by performing the change of variable $V_1 = \Xi + V_2$, where $\E \big[ |\Xi(r)|^2 \big] \les \frac{r - \dl}{r^{d-1}}$ by Lemma \ref{LEM:Xi}.
\item Finally, there is one last (possibly) diverging term, of the form $\int V_2 \Xi z_{\dl,\ld}^2$, and we tailor the final  $\Upsilon$ so that this term disappears after the change of variable $V_2 = \Upsilon + W$.
\end{itemize}
This allows us to show a variational formula for the measure $\rho^+_\dl [\ld]$, given by Proposition \ref{prop:rhoVar}. We also remark that in order to achieve this, we need to show an infinite-dimensional extension of the Bou\'e--Dupuis variational formula, that works for lower  semicontinuous functionals, see Corollary \ref{COR:BD}. In Section~\ref{SEC:proof}, we then exploit this to prove that, for every $\dl \in (0,\frac12]$ and $\al \in (0,\frac12)$,
\begin{align*}
\int \exp \Big( \eta \|u\|^2_{C^\al (B_1 \backslash B_{2\dl})} \Big) \, d\rho^+_\dl [\ld](u) \le C,
\end{align*}
for some constants $\eta, C > 0$ depending only on $\dl$ and $\al$ (see Lemma~\ref{LEM:EB}), together with exponential moment bounds, degenerating at an explicit rate as $r \to 0$, for $r^{d-2} u(r)^2$ and $\|u\|^4_{L^4 (B_1 \backslash B_r)}$ (see Proposition~\ref{PROP:PW}).
All these bounds are uniform in the boundary value $\ld \in \R$, and hence, by the Markov property, uniform in $\eps$ under $\rho_\eps$.
The first one allows us to build the limiting measure $\rho_0$ via a standard tightness argument, while the second one, after a dyadic summation, gives $\E_{\rho_0} \big[ \int_{B_1} |u|^p dx \big] < \infty$ for every $p < \frac{2d}{d-2}$ (see Lemma~\ref{LEM:L4-}).
Moreover, by showing an exponential moment bound for $\jb{u,\ph}^4$, where $\ph$ is a radial test function supported in $B_1 \backslash B_{3/4}$ (see Proposition~\ref{PROP:SG}), we prove that the tails of $\jb{u,\ph}$ under $\rho_0$ are sub-Gaussian.

Finally, in order to prove singularity, we base our argument on the fact that the candidate logarithm of the density
$$ f_\dl(u) = -\frac 14 \int_{B_1\setminus B_\dl} |u|^4 $$
in the limit satisfies a non-concentration inequality of the form
$$ \lim_{\dl \to 0} \sup_{Z \in \R} \mu(\{ |f_\dl - Z| \le 1 \}) = 0, $$
see also the discussion at the beginning of Subsection~\ref{SUB:sing}.

\medskip

The remainder of this paper is  organised as follows.
In Section~\ref{SEC:pre}, we recall several fundamental definitions and tools, including radial Green's functions, radial Gaussian free field estimates, comparison principles for elliptic operators, and the Bou\'e--Dupuis formula.
Section~\ref{SEC:Markov} is devoted to the Markov property of the radial Gaussian free field and the Gibbs measure $\rho_\varepsilon$.
In Section~\ref{SEC:rho+}, we  analyse the structure of $\rho_\delta^+[\lambda]$.
The proof of Theorem~\ref{THM:main} is presented in Section~\ref{SEC:proof}.
Finally, in Section~\ref{SEC:sing}, we establish Theorems~\ref{THM:main_sub} and~\ref{THM:main_singular}.

\begin{remark}\label{RMK:radial}
Because of the radial symmetry, one could rewrite the measure \eqref{Phi44} in radial coordinates as
$$ d\rho = \frac 1 Z \exp\Big( - \frac{ \o_d}{4} \int_0^1 |u(r)|^4 r^{d-1}dr - \frac {\o_d}{ 2} \int_0^1 |u'(r)|^2 r^{d-1} dr \Big) d u, $$
where $\o_d$ denotes the surface area of the unit sphere in $\R^d$,
and then use the Feynman--Kac formula to view this measure as the law of a one-dimensional diffusion in the variable $r$, weighted by an exponential factor.
Indeed, the Gaussian part of the right-hand side is the law of the time-changed Brownian motion $u(r) = B(\tau(r))$, where $B$ is a standard Brownian motion and $\tau(r) = \int_r^1 \frac{ds}{\o_d s^{d-1}}$ (so that $u(1) = 0$), whose squared Cameron--Martin norm is $\o_d \int_0^1 |u'(r)|^2 r^{d-1} dr$, while the quartic part is the Feynman--Kac weight $\exp \big( - \frac{\o_d}4 \int_0^1 u(r)^4 r^{d-1} dr \big)$.
In principle, one could then obtain the various results of this paper by a spectral analysis of the associated Schr\"odinger operators
$$ \mathscr L_r = - \frac1{2 \o_d\, r^{d-1}} \partial_u^2 + \frac{\o_d}4 r^{d-1} u^4 , $$
acting in the field variable $u$, with $r$ playing the role of time. While this approach is arguably easier than the one we decided to follow in this paper, it is extremely rigid, in the sense that it requires the measure under consideration to be a one-dimensional object. We believe that the approach that we chose to follow is more flexible, and could be adapted to related higher-dimensional problems.
\end{remark}

\subsection*{Acknowledgements}

The authors are grateful to Didier Smets and Denis Serre for bringing to their attention the literature on maximal solutions of semilinear elliptic equations.

T.O. was supported by the European Research Council (grant no.\ 864138 ``SingStochDispDyn'') and by the EPSRC Mathematical Sciences Small Grant (grant no.\ EP/Y033507/1), and also acknowledges support from the NSFC (grant no.\ W2531005).
L.T. is funded by the European Union, via the ERC Starting Grant ``CritPDEsRand'', grant agreement no.\ 101219486.
N.T. was partially supported by the ANR project Smooth ANR-22-CE40-0017.
Y.W. was supported by the EPSRC Mathematical Sciences Small Grant (grant no.\ UKRI1116).
Views and opinions expressed are however those of the authors only and do not necessarily reflect those of the European Union or the European Research Council Executive Agency.
Neither the European Union nor the granting authority can be held responsible for them.

\subsection*{Declaration on the use of AI}

A complete first draft of this paper, with full proofs, was written before any use of AI.
We then used the AI assistants Claude Opus~5 and Claude Opus~5.5 (Anthropic), with the ``Extra'' reasoning setting, to proofread the manuscript and to help us fix the errors and gaps that it found, including drafting the proofs of some auxiliary steps.
Every statement and every proof has been verified by us, and we take sole responsibility for any errors that remain.

\section{Preliminaries}
\label{SEC:pre}
In this section, we prepare some tools, such as radial Green function estimates, radial Gaussian field estimates, the comparison principle for Schr\"odinger operators, and the  Bou\'e--Dupuis formula, that will be used throughout the entire paper.

\subsection{Radial Green functions}

In this subsection, we recall some Green function formulas of the Laplacian on the radial balls $B_1 \subset \R^d$ and shells $B_1 \backslash B_\dl \subset B_1$.
For a detailed discussion on various Green functions, see \cite[Subsection 2.2.4]{EvansPDE} and references therein.

\subsubsection{Green functions on balls}
Let $u$ be a radial solution to the following boundary value problem
\begin{align}
\label{BVP}
\begin{cases}
- \Delta u (x) = f(x), & x \in B_1, \\
u(x) = 0, & x \in \partial B_1.
\end{cases}
\end{align}

\noi
We may abuse notation and write $u(r) = u(x)$ for $r = |x|$.
Then Poisson's equation \eqref{BVP} can be written as
\begin{align}
\label{BVP1}
\begin{cases}
- \partial_r^2 u (r) - \frac{d-1}r \partial_r u (r) = f(r), \quad  r \in (0,1), \\
u(1) = 0,\\
\partial_r u(0) = 0.
\end{cases}
\end{align}

\noi
Denote by $K(r,s)$ the radial Green function to the Laplacian operator.
Then, the solution to \eqref{BVP1} can be recast as
\begin{align}
\label{Green1}
u(r) = \int_0^1 K(r,s) f(s) s^{d-1} ds.
\end{align}

\noi
Here, $K(r,s)$ is the kernel of $(-\Delta)^{-1}$ on radial functions with respect to the measure $s^{d-1} ds$, with the convention on radial integrals of Subsection~\ref{SUB:GFFg}.
We also have
\begin{align}
\label{Green_k}
K(r,s) = \sum_{n=0}^\infty \frac{e_{n,d}(r) e_{n,d}(s)}{z_{n,d}^2},
\end{align}

\noi
for $r, s \in (0,1]$,
where $e_{n,d}$ are the radial eigenfunctions defined in \eqref{ef_n} and  $z_{n,d}^2$ are the corresponding eigenvalues.

\medskip

Then, we have the following formula.

\begin{lemma}
\label{LEM:Green_r}
With the above notation, we have
\begin{align}
\label{Green_r}
K(r,s) = \frac{1}{d-2} \bigg( \frac1{\max(r, s)^{d-2}} - 1 \bigg),
\end{align}

\noi
for $d \ge 3$.
\end{lemma}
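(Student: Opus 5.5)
The plan is to verify directly that the proposed kernel solves the radial boundary value problem \eqref{BVP1}. Then uniqueness gives \eqref{Green_r}, and the eigenfunction expansion \eqref{Green_k} follows from the spectral theorem. Fix $s \in (0,1]$ and set
\[
G_s(r) = \frac{1}{d-2}\Big(\max(r,s)^{-(d-2)} - 1\Big).
\]
For $r < s$ this function is constant, so $\partial_r G_s = 0$ there. For $r > s$ it equals $\frac{1}{d-2}(r^{2-d} - 1)$, which is the radial fundamental solution shifted so that $G_s(1) = 0$. On each of the two intervals it is therefore radially harmonic, since $\partial_r(r^{d-1}\partial_r G_s) = 0$. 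It is also continuous at $r = s$.

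The jump of the flux at $r = s$ is
\[
r^{d-1}\partial_r G_s\big|_{s^-}^{s^+} = s^{d-1}\cdot(-s^{1-d}) - 0 = -1.
\]
Hence, in the weak sense, $-\partial_r(r^{d-1}\partial_r G_s) = \delta_s$. This is exactly the statement that $-\Delta_r G_s = \delta_s / s^{d-1}$, i.e.\ that $G_s$ is the Dirac mass relative to the measure $s^{d-1}ds$. That matches the normalisation in \eqref{Green1}, with the surface factor $\o_d$ dropped as per the convention of Subsection~\ref{SUB:GFFg}.

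Next, for $f \in C_c^\infty$ radial, define $u(r) = \int_0^1 G_s(r) f(s) s^{d-1}\,ds$. Splitting the integral at $s = r$ gives
\[
u(r) = \frac{1}{d-2}\Big[(r^{2-d}-1)\int_0^r f s^{d-1}\,ds + \int_r^1 (s^{2-d}-1) f s^{d-1}\,ds\Big].
\]
Differentiating, the boundary terms cancel and we obtain $r^{d-1}u'(r) = -\int_0^r f s^{d-1}\,ds$. Therefore $-(r^{d-1}u')' = f r^{d-1}$ and $u(1) = 0$. Moreover $|u'(r)| \le r\|f\|_\infty / d$, so $u'(0) = 0$ and $u$ is the regular solution of \eqref{BVP1}.

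Uniqueness is straightforward. The difference of two regular solutions satisfies $(r^{d-1}w')' = 0$, so $w = a + b r^{2-d}$. Regularity at the origin (or membership in $H^1_0(B_1)$) forces $b = 0$, and $w(1) = 0$ forces $a = 0$. This identifies the kernel of $(-\Delta)^{-1}$ on radial functions as $K(r,s) = G_s(r)$.

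Finally, the series \eqref{Green_k} is the spectral kernel of the same operator in the basis $\{e_{n,d}\}$. Both kernels agree when integrated against $f \otimes g$ for test functions, so they coincide almost everywhere. Since the series converges, and both sides are continuous for $r, s > 0$ with $r \ne s$, they agree pointwise there.

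The only delicate point is the convention bookkeeping: one must check that the $\o_d$ normalisation is consistent with \eqref{Green1}, and justify pointwise identification of the series near the diagonal. I would handle the latter by noting that the series converges in $L^2$ with respect to $(rs)^{d-1}dr\,ds$, which suffices for the application in Lemma~\ref{LEM:GNu}.
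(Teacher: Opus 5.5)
Your proof is correct and is essentially the paper's argument run in reverse. The paper integrates $-\partial_r(r^{d-1}\partial_r u) = r^{d-1} f$ twice, using $\partial_r u(0)=0$ and then $u(1)=0$, and swaps the order of integration to read off the kernel, so uniqueness is built into the derivation. You instead guess the kernel, recover the same first integral $r^{d-1}u'(r) = -\int_0^r f(s) s^{d-1}\,ds$ by differentiating, and add a separate uniqueness step, which makes the jump-condition check redundant.

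Your final step matching the kernel with the series \eqref{Green_k} is not part of the lemma, since the paper takes \eqref{Green_k} as given. As written, its pointwise claim relies on continuity of the series sum, which you do not prove. That continuity would follow from the uniform bound $|e_{n,d}(r)| \lesssim r^{-\frac{d-1}{2}}$, which gives absolute, locally uniform convergence on $(0,1]^2$.
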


\begin{proof}
Here, we provide a direct proof from \eqref{BVP1} for completeness.
Let $f$ be a smooth test function.
We first note from \eqref{BVP1} that
\[
- \partial_r (r^{d-1} \partial_r u (r)) = r^{d-1} f(r),
\]

\noi
for $r \in (0,1)$.
By integrating the above from $0$ to $r$, we get
\[
-r^{d-1} \partial_r u(r) = \int_0^r s^{d-1} f(s) ds,
\]

\noi
where we used the boundary condition $\partial_r u(0) = 0$.
We integrate the above again to get
\[
u(r) = \int_r^1 \bigg( \int_0^\tau \Big(\frac{s}{\tau} \Big)^{d-1} f(s) ds\bigg) d\tau = \int_0^1 \frac1{d-2} \bigg(\frac1{\max (s,r)^{d-2}} - 1 \bigg) f(s) s^{d-1} ds,
\]

\noi
where we used the boundary condition $u(1) = 0$.
Then, \eqref{Green_r} follows directly.
\end{proof}

\subsubsection{Green functions on shells}
Let $u (x)$ be the solution to the following equation posed on the shell $B_1\backslash B_\dl$ with Dirichlet boundary conditions,
\begin{equation}
\begin{cases}
- \Delta u (x) = f (x),  & \dl < |x|< 1 ,\\
u (x) = 0, & x \in \{|x| = \dl\} \cup \{|x| = 1\}.
\end{cases}
\label{BVP_dl}
\end{equation}

\noi
Denote by $G_{\dl} (x,y)$ the Green function associated with Poisson's equation  \eqref{BVP_dl}.
Then, the solution to \eqref{BVP_dl} can be written as
\begin{align}
\label{Green_dl}
u(x) = \int_{B_1 \backslash B_\dl} G_\dl (x,y) f(y) dy.
\end{align}

\noi
If $u$ is a radial solution to \eqref{BVP_dl},
then we can write \eqref{BVP_dl} as
\begin{align}
\label{BVP1_dl}
\begin{cases}
- \partial_r^2 u (r) - \frac{d-1}r \partial_r u (r) = f(r), \quad  r \in (\dl,1), \\
u(1) = 0,\\
u(\dl) = 0.
\end{cases}
\end{align}

\noi
Denote by $K_\dl (r,s)$ the radial Green function to the Laplacian operator on $B_1 \backslash B_\dl$.
Then, the solution to \eqref{BVP1_dl} can be recast as
\begin{align}
\label{Green1_dl}
u(r) = \int_\dl^1 K_\dl (r,s) f(s) s^{d-1} ds.
\end{align}

\noi
Therefore, it follows from \eqref{Green1_dl} that
\begin{align}
    \label{r_green3}
    \begin{split}
    - \bigg( \partial_r^2 + \frac{d-1}r \partial_r\bigg) K_\dl (r,s)  = \boldsymbol \dl(r-s) r^{-d+1},
    \end{split}
\end{align}

\noi
with boundary conditions $K_\dl(1,s) = K_\dl(\dl,s) = 0$,
where $\boldsymbol \dl$ is the delta function.
Then, we have the following formula for $K_\dl (r,s)$.
\begin{lemma}
\label{LEM:GFFX2}
Let $d \ge 3$ and $K_\dl (r,s)$ be as in \eqref{Green1_dl} and \eqref{r_green3}. Then we have
\begin{align}
    \label{kernel0}
    K_\dl (r,s) = \frac{\big( \min (r,s)^{d-2} - \dl^{d-2} \big)\big( 1 - \max (r,s)^{d-2}\big)}{(d-2) \, r^{d-2} s^{d-2} \big(1 - \dl^{d-2}\big)} .
\end{align}

\noi
In particular, $K_\dl$ is symmetric and non-negative on $(\dl,1)^2$.
\end{lemma}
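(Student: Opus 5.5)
We verify the formula \eqref{kernel0} directly, in the same way as in the proof of Lemma~\ref{LEM:Green_r}. The plan is to construct the radial Green function from two homogeneous solutions of the radial Laplace equation, each satisfying one of the boundary conditions. The radial harmonic functions on the shell are spanned by $1$ and $r^{-(d-2)}$ (since $d \ge 3$). We take
\[
\phi_\dl(r) = \dl^{-(d-2)} - r^{-(d-2)} = \frac{r^{d-2} - \dl^{d-2}}{\dl^{d-2} r^{d-2}},
\qquad
\phi_1(r) = r^{-(d-2)} - 1 = \frac{1 - r^{d-2}}{r^{d-2}}.
\]
Then $\phi_\dl(\dl) = 0$, $\phi_1(1) = 0$, and both satisfy $\partial_r(r^{d-1}\partial_r \phi) = 0$ on $(\dl,1)$.

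Next we make the ansatz $K_\dl(r,s) = c\, \phi_\dl(\min(r,s))\, \phi_1(\max(r,s))$. For fixed $s$, this is continuous in $r$, harmonic for $r \ne s$, and vanishes at $r = \dl$ and $r = 1$. Writing \eqref{r_green3} as $-\partial_r(r^{d-1}\partial_r K_\dl) = \boldsymbol\dl(r-s)$, the constant $c$ is fixed by the jump condition
\[
-s^{d-1}\big[\partial_r K_\dl(s^+,s) - \partial_r K_\dl(s^-,s)\big] = 1,
\]
that is, $c\, s^{d-1}\big(\phi_\dl(s)\phi_1'(s) - \phi_\dl'(s)\phi_1(s)\big) = -1$. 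The quantity $s^{d-1}W(s)$, where $W$ is the Wronskian, is constant by Abel's identity, so it suffices to compute it at one convenient point. We have $\phi_1' = \phi_\dl' = -(d-2)\cdot(-1)\, r^{-(d-1)}$ up to sign, namely $\phi_1'(r) = \phi_\dl'(r) = -(d-2)r^{-(d-1)}\cdot(-1)^{0}$ with the appropriate sign. This gives
\[
r^{d-1}W = -(d-2)\big(\phi_\dl + \phi_1\big) = -(d-2)\big(\dl^{-(d-2)} - 1\big),
\]
and hence $c = \big[(d-2)(\dl^{-(d-2)} - 1)\big]^{-1}$.

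Substituting $c$ into the ansatz and multiplying numerator and denominator by $\dl^{d-2}$ gives exactly \eqref{kernel0}. The only real hazard in the proof is sign bookkeeping in the jump condition. It can be cross-checked by letting $\dl \to 0$: in that limit the formula reduces to \eqref{Green_r}, as it should.

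Symmetry of $K_\dl$ in $(r,s)$ is evident, since the formula depends only on $\min(r,s)$, $\max(r,s)$ and the product $rs$. Non-negativity follows because on $(\dl,1)^2$ each factor $\min(r,s)^{d-2} - \dl^{d-2}$, $1 - \max(r,s)^{d-2}$ and $1 - \dl^{d-2}$ is non-negative.

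Finally, to confirm that this kernel is the Green function appearing in \eqref{Green1_dl}, we note that $u(r) = \int_\dl^1 K_\dl(r,s) f(s) s^{d-1}\, ds$ satisfies \eqref{BVP1_dl}. This can be checked by differentiating twice, exactly as in the proof of Lemma~\ref{LEM:Green_r}, and uniqueness for the Dirichlet problem on the shell then identifies the two.
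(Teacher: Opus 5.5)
Your argument is correct apart from one wrong line (see the second paragraph), and it reaches \eqref{kernel0} by a different route from the paper.

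The paper does not guess the kernel. It integrates $-\partial_r\big(r^{d-1}\partial_r K_\delta(\cdot,s)\big)=\boldsymbol{\delta}(r-s)$ once to get $-r^{d-1}\partial_r K_\delta = \mathbf{1}_{\{r\ge s\}}+C_1(s)$. It then integrates again from $\delta$ using $K_\delta(\delta,s)=0$, fixes $C_1$ from $K_\delta(1,s)=0$, and simplifies separately on $\{r\le s\}$ and $\{r>s\}$. Because every solution is forced into that form, existence and uniqueness come out together, exactly as in Lemma~\ref{LEM:Green_r}.

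You instead use the Sturm--Liouville construction $c\,\phi_\delta(\min(r,s))\,\phi_1(\max(r,s))$, with $c$ fixed by the jump condition and Abel's identity. This gives the factorised form directly, so symmetry and non-negativity are immediate and no case-by-case algebra is needed. The price is the separate identification step in your last paragraph. That step can be shortened: for fixed $s$, the difference of two such kernels is a radial harmonic function $a+br^{-(d-2)}$ on the shell that vanishes at $r=\delta$ and $r=1$, hence is zero.

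The line that must be corrected is the claim $\phi_1'=\phi_\delta'$, which is false. In fact $\phi_\delta'(r)=(d-2)r^{-(d-1)}=-\phi_1'(r)$. It is exactly this opposite sign that gives $r^{d-1}W=-(d-2)(\phi_\delta+\phi_1)=-(d-2)(\delta^{-(d-2)}-1)$. With equal derivatives you would instead get $\pm(d-2)(\phi_\delta-\phi_1)$, which is not even constant, so as written that line contradicts your own next display. Your displayed value of $r^{d-1}W$ is nevertheless the correct one. Hence $c=\big[(d-2)(\delta^{-(d-2)}-1)\big]^{-1}>0$, and the substitution (multiplying through by $\delta^{d-2}$) does give \eqref{kernel0}.
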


\begin{proof}
By integrating \eqref{r_green3}, we get
\begin{align}\label{kernel2}
- r^{d-1} \partial_r K_\dl(r,s) =  \ind_{\ge 0} (r - s) +C_1,
\end{align}
where $C_1 $ is a function depending only on $s$ and $\dl$, and
\[
\ind_{\ge 0} (r-s) =
\begin{cases}
0, & r < s,\\
1, & r \ge s.
\end{cases}
\]
By rewriting \eqref{kernel2} as
\[
- \partial_r K_\dl(r,s) = \frac{\ind_{\ge 0} (r - s)}{r^{d-1}} + \frac{C_1 }{r^{d-1}}
\]
and integrating in $r$ from $\dl$, using the boundary condition $K_\dl (\dl,s) = 0$, we get
\begin{align}
\label{kernel3}
K_\dl (r,s) = - \frac1{d-2}\bigg( \frac1{s^{d-2}} - \frac1{r^{d-2}} \bigg)_+ + \frac{C_1}{d-2} \bigg( \frac1{ r^{d-2}} - \frac1{\dl^{d-2}}\bigg),
\end{align}

\noi
where $(a)_+ = 0$ for $a < 0$ and $(a)_+ = a$ for $a \ge 0$.
Note that the positive part is supported on $\{r > s\}$, since $s \mapsto s^{-d+2}$ is decreasing.
Imposing the remaining boundary condition $K_\dl (1, s) = 0$ for $s \in (\dl, 1)$ gives
\[
- \frac1{d-2} \bigg( \frac1{s^{d-2}} - 1 \bigg) + \frac{C_1}{d-2} \bigg( 1 - \frac1{\dl^{d-2}}\bigg) = 0,
\]

\noi
that is, $C_1 = - \frac{\dl^{d-2} (1 - s^{d-2})}{s^{d-2}(1-\dl^{d-2})}$. Substituting this into \eqref{kernel3} and
simplifying separately on $\{r \le s\}$ and $\{r > s\}$ leads to \eqref{kernel0}.
\end{proof}

\subsection{Gaussian free field estimates}

In this subsection, we derive some estimates for the radial Gaussian free fields $X$ and $X^+_\dl$, as given in \eqref{gaussian} and  \eqref{mu+}, respectively.
In particular, given  a compact domain $D$, we denote by
\begin{align}
\label{XD}
X^D = \sum_{n=0}^\infty \frac{g_n}{\ld_n} e_n,
\end{align}

\noi
where $\{e_n\}$ and  $\{\ld_n^2\}$ are eigenfunctions and eigenvalues, respectively, of $-\Delta$ over the domain $D$.
See Subsection \ref{SUB:GFFg} for more details.

\subsubsection{Radial Gaussian free field on ball}
This subsection is devoted to estimates of the radial Gaussian free field $X$ on the ball  $B_1$.
Let $\ph$ be a radial test function supported on the unit ball  $B_1$. From the definition of $X$ and \eqref{test},
it follows that
\begin{align}
\label{GFFX}
\begin{split}
\E \bigg[ \Big( \int_{B_1} X (x) \ph(x) dx\Big)^2 \bigg ]
= \iint_{(0,1]^2} K (r,s) \ph(r)\ph(s) (rs)^{d-1}drds ,
\end{split}
\end{align}

\noi
 where $K$ is the radial Green function given in \eqref{Green_r}.
Here and in what follows, we only consider radial functions $\ph$.
We do not distinguish  $\ph (r)$ from  $\ph (x)$ when $r = |x|$.
As a consequence of \eqref{Green_r}, we have
\begin{lemma}
\label{LEM:GFFX}
Let $X$ be the radial Gaussian free field defined in \eqref{gaussian}. Then we have the following pointwise estimate
\begin{align}
    \label{GFFX_e}
    \E \big[ |X(r) |^2\big] = K (r,r) \les \frac1{r^{d-2}},
\end{align}

\noi
provided $r \in (0,1)$, where $K (\cdot, \cdot)$ is the radial Green function given in \eqref{Green_k}.
\end{lemma}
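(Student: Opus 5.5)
The identity $\E[|X(r)|^2]=K(r,r)$ is the diagonal of the covariance. The bound then follows from the explicit formula in Lemma~\ref{LEM:Green_r}. I would proceed in three steps.

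First, I would identify the covariance. By \eqref{gaussian}, for fixed $r\in(0,1)$, $X(r)=\sum_n g_n \ld_n^{-1} e_n(r)$ is a sum of independent centred Gaussians, with $e_n=e_{n,d}$ and $\ld_n=z_{n,d}$. Its variance is therefore
\[
\E\big[|X(r)|^2\big]=\sum_{n\ge0}\frac{e_{n,d}(r)^2}{z_{n,d}^2},
\]
provided this series converges. This is exactly the diagonal value of \eqref{Green_k}, so $\E[|X(r)|^2]=K(r,r)$.

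Second, I would justify that pointwise evaluation makes sense and that \eqref{Green_k} holds on the diagonal. The series for $K(r,s)$ is the spectral expansion of $(-\Delta)^{-1}$ on radial functions. By \eqref{GFFX} and \eqref{test}, it holds in the sense of kernels. Every term of the series is non-negative, and the partial sums $\s_N(r)$ increase. So I would pair $X$ with radial approximate identities $\ph_k$ concentrating at the sphere $|x|=r$ and use \eqref{GFFX}:
\[
\E\big[\jb{X,\ph_k}^2\big]=\iint K(r',s')\,\ph_k(r')\ph_k(s')\,(r's')^{d-1}\,dr'\,ds'.
\]
Since $K$ from \eqref{Green_r} is continuous away from the origin, the right-hand side converges to $K(r,r)$. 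Combining this with Fatou's lemma, or with monotone convergence of $\s_N(r)$, gives both the convergence of the series and the identity on the diagonal. For $d\le2$, the same ODE computation as in Lemma~\ref{LEM:Green_r} gives a continuous Green function: $-\log\max(r,s)$ for $d=2$, and $1-\max(r,s)$ for $d=1$. In those cases the bound is trivial, up to the logarithm for $d=2$. The estimate is only really needed for $d\ge3$.

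Third, for $d\ge3$, Lemma~\ref{LEM:Green_r} gives
\[
K(r,r)=\frac{1}{d-2}\big(r^{-(d-2)}-1\big)\le \frac{1}{d-2}\,r^{-(d-2)},
\]
which is \eqref{GFFX_e}.

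The main obstacle is the second step: rigorously passing from the $L^2$/kernel identity to pointwise evaluation on the diagonal. I expect continuity of $K$ off the origin, together with monotonicity of the partial sums $\s_N$, to handle it.
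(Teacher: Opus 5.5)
Your proposal is correct and follows the paper's proof. The paper writes $\E[|X(r)|^2]=\sum_n e_{n,d}^2(r)/z_{n,d}^2=K(r,r)$, taking \eqref{Green_k} as a pointwise identity, and then applies Lemma~\ref{LEM:Green_r}; this makes the statement one for $d\ge3$ only, so your aside is off for $d=1$, where $K(r,r)=1-r$ is not bounded by a multiple of $r^{-(d-2)}=r$. Your Step~2 goes beyond the paper, but as written Fatou or monotonicity only gives $\sum_n e_{n,d}^2(r)/z_{n,d}^2\le K(r,r)$, which suffices for the bound; for the equality, note that $\jb{X,\ph_k}$ is Cauchy in $L^2(\O)$ by continuity of $K$ at $(r,r)$, its limit has coefficient $e_{n,d}(r)/z_{n,d}$ on each $g_n$, and Parseval then closes the argument.
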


\begin{proof}
From \eqref{Green_r} and \eqref{Green_k}, we see that
\begin{align}
\begin{split}
\E \big[ |X(r) |^2\big] & = \E \bigg[ \Big| \sum_{n\ge 0} \frac{e_{n,d} (r) g_n}{z_{n,d}}\Big|^2\bigg]  = \sum_{n \ge 0} \frac{e_{n,d}^2 (r)}{z_{n,d}^2} = K(r,r),
\end{split}
\end{align}

\noi
which finishes the proof in view of Lemma \ref{LEM:Green_r}.
\end{proof}

As a consequence of Lemma \ref{LEM:GFFX}, we can make sense of the measure \eqref{rho_eps}.

\begin{proposition}
\label{LEM:Ga}
Given  $0 < \eps <1$, the weighted Gaussian measure $\rho_\eps$ given in \eqref{rho_eps} is well defined.
\end{proposition}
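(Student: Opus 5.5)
To show that $\rho_\eps$ in \eqref{rho_eps} is well defined, I need three things: the weight $F_\eps(u) := \exp\big(-\frac14 \int_{B_1\backslash B_\eps}|u|^4 dx\big)$ is a measurable function on the support of $\mu$ with values in $[0,1]$, and the partition function $Z_\eps$ in \eqref{partition_eps} is strictly positive. Once these hold, $Z_\eps^{-1}F_\eps\, d\mu$ is a probability measure. The upper bound $F_\eps \le 1$ is trivial because the exponent is non-positive. The whole point is therefore to show that $\int_{B_1\backslash B_\eps}|X|^4dx < \infty$ holds $\mathbb P$-almost surely, and hence $F_\eps(X) > 0$ almost surely.

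\textbf{Step 1: finite fourth moment on the shell.} By Fubini and Gaussianity of $X(r)$, together with Lemma~\ref{LEM:GFFX}, I would write
\[
\E\Big[\int_{B_1\backslash B_\eps}|X|^4dx\Big] = 3\int_\eps^1 K(r,r)^2 r^{d-1}dr \les \int_\eps^1 r^{-2(d-2)} r^{d-1}dr \le C(\eps,d) < \infty.
\]
This is essentially Lemma~\ref{LEM:GMap} with $p=4$ and $\dl=\eps$. For $d\le 2$ I would use the analogous bound on $K(r,r)$ away from the origin (logarithmic for $d=2$, bounded for $d=1$), which is equally finite. It follows that $\int_{B_1\backslash B_\eps}|X|^4dx<\infty$ almost surely.

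\textbf{Step 2: measurability of the weight.} I would realise $\int_{B_1\backslash B_\eps}|u|^4dx$ as a measurable functional. The cleanest route is as the limit of $\int_{B_1\backslash B_\eps}|P_N X|^4dx$. Since $P_NX\to X$ in $L^2(\O;L^4(B_1\backslash B_\eps))$ — by the same variance computation applied to the tails $\sum_{n>N}e_n^2/\ld_n^2$ and dominated convergence — a subsequence converges almost surely. Alternatively, one can define it as a lower semicontinuous, hence Borel, functional in the $\mathbb X^\al$ setting, where for $d \ge 3$ the version of $X$ is continuous on the shell by Lemma~\ref{LEM:Xal}. Either way $F_\eps$ is a Borel function with values in $[0,1]$.

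\textbf{Step 3: positivity of the partition function.} By Jensen's inequality and Step~1,
\[
Z_\eps = \E\big[e^{-\frac14\int_{B_1\backslash B_\eps}|X|^4}\big] \ge \exp\Big(-\tfrac14\E\int_{B_1\backslash B_\eps}|X|^4\Big) \ge e^{-C(\eps,d)/4} > 0.
\]
Combined with $Z_\eps \le 1$, this shows that the normalisation is finite and nonzero, which completes the argument.

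The only mildly delicate point is Step~2, the measurability and identification of the weight on the support of $\mu$. I would handle it through the $L^2(\O; L^4)$ convergence of the truncations. Steps~1 and~3 are direct consequences of the Green function bound in Lemma~\ref{LEM:GFFX}.
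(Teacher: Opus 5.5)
Your proposal is correct and follows the paper's proof. The paper likewise notes $Z_\eps\le 1$ and bounds $-\log Z_\eps$ via Jensen's inequality, Fubini, the Gaussian identity $\E|X(r)|^4=3(\E|X(r)|^2)^2$ and the Green function bound \eqref{GFFX_e}, arriving at $\int_\eps^1 r^{3-d}\,dr<\infty$. Your Step~2 on measurability of the weight, and your separate handling of $d\le 2$ (where $K(r,r)$ is bounded only away from the origin), are correct extra details that the paper leaves implicit.
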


\begin{proof}
Since $Z_\eps \le 1$, it suffices to show that the partition function $Z_\eps$ in \eqref{partition_eps} is positive.
By Jensen's inequality, Fubini's theorem, the Gaussianity of $X(r)$, and \eqref{GFFX_e}, we have
\begin{align}
\begin{split}
- \log Z_\eps & = - \log \E \bigg[ \exp \bigg( - \frac 14 \int_{B_1\backslash B_\eps} |X|^4 dx \bigg) \bigg]
\le \frac14 \E \bigg[ \int_{B_1\backslash B_\eps} |X|^4 dx \bigg] \\
& = \frac34 \int_\eps^1 \big( \E [|X (r)|^2] \big)^2 r^{d-1} dr
\les \int_\eps^1 \frac1{r^{d-3}} dr < \infty.
\end{split}
\label{Ga}
\end{align}

\noi
Hence, $Z_\eps > 0$, which finishes the proof.
\end{proof}

\subsubsection{Radial Gaussian free field on shell}

In this subsection, we consider the radial Gaussian free field $X^+_\dl$ on the shell $\dl < |x| < 1$, whose law is given by the Gaussian measure
\begin{align}\label{mu+}
d \mu^+_\dl (u) = \frac 1 {Z_+} \exp\Big( - \frac 12 \int_{B_1\backslash B_\dl} |\nabla u|^2 dx\Big) du,
\end{align}
supported on distributions with $0$ boundary condition on $\partial (B_1 \backslash B_\dl) = \{|x| = \dl\} \cup \{|x| = 1\}$, and extended to $0$ on $B_\dl = \{|x| < \dl\}$.

As a consequence of Lemma \ref{LEM:GFFX2}, we have the following estimate.

\begin{corollary}
\label{COR:Green}
Let $\dl \in (0,1)$ and let $K_\dl (r,s)$ be as in \eqref{Green1_dl}. Then we have
\begin{align}
    \label{KE}
    0 \le K_{\dl}(r,s) \le \frac{\min (r^{d-2}, s^{d-2}) - \dl^{d-2}}{(d-2)\, r^{d-2} s^{d-2}},
\end{align}

\noi
for $\dl < r, s < 1$.
\end{corollary}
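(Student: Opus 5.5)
The bound follows directly from the explicit formula \eqref{kernel0} of Lemma~\ref{LEM:GFFX2}, so the plan is a short case analysis with no new ingredients. Non-negativity has already been recorded in Lemma~\ref{LEM:GFFX2}. For the record, it comes from three sign facts on $(\dl,1)^2$: $\min(r,s)^{d-2} - \dl^{d-2} \ge 0$ because $\min(r,s) \ge \dl$; $1 - \max(r,s)^{d-2} \ge 0$ because $\max(r,s) \le 1$; and the denominator is positive because $\dl < 1$ and $d \ge 3$.

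For the upper bound, I write
\[
K_\dl(r,s) = \frac{\min (r^{d-2}, s^{d-2}) - \dl^{d-2}}{(d-2)\, r^{d-2} s^{d-2}} \cdot \frac{1 - \max(r,s)^{d-2}}{1 - \dl^{d-2}}.
\]
Since $t \mapsto t^{d-2}$ is increasing, $\min(r,s)^{d-2} = \min(r^{d-2},s^{d-2})$, so the first factor is exactly the claimed right-hand side of \eqref{KE}. It therefore remains to show that the second factor lies in $[0,1]$. This holds because $\max(r,s) > \dl$, which gives $\max(r,s)^{d-2} \ge \dl^{d-2}$ and hence $1 - \max(r,s)^{d-2} \le 1 - \dl^{d-2}$. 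The first factor is non-negative, so multiplying it by a number in $[0,1]$ can only decrease it, and \eqref{KE} follows.

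There is no real obstacle. The only point requiring care is the sign bookkeeping: the inequality $1-\max(r,s)^{d-2} \le 1-\dl^{d-2}$ may be multiplied through only because the first factor is already known to be non-negative.
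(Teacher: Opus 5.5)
Your proof is correct and is the paper's argument: you read both bounds off the explicit formula \eqref{kernel0} of Lemma~\ref{LEM:GFFX2}, using $0 < 1 - \max(r,s)^{d-2} \le 1 - \dl^{d-2}$, which holds because $\dl < \max(r,s) < 1$.
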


\begin{proof}
Since $\max (r,s) > \dl$, we have $0 < 1 - \max (r,s)^{d-2} \le 1 - \dl^{d-2}$, and \eqref{KE} follows from \eqref{kernel0}.
\end{proof}

Corollary \ref{COR:Green} allows us to deduce the following estimate on the radial Gaussian free field $X^+_\dl$.

\begin{lemma} \label{LEM:X+}
Let $\dl \in (0,1)$ and $r, s \in (\dl, 1)$.
Then, we have
\begin{align} \label{covariance}
\E\big[ X^+_\dl(r)  X^+_\dl(s)\big ] = K_\dl (r,s),
\end{align}
where $K_\dl(r,s)$ is the Green function given in \eqref{kernel0}.
Furthermore, it follows that
\begin{equation} \label{X+estimate}
\E \big[ \big|X^+_\dl(r) \big|^2 \big] = K_\dl(r,r) \le \frac{r-\dl}{r^{d-1}} \le \frac{1}{r^{d-2}}.
\end{equation}
\end{lemma}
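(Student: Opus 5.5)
The plan is to identify the covariance of $X^+_\dl$ with the radial Dirichlet Green function on the shell, and then bound its diagonal using Corollary~\ref{COR:Green} and an elementary inequality.

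\textbf{Covariance.} By \eqref{XD} with $D = \cj{B_1\backslash B_\dl}$ we have $X^+_\dl = \sum_n \frac{g_n}{\ld_n} e_n$, where $e_n$ are the radial Dirichlet eigenfunctions of the shell. Independence of the $g_n$ gives
\[
\E\big[X^+_\dl(r)X^+_\dl(s)\big] = \sum_n \frac{e_n(r)e_n(s)}{\ld_n^2}.
\]
This series is the kernel of $(-\Delta)^{-1}$ with Dirichlet conditions on the shell, with respect to $s^{d-1}ds$, exactly as in \eqref{Green_k}. By \eqref{Green1_dl} that kernel is $K_\dl$, which proves \eqref{covariance}.

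To make the pointwise identification rigorous, I would test against radial $\ph,\psi$. Both sides then give $\jb{(-\Delta)^{-1}\ph,\psi}$, and the continuity of $K_\dl$ from \eqref{kernel0} upgrades this to a pointwise identity. Equivalently, for fixed $r$ the series converges in $L^2(\O)$, because $\sum_n e_n(r)^2/\ld_n^2$ is the diagonal of a continuous positive kernel (Mercer). This is the only step requiring care, and it is routine.

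\textbf{Diagonal bound.} Setting $s=r$ in \eqref{KE} gives
\[
K_\dl(r,r) \le \frac{r^{d-2}-\dl^{d-2}}{(d-2)r^{2(d-2)}}.
\]
By the mean value theorem, $r^{d-2}-\dl^{d-2} = (d-2)\xi^{d-3}(r-\dl)$ for some $\xi\in(\dl,r)$. Since $d\ge3$, we have $\xi^{d-3}\le r^{d-3}$. Therefore
\[
K_\dl(r,r)\le \frac{r^{d-3}(r-\dl)}{r^{2d-4}} = \frac{r-\dl}{r^{d-1}}.
\]
Finally, $r-\dl\le r$ gives the last inequality, $\frac{r-\dl}{r^{d-1}} \le \frac1{r^{d-2}}$.
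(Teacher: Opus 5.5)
Your proposal is correct and follows the paper's proof: you compute the covariance from the eigenfunction expansion of $X^+_\dl$ and identify the series with the shell Green function $K_\dl$, then obtain the diagonal bound from \eqref{KE} together with $r^{d-2}-\dl^{d-2}\le (d-2)r^{d-3}(r-\dl)$, which you derive via the mean value theorem. Your Mercer/test-function justification of the pointwise identity is extra rigor that the paper leaves implicit, and it is harmless.
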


\begin{proof}
The proof is similar to that of Lemma \ref{LEM:GFFX} by observing
\begin{align}
\begin{split}
\E \big[  X^+_\dl(r)  X^+_\dl(s) \big] & = \E \bigg[ \bigg( \sum_{n\ge 0} \frac{e_n^\dl (r) g_n}{\ld_n} \bigg) \bigg( \sum_{n\ge 0} \frac{e_n^\dl (s) g_n}{\ld_n} \bigg) \bigg] \\
& = \sum_{n \ge 0} \frac{e_n^\dl (r) e_n^\dl (s)}{\ld_n^2} = K_\dl(r,s) ,
\end{split}
\label{variance}
\end{align}

\noi
where $\ld_n^2$ and $e_n^\dl$ are the eigenvalues and the radial eigenfunctions of $-\Delta$ on $B_1 \backslash B_\dl$ with Dirichlet boundary conditions, normalised in $L^2_{\rm rad} (B_1 \backslash B_\dl) = L^2 ((\dl,1), r^{d-1} dr)$.
Then \eqref{X+estimate} follows from \eqref{KE} and the bound $r^{d-2} - \dl^{d-2} \le (d-2) r^{d-3} (r - \dl)$.
\end{proof}

\begin{corollary}\label{COR:L4est} Let $d \ge 4$ and $\dl \in (0,1)$. We have that
\begin{equation}
\label{eq:X+L4}
\E \bigg[ \int_{B_1} |X^+_\dl|^4 dx \bigg] \les \Ld (\dl), \end{equation}
where the implicit constant is independent of $\dl$, and $\Ld (\cdot)$ is given by
\begin{align}
\label{Ld}
\Ld (\dl) =
\begin{cases}
|\ln \dl|, & d = 4; \\
\dl^{-d + 4}, & d \ge 5,
\end{cases}
\end{align}
for $\dl > 0$.
\end{corollary}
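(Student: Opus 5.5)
The plan is to compute the fourth moment pointwise and then integrate in the radial variable. Since $X^+_\dl(r)$ is a centred Gaussian random variable for each $r \in (\dl,1)$, and $X^+_\dl$ vanishes on $B_\dl$ by definition, Fubini's theorem and Gaussianity give
\begin{align*}
\E \bigg[ \int_{B_1} |X^+_\dl|^4 dx \bigg] = \int_\dl^1 \E\big[|X^+_\dl(r)|^4\big] r^{d-1} dr = 3 \int_\dl^1 \big(K_\dl(r,r)\big)^2 r^{d-1} dr .
\end{align*}
This is the same manipulation used in the proof of Proposition~\ref{LEM:Ga}, with the convention that the surface measure constant $\o_d$ is dropped.

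Next, I insert the variance bound \eqref{X+estimate} from Lemma~\ref{LEM:X+}. I use its cruder form $K_\dl(r,r) \le r^{-(d-2)}$, which suffices here. This yields
\begin{align*}
\E \bigg[ \int_{B_1} |X^+_\dl|^4 dx \bigg] \le 3 \int_\dl^1 r^{-2(d-2)} r^{d-1} dr = 3 \int_\dl^1 r^{-d+3} dr .
\end{align*}
The remaining step is an explicit evaluation. For $d = 4$ the integrand is $r^{-1}$, and the integral equals $|\ln \dl|$. For $d \ge 5$ the integral equals $\frac{1}{d-4}\big(\dl^{-d+4} - 1\big) \le \frac{1}{d-4}\dl^{-d+4}$. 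Both cases match $\Ld(\dl)$ in \eqref{Ld}, with implicit constants depending only on $d$.

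I do not expect any genuine obstacle. The only point to check is that Fubini's theorem applies, which holds because the integrand is non-negative (Tonelli). One could alternatively use the sharper bound $\frac{r-\dl}{r^{d-1}}$ from \eqref{X+estimate}, but it is unnecessary for the stated rate.
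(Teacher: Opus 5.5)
Your proposal is correct and follows the paper's proof essentially line by line. You use Gaussianity to write the fourth moment as $3\big(K_\dl(r,r)\big)^2$, then the bound $K_\dl(r,r)\le r^{-(d-2)}$ from \eqref{X+estimate}, and finally evaluate $\int_\dl^1 r^{-d+3}\,dr\le \Ld(\dl)$.
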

\begin{proof}
Since $X^+_\dl$ is Gaussian and $X^+_\dl|_{B_\dl} \equiv 0$, we have that
\begin{align*}
\E \bigg [ \int_{B_1} |X^+_\dl(x)|^4 dx \bigg]
& = \E \bigg[ \int_{B_1\backslash B_\dl} |X^+_\dl(x)|^4 dx \bigg] = \int_{B_1\backslash B_\dl} \E \big [ |X^+_\dl(x)|^4 \big] dx \\
& = 3 \int_{B_1\backslash B_\dl} \big( \E [|X^+_\dl (x)|^2]\big)^2 dx, \\
\intertext{by using the spherical coordinates and \eqref{X+estimate}, we continue with}
&= c \int_\dl^1 \big( \E [ |X^+_\dl (r)|^2] \big)^2 r^{d-1} dr  = c \int_\dl^1 \big( K_\dl(r,r) \big)^2 r^{d-1} dr\\
& \les \int_{\dl}^1 r^{-d +3} dr \le \Ld (\dl),
\end{align*}

\noi
which finishes the proof.
\end{proof}

\begin{lemma}\label{LEM:reg}
    Let  $\dl \in (0,1)$.
    For every $\ta >0$ and every $p>0$, there exists a constant $C_\dl = C_\dl(\ta,p)$ such that
    \begin{align}
    \label{reg_e}
    \E\Big [ \| X^+_\dl\|^p_{C^{\frac12-\ta}(B_1\backslash B_{\dl})} \Big] \le C_\dl.
    \end{align}
\end{lemma}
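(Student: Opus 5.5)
The plan is to derive the Hölder moment bound \eqref{reg_e} from a Kolmogorov continuity argument, using the explicit covariance \eqref{covariance} of $X^+_\dl$ on the compact interval $[\dl,1]$. In radial coordinates, $X^+_\dl$ is a centred Gaussian process indexed by $r \in [\dl,1]$ and vanishing at $r = \dl$ and $r = 1$. The radial Hölder norm on the shell is equivalent to the one-dimensional Hölder norm in $r$: the map $x \mapsto |x|$ is Lipschitz and $\big||x|-|y|\big| \le |x-y|$, so $\|u\|_{C^\al(B_1\backslash B_\dl)} \le \|u\|_{C^\al([\dl,1])}$. It therefore suffices to bound the one-dimensional norm.

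\emph{Step 1: increment bound.} For $\dl \le s < r \le 1$, the aim is
\begin{equation*}
\E\big[|X^+_\dl(r) - X^+_\dl(s)|^2\big] = K_\dl(r,r) + K_\dl(s,s) - 2K_\dl(r,s) \le C_\dl |r-s| .
\end{equation*}
Lemma~\ref{LEM:GFFX2} shows that $K_\dl$ is given by the explicit formula \eqref{kernel0}. This formula is Lipschitz in each variable on $[\dl,1]^2$, with constants depending on $\dl$, because $\dl > 0$ keeps every negative power of $r$ and $s$ bounded. The increment is then estimated as
\begin{equation*}
\big(K_\dl(r,r) - K_\dl(r,s)\big) + \big(K_\dl(s,s) - K_\dl(s,r)\big).
\end{equation*}
Each bracket is a difference of $K_\dl$ in one variable, so each is $\le C_\dl |r-s|$. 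This uses the symmetry of $K_\dl$ stated in Lemma~\ref{LEM:GFFX2}, and requires no delicate cancellation.

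\emph{Step 2: Kolmogorov--Garsia.} Since $X^+_\dl(r) - X^+_\dl(s)$ is Gaussian, hypercontractivity gives, for every $q \ge 2$,
\begin{equation*}
\E\big[|X^+_\dl(r) - X^+_\dl(s)|^q\big] \le C_q\, C_\dl^{q/2} |r-s|^{q/2}.
\end{equation*}
We then apply the Kolmogorov continuity theorem in its quantitative form, or equivalently the Garsia--Rodemich--Rumsey inequality, on $[\dl,1]$. This yields a continuous version of $X^+_\dl$ with
\begin{equation*}
\E\big[[X^+_\dl]_{C^\be}^q\big] < \infty \qquad \text{for every } \be < \tfrac12 - \tfrac1q.
\end{equation*}
Given $\ta > 0$, we choose $q$ large enough that $\frac12 - \ta < \frac12 - \frac1q$. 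Since $X^+_\dl(\dl) = 0$, the sup norm is controlled by the Hölder seminorm: $\|X^+_\dl\|_{L^\infty} \le (1-\dl)^{\be}[X^+_\dl]_{C^\be}$. This bounds the full norm in $C^{\frac12-\ta}$ in $L^q(\Omega)$. For $p \le q$ the $p$-th moment follows by Hölder's inequality in $\omega$; for $p > q$ we simply choose $q \ge p$ from the start.

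\emph{Main obstacle.} The only point needing care is identifying the series \eqref{XD} with the continuous version produced by Kolmogorov. The series converges in $L^2(\Omega)$ pointwise in $r$ to a process with covariance $K_\dl$, so the two processes are modifications of each other. The resulting constant depends on $\dl$, which is consistent with \eqref{reg_e}, and the blow-up of these constants as $\dl \to 0$ is irrelevant for the statement.
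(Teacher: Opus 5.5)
Your proposal is correct and follows essentially the same route as the paper: both apply Kolmogorov's continuity theorem to the Gaussian process $X^+_\dl$ and reduce the claim to the increment bound $\E\big[|X^+_\dl(t)-X^+_\dl(s)|^2\big] \les_\dl |t-s|$, which comes from the Lipschitz continuity of the explicit kernel \eqref{kernel0} on $[\dl,1]^2$. You also spell out steps the paper leaves implicit: hypercontractivity, control of the sup norm via $X^+_\dl(\dl)=0$, and the reduction from the radial Hölder norm to the one-dimensional one.
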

\begin{proof}
    By Kolmogorov's continuity theorem, since $X^+_\dl$ is Gaussian, it is enough to check that for every $\dl \le s \le t \le 1 $,
    \begin{equation}
    \label{regularityX+expectation}
    \E \big [ |X^+_\dl(t)-X^+_\dl(s)|^2 \big ] \les_\dl  |t-s|,
    \end{equation}
    where the implicit constant is allowed to depend on the parameter $\dl$.
    From \eqref{covariance}, we have that
    \begin{align*}
        \E\big [|X^+_\dl(t)-X^+_\dl(s)|^2 \big ] = K_\dl(t,t) -2K_{\dl}(t,s) + K_\dl(s,s).
    \end{align*}
    Recalling the formula \eqref{kernel0}, it is immediate that $K_\dl (t,s)$ is Lipschitz on  $[\dl,1]^2$, since $t,s \in (\dl, 1)$ for $\dl > 0$. Therefore, \eqref{regularityX+expectation} follows, and thus the lemma.
\end{proof}

\subsection{Comparison principle}
This section is devoted to the linear operator $(-\Delta + V)^{-1}$ appearing in \eqref{Upsilon}.
We shall derive a comparison principle that will be used in the proof of Lemma \ref{LEM:Upsilon}.
Before stating the comparison principle used in this paper, we recall the Lax--Milgram theorem from \cite{Br11},
which is the key ingredient in proving the existence and uniqueness of the solution to the weak formulation of the Dirichlet problem.
Let $D$ be a regular bounded domain in $\R^d$ with boundary $\partial D$, and consider the Dirichlet problem
\begin{align}
\label{Poisson}
\begin{cases}
-\Delta u + V u= f & \textup{ in } D;\\
u = 0& \textup{ on } \partial D,
\end{cases}
\end{align}

\noi
where $V \in L^\infty (D)$ with $V \ge 0$ on $D$, and $f \in L^2 (D)$.
Let $H_0^1 (D)$ be the completion of the space of all smooth functions with compact support in $D$ with respect to the norm $\| \cdot \|_{H^1 (D)}$.
The  Dirichlet problem \eqref{Poisson} can be written in weak formulation as:
to find $u \in H_0^1 (D)$ such that
\begin{align}
\label{Poisson_w}
\int_D \big( \nabla u \cdot \nabla v + V u v \big) dx = \int_D f v dx,
\end{align}

\noi
for all $v \in H_0^1 (D)$.
By defining the bilinear form on $H_0^1 (D)$ via
\begin{align}
\label{LM_b}
a(u,v) = \int_D \nabla u \cdot \nabla v dx  + \int_D V uv dx,
\end{align}

\noi
we conclude from the  Lax--Milgram theorem, \cite[Section 5.3]{Br11}, that there exists a unique solution $u \in H_0^1(D)$ to \eqref{Poisson_w}. Moreover, we have
\begin{align}
\label{LM_b1}
\|u\|_{H^1 (D)} \les \|f\|_{H^{-1} (D)}.
\end{align}

\noi
If, moreover, $\partial D$ is of class $C^2$, then $u \in H^2 (D)$.
For further discussion on the Lax--Milgram theorem, see  \cite[Chapter 9]{Br11}.
If in addition, for some $\al \in (0,1)$, $\partial D$ is of class $C^{2,\al}$ and $V, f \in C^{0,\al}(\cj{D})$, then from Schauder estimates \cite[Chapter 6]{GT01} (see also  \cite[Chapter 2]{FR22} and \cite[Chapter 5]{GM}) we have $u \in C^{2,\al} (\cj{D})$, with
\begin{align}
\label{Schauder}
\|u\|_{C^{2,\al}(\cj D)} \les \|u\|_{L^\infty (D)} + \|f\|_{C^{0,\al}(\cj D)},
\end{align}

\noi
where the implicit constant depends only on $D$, $\al$, and an upper bound for $\|V\|_{C^{0,\al}(\cj D)}$.
It is easy to check that all the assumptions on $\partial D$ are satisfied by the balls and shells considered in this paper.

The above discussion, in particular, implies that the operator
\begin{align}
\label{inver}
(-\Delta + V)^{-1}: L^2 (D) \to H^1_0 (D)
\end{align}

\noi
is well-defined provided $V \in L^\infty (D)$ and $V \ge 0$.
We recall a well-known comparison result for the heat kernels of Schr\"odinger operators from \cite[Sec. {\rm II}. 6]{Simon05}; also see \cite[Lemma 4.3]{LRN18}.

\begin{lemma}[Heat kernel estimates]
\label{LEM:HKE}
Consider two Schr\"odinger operators $- \Delta + V_j$ on $L^2 (D)$ with Dirichlet boundary conditions, where $V_j \in L^\infty (D)$, $j = 1,2$, and $V_1 (x) \ge V_2 (x)$.
Let $K_j^\tau $ be the integral kernel of $e^{ - \tau (- \Delta + V_j)}$ in the sense
\[
  e^{ - \tau (- \Delta + V_j)} f  (x) = \int_D K_j^\tau  (x,y) f(y) dy
\]

\noi
for all $f \in L^2 (D)$ and $j = 1,2$.
Then for all $\tau > 0$, we have the kernel estimate
\begin{align}
\label{HKE}
0 \le K^\tau_1 (x,y) \le K^\tau_2 (x,y)
\end{align}

\noi
for almost every $(x,y) \in D \times D$.
\end{lemma}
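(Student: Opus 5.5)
The plan is to go through the Trotter product formula (equivalently, Feynman--Kac) for the Schr\"odinger semigroups. Positivity preservation is then reduced to that of the Dirichlet heat semigroup, and the monotonicity in the potential reduces to the pointwise inequality $e^{-\tau V_1/n} \le e^{-\tau V_2/n}$.

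\emph{Step 1.} I would first record the basic facts about the free Dirichlet heat semigroup $e^{\tau\Delta}$ on $D$. Its kernel $K_0^\tau(x,y)$ is non-negative, by the weak maximum principle or the Beurling--Deny criterion: the Dirichlet form $\int_D|\nabla u|^2dx$ satisfies $\||\nabla |u||\|_{L^2}\le \|\nabla u\|_{L^2}$, and $|u|\in H^1_0(D)$ for $u \in H^1_0(D)$. Hence $e^{\tau\Delta}$ is positivity preserving. Multiplication by $e^{-\tau V_j/n}$ is a bounded positive operator, since $V_j\in L^\infty$.

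\emph{Step 2.} Next I would apply the Trotter product formula,
\[
e^{-\tau(-\Delta+V_j)}f=\lim_{n\to\infty}\big(e^{\frac{\tau}{n}\Delta}e^{-\frac{\tau}{n}V_j}\big)^n f \quad \text{in } L^2(D).
\]
This is valid here because $-\Delta$ with Dirichlet condition is self-adjoint and non-negative on $L^2(D)$, and $V_j$ is a bounded self-adjoint perturbation. In this bounded case, a Duhamel/Chernoff argument suffices.

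Now take $f\ge0$. Each factor of the product is positivity preserving, and $0\le e^{-\frac{\tau}{n}V_1}g\le e^{-\frac{\tau}{n}V_2}g$ for $g\ge0$. Inducting on the number of factors, and using positivity of $e^{\frac{\tau}{n}\Delta}$ at each stage, gives
\[
0\le \big(e^{\frac{\tau}{n}\Delta}e^{-\frac{\tau}{n}V_1}\big)^n f\le \big(e^{\frac{\tau}{n}\Delta}e^{-\frac{\tau}{n}V_2}\big)^n f \quad \text{a.e.}
\]
Passing to the $L^2$ limit along an a.e.\ convergent subsequence yields $0\le e^{-\tau(-\Delta+V_1)}f\le e^{-\tau(-\Delta+V_2)}f$ a.e. for all $f\ge0$.

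\emph{Step 3.} Finally I would transfer this operator inequality to the kernels. The kernels $K_j^\tau$ exist and are bounded measurable by domination by $e^{\tau\|V_j^-\|_\infty}K_0^\tau$ (here $V^-$ is the negative part, which is bounded); this is just Step 2 applied with constant potentials. Testing against $f=\ind_A$ and $g=\ind_B$ for measurable $A,B\subset D$ gives
\[
\int_{A\times B}\big(K_2^\tau-K_1^\tau\big)\ge0 \quad\text{and}\quad \int_{A\times B}K_1^\tau\ge0.
\]
Rectangles generate the product $\sigma$-algebra, so a monotone class argument (or Lebesgue differentiation) gives \eqref{HKE} for a.e.\ $(x,y)$.

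The main obstacle is justifying the Trotter formula and the existence of the kernels, rather than the ordering itself, which is straightforward. Alternatively one can cite the Feynman--Kac representation,
\[
K_j^\tau(x,y)=\E_{x\to y}^{\tau}\Big[e^{-\int_0^\tau V_j(b_s)ds}\ind_{\{b \text{ stays in } D\}}\Big]K_0^{\tau,\R^d}(x,y),
\]
where the expectation is over the Brownian bridge $b$ from $x$ to $y$ in time $\tau$. In that form the monotonicity in $V$ is immediate from $e^{-\int V_1}\le e^{-\int V_2}$, which is the route in \cite[Sec.~II.6]{Simon05}.
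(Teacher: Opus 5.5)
Your proposal is correct and follows essentially the same route as the paper: the Trotter product formula for $e^{-\tau(-\Delta+V_j)}$, positivity of the Dirichlet heat semigroup $e^{\frac{\tau}{m}\Delta}$, and the pointwise ordering $0 \le e^{-\tau V_1/m} \le e^{-\tau V_2/m}$. The paper only sketches this and refers to the literature for details, so your induction over the product factors, the a.e.-convergent subsequence, and the passage from the operator inequality to the kernel inequality via rectangles are exactly the details it leaves out. Also, the existence of the kernels is part of the hypothesis, so your domination argument for their existence in Step 3 is not needed.
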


The proof of Lemma \ref{LEM:HKE} follows from the Trotter product formula (see e.g. \cite[Theorem 1.1]{Simon05}),
\[
\exp \big( -\tau ( - \Delta + V_j) \big) = \lim_{m \to \infty} \bigg( \exp \bigg( \frac{\tau \Delta}{m}\bigg) \cdot \exp \bigg( -\frac{\tau V_j}{m} \bigg)  \bigg)^m,
\]

\noi
together with the positivity of the heat kernel $ \exp ( \frac{\tau \Delta}{m} )  $ and the pointwise inequality
\[
0 \le \exp \bigg( -\frac{\tau V_1}{m} \bigg) \le \exp \bigg( -\frac{\tau V_2}{m} \bigg).
\]

\noi
For the detailed proof,
we refer readers to \cite[Sec. {\rm II}. 6]{Simon05} or \cite[Lemma 4.3]{LRN18}.

Now, we are ready to state the comparison principle.

\begin{lemma}
[Comparison principle]
\label{LEM:CP}
Let $V_1, V_2 \in L^\infty (D)$ with $V_1(x) \ge V_2(x) \ge 0$ for all $x \in D$.
Given $f \in C(\cj D)$,
we have
\begin{align}
\label{CP}
\big| (-\Delta + V_1 (x))^{-1} f (x) \big| \le
 (-\Delta + V_1 (x))^{-1} |f| (x) \le
(-\Delta + V_2 (x))^{-1} |f| (x),
\end{align}

\noi
where the inverse $(-\Delta + V_1 (x))^{-1}$ is given in \eqref{inver}.
\end{lemma}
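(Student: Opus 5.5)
The plan is to represent both resolvents through the heat semigroup and then apply Lemma~\ref{LEM:HKE} pointwise. For $V \in L^\infty(D)$ with $V \ge 0$, the Dirichlet operator $-\Delta + V$ on the bounded domain $D$ has spectrum bounded below by the first Dirichlet eigenvalue $\ld_0^2 > 0$ of $-\Delta$, since $V \ge 0$. Hence $\|e^{-\tau(-\Delta+V)}\|_{L^2 \to L^2} \le e^{-\tau \ld_0^2}$, and the Bochner integral
\[
(-\Delta + V)^{-1} f = \int_0^\infty e^{-\tau(-\Delta+V)} f \, d\tau
\]
converges in $L^2(D)$ for $f \in C(\cj D) \subset L^2(D)$. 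By uniqueness in Lax--Milgram, this integral coincides with the operator in \eqref{inver}. Writing the semigroup through its kernel $K^\tau_V$ gives
\[
(-\Delta+V)^{-1} f(x) = \int_0^\infty \int_D K^\tau_V(x,y) f(y)\, dy\, d\tau
\]
for a.e.\ $x$, with Fubini justified by the positivity and $L^2$ bounds below.

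\emph{First inequality.} Apply Lemma~\ref{LEM:HKE} with $V_2 = V_1$ to get $K^\tau_{V_1} \ge 0$. Then
\[
\Big| \int_D K^\tau_{V_1}(x,y) f(y)\,dy \Big| \le \int_D K^\tau_{V_1}(x,y) |f(y)|\,dy.
\]
Integrating in $\tau$, with the triangle inequality for the $\tau$-integral, gives $|(-\Delta+V_1)^{-1} f| \le (-\Delta+V_1)^{-1}|f|$ almost everywhere.

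\emph{Second inequality.} Use $0 \le K^\tau_{V_1} \le K^\tau_{V_2}$ from \eqref{HKE}. Since $|f| \ge 0$, integrating against $|f(y)|\,dy$ and then $d\tau$ yields $(-\Delta+V_1)^{-1}|f| \le (-\Delta+V_2)^{-1}|f|$ almost everywhere. Fubini--Tonelli is legitimate here because the integrand is nonnegative and the integral is finite: it equals $(-\Delta+V_2)^{-1}|f| \in L^2$.

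\emph{From a.e.\ to everywhere.} To upgrade to every $x \in D$, note that the functions involved are in $H^1_0 \cap H^2$. By elliptic regularity they are continuous on $D$ (for instance, $W^{2,q}_{\rm loc}$ for all $q$ after bootstrapping, since $Vu$ and $f$ are bounded). An a.e.\ inequality between continuous functions then holds everywhere.

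\emph{Main obstacle.} The key point is justifying the semigroup representation of the inverse, i.e.\ exponential decay in $\tau$ together with the interchange of integrals. This is where $V_2 \ge 0$ (a spectral gap from $\ld_0^2>0$) is essential. An alternative route, which avoids heat kernels altogether, is a weak maximum principle argument. Set $w = (-\Delta+V_2)^{-1}|f| - (-\Delta+V_1)^{-1}|f|$. It satisfies
\[
(-\Delta + V_1) w = (V_1 - V_2)(-\Delta+V_2)^{-1}|f| \ge 0,
\]
where the right-hand side is nonnegative because $(-\Delta+V_2)^{-1}|f| \ge 0$ by the maximum principle. Testing this equation with $w_- \in H^1_0$ in the weak formulation \eqref{Poisson_w} gives $\int |\nabla w_-|^2 + V_1 w_-^2 \le 0$, so $w_- = 0$. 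The first inequality follows from the same test applied to $(-\Delta+V_1)^{-1}(|f| \pm f)$.
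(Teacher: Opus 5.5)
Your main argument is the same as the paper's: you write $(-\Delta+V_j)^{-1}=\int_0^\infty e^{-\tau(-\Delta+V_j)}\,d\tau$ and integrate the kernel bounds $0\le K^\tau_1\le K^\tau_2$ of Lemma~\ref{LEM:HKE} against $|f|$, with the first inequality coming from the positivity of $K^\tau_1$. The spectral-gap, Fubini, and a.e.-to-pointwise justifications you add are details the paper leaves implicit, and your weak maximum principle alternative is also correct.
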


\begin{proof}
Recall the integral representation formula
\[
(-\Delta + V_j )^{-1} = \int_0^\infty \exp \big( -\tau (-\Delta + V_j )\big) d \tau ,
\]

\noi
for $j = 1,2$.
Then by invoking Lemma \ref{LEM:HKE}, we obtain that for all $x \in D$,
\begin{align}
\label{CP-positive}
\begin{split}
\big|(-\Delta + V_1 )^{-1} f (x)\big| & \le \int_0^\infty \exp \big( -\tau (-\Delta + V_1 )\big) |f|(x) d \tau \\
& = \int_0^\infty \int_D K_1^\tau (x,y) |f|(y) dy d \tau\\
& \le \int_0^\infty \int_D K_2^\tau (x,y) |f|(y) dy d \tau\\
& = (-\Delta + V_2 )^{-1} |f| (x),
\end{split}
\end{align}

\noi
where we used the kernel comparison \eqref{HKE}.
Thus, we finish the proof.
\end{proof}

With the above preparations, we are ready to solve the following  integro-differential equation
\begin{align}
\label{IDe}
-\Delta u (t,x) + V(t,x) u(t,x) = a(x) \int_0^t u(s,x) ds + b(t,x),
\end{align}

\noi
where $V, b \in C([0,1]; C^{0,\al}(\cj D))$,  $a \in C^{0,\al}(\cj D)$, and $V \ge 0$ on $D$.
Note that \eqref{IDe} is a general form of \eqref{Upsilon}.
The following lemma confirms that \eqref{IDe} is well-posed.

\begin{lemma}
\label{LEM:IDe}
Let $\al \in (0,1)$, and assume that $\partial D$ is of class $C^{2,\al}$.
Let $V, b \in C ([0,1]; C^{0,\al}(\cj D))$ and $a \in C^{0,\al}(\cj D)$, with $V \ge 0$ on $[0,1] \times D$.
Then, there exists a unique solution to \eqref{IDe} in $C([0,1]; H^1_0 (D) \cap C^{2,\al}(\cj D) )$.
\end{lemma}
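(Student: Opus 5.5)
We treat \eqref{IDe} as a fixed-point problem in the time variable $t$, using the elliptic solution operator from the Lax--Milgram and Schauder discussion above. For each $t \in [0,1]$ set $L_t := (-\Delta + V(t))^{-1}$, which is well defined by \eqref{inver} since $V(t) \ge 0$. Equation \eqref{IDe} is then equivalent to
\begin{align*}
u(t) = \Phi(u)(t) := L_t \Big( a \int_0^t u(s)\, ds + b(t) \Big), \qquad t \in [0,1].
\end{align*}
We will look for a fixed point of $\Phi$ in $Y := C([0,1]; C^{0,\al}(\cj D))$, and only afterwards upgrade its regularity.

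The first step is a uniform bound for $L_t$ as a map $C^{0,\al}(\cj D) \to C^{2,\al}(\cj D) \cap H^1_0(D)$. The $L^\infty$ part comes from Lemma~\ref{LEM:CP} with $V_2 = 0$, which gives $|L_t f| \le (-\Delta)^{-1}|f| \le C\|f\|_{L^\infty}$. Combined with the Schauder estimate \eqref{Schauder}, this yields
\begin{align*}
\|L_t f\|_{C^{2,\al}} \le C \|f\|_{C^{0,\al}},
\end{align*}
with $C$ depending only on $\sup_t \|V(t)\|_{C^{0,\al}}$, which is finite by the continuity of $V$ in $t$. In particular $\Phi$ maps $Y$ into $L^\infty([0,1]; C^{2,\al})$.

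Continuity of $\Phi(u)$ in $t$ follows from the resolvent identity
\begin{align*}
L_t f - L_{t'} f' = L_t (f - f') + L_t \big( V(t') - V(t) \big) L_{t'} f'.
\end{align*}
Both terms are bounded in $C^{2,\al}$ by the uniform bound above, and they tend to $0$ as $t' \to t$ because $V$ and $b$ are continuous in $t$ and $\int_0^t u(s)\,ds$ is continuous in $t$ with values in $C^{0,\al}$.

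For the contraction, take $u, v \in Y$. The uniform bound gives
\begin{align*}
\|\Phi(u)(t) - \Phi(v)(t)\|_{C^{0,\al}} \le C \|a\|_{C^{0,\al}} \int_0^t \|u(s) - v(s)\|_{C^{0,\al}}\, ds.
\end{align*}
This Volterra-type estimate makes $\Phi$ a contraction in the weighted norm $\sup_t e^{-Mt}\|u(t)\|_{C^{0,\al}}$ once $M$ is large, so Banach's fixed point theorem gives existence and uniqueness in $Y$. Equivalently, one can iterate $\Phi$ and note that the $n$-th iterate has Lipschitz constant at most $(Ct)^n/n!$.

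Finally, the fixed point satisfies $u = \Phi(u) \in C([0,1]; H^1_0 \cap C^{2,\al})$ by the continuity step. Uniqueness in the stated class follows as well, because that class embeds in $Y$.

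The main obstacle is the uniform-in-$t$ Schauder bound. Estimate \eqref{Schauder} requires $\|u\|_{L^\infty}$ on its right-hand side, and this is exactly what the comparison principle supplies, uniformly in $V \ge 0$. Some care is also needed to check that the constant in \eqref{Schauder} depends on $V(t)$ only through an upper bound on its $C^{0,\al}$ norm, which holds by the statement preceding \eqref{Schauder}.
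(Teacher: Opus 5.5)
Your proposal is correct and follows essentially the same route as the paper. It uses the same fixed-point map, the same uniform $C^{0,\al}\to C^{2,\al}$ bound from the comparison principle (Lemma~\ref{LEM:CP} with $V_2=0$) combined with \eqref{Schauder}, the resolvent identity for continuity in $t$, and a Volterra-type contraction estimate.

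The only differences are minor. You run the contraction in $C([0,1];C^{0,\al}(\cj D))$ with an exponentially weighted norm and then upgrade the regularity. The paper instead works directly in $C([0,1];H^1_0(D)\cap C^{2,\al}(\cj D))$, with a short-time contraction that it restarts on successive intervals. Your choice of space even gives uniqueness in a slightly larger class.
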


\begin{proof}

From the Lax--Milgram theory and the Schauder estimates above, we note that for fixed $t \in [0,1]$, the operator
\[
(-\Delta + V(t,x))^{-1} : C^{0,\al} (\cj D) \to H^1_0 (D) \cap C^{2,\al} (\cj D)
\]

\noi
is well-defined. Moreover, there exists a constant $C_1 > 0$, depending only on $D$, $\al$, and $\sup_{t \in [0,1]} \|V(t)\|_{C^{0,\al}(\cj D)}$, such that
\begin{align}
\label{IDe_S}
\|(-\Delta + V(t,x))^{-1} g\|_{C^{2,\al}(\cj D)} \le C_1 \|g\|_{C^{0,\al}(\cj D)}
\end{align}

\noi
for every $t \in [0,1]$ and $g \in C^{0,\al}(\cj D)$.
Indeed, by Lemma \ref{LEM:CP} with $V_1 = V(t)$ and $V_2 = 0$, we have $|(-\Delta + V(t,x))^{-1} g| \le (-\Delta)^{-1} |g| \le \|g\|_{L^\infty (D)} (-\Delta)^{-1} 1$, where $(-\Delta)^{-1} 1 \in C^{2,\al}(\cj D)$ is bounded, so that \eqref{IDe_S} follows from \eqref{Schauder}.
We define a map
\begin{equation} \label{Gamma_fixed_point}
\G (u) (t) = (-\Delta + V(t,x))^{-1} \bigg( a(x) \int_0^t u(s,x) ds + b(t,x)\bigg),
\end{equation}

\noi
whose fixed points are exactly the solutions to \eqref{IDe}.
It suffices to show the map $\G$ admits a fixed point in $C([0,1]; H^1_0 (D) \cap C^{2,\al}(\cj D) )$. We first show that the map $\G$ maps $C([0,1]; H^1_0 (D) \cap C^{2,\al}(\cj D) )$ to itself.
We note
\[
\begin{split}
\G(u) (t_1) & - \G(u) (t_2) =   (-\Delta + V(t_1,x))^{-1} \bigg( a(x) \int_{t_2}^{t_1} u(s,x) ds + b(t_1,x) - b(t_2,x)\bigg) \\
&   +  \Big[ (-\Delta + V(t_1,x))^{-1} - (-\Delta + V(t_2,x))^{-1} \Big] \bigg( a(x) \int_0^{t_2} u(s,x) ds + b(t_2,x)\bigg),
\end{split}
\]

\noi
whose first term tends to $0$ in $C^{2,\al}(\cj D)$ as $t_1 \to t_2$, by \eqref{IDe_S} and the continuity of $b$.
Then, we only need to show that the operator $(-\Delta + V(t,x))^{-1}$, from $C^{0,\al}(\cj D)$ to $C^{2,\al}(\cj D)$, is continuous in $t \in [0,1]$.
For $t_1, t_2 \in [0,1]$, we have the resolvent identity
\[
(-\Delta  + V(t_1,x))^{-1}  - (-\Delta + V(t_2,x))^{-1} = (-\Delta  + V(t_1,x))^{-1} \big( V(t_2) - V(t_1) \big) (-\Delta + V(t_2,x))^{-1}.
\]

Therefore, by \eqref{IDe_S} and the bound $\|V \psi\|_{C^{0,\al}(\cj D)} \le \|V\|_{C^{0,\al}(\cj D)} \|\psi\|_{C^{2,\al}(\cj D)}$, the norm of $(-\Delta + V(t_1,x))^{-1} - (-\Delta + V(t_2,x))^{-1}$, as an operator from $C^{0,\al}(\cj D)$ to $C^{2,\al}(\cj D)$, is $\les \|V(t_1) - V(t_2)\|_{C^{0,\al}(\cj D)}$, which tends to $0$ as $t_1 \to t_2$ by the continuity of $V$.

We then prove that $\G$ is a contraction. From \eqref{IDe_S}, we note
\begin{align}
\label{IDe_C}
\begin{split}
\| \G (u_1)(t) - \G (u_2)(t)\|_{C^{2,\al} (\cj D)} & = \bigg\| (-\Delta + V(t,x))^{-1} \bigg( a(x) \int_0^t \big( u_1(s,x) - u_2(s,x) \big) ds \bigg) \bigg\|_{C^{2,\al} (\cj D)} \\
& \le C_1 \|a\|_{C^{0,\al}(\cj D)} \int_0^t \| u_1(s) - u_2(s)\|_{C^{2,\al} (\cj D)} ds,
\end{split}
\end{align}

\noi
which implies that $\G$ is a contraction on  $C([0,t_0]; H^1_0 (D) \cap C^{2,\al} (\cj D) )$ with  $t_0 = \big( 2 C_1 \|a\|_{C^{0,\al}(\cj D)} + 1 \big)^{-1}$.

To extend the solution $u$ to \eqref{IDe} beyond $t_0$, we rewrite \eqref{IDe} as
\begin{align}
\label{IDe2}
-\Delta u (t,x) + V(t,x) u(t,x) = a(x) \int_{t_0}^t u(s,x) ds + b_1(t,x),
\end{align}

\noi
where $t\ge t_0$ and
\[
b_1 (t,x) = b(t,x) + a(x) \int_0^{t_0} u(s,x) ds.
\]

\noi
Since $b_1 \in C([t_0,1]; C^{0,\al}(\cj D))$ and $t_0$ depends only on $C_1$ and $\|a\|_{C^{0,\al}(\cj D)}$, by the same argument as above, we may solve \eqref{IDe2} on the time interval $[t_0,2t_0]$. By repeating this process, we can solve \eqref{IDe} in $C([0,1]; H^1_0 (D) \cap C^{2,\al} (\cj D) )$, and the solution is unique, since it is unique on each of these intervals.
We also note that, by iterating \eqref{IDe_C}, we have
\[
\sup_{0 \le t \le 1} \| \G^n (u_1)(t) - \G^n (u_2)(t)\|_{C^{2,\al} (\cj D)} \le \frac{\big( C_1 \|a\|_{C^{0,\al}(\cj D)} \big)^n}{n!} \sup_{0 \le t \le 1} \| u_1(t) - u_2(t)\|_{C^{2,\al} (\cj D)}
\]

\noi
for every $n \ge 1$, so that $\G^n$ is a contraction on $C([0,1]; H^1_0 (D) \cap C^{2,\al}(\cj D))$ for $n$ large enough.
\end{proof}

\subsection{The Bou\'e--Dupuis formula}
In this section,
we recall the  Bou\'e--Dupuis formula.
Let  $W_D(t)$ be a cylindrical Brownian motion in $L^2_{0,\rm rad} (D)$ with zero boundary condition defined by
\begin{align}
\label{BM}
W_D (t) = \sum_{n \ge 0} B_n(t) e_n^D,
\end{align}
\noi
where $D \subset \R^d$ is  a radially symmetric domain, $e_n^D$ are the radial eigenfunctions of $-\Delta$ on $D$ given in \eqref{eigenD}, and $\{B_n(t)\}_{n \ge 0}$ is a sequence of mutually independent Brownian motions. We  define the centered cylindrical Brownian process $X^D(t)$ by
\begin{align}
    \label{Yt}
    X^D (t) = (-\Delta)^{-\frac12} W_D (t) = \sum_{n \ge 0} \frac{B_n(t)}{\ld_n^D} e_n^D,
\end{align}

\noi
which is well-defined in $H^{\frac12 - \eps} (D)$.
We may suppress the superscript $D$ and write $X^D (t)$ as $X (t)$, $\ld_n^D$ as $\ld_n$, and $e_n^D$ as $e_n$ when the dependence on $D$ is clear from the context.

Let $\big(\F^{X^D}_t\big)_{t\in[0,1]}$ be the $\P$-augmented filtration generated by $X^D$ (equivalently, by $\{B_n\}_{n \ge 0}$), and let $\mathbb H$ be the space of drifts, which consists of $\big(\F^{X^D}_t\big)$-progressively measurable processes $V$ such that $V(0)=0$ and $\dot V \in L^2_t ([0,1]; H^1_{0,{\rm rad}} (D))$, $\P$-almost surely.
Here $H^s_{0,{\rm rad}} (D)$ denotes the completion of the space of smooth radial functions with compact support in $D$ with respect to the $H^s$ norm.
When $s=0$, we write $L^2_{0,\rm{rad}} = H^0_{0,\rm{rad}}$.
We now recall the Bou\'e--Dupuis formula from~\cite{BD, Ust}, specialising it to our setting. The following is a direct consequence of \cite[Theorem 3.2]{Zhang09}. See also \cite[Proposition {A.2}]{TolWeb} and \cite[Theorem 1.1]{HW22}.

\begin{proposition}[Bou\'e--Dupuis variational formula]\label{LEM:BD}
 Suppose that $F:L^2_{0, {\rm rad}} (D) \to \R$
	is a bounded measurable function.
	Then, we have
	\begin{align}
	- \log \E\Big[e^{-F( X^D(1))}\Big]
	= \inf_{V \in \mathbb H}
	\E\bigg[ F(X^D(1) + V(1)) + \frac{1}{2} \int_0^1 \|\dot V \|_{\dot H^1}^2 dt \bigg],
	\label{BDformula}
	\end{align}
\noi
and the expectation $\E = \E_\P$
is with respect to the underlying probability measure~$\P$.
\end{proposition}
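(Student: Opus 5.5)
The formula in Proposition~\ref{LEM:BD} is the Bou\'e--Dupuis formula specialised to our radial setting, so the plan is to reduce it to the abstract version in \cite[Theorem 3.2]{Zhang09} rather than reprove it. That theorem concerns an abstract Wiener space $(\mathcal W, \mathcal H, \P)$ with canonical process a cylindrical Brownian motion, a bounded measurable functional $\Phi$ of the whole path, and the variational representation
\[
-\log \E\big[e^{-\Phi(B)}\big] = \inf_{u} \E\Big[\Phi\Big(B + \int_0^\cdot u_s\, ds\Big) + \frac12 \int_0^1 \|u_s\|_{\mathcal H}^2\, ds\Big],
\]
where the infimum is over progressively measurable $u$ with $\int_0^1\|u_s\|_{\mathcal H}^2 ds<\infty$ almost surely.

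The first step is to set up the Wiener space. Take the sequence $\{B_n\}_{n\ge 0}$, equivalently the cylindrical Brownian motion $W_D$ in \eqref{BM}, with Cameron--Martin space $L^2_{0,\rm rad}(D)$ identified with $\ell^2$ through the eigenbasis $\{e_n^D\}$. We then define the path functional $\Phi(W_D) := F\big((-\Delta)^{-1/2} W_D(1)\big) = F(X^D(1))$. This is well defined because $X^D(1) = \sum_n \ld_n^{-1} B_n(1) e_n^D$ converges almost surely in $L^2_{0,\rm rad}(D)$; indeed $\sum_n \ld_n^{-2}<\infty$ by the radial Weyl law, exactly as in \eqref{GMap2}. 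The resulting $\Phi$ is bounded and measurable, since $F$ is bounded and Borel and the map to $X^D(1)$ is measurable.

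The second step is to translate the drifts. A drift $u\in L^2([0,1]; L^2_{0,\rm rad})$ shifts $W_D$ to $W_D + \int_0^\cdot u$, and therefore shifts $X^D$ to $X^D + V$ with $V(t) = (-\Delta)^{-1/2}\int_0^t u_s\, ds$. Since $(-\Delta)^{-1/2}$ is an isometry from $L^2_{0,\rm rad}(D)$ onto $H^1_{0,\rm rad}(D)$ with the $\dot H^1$ norm, we get $\|\dot V\|_{\dot H^1} = \|u\|_{L^2}$. In the other direction, every $V\in\mathbb H$ arises from $u = (-\Delta)^{1/2}\dot V$, and this $u$ is progressively measurable with respect to the same augmented filtration, because that filtration is generated by $\{B_n\}$ in both descriptions. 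The correspondence $u\leftrightarrow V$ is therefore a bijection between the admissible drifts of \cite{Zhang09} and $\mathbb H$, and it preserves the cost functional. Substituting into Zhang's formula gives \eqref{BDformula}.

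The only point needing care, and the main (mild) obstacle, is checking that the admissible class matches: Zhang allows drifts with merely almost surely finite energy, which is exactly our definition of $\mathbb H$, so the two infima are over the same set. Alternatively one can appeal to \cite[Proposition A.2]{TolWeb}, which is stated directly for Gaussian measures on Hilbert spaces of the form $(-\Delta)^{-1/2}$ applied to white noise. Beyond this bookkeeping, the proof requires no new estimate.
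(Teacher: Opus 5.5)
Your proposal is correct and matches the paper, which also gives no independent proof and presents the formula as a direct consequence of \cite[Theorem 3.2]{Zhang09} (also pointing to \cite[Proposition A.2]{TolWeb} and \cite[Theorem 1.1]{HW22}). Your identification of drifts through the isometry $(-\Delta)^{-1/2}: L^2_{0,\rm rad}(D) \to H^1_{0,\rm rad}(D)$, together with the matching of the admissible class, is exactly the bookkeeping that the paper leaves implicit.
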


The condition that $F$ is bounded in Proposition \ref{LEM:BD} is not necessarily satisfied in our later application.
It is known that this boundedness condition can be relaxed in the finite-dimensional setting, where $X^D$ is a  finite-dimensional process. In particular, in \cite[Theorem 1.1]{HW22} the authors showed that the boundedness condition on $F$ can be relaxed to the following ones,
\[
\E \Big[ e^{-F(X^D(1))} \Big] < \infty \textup{ and } \E \big[ \max (F(X^D(1)), 0) \big] < \infty,
\]

\noi
provided that $X^D$ is a finite-dimensional process.
But for the infinite-dimensional setting as in Proposition \ref{LEM:BD}, the situation is more involved, and we need  several lemmas.

\begin{lemma}\label{lem:weak_drift}
Let a \emph{weak drift} be a tuple $V^{\rm w} = (\wt \O, \wt \F, (\wt \G_t)_{t \in [0,1]}, \wt \P, \wt X^D, \wt V)$ consisting of a filtered probability space, a process $\wt X^D$ with the same law as $X^D$ in \eqref{Yt} which is a cylindrical Brownian motion of the form \eqref{Yt} with respect to $(\wt \G_t)$, and a $(\wt \G_t)$-progressively measurable process $\wt V$ with $\wt V(0) = 0$ and $\dot{\wt V} \in L^2_t([0,1]; H^1_{0,{\rm rad}} (D))$, $\wt \P$-almost surely.  Let $F : L^2_{0, {\rm rad}} (D) \to (-\infty, +\infty]$ be Borel measurable and bounded below. We write $\mathbb H^{\rm w}$ for the collection of all weak drifts.  Then, with the convention $-\log 0 = +\infty$,
\begin{equation}
\begin{aligned}
\label{HHw}
- \log \E \Big[ e^{-F(X^D(1))} \Big] &\le \inf_{V^{\rm w} \in \mathbb H^{\rm w}} \wt \E\bigg[ F (\wt X^D(1) + \wt V(1)) + \frac12 \int_0^1 \|\dot{\wt V}\|_{\dot H^1}^2 dt\bigg]\\
&\le \inf_{V \in \mathbb H} \E\bigg[ F (X^D(1) + V(1)) + \frac12 \int_0^1 \|\dot V\|_{\dot H^1}^2 dt\bigg] .
\end{aligned}
\end{equation}
\end{lemma}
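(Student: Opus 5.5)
The plan proves the two inequalities in \eqref{HHw} separately. The second one is immediate. Any $V \in \mathbb H$ gives a weak drift by taking $\wt \O = \O$, $\wt \G_t = \F^{X^D}_t$, $\wt \P = \P$, $\wt X^D = X^D$ and $\wt V = V$. Hence the infimum over $\mathbb H^{\rm w}$ is at most the infimum over $\mathbb H$.

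For the first inequality, fix a weak drift $V^{\rm w}$. We may assume the right-hand side is finite, so in particular $\wt \E\big[\frac12\int_0^1 \|\dot{\wt V}\|_{\dot H^1}^2 dt\big] < \infty$. The main tool is the Donsker--Varadhan (Gibbs) variational inequality. For any probability measure $\mathbb Q$ and any $G$ bounded below,
\[
-\log \E_{\P}\big[e^{-G}\big] \le \E_{\mathbb Q}[G] + H(\mathbb Q \,|\, \P).
\]
We apply it on path space, which reduces the claim to a relative entropy computation via Girsanov.

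First, truncate. Set $F_M = \min(F, M)$. Since $F$ is bounded below, $F_M$ is bounded measurable, and monotone convergence gives $\E[e^{-F_M(X^D(1))}] \to \E[e^{-F(X^D(1))}]$. Also $F_M \le F$ on the right-hand side. So it suffices to treat bounded $F$. Second, localise the drift. Let $\tau_K = \inf\{t : \int_0^t \|\dot{\wt V}\|_{\dot H^1}^2 ds \ge K\}$ and $\wt V_K(t) = \wt V(t\wedge\tau_K)$. The Novikov condition then holds for $\wt V_K$.

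Girsanov's theorem on $(\wt \O, \wt \F, \wt\G_t, \wt\P)$ applies to the cylindrical Brownian motion $\wt W = (-\Delta)^{1/2}\wt X^D$ with drift $\theta = (-\Delta)^{1/2}\dot{\wt V}_K \in L^2_t L^2$. It produces a measure $\wt{\mathbb Q}$ under which $\wt X^D + \wt V_K$ has the law of $X^D$. Its entropy is
\[
H(\wt{\mathbb Q}\,|\,\wt\P) = \tfrac12 \wt\E\Big[\int_0^{1} \|\dot{\wt V}_K\|^2_{\dot H^1}dt\Big],
\]
where the expectation is with respect to $\wt{\mathbb Q}$ in the standard form of the identity. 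To avoid measure-change bookkeeping, it is cleaner to argue in the reverse direction. Let $\Pi$ be the law of the pair $(\wt X^D + \wt V_K, \wt X^D)$, or more simply the law $\nu$ of $Y := \wt X^D + \wt V_K$ on path space $C([0,1]; H^{-1})$, and compare it with the law $\mu^{\rm path}$ of $X^D$. The data processing inequality together with Girsanov gives
\[
H(\nu \,|\, \mu^{\rm path}) \le \tfrac12 \wt\E\Big[\int_0^1\|\dot{\wt V}_K\|^2_{\dot H^1}dt\Big].
\]
This is the standard estimate that the entropy of a drifted Brownian motion is bounded by the energy of the adapted drift. One proves it via the Föllmer/Lehec argument, using adaptedness to $\wt\G_t$ together with the stopping.

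Donsker--Varadhan applied with $\mathbb Q = \nu$ and $G = F(\text{path}(1))$ then gives
\[
-\log\E[e^{-F(X^D(1))}] \le \wt\E[F(\wt X^D(1)+\wt V_K(1))] + \tfrac12\wt\E\Big[\int_0^1\|\dot{\wt V}_K\|^2_{\dot H^1}dt\Big].
\]
We then let $K\to\infty$. The energy term increases to the full energy. For the $F$ term, $\wt V_K(1) = \wt V(1)$ on $\{\tau_K > 1\}$, and the probability of this event tends to $1$; since $F$ is bounded after truncation, dominated convergence applies.

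The main obstacle is the entropy bound $H(\nu|\mu^{\rm path}) \le \frac12\wt\E\int\|\dot{\wt V}\|^2$ in infinite dimensions with a general filtration $\wt\G_t$ that is larger than the one generated by $\wt X^D$. I would handle it by projecting onto finitely many modes $P_N$. Data processing gives the bound for the finite-dimensional marginals, where the finite-dimensional result (cf. \cite[Theorem 1.1]{HW22}, or Lehec's lemma) applies to Brownian motion with respect to an enlarged filtration. Lower semicontinuity of relative entropy under the limit $N\to\infty$, via the supremum over finite-dimensional cylinder σ-algebras, then completes the bound.
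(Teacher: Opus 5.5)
Your proposal is correct and is essentially the paper's argument. The paper localises the drift with the smooth cutoff $\chi(E(s)/n)$ instead of a stopping time, applies Girsanov on $(\wt\O,\wt\F,(\wt\G_t),\wt\P)$, and then uses Jensen on $-\log \wt\E\big[e^{-F(\wt X^D(1)+\wt V_n(1))}\,\tfrac{d\Q_n}{d\wt\P}\big]$; this is exactly your Donsker--Varadhan step with $H(\wt\P\,|\,\Q_n)=\frac12\wt\E\int_0^1\|\dot{\wt V}_n\|_{\dot H^1}^2\,dt$, and the paper removes the upper bound on $F$ at the end via $F\wedge m$ rather than at the start. Because the paper works directly on $\wt\O$, your ``main obstacle'' does not arise: $\wt X^D$ is a cylindrical Brownian motion for $(\wt\G_t)$, so $\int_0^1\langle\dot{\wt V}_n,d\wt X^D\rangle_{\dot H^1}$ has zero $\wt\P$-mean, and no push-forward to path space, data processing, or finite-mode projection is needed.
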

\begin{proof}
We first show that
\begin{equation} \label{HHwle}
 \inf_{V^{\rm w} \in \mathbb H^{\rm w}} \wt \E\bigg[ F (\wt X^D(1) + \wt V(1)) + \frac12 \int_0^1 \|\dot{\wt V}\|_{\dot H^1}^2 dt\bigg]
\le \inf_{V \in \mathbb H} \E\bigg[ F (X^D(1) + V(1)) + \frac12 \int_0^1 \|\dot V\|_{\dot H^1}^2 dt\bigg].
\end{equation}

\noi
Let $V \in \mathbb H$.
Then $X^D$ is a cylindrical Brownian motion with respect to the augmented filtration $\big(\F^{X^D}_t\big)$, and $V$ is $\big(\F^{X^D}_t\big)$-progressively measurable by the definition of $\mathbb H$, so that
$$ V^{\rm w} = \big(\O, \F, \big(\F^{X^D}_t\big), \P, X^D, V\big)$$
is a weak drift, for which the two functionals in \eqref{HHwle} coincide.
Taking the infimum over $V \in \mathbb H$ gives \eqref{HHwle}.

We now move to showing
\begin{equation} \label{HHwge}
 \inf_{V^{\rm w} \in \mathbb H^{\rm w}} \wt \E\bigg[ F (\wt X^D(1) + \wt V(1)) + \frac12 \int_0^1 \|\dot{\wt V}\|_{\dot H^1}^2 dt\bigg]
\ge - \log \E \Big[ e^{-F(X^D(1))} \Big].
\end{equation}
Let $V^{\rm w}= (\wt \O, \wt \F, (\wt \G_t), \wt \P, \wt X^D, \wt V) \in \mathbb H^{\rm w}$.
It suffices to show that
\begin{equation} \label{HHwge1}
- \log \E \Big[ e^{-F(X^D(1))} \Big] \le \wt \E\bigg[ F (\wt X^D(1) + \wt V(1)) + \frac12 \int_0^1 \|\dot{\wt V}\|_{\dot H^1}^2 dt\bigg].
\end{equation}
We may assume that $\wt \E \big[ \int_0^1 \|\dot{\wt V}\|_{\dot H^1}^2 dt\big] < \infty$, since otherwise, $F$ being bounded below, the right-hand side of \eqref{HHwge1} is $+\infty$ and there is nothing to prove.
We first prove \eqref{HHwge1} under the additional assumption that $F$ is bounded from above as well.

Let $\chi: \R \to [0,1]$ be a smooth, compactly supported cutoff function with $\chi|_{[-1,1]} \equiv 1$, let $M \ge 1$ be such that $\chi$ is supported in $[-M,M]$, write
$$ E(t) := \int_0^t \| \dot {\wt V}(s)\|_{\dot H^1}^2 ds , $$
and define
$$ {\wt V}_n(t) = \int_0^t \chi\Big(\frac{E(s)}{n}\Big) \dot {\wt V}(s) \, ds. $$
Then $V^{\rm w}_n =  (\wt \O, \wt \F, (\wt \G_t), \wt \P, \wt X^D, {\wt V}_n)$ is also a weak drift, and $\dot {\wt V}_n (s) = \chi \big( E(s)/n\big) \dot{\wt V}(s)$.
Since $E$ is non-decreasing and $\chi$ vanishes outside $[-M,M]$, we have $\dot {\wt V}_n(s) = 0$ whenever $E(s) > Mn$, and therefore
\begin{align}
\label{Novikov}
\int_0^1 \| \dot {\wt V}_n(s)\|_{\dot H^1}^2 ds \le \int_0^1 \ind_{E(s) \le Mn} \| \dot {\wt V}(s)\|_{\dot H^1}^2 ds \le Mn
\end{align}

\noi
almost surely.
In particular, Novikov's condition is satisfied by $\dot{\wt V}_n$, so that
$$\Q_n = \exp\Big(- \int_0^1 \jb{ \dot {\wt V}_n(s) , d  \wt X^D(s)}_{\dot H^1} - \frac12 \int_0^1 \|\dot{{\wt V}_n}(s)\|_{\dot H^1}^2 ds\Big) \wt \P$$
is a probability measure, and by Girsanov's theorem $\wt X^D + {\wt V}_n$ is a cylindrical Brownian motion with the same law as $X^D$ under $\Q_n$.
Moreover, since $0 \le \chi \le 1$ and $E(1) < \infty$ almost surely, we have $\chi (E(s)/n) \to 1$ for every $s$, so that, by dominated convergence,
\begin{equation}\label{Vntrunc1}
\begin{gathered}
\int_0^1 \| \dot {\wt V}_n(s)\|_{\dot H^1}^2 ds \le \int_0^1 \| \dot {\wt V}(s)\|_{\dot H^1}^2 ds,\\
\wt \E \bigg[\int_0^1 \|\dot{{\wt V}_n}\|_{\dot H^1}^2 ds\bigg] \longrightarrow \wt \E \bigg[\int_0^1 \|\dot{{\wt V}}\|_{\dot H^1}^2 ds\bigg] .
\end{gathered}
\end{equation}

\noi
Finally, on the event $A_n := \{ E(1) \le n\}$ we have $\chi(E(s)/n) = 1$ for every $s \in [0,1]$, hence ${\wt V}_n \equiv {\wt V}$; since $E(1) < \infty$ almost surely, the events $A_n$ increase to a set of full measure.

By Girsanov's theorem, followed by Jensen's inequality, we have that
\begin{align*}
&- \log \E \Big[ e^{-F(X^D(1))} \Big] \\
&= - \log \E_{\Q_n}\Big[ \exp\Big( -F(\wt X^D(1) + {\wt V}_n(1))  \Big) \Big] \\
&= - \log \wt\E\Big[ \exp\Big( -F(\wt X^D(1) + {\wt V}_n(1)) - \int_0^1 \jb{\dot {\wt V}_n(s) , d  \wt X^D(s)}_{\dot H^1} - \frac12 \int_0^1 \|\dot{{\wt V}_n}(s)\|_{\dot H^1}^2 ds\Big)\Big]\\
&\le \wt\E\Big[ F(\wt X^D(1) + {\wt V}_n(1)) + \int_0^1 \jb{\dot {\wt V}_n(s) , d  \wt X^D(s)}_{\dot H^1} + \frac12 \int_0^1 \|\dot{{\wt V}_n}(s)\|_{\dot H^1}^2 ds\Big] \\
&= \wt\E\Big[ F(\wt X^D(1) + {\wt V}_n(1))+ \frac12 \int_0^1 \|\dot{{\wt V}_n}(s)\|_{\dot H^1}^2 ds\Big],
\end{align*}

\noi
where in the last step we used that, by \eqref{Novikov}, the stochastic integral is a martingale and hence has vanishing expectation.
We now let $n \to \infty$.
The energy term converges by \eqref{Vntrunc1}.
For the other term, we use that ${\wt V}_n \equiv {\wt V}$ on $A_n$ and that $F$ is bounded, to write
\begin{align*}
\wt\E\Big[ F(\wt X^D(1) + {\wt V}_n(1))\Big]
& = \wt\E\Big[ F(\wt X^D(1) + {\wt V}(1)) \ind_{A_n}\Big] + \wt\E\Big[ F(\wt X^D(1) + {\wt V}_n(1)) \ind_{A_n^c}\Big] \\
& \le \wt\E\Big[ F(\wt X^D(1) + {\wt V}(1)) \ind_{A_n}\Big] + \big( \sup F \big) \, \wt \P (A_n^c) .
\end{align*}

\noi
Since $F$ is bounded and $\wt \P (A_n) \to 1$, the first term converges to $\wt\E [ F(\wt X^D(1) + {\wt V}(1))]$ and the second one to $0$, so that
$$ - \log \E \Big[ e^{-F(X^D(1))} \Big] \le \wt\E\Big[ F(\wt X^D(1) + {\wt V}(1))+ \frac12 \int_0^1 \|\dot{{\wt V}}(s)\|_{\dot H^1}^2 ds\Big],$$
which is \eqref{HHwge1} for bounded $F$.

We now remove the assumption that $F$ is bounded from above.
For $m \in \N$, the functional $F \wedge m$ is bounded and measurable, so that, by the above,
$$ - \log \E \Big[ e^{-(F\wedge m)(X^D(1))} \Big] \le \wt\E\Big[ (F\wedge m)(\wt X^D(1) + {\wt V}(1))+ \frac12 \int_0^1 \|\dot{{\wt V}}(s)\|_{\dot H^1}^2 ds\Big] ,$$
and the right-hand side is bounded by $\wt\E[ F(\wt X^D(1) + {\wt V}(1))+ \frac12 \int_0^1 \|\dot{{\wt V}}\|_{\dot H^1}^2 ds]$, since $F \wedge m \le F$.
As $m \to \infty$, we have $e^{-(F\wedge m)} \downarrow e^{-F}$ with $e^{-(F\wedge m)} \le e^{-\inf F}$, so that, by dominated convergence,
$- \log \E [ e^{-(F\wedge m)(X^D(1))} ] \to - \log \E [ e^{-F(X^D(1))} ]$.
This gives \eqref{HHwge1}, and taking the infimum over $V^{\rm w} \in \mathbb H^{\rm w}$ yields \eqref{HHwge}, which finishes the proof.
\end{proof}

\begin{lemma} \label{lem:var_limit}
    Let $F : L^2_{0, {\rm rad}} (D) \to (-\infty, +\infty]$ be lower semicontinuous and bounded below.
    Let $X^D_n(t)$ be a sequence of cylindrical Brownian motions with the same law as $X^D$ in \eqref{Yt}, and let $V_n(t)$ be a sequence of processes satisfying the conditions in the definition of $\mathbb H$ with $X^D$ replaced by $X^D_n$, namely, $V_n$ is progressively measurable with respect to the augmented filtration $\big(\F^{X^D_n}_t\big)$ generated by $X^D_n$, $V_n(0) = 0$, and $\dot V_n \in L^2_t([0,1]; H^1_{0,{\rm rad}} (D))$ almost surely, such that
    \begin{align}\label{var_e}
    \sup_{n} \E\bigg[ \int_0^1 \|\dot V_n\|_{\dot H^1}^2 dt \bigg] < \infty.
    \end{align}
    Then, there exists a weak drift $(\wt \O, \wt \F, (\wt \G_t), \wt \P, \wt X^D, \wt V) \in \mathbb H^{\rm w}$ such that
    \begin{equation}
    \label{var_limit}
    \begin{split}
        \wt \E\bigg[ & F (\wt {X}^D(1) + \wt V(1)) + \frac{1}{2} \int_0^1 \| \dot{\wt V} \|_{\dot H^1_{\rm{rad}}}^2 dt \bigg] \\
        & \le \liminf_{n\to \infty} \E\bigg[ (F\wedge n) (X_n^D(1) + V_n(1)) + \frac{1}{2} \int_0^1 \| \dot V_n \|_{\dot H^1_{\rm{rad}}}^2 dt \bigg].
    \end{split}
    \end{equation}

\end{lemma}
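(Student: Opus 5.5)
We will prove Lemma~\ref{lem:var_limit} by a tightness and Skorokhod-type compactness argument on the joint laws of $(X^D_n, V_n)$, followed by lower semicontinuity.

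\emph{Step 1: tightness and extraction.} We may assume, up to extracting a subsequence, that the $\liminf$ in \eqref{var_limit} is a finite limit; otherwise there is nothing to prove. We fix a Polish space $\mathcal S$ for the driving noise, say $C([0,1]; H^{-\s}_{0,\rm rad}(D))$ for some large $\s>0$. In this space the law of $X^D_n$ is fixed, hence tight. For the drifts we regard $\dot V_n$ as a random element of $L^2_t([0,1];H^1_{0,\rm rad}(D))$ endowed with the \emph{weak} topology. By \eqref{var_e} and Chebyshev, the laws of $\dot V_n$ are concentrated, up to arbitrarily small probability, on closed balls. These balls are weakly compact and metrisable, since $L^2_tH^1$ is separable. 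Hence the laws of $(X^D_n, \dot V_n)$ are tight on $\mathcal S\times B$, where $B$ is the ball space with the weak topology. The map $\dot V \mapsto V(1)=\int_0^1 \dot V\,dt$ is continuous from the weak topology of $L^2_tH^1$ into $H^1$ with its weak topology, and hence into the strong topology of $L^2$ by the compact embedding $H^1_0 \hookrightarrow L^2$. By Prokhorov and the Skorokhod representation theorem (Jakubowski's version if metrisability on the whole space is an issue), we obtain a subsequence, a probability space $(\wt\O,\wt\F,\wt\P)$, copies $(\wt X_n,\dot{\wt V}_n)$ with the same joint laws, and a limit $(\wt X^D,\dot{\wt V})$ such that $\wt X_n\to\wt X^D$ in $\mathcal S$ and $\dot{\wt V}_n\rightharpoonup\dot{\wt V}$ weakly, almost surely. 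In particular $\wt X_n(1)+\wt V_n(1)\to\wt X^D(1)+\wt V(1)$ in $L^2$ almost surely.

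\emph{Step 2: the limit is a weak drift.} Let $\wt\G_t$ be the augmented filtration generated by $(\wt X^D(s),\wt V(s))_{s\le t}$. The claim is that $\wt X^D$ is still a cylindrical Brownian motion with respect to $\wt\G_t$. This is the main obstacle, because adaptedness is not obviously preserved under weak limits. We verify it through the martingale characterisation. For $s<t$, bounded continuous $\Phi$ of the path $(\wt X^D,\wt V)$ restricted to $[0,s]$, and test functions $e_k$, we check that
\[
\E\big[\Phi\cdot(\jb{X(t)-X(s),e_k})\big]=0
\]
together with the analogous identity for the quadratic variation. These identities hold at level $n$: there $V_n$ is $\F^{X_n}$-progressive, so the increments of $X_n$ after time $s$ are independent of $\F^{X_n}_s$, which contains the past of $V_n$. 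The identities pass to the limit by uniform integrability of Gaussian moments. One subtlety is that $V(r)$ for $r\le s$, as a function of $\dot V$ in the weak topology, is continuous, so $\Phi$ remains continuous in the limit. Lévy's characterisation then gives that $\wt X^D$ is a $\wt\G_t$-cylindrical Brownian motion. Moreover $\wt V$ is continuous and adapted, hence progressively measurable, and $\wt V(0)=0$.

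\emph{Step 3: lower semicontinuity.} By weak lower semicontinuity of the norm, $\int_0^1\|\dot{\wt V}\|^2_{\dot H^1}\le\liminf_n\int_0^1\|\dot{\wt V}_n\|^2_{\dot H^1}$ almost surely. Lower semicontinuity of $F$, together with $n\to\infty$ in the truncation, gives
\[
F(\wt X^D(1)+\wt V(1))\le\liminf_n (F\wedge n)(\wt X_n(1)+\wt V_n(1))
\]
almost surely. Here we use that $F\wedge m$ is lower semicontinuous and $F\wedge n\ge F\wedge m$ for $n\ge m$, and then let $m\to\infty$. Since $F$ is bounded below, Fatou's lemma applies to the sum of the two terms. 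Equality in law of $(\wt X_n,\wt V_n)$ and $(X^D_n,V_n)$ then yields \eqref{var_limit}.
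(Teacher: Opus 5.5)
Your Step~1 has a slip that Step~3 depends on. With the noise realised in $\mathcal S=C([0,1];H^{-\sigma})$, you only get $\wt X_n(1)\to\wt X^D(1)$ in $H^{-\sigma}$. So the claim that $\wt X_n(1)+\wt V_n(1)\to\wt X^D(1)+\wt V(1)$ in $L^2$ almost surely does not follow from what you established. Yet Step~3 needs exactly this, because $F$ is only assumed lower semicontinuous on $L^2_{0,{\rm rad}}(D)$. One could upgrade the convergence using the equality of the laws of $\wt X_n(1)$ and $\wt X^D(1)$ on $L^2$, but that needs an argument you do not give. The simple fix is the one the paper uses. Since $\sum_n\lambda_n^{-2}<\infty$, the process $X^D$ has continuous $L^2$-valued paths, so take $\mathcal S=C([0,1];L^2_{0,{\rm rad}}(D))$; the fixed law of $X^D_n$ is trivially tight there.

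With that change your argument is correct, and it differs from the paper's in how it handles the drifts and the weak-drift property.
\begin{itemize}
\item \emph{Topology on the drifts.} The paper places $V_n$ itself in $C([0,1];L^2)$, so the product space is Polish and the classical Skorokhod theorem applies. The price is an Arzel\`a--Ascoli tightness argument, using $\|V(t)-V(s)\|_{\dot H^1}\le|t-s|^{1/2}\|\dot V\|_{L^2_t\dot H^1}$ and the compact embedding $H^1_0\hookrightarrow L^2$. It also needs lower semicontinuity of $V\mapsto\int_0^1\|\dot V\|^2_{\dot H^1}dt$ under uniform $L^2$ convergence. You instead put the weak topology on $\dot V_n\in L^2_tH^1$. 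Then tightness is just Chebyshev plus Banach--Alaoglu, and the energy bound is weak lower semicontinuity of the norm. However, that space is not metrisable, so you need Jakubowski's Skorokhod theorem (or a metric inducing the weak topology on bounded sets). You also recover strong $L^2$ convergence of $V_n(1)$ only sequentially, via the compact embedding.
\item \emph{Weak-drift property.} The paper checks directly that $\wt\G_s$ is independent of the increments of $\wt X^D$ after time $s$, using cylinder functionals and a monotone class argument. You pass the martingale and quadratic-variation identities to the limit and invoke L\'evy's characterisation. Both rest on the same fact at level $n$: $V_n$ is progressively measurable with respect to $\F^{X^D_n}$.
\item \emph{Truncation.} Your handling of $F\wedge n$ uses that $F\wedge m$ is lower semicontinuous and $F\wedge n\ge F\wedge m$ for $n\ge m$, then lets $m\to\infty$. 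This is slightly cleaner than the paper's identity $\liminf_n\big((F\wedge n)(y_n)+e_n\big)=\liminf_n\big(F(y_n)+e_n\big)$.
\end{itemize}
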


\begin{proof}

Consider the couple
$$ (X^D_n(t), V_n(t)) \in C_t([0,1], L^2_{0, {\rm rad}} (D)) \times C_t([0,1], L^2_{0, {\rm rad}} (D)). $$
We pick a subsequence such that the liminf in \eqref{var_limit} is actually a limit. Abusing notation, we still denote this subsequence $(X^D_n(t), V_n(t))$.
Because of our hypotheses on $X^D_n, V_n$, the laws of these couples are tight on the space $C_t([0,1], L^2_{0, {\rm rad}} (D)) \times C_t([0,1], L^2_{0, {\rm rad}} (D))$. Therefore, by Prokhorov's theorem (see, e.g., \cite[Theorem 23.2]{K21}), up to taking a further subsequence, $\Law(X^D_n(t), V_n(t))$ admits a weak limit. We can then invoke Skorokhod's representation theorem, and we have that, up to taking another subsequence, there exists a probability space $(\wt \Omega, \wt \F, \wt \P)$ and random variables $(\wt{X^D_n}(t), \wt{V_n}(t))$ with the same law as $(X^D_n(t), V_n(t))$, such that
\begin{align}
\label{Skorokhod}
\lim_{n \to \infty} (\wt{X^D_n}(t), \wt{V_n}(t)) = (\wt{X^D}(t), \wt{V}(t))
\end{align}
in the space $C_t([0,1], L^2_{0, {\rm rad}} (D)) \times C_t([0,1], L^2_{0, {\rm rad}} (D))$, $\wt \P$-almost surely.
In particular $\wt{X^D}$ has the same law as $X^D$, and $\wt V(0) = 0$ with $\dot{\wt V} \in L^2_t([0,1]; H^1_{0,{\rm rad}}(D))$ almost surely, by \eqref{var_e} and Fatou's lemma.
We now claim that
\begin{align}
\label{var_step2}
\begin{split}
\wt\E\bigg[ & F (\wt{X}^D(1) + \wt{V}(1)) + \frac{1}{2} \int_0^1 \| \dot{\wt{V}} \|_{\dot H^1_{\rm{rad}}}^2 dt \bigg] \\
& \le \liminf_{n\to \infty} \E\bigg[ (F\wedge n) (X_n^D(1) + V_n(1)) + \frac{1}{2} \int_0^1 \| \dot V_n \|_{\dot H^1_{\rm{rad}}}^2 dt \bigg].
\end{split}
\end{align}

\noi
Indeed, lower semicontinuity of $F$ and of $V \mapsto \int_0^1 \|\dot V\|^2_{\dot H^1_{\rm rad}} dt$ along \eqref{Skorokhod}, followed by Fatou's lemma, give
\begin{align*}
&\wt\E\bigg[ F (\wt{X}^D(1) + \wt{V}(1)) + \frac{1}{2} \int_0^1 \| \dot{\wt{V}} \|_{\dot H^1_{\rm{rad}}}^2 dt \bigg] \\
\le&\ \wt \E\bigg[ \liminf_{n\to \infty}\bigg( F (\wt{X_n^D}(1) + \wt{V_n}(1)) + \frac{1}{2} \int_0^1 \| \dot{\wt{V_n}} \|_{\dot H^1_{\rm{rad}}}^2 dt\bigg) \bigg] \\
=&\  \wt\E\bigg[ \liminf_{n\to \infty}\bigg( (F\wedge n) (\wt{X_n^D}(1) + \wt{V_n}(1)) + \frac{1}{2} \int_0^1 \| \dot{\wt{V_n}} \|_{\dot H^1_{\rm{rad}}}^2 dt\bigg) \bigg] \\
\le &\ \liminf_{n\to \infty} \wt \E\bigg[ (F\wedge n) (\wt{X_n^D}(1) + \wt{V_n}(1)) + \frac{1}{2} \int_0^1 \| \dot{\wt{V_n}} \|_{\dot H^1_{\rm{rad}}}^2 dt \bigg] \\
=&\ \liminf_{n\to \infty} \E\bigg[ (F\wedge n) (X_n^D(1) + V_n(1)) + \frac{1}{2} \int_0^1 \| \dot V_n \|_{\dot H^1_{\rm{rad}}}^2 dt \bigg],
\end{align*}

\noi
where in the second step we used that, for any sequences $(y_n)$ in $L^2_{0, {\rm rad}} (D)$ and $(e_n)$ in $[0, +\infty]$,
$\liminf_{n \to \infty} \big( (F\wedge n)(y_n) + e_n \big) = \liminf_{n \to \infty} \big( F(y_n) + e_n \big)$.
Indeed, the inequality $\le$ follows from $F \wedge n \le F$. Conversely, if the left-hand side is equal to some $L < \infty$, then, along a subsequence realising it, we have $(F\wedge n)(y_n) \le L + 1 < n$ for $n$ large, since $e_n \ge 0$; hence $(F\wedge n)(y_n) = F(y_n)$ along this subsequence, and the right-hand side is at most $L$.
In the third step we used Fatou's lemma, which applies since $F \wedge n \ge \inf F$ uniformly in $n$.
Since the subsequence was chosen so that the liminf in \eqref{var_limit} is a limit, this is precisely \eqref{var_step2}.

It remains to check that $\wt V$ is a weak drift, i.e.\ that $\wt{X^D}$ is still a cylindrical Brownian motion for a filtration to which $\wt V$ is adapted.
Set
\[
\wt \G_t := \s \big( \wt{X^D}(u), \wt V(u) : 0 \le u \le t\big) .
\]

\noi
Since $\wt{X^D}$ has the same law as $X^D$, it is a cylindrical Brownian motion with respect to its own filtration. Hence, it suffices to show that, for every $0 \le s \le 1$, the $\s$-algebra $\wt \G_s$ is independent of $\s \big( \wt{X^D}(t) - \wt{X^D}(s) : s \le t \le 1\big)$.
To this end, let $j, k \in \N$, let $G: \big(L^2_{0, {\rm rad}} (D)\big)^{2j} \to \R$ and $H: \big(L^2_{0, {\rm rad}} (D)\big)^{k} \to \R$ be continuous and bounded, and let $0 \le u_1 \le \dots \le u_j \le s \le s_1 \le \dots \le s_k \le 1$.
We write
\begin{align*}
\wt{\mathbf p} & = \big(\wt V(u_1), \dotsc, \wt V(u_j), \wt{X^D}(u_1), \dotsc, \wt{X^D}(u_j)\big), \\
\wt{\mathbf q} & = \big(\wt{X^D}(s_1) - \wt{X^D}(s), \dotsc, \wt{X^D}(s_k) - \wt{X^D}(s)\big),
\end{align*}

\noi
and we define $\wt{\mathbf p}_n, \wt{\mathbf q}_n$ and $\mathbf p_n, \mathbf q_n$ in the same way, with $(\wt V, \wt{X^D})$ replaced by $(\wt{V_n}, \wt{X^D_n})$ and by $(V_n, X^D_n)$, respectively.
We have that
\begin{align*}
\wt\E\big[G(\wt{\mathbf p}) H(\wt{\mathbf q}) \big]
& = \lim_{n \to \infty} \wt\E\big[G(\wt{\mathbf p}_n) H(\wt{\mathbf q}_n) \big]\\
&= \lim_{n \to \infty} \E\big[G(\mathbf p_n) H(\mathbf q_n) \big] \\
& = \lim_{n \to \infty} \E\big[G(\mathbf p_n)\big] \, \E\big[H(\mathbf q_n) \big]\\
&= \wt\E\big[G(\wt{\mathbf p})\big] \, \wt\E\big[H(\wt{\mathbf q}) \big],
\end{align*}
where we used \eqref{Skorokhod} and the boundedness and continuity of $G$ and $H$ in the first step, the equality of laws in the second step,  the fact that $\mathbf p_n$ is $\F^{X^D_n}_s$-measurable, hence independent of the increments of the cylindrical Brownian motion $X^D_n$ after time $s$, in the third step, and equality of laws together with \eqref{Skorokhod} in the last step.

\noi
Since the random vectors of the form $\wt{\mathbf p}$ generate $\wt \G_s$, and those of the form $\wt{\mathbf q}$ generate $\s \big( \wt{X^D}(t) - \wt{X^D}(s) : s \le t \le 1\big)$, a monotone class argument then gives the claimed independence, so that $\wt{X^D}$ is a cylindrical Brownian motion with respect to $(\wt \G_t)$.
Since $\wt V$ is $(\wt \G_t)$-adapted and almost surely continuous in time, it is $(\wt \G_t)$-progressively measurable, and therefore $(\wt \O, \wt \F, (\wt \G_t), \wt \P, \wt{X^D}, \wt V) \in \mathbb H^{\rm w}$.
Together with \eqref{var_step2}, this proves \eqref{var_limit}.
\end{proof}

\begin{corollary}
\label{COR:BD}
Let $F : L^2_{0, {\rm rad}} (D) \to (-\infty, +\infty]$ be lower semicontinuous, and suppose that $\E [e^{-F( X^D(1))}] < \infty$.
Then we have
\begin{align}
- \log \E\Big[e^{-F( X^D(1))}\Big] = \inf_{V^{\rm w} \in {\mathbb H^{\rm w}}}
\wt \E\bigg[ F(\wt X^D(1) + \wt V(1)) + \frac{1}{2} \int_0^1 \| \dot{\wt V} \|_{\dot H^1}^2 dt \bigg],
\label{CP3}
\end{align}
with the convention $-\log 0 = +\infty$.
Here, the expectation on the right-hand side is set to be $+\infty$ for the weak drifts with $\wt \E \big[ \int_0^1 \| \dot{\wt V} \|_{\dot H^1}^2 dt \big] = \infty$.
\end{corollary}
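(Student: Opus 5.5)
The identity \eqref{CP3} follows by combining the two lemmas just proved. Lemma~\ref{lem:weak_drift} gives one inequality and Lemma~\ref{lem:var_limit} supplies the other, through a truncation of $F$.

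\textbf{Upper bound for the log-Laplace transform.} First I reduce to $F$ bounded below. A lower semicontinuous $F$ with values in $(-\infty,+\infty]$ need not be bounded below. However, I can apply the argument to $F_k := F \vee (-k)$, which is lower semicontinuous and bounded below, and then let $k\to\infty$. By Lemma~\ref{lem:weak_drift},
\[
-\log \E\big[e^{-F_k(X^D(1))}\big] \le \inf_{V^{\rm w}} \wt\E\Big[F_k(\wt X^D(1)+\wt V(1)) + \tfrac12\int_0^1\|\dot{\wt V}\|_{\dot H^1}^2dt\Big].
\]
As $k\to\infty$, the left-hand side decreases to $-\log\E[e^{-F(X^D(1))}]$ by monotone convergence, which is where the hypothesis $\E[e^{-F(X^D(1))}]<\infty$ is used. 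On the right, for a fixed weak drift with $\wt\E[F^-]<\infty$ or finite energy, monotone convergence also passes to the limit; drifts with infinite energy contribute $+\infty$ by convention. Hence the ``$\le$'' direction of \eqref{CP3} holds. (If one prefers, the statement can also be read as assuming $F$ bounded below; the main argument is the same.)

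\textbf{Lower bound.} For the reverse inequality, I may assume the left-hand side $L := -\log\E[e^{-F(X^D(1))}]$ is finite, since otherwise there is nothing to prove. I apply Proposition~\ref{LEM:BD} to the bounded measurable functional $F_n := (F\wedge n)\vee(-n)$. This gives $V_n\in\mathbb H$ (on the original space, with $X^D_n = X^D$) such that
\[
\E\Big[F_n(X^D(1)+V_n(1)) + \tfrac12\int_0^1\|\dot V_n\|^2_{\dot H^1}dt\Big] \le -\log\E\big[e^{-F_n(X^D(1))}\big] + \tfrac1n.
\]
The right-hand side converges to $L$ by dominated and monotone convergence.

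\textbf{Compactness and main obstacle.} To invoke Lemma~\ref{lem:var_limit}, I need the energy bound \eqref{var_e} and a lower bound on $F$. The main obstacle is controlling $\E[F_n(X^D(1)+V_n(1))]$ from below so that the energy stays bounded, and this is where $F$ bounded below is essentially needed. If $F\ge -C$, then $F_n\ge -C$ for $n\ge C$, so $\tfrac12\E\int\|\dot V_n\|^2 \le L+1+C$ uniformly, and $F_n=F\wedge n$ for $n\ge C$. If $F$ is not bounded below, I would instead first prove the identity for $F_k=F\vee(-k)$. I would then note that the infimum on the right of \eqref{CP3} for $F$ is at most that for $F_k$, because $F\le F_k$, while the left-hand sides converge. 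Combined with the upper bound already proved, this closes the argument.

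\textbf{Conclusion.} So, for $F$ bounded below, Lemma~\ref{lem:var_limit} applied to $(X^D,V_n)$ yields a weak drift $\wt V$ with
\[
\wt\E\Big[F(\wt X^D(1)+\wt V(1))+\tfrac12\int_0^1\|\dot{\wt V}\|^2dt\Big] \le \liminf_n \E\Big[(F\wedge n)(X^D(1)+V_n(1))+\tfrac12\int_0^1\|\dot V_n\|^2dt\Big] \le L.
\]
The infimum over $\mathbb H^{\rm w}$ is therefore at most $L$, which together with the upper bound gives \eqref{CP3}.
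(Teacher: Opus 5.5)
For $F$ bounded below, your argument is correct and is exactly the paper's. You use Lemma~\ref{lem:weak_drift} for ``$\le$''. For ``$\ge$'' you take the $\frac1n$-optimisers from Proposition~\ref{LEM:BD}, get the uniform energy bound from $F_n \ge -C$ and $-\log \E[e^{-F_n(X^D(1))}] \le L$, and conclude with Lemma~\ref{lem:var_limit}. Your reduction of ``$\ge$'' for general $F$ via $F\vee(-k) \ge F$ is also the paper's (there written $F_a = \max(F,a)$).

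The gap is in the general case, which the statement genuinely covers. You cannot read it as assuming $F$ bounded below: it is applied in Proposition~\ref{PROP:rho} to $F + \frac14\int|u|^4$, which need not be bounded below. Write $Y = \wt X^D(1) + \wt V(1)$ and $F^- = \max(-F,0)$. You never prove that $\wt\E[F^-(Y)] < \infty$ for a weak drift of finite energy. Without this, $\wt\E[F(Y)]$ can have the form $\infty-\infty$, so the right-hand side of \eqref{CP3} is not even defined.

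Your limit step in the upper bound also fails without it. You start from $-\log\E[e^{-F_k(X^D(1))}] \le \wt\E[F_k(Y)] + \frac12 \wt\E\int_0^1\|\dot{\wt V}\|^2_{\dot H^1}dt$. Decreasing monotone convergence gives $\wt\E[F_k(Y)]\to\wt\E[F(Y)]$ only when $\wt\E[F^+(Y)]<\infty$. If $\wt\E[F^+(Y)] = \infty$, every $\wt\E[F_k(Y)]$ equals $+\infty$, and you learn nothing unless you already know $\wt\E[F^-(Y)]<\infty$. Your phrase ``with $\wt\E[F^-]<\infty$ or finite energy'' assumes exactly what must be shown; finite energy does not give it for free.

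The missing ingredient is the paper's bound \eqref{Fminus}:
\[
\wt\E\big[F^-(Y)\big] \le \log\Big(1 + \E\big[e^{-F(X^D(1))}\big]\Big) + \frac12\,\wt\E\bigg[\int_0^1\|\dot{\wt V}\|^2_{\dot H^1}dt\bigg].
\]
It follows by applying Lemma~\ref{lem:weak_drift} to the bounded functional $-(F^-\wedge k)$, using $e^{F^-\wedge k}\le e^{F^-}\le 1+e^{-F}$, and letting $k\to\infty$ by monotone convergence. This is where the hypothesis $\E[e^{-F(X^D(1))}]<\infty$ is really used. The monotone convergence of the left-hand side, to which you attribute it, needs no integrability.

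With \eqref{Fminus} in hand, there are two cases. If $\wt\E[F^+(Y)] = \infty$, then $\wt\E[F(Y)] = +\infty$ and there is nothing to prove. Otherwise $-F^- \le F\vee(-k) \le \max(F,0)$, and dominated convergence gives $\wt\E[(F\vee(-k))(Y)] \to \wt\E[F(Y)]$. This closes your upper bound and makes the comparison of infima in your ``$\ge$'' step meaningful.
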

Before proceeding to the proof, we note that for every weak drift with $\wt \E \big[ \int_0^1 \| \dot{\wt V} \|_{\dot H^1}^2 dt \big] < \infty$, the negative part of $F(\wt X^D(1) + \wt V(1))$ is integrable by \eqref{Fminus} below, so that the expectation is well defined in $(-\infty, +\infty]$.

\begin{proof}
We first consider the case where $F$ is bounded from below. We start by proving \eqref{CP3}. By \eqref{HHw}, we already have the $\le$; in particular, if $\E[e^{-F( X^D(1))}] = 0$, then both sides of \eqref{CP3} are $+\infty$. We may thus assume that $\E[e^{-F( X^D(1))}] > 0$, and we just need to prove that
$$ - \log \E\Big[e^{-F( X^D(1))}\Big] \ge \inf_{V^{\rm w} \in {\mathbb H^{\rm w}}}
\wt \E\bigg[ F(\wt X^D(1) + \wt V(1)) + \frac{1}{2} \int_0^1 \| \dot{\wt V} \|_{\dot H^1}^2 dt \bigg].$$
We consider the approximating sequence of functionals
$$
F_n := F \wedge n , \qquad \text{ that is, } \quad
F_n (u) =
\begin{cases}
F (u) & \textup{ if } F(u) \le n,\\
n & \textup{ otherwise}.
\end{cases}
$$

Let $n\gg 1$.
Since $F_n$ is bounded and measurable, Proposition \ref{LEM:BD} applies to it and provides a drift $V_n \in \mathbb H$ which is a $\frac1n$-almost optimiser, in the sense that
\begin{align}
\label{P33}
- \log \E\Big[e^{-F_n( X^D(1))}\Big] \ge \E\bigg[ F_n(X^D(1) + V_n(1)) + \frac{1}{2} \int_0^1 \| \dot V_n \|_{\dot H^1}^2 dt \bigg] - \frac1n.
\end{align}
Since $F_n \le F$, we conclude that
\[
- \log \E\Big[e^{-F_n( X^D(1))}\Big] \le - \log \E\Big[e^{-F( X^D(1))}\Big] < \infty,
\]

\noi
which, together with the lower bound $F_n \ge \inf F$, further implies that
\begin{align}
\label{P331}
 \frac{1}{2} \E\bigg[ \int_0^1 \| \dot V_n \|_{\dot H^1}^2 dt \bigg] \le \max(-\inf F, 0) +  \E\bigg[ F_n(X^D(1) + V_n(1)) + \frac{1}{2} \int_0^1 \| \dot V_n \|_{\dot H^1}^2 dt \bigg] \les 1
\end{align}

\noi
uniformly in $n \in \N$, where the implicit constant may depend on the particular choice of $F$.
In particular, the hypothesis \eqref{var_e} is satisfied.
Therefore, by Lemma \ref{lem:var_limit}, we have that there exists a weak drift $V_\ast = (\wt \O, \wt \F, (\wt \G_t), \wt \P, \wt X^D, \wt V_\ast) \in \mathbb H^{\rm w}$ such that
\begin{align}
\label{P332}
\begin{split}
\wt \E & \bigg[ F (\wt X^D (1) + \wt V_\ast(1)) + \frac{1}{2} \int_0^1 \| \dot{\wt V}_\ast \|_{\dot H^1}^2 dt \bigg]  \\
& \le  \liminf_{n \to \infty} \E\bigg[ F_n (X^D(1) +  V_n(1)) + \frac{1}{2} \int_0^1 \| \dot{ V_n} \|_{\dot H^1}^2 dt \bigg].
\end{split}
\end{align}

\noi
Moreover, $F_n \uparrow F$ pointwise and $e^{-F_n} \le e^{-\inf F}$, so that, by the dominated convergence theorem,
\begin{align}
\label{P311}
 \lim_{n \to \infty} \log \E\Big[e^{-F_n( X^D(1))}\Big] = \log \E\Big[e^{-F( X^D(1))}\Big].
\end{align}

\noi
From \eqref{P33}, \eqref{P311}, \eqref{P331}, and \eqref{P332}, we have
\begin{align}
\label{P34}
\begin{split}
\inf_{V^{\rm w} \in \mathbb H^{\rm w}} &
\wt \E\bigg[ F (\wt X^D(1) + \wt V(1)) + \frac{1}{2} \int_0^1 \| \dot{\wt V} \|_{\dot H^1}^2 dt \bigg] \\
& \le \wt \E\bigg[ F (\wt X^D(1) + \wt V_\ast(1)) + \frac{1}{2} \int_0^1 \| \dot{\wt V}_\ast \|_{\dot H^1}^2 dt \bigg]\\
& \le \liminf_{n \to \infty} \E\bigg[ F_n (X^D(1) +  V_n(1)) + \frac{1}{2} \int_0^1 \| \dot{ V_n} \|_{\dot H^1}^2 dt \bigg]\\
& \le \liminf_{n \to \infty} \Big( -\log \E \big[ \exp(-F_n (X^D (1))) \big] + \frac 1n \Big) \\
& = -\log \E \big[ \exp(-F (X^D (1))) \big],
\end{split}
\end{align}
which concludes the proof of \eqref{CP3} in the case where $F$ is bounded from below.

We now consider a general $F$. Let $V^{\rm w} \in \mathbb H^{\rm w}$ be a weak drift, with components denoted as in Lemma~\ref{lem:weak_drift}, such that $\wt\E \big[ \int_0^1 \| \dot{\wt V} \|_{\dot H^1}^2 dt \big] < \infty$, and write $Y = \wt X^D(1) + \wt V(1)$ and $F^- = \max (-F, 0)$.
We first show that
\begin{align}
\label{Fminus}
\wt \E \big[ F^-(Y) \big] \le \log \Big( 1 + \E \Big[ e^{-F(X^D(1))} \Big] \Big) + \frac12 \wt\E \bigg[ \int_0^1 \| \dot{\wt V} \|_{\dot H^1}^2 dt \bigg] < \infty.
\end{align}

\noi
Indeed, for $k \in \N$, the functional $-(F^- \wedge k)$ is bounded and Borel measurable, so that, by \eqref{HHw},
\[
- \log \E \Big[ e^{(F^- \wedge k)(X^D(1))} \Big] \le - \wt \E \big[ (F^- \wedge k) (Y) \big] + \frac12 \wt\E \bigg[ \int_0^1 \| \dot{\wt V} \|_{\dot H^1}^2 dt \bigg].
\]
Since $e^{F^- \wedge k} \le e^{F^-} \le 1 + e^{-F}$, \eqref{Fminus} follows by letting $k \to \infty$, by monotone convergence.
In particular, the right-hand side of \eqref{CP3} is well defined.

For $a < 0$, the functional $F_a = \max (F, a)$ is lower semicontinuous and bounded below, and $\E [ e^{-F_a (X^D(1))} ] \le \E [ e^{-F (X^D(1))} ] + e^{-a} < \infty$, so that \eqref{CP3} holds for $F_a$ by the first part of the proof.
As $a \to - \infty$, we have $e^{-F_a} \uparrow e^{-F}$, and thus $- \log \E [ e^{-F_a (X^D(1))} ] \to - \log \E [ e^{-F (X^D(1))} ]$ by monotone convergence.
Since $F_a \ge F$, \eqref{CP3} for $F_a$ gives
\[
- \log \E \Big[ e^{-F_a (X^D(1))} \Big] \ge \inf_{V^{\rm w} \in {\mathbb H^{\rm w}}} \wt \E\bigg[ F(\wt X^D(1) + \wt V(1)) + \frac{1}{2} \int_0^1 \| \dot{\wt V} \|_{\dot H^1}^2 dt \bigg],
\]
and letting $a \to -\infty$ yields the inequality $\ge$ in \eqref{CP3}.
Conversely, since weak drifts with infinite energy do not contribute to the infimum in \eqref{CP3}, it suffices to consider $V^{\rm w}$ and $Y$ as above.
If $\wt \E [ \max (F(Y), 0) ] = \infty$, then $\wt \E [F(Y)] = +\infty$ by \eqref{Fminus}, and there is nothing to prove.
Otherwise, since $- F^- \le F_a \le \max (F, 0)$ for $a < 0$ and $F_a \downarrow F$ as $a \to -\infty$, the dominated convergence theorem and \eqref{Fminus} give $\wt \E [F_a(Y)] \to \wt \E [F(Y)]$, so that, by \eqref{CP3} for $F_a$,
\[
- \log \E \Big[ e^{-F (X^D(1))} \Big] = \lim_{a \to -\infty} \bigg( - \log \E \Big[ e^{-F_a (X^D(1))} \Big] \bigg) \le \wt \E\big[ F(Y) \big] + \frac{1}{2} \wt\E \bigg[ \int_0^1 \| \dot{\wt V} \|_{\dot H^1}^2 dt \bigg].
\]
Taking the infimum over $V^{\rm w}$ gives the inequality $\le$ in \eqref{CP3}, which concludes the proof.
\end{proof}

\begin{remark}
\rm
\label{RMK:Hw}
In the rest of the paper we use Corollary \ref{COR:BD} in the form \eqref{CP3}, and, in order not to burden the notation, we keep writing $\inf_{V \in \mathbb H}$ for the infimum over weak drifts; that is, from now on a {\it drift} means a weak drift in the sense introduced in Lemma \ref{lem:weak_drift}.
\end{remark}

\begin{remark}
\rm
\label{RMK:domain}
In this subsection, we  did not specify the domain $D$.
In our later analysis, we will mainly consider two types of domains for $D$. One type consists of all balls $B_\dl$ with radius $\dl$ for $\dl \le 1$; we also consider shells $B_1 \backslash B_\dl$ for $\dl < 1$, which are the regions between two concentric spheres.
\end{remark}

\section{Markov property}
\label{SEC:Markov}
This section is devoted to the study of the Markov property of the radial Gaussian free field (GFF), which is one of the crucial ingredients in the proof of Theorem \ref{THM:main}.
Nelson \cite{Nel73}  first observed the \emph{spatial} Markov property for the GFF (without radial assumptions). This informally states that for a ``nice" open domain $U \subset \R^d$, and for every observable $F$ that depends only on  $u|_{U}$, one has that
$$ \E\big[F(u) \big| u|_{U^c}\big] = \E\big[F(u)\big|u|_{\partial U}\big]. $$
Namely, for observables of this form, knowing $u$ on the boundary $\partial U$ gives the same amount of information as knowing $u$  on the whole  complement $U^c$.
This enables one to localise random fields into sub-domains without having to retain the whole knowledge of the field on the complement of the domain.
For a detailed discussion on the Markov property of the GFF, see \cite[Subsection 2.6]{Sh07} and \cite[Chapter 4]{WP21}.

Given a radially symmetric bounded domain $D \subset \R^d$ and an open subset $U$ of $D$, we denote by $H_0^1(U)$ the subspace of $H_0^1(D)$ given by (radial) functions that vanish on $D\setminus U$.
When it comes to our work, we will
exploit the Markov property of the GFF to isolate the singularity of the quartic interaction near the origin, inside $B_\dl$ for $\dl \ll 1$. To this aim, we introduce some notations.

Similarly to the random field $X^+_\dl$ defined in \eqref{mu+}, we define $X^-_\dl$ as a Gaussian random field on $B_\dl$ vanishing at the boundary,
whose law is given by the Gaussian measure
\begin{align}\label{mu-}
d \mu^-_\dl (u) = \frac 1 {Z^-_\dl} \exp\Big( - \frac 12 \int_{ B_\dl} |\nabla u|^2 dx\Big) du,
\end{align}

\noi
where $\mu^-_\dl$ is
supported on distributions with $0$ boundary conditions on $\partial B_\dl = \{|x| = \dl\}$, and extended to $0$ on  $B_1\backslash B_\dl = \{\dl \le |x| \le 1\}$.

In order to fully decompose the radial GFF $X = X^{B_1}$ as in \eqref{gaussian},
we need an additional series of one-parameter fields.

\begin{definition}
\label{DEF:harmonic}
Define the random field $X^\partial_\dl$ over $B_1$ by
\begin{align} \label{Xp2}
X^\partial_\dl(r) = \Big(\frac{r^{-d+2} - 1}{\dl^{-d+2} -1}\ind_{B_1\backslash B_\dl} + \ind_{B_\dl}\Big)\cdot X (\dl),
\end{align}

\noi
where $X(\dl)$ is the random variable given by the random field $X$ in \eqref{gaussian}  evaluated at $r = \dl$.
\end{definition}

As $X(\dl)$ given by \eqref{gaussian} is a Gaussian random variable, it follows that $X^\partial_\dl$ is also Gaussian.
It is easy to check that
$X^\partial_\dl$ satisfies the equation
\begin{equation}
\label{Xp1}
\Delta X^\partial_\dl (x) = 0 \text{ for } |x|<1 \textup{ and } |x| \neq \dl.
\end{equation}

\noi
In what follows, we realise $X^+_\dl$, $X^\partial_\dl$ and $X^{-}_\dl$ on a common probability space, such that they are independent.
Moreover, since $X^\pm_\dl$ vanish on $\{|x| = \dl\}$, \eqref{Xp2} gives $(X^+_\dl + X^\partial_\dl + X^-_\dl)(\dl) = X^\partial_\dl(\dl) = X(\dl)$.

Given any radial test functions $\ph^+$ and $\ph^-$ supported on $\cj{B_1\backslash B_\dl}$ and $B_\dl$, respectively, a similar argument as in \eqref{test} yields
\begin{align}
    \label{test2}
    \E \big[ \jb{X^\pm_\dl,\ph^\pm}_{\dot H^1}^2 \big] = \|\ph^\pm\|_{\dot H^1}^2.
\end{align}

We now show that the law of the random variable $\wt X = X^+_\dl + X^\partial_\dl + X^-_\dl$ is the same as that of the Gaussian random variable $X$ given in \eqref{gaussian}.

\begin{lemma}[Markov property]
\label{LEM:dec}
Let $X^+_\dl, X^\partial_\dl$ and $X^-_\dl$ be defined as above. Then $\Law(X^+_\dl + X^\partial_\dl + X^-_\dl) = \mu$, where $\mu$ is given in \eqref{law}, the law of the Gaussian random field $X$ defined in \eqref{gaussian}.
\end{lemma}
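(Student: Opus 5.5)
Both $\wt X = X^+_\dl + X^\partial_\dl + X^-_\dl$ and $X$ are centred Gaussian fields. A centred Gaussian field is determined by its covariance, so it suffices to show that the two covariance kernels agree. Since both fields are radial and have continuous covariance away from the origin, it is enough to check that
\[
\E\big[\wt X(r)\wt X(s)\big] = K(r,s), \qquad r,s \in (0,1],
\]
where $K$ is given by \eqref{Green_r}. Equivalently, one can pair against radial test functions $\ph$ supported in $B_1\setminus\{0\}$ and compare with \eqref{GFFX}. I restrict to $d\ge3$ so that the explicit formulas \eqref{Green_r} and \eqref{kernel0} apply; the cases $d\le 2$ are handled the same way with logarithmic Green functions.

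By independence, the covariance of $\wt X$ splits as
\[
K_\dl(r,s) + h(r)h(s)K(\dl,\dl) + K^-_\dl(r,s).
\]
Here $K_\dl$ is the shell Green function \eqref{kernel0}, extended by zero off $(\dl,1)^2$. The function $h$ is the profile in \eqref{Xp2}, and $K(\dl,\dl)=\E[X(\dl)^2]=\frac{1}{d-2}(\dl^{-d+2}-1)$. Finally, $K^-_\dl$ is the Dirichlet Green function of the ball $B_\dl$. By scaling Lemma~\ref{LEM:Green_r} to radius $\dl$, it equals $\frac{1}{d-2}\big(\max(r,s)^{-d+2}-\dl^{-d+2}\big)$ on $(0,\dl)^2$ and vanishes elsewhere.

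I then verify the identity case by case.

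\emph{Both points outside}, $r,s\ge\dl$. Write $a=\min(r,s)^{-d+2}$, $b=\max(r,s)^{-d+2}$ and $D=\dl^{-d+2}$, so that $b\le a\le D$. Then \eqref{kernel0} becomes
\[
K_\dl = \frac{(D-a)(b-1)}{(d-2)(D-1)},
\]
and the harmonic term is $\frac{(a-1)(b-1)}{(d-2)(D-1)}$. Their sum is $\frac{b-1}{d-2}$, which is exactly $K(r,s)$.

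\emph{Both points inside}, $r,s<\dl$. Here $K^-_\dl$ plus $K(\dl,\dl)$ gives $\frac{1}{d-2}(\max^{-d+2}-1)$, which matches $K(r,s)$.

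\emph{Mixed case}, $r<\dl\le s$. Only the harmonic term survives. It equals $h(s)K(\dl,\dl)=\frac{1}{d-2}(s^{-d+2}-1)=K(r,s)$, since $\max(r,s)=s$.

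The main obstacle is not this algebra but justifying that the pointwise covariances determine the law on the space where $\mu$ lives. To do this, I would use \eqref{GFFX}: equality of the kernels gives equality of the variances of all pairings $\jb{\cdot,\ph}$. It also requires knowing that the pointwise evaluations $X(\dl)$, $X^\pm_\dl(r)$ make sense, which holds by Lemma~\ref{LEM:GFFX} and \eqref{X+estimate}. Characteristic functionals of centred Gaussian measures are then determined by these variances, so the laws coincide.
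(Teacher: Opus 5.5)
Your proof is correct, and the three cases of the kernel identity check out. It works on the opposite side of the duality from the paper's proof.

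The paper never writes down a covariance kernel. Instead it decomposes the test function as $\ph=\ph_1+\ph_2$, with $\ph_1(\dl)=0$ and $\ph_2=\ph(\dl)(h_\dl+\ind_{B_\dl})$. Harmonicity of $\ph_2$ gives $\dot H^1$-orthogonality of the two pieces. It then checks that $X^+_\dl$, $X^-_\dl$, $X^\partial_\dl$ contribute $\|\ph_1\|^2_{\dot H^1(B_1\setminus B_\dl)}$, $\|\ph_1\|^2_{\dot H^1(B_\dl)}$ and $\|\ph_2\|^2_{\dot H^1}$ to $\E[\jb{\cdot,\ph}_{\dot H^1}^2]$. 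The last of these uses only $\|h_\dl\|_{\dot H^1}^{-2}=K(\dl,\dl)$.

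Since $K$ is the reproducing kernel of the Cameron--Martin space $H^1_0(B_1)$, the paper's orthogonal splitting $H^1_0(B_1)=H^1_0(B_1\setminus B_\dl)\oplus H^1_0(B_\dl)\oplus\mathrm{span}(h_\dl+\ind_{B_\dl})$ is equivalent to your identity $K=K_\dl+K^-_\dl+K(\dl,\dl)\,h\otimes h$.

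What each route buys:
\begin{itemize}
\item Your route is fully explicit. It only pairs the fields in $L^2$ against smooth test functions, via Fubini and the pointwise variance bounds you cite. The price is that you need closed-form Green functions for both the shell and the radius-$\dl$ ball.
\item The paper pairs the fields in $\dot H^1$ against the non-smooth $\ph_1,\ph_2$, which implicitly extends \eqref{test2} to the whole Cameron--Martin space. In exchange, it uses almost no explicit formulas. It is also the argument that carries over to the non-radial domain Markov property, where the harmonic component is infinite-dimensional and no such kernels are available.
\end{itemize}
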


\begin{remark}
\rm
Recalling that  $X^\partial_\dl$ depends only on $X(\dl)$, we have that Lemma \ref{LEM:dec} manifests the Markov property of the radial GFF that we explained in the first paragraph of this section, by taking  $D = \{|x| < 1\}$ and  $U = D \backslash \{|x| = \dl\}$.
Additionally, due to the radial assumption,
the space $(H_0^1 (U))^{\perp}$ is only one-dimensional and has an explicit formula \eqref{Xp2}.
\end{remark}

\begin{proof}
As $X^+_\dl + X^\partial_\dl + X^-_\dl$ is Gaussian, in order to prove the lemma, one only needs to check the covariance relationship. More precisely, for every radially symmetric test function $\ph$, it suffices to show
\begin{align}
\E[\jb{X^+_\dl + X^\partial_\dl + X^-_\dl, \ph}_{\dot H^1}^2] = \|\ph\|_{\dot H^1}^2,
\label{Mar}
\end{align}

\noi
in view of \eqref{test}.
To prove \eqref{Mar}, we decompose $\ph$ as
$$\ph(r) = \ph_1 (r) + \ph_2 (r),$$
where $\ph_1 (r)$ and $\ph_2 (r)$ are in  $H^1_0 (B_1)$, and
\begin{align}
\begin{split}
\ph_1 (r) & = \Big[\ph(r) - \Big(\frac{r^{-d+2} - 1}{\dl^{-d+2} -1}\ind_{B_1\backslash B_\dl} + \ind_{B_\dl}\Big)\cdot \ph(\dl)\Big], \\
\ph_2 (r) & = \Big(\frac{r^{-d+2} - 1}{\dl^{-d+2} -1}\ind_{B_1\backslash B_\dl} + \ind_{B_\dl}\Big)\cdot \ph(\dl).
\end{split}
\label{ph12}
\end{align}

\noi
As $\ph_1$ vanishes at $r = \dl$ and $\ph_2$ is harmonic away from the boundary $\{|x| = 1\} \cup \{|x| = \dl\}$, it follows that $\jb{\ph_1,\ph_2}_{\dot H^1} = 0$ and thus
\begin{align}
    \label{oth}
    \|\ph \|_{\dot H^1}^2 = \|\ph_1 \|_{\dot H^1}^2 + \|\ph_2 \|_{\dot H^1}^2.
\end{align}

\noi
Similarly, we note that
\begin{align}
\label{M1}
\E [\jb{X^+_\dl , \ph}_{\dot H^1}^2] =  \E [\jb{X^+_\dl , \ph_1 }_{\dot H^1}^2] = \|\ph_1 \|_{\dot H^1 (B_1 \backslash B_\dl)}^2,
\end{align}
and
\begin{align}\label{M2}
\E [\jb{X^-_\dl, \ph}_{\dot H^1}^2] =  \E [\jb{X^-_\dl , \ph_1 }_{\dot H^1}^2]  = \|\ph_1 \|_{\dot H^1 ( B_\dl)}^2.
\end{align}

\noi
Since the random field $X^\partial_\dl$ is harmonic  away from $\{|x| = \dl\}$ by \eqref{Xp1}, while $\ph_1$ vanishes on $\{|x| = \dl\} \cup \partial B_1$, by Stokes' theorem, we have that $\jb{X^\partial_\dl, \ph_1}_{\dot H^1} = 0$, which together with \eqref{Xp2} implies
\begin{align}
\label{M3}
\E [ \jb{X^\partial_\dl, \ph}_{\dot H^1}^2]  & = \E [\jb{X^\partial_\dl, \ph_2}_{\dot H^1}^2] \notag\\
& = \E \bigg[ \Big \langle  \Big(\frac{r^{-d+2} - 1}{\dl^{-d+2} -1}\ind_{B_1\backslash B_\dl} + \ind_{B_\dl}\Big)\cdot X (\dl), \ph_2 \Big \rangle_{\dot H^1}^2 \bigg] \notag\\
& =  |\ph (\dl)|^2 \Big \| \Big(\frac{r^{-d+2} - 1}{\dl^{-d+2} -1}\ind_{B_1\backslash B_\dl} + \ind_{B_\dl}\Big)\Big \|_{\dot H^1}^4 \E \big[ |X (\dl) |^2 \big], \notag\\
\intertext{which, in view of \eqref{ph12} and $\E [|X(\dl)|^2] = K(\dl,\dl)$ (see \eqref{V_dl} below), we may rewrite as}
& = \|\ph_2 \|_{\dot H^1}^2  K(\dl,\dl) \int_\dl^1 \frac{(d-2)^2 r^{- d+1}}{(\dl^{-d+2} - 1)^2} dr \notag\\
& = \|\ph_2 \|_{\dot H^1}^2,
\end{align}

\noi
where the radial Green function is given in \eqref{Green_r}. Here, we used
\[
\int_\dl^1 \frac{(d-2)^2 r^{- d+1}}{(\dl^{-d+2} - 1)^2}  dr
= \frac{(d-2) }{\dl^{-d+2} -1} = (K(\dl,\dl))^{-1}
\]
in the last step.
By collecting \eqref{oth}, \eqref{M1}, \eqref{M2}, and \eqref{M3}, together with the independence of $X^+_\dl, X^\partial_\dl, X^-_\dl$, we arrive at \eqref{Mar}.
\end{proof}

To have a further description of the measure $\mu$ in Lemma \ref{LEM:dec}, we define
\begin{align}
\begin{split}
\mu^+_\dl[\ld] &= \Law (X^+_\dl + \ld  h_\dl ),\\
\mu^-_\dl[\ld] &= \Law(X^-_\dl + \ld  \ind_{B_\dl}),
\end{split}
\label{mu_pm}
\end{align}

\noi
where $X^+_\dl$ and $X^-_\dl$ are given by \eqref{mu+} and \eqref{mu-}, and
\begin{align}
\label{hdelta}
h_\dl (x) = \frac{|x|^{-d+2} - 1}{\dl^{- d+ 2} -1}\ind_{B_1\backslash B_\dl}.
\end{align}

\noi
It is easy to see that when $\ld = 0$ in \eqref{mu_pm}, the measures $\mu_\dl^\pm [0]$ coincide with the measures $\mu_\dl^\pm$ defined in \eqref{mu+} and \eqref{mu-}.
We also define a measure for the boundary value at $r = |x| = \dl$ by
\begin{align}
\label{mu_b}
\mu^\partial_\dl & = \Law  (X  (\dl)),
\end{align}

\noi
where $X(\dl)$ is the random variable given by the random field $X$ in \eqref{gaussian}  evaluated at $r = \dl$.
From \eqref{gaussian}, we see that $X(\dl)$ is a  centred Gaussian random variable with mean $0$ and variance
\begin{align}
\label{V_dl}
{\rm Var} (X(\dl)) = \sum_{n \ge 0} \frac{e_n^2 (\dl)}{\ld_n^2} = K(\dl,\dl) = \frac1{d-2} \big( \dl^{-d+2} -1 \big),
\end{align}

\noi
where $K(\cdot, \cdot)$ is the Green function defined in \eqref{Green_k}.

\begin{lemma}
\label{LEM:Xal}
Let $d \ge 3$ and $\al \in (0,\frac12)$. Then the radial Gaussian free field $X$ in \eqref{gaussian} has a version with values in the space $\mathbb X^\al$ defined in \eqref{Xp}.
In particular, $\mu$ in \eqref{law} and each $\rho_\eps$ in \eqref{rho_eps}, $0 < \eps < 1$, can be regarded as Borel probability measures on $\mathbb X^\al$.
\end{lemma}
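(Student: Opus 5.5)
The plan is to apply Kolmogorov's continuity theorem on each shell $B_1\backslash B_{1/k}$ and then glue the resulting continuous versions. Fix $k \ge 2$. By \eqref{GFFX} and Lemma \ref{LEM:Green_r}, the pointwise covariance of $X$ is $\E[X(r)X(s)] = K(r,s) = \frac1{d-2}(\max(r,s)^{-d+2}-1)$. This is Lipschitz on $[1/k,1]^2$, with constant $\les k^{d-1}$. Hence, for $1/k \le s \le t \le 1$,
\[
\E\big[|X(t)-X(s)|^2\big] = K(t,t)-K(s,s) = \frac1{d-2}\big(s^{-d+2}-t^{-d+2}\big) \les_k |t-s| .
\]
In fact this is exactly the variance of a time-changed Brownian increment, consistent with Remark \ref{RMK:radial}. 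Since $X(t)-X(s)$ is Gaussian, all moments follow: $\E|X(t)-X(s)|^{p} \les_{k,p} |t-s|^{p/2}$. Kolmogorov's theorem then yields a version of $X$ on $[1/k,1]$ whose paths lie in $C^{\be}([1/k,1])$ for every $\be<\frac12$, and $\E\|X\|^p_{C^\be(B_1\backslash B_{1/k})}<\infty$. This is exactly the argument of Lemma \ref{LEM:reg}.

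Next I would glue these versions. Take the versions on $[1/k,1]$ for all $k$. Two continuous versions agree on a countable dense set almost surely, hence everywhere. So on a full-measure event there is a single function on $(0,1]$ that is $C^\be$ on every $[1/k,1]$. Setting it to $0$ off this event gives a version $\wt X$ with $\wt X\in C^\be_{\rm loc}$ on $B_1\backslash\{0\}$ for all $\be<\frac12$. It coincides with $X$ as a distribution, since it agrees with $X$ pointwise almost surely for each $r$ and $X$ is locally integrable in $L^2(\O\times B_1)$ by \eqref{GMap2_B}.

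To land in the little H\"older space I would use the standard embedding $C^{\be}(E)\subset c^{\al}(E)$ for $\al<\be$ on a compact interval $E$. This holds because mollifications converge in $C^\al$, by interpolation between uniform convergence and a bounded $C^\be$ norm. Choosing $\be\in(\al,\frac12)$ gives $\wt X\in\mathbb X^\al$. Measurability of $\wt X$ as an $\mathbb X^\al$-valued map holds because $\mathbb X^\al$ is Polish and its Borel $\s$-algebra is generated by point evaluations: the seminorms \eqref{Xpsemi} are suprema over countable families of evaluations by continuity. Hence $\mu=\Law(\wt X)$ is a Borel probability measure on $\mathbb X^\al$.

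For $\rho_\eps$ it remains to check that the density in \eqref{rho_eps} is a Borel function on $\mathbb X^\al$. The map $u\mapsto \int_{B_1\backslash B_\eps}|u|^4dx$ is continuous on $\mathbb X^\al$, since it is controlled by the sup norm on $[\eps,1]\subset[1/k,1]$ for $k\ge 1/\eps$. The density is also bounded, and $Z_\eps>0$ by Proposition \ref{LEM:Ga}. So $\rho_\eps$ is a Borel probability measure on $\mathbb X^\al$. The only mildly delicate step is the measurability and identification of the Borel structure on $\mathbb X^\al$; everything else is routine.
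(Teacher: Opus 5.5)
Your proposal is correct and follows the paper's proof essentially step for step. The shared steps are: the increment bound $K(s,s)-K(t,t)\le k^{d-1}(t-s)$ on each shell, Kolmogorov's theorem as in Lemma \ref{LEM:reg}, gluing by indistinguishability, the embedding $C^{\frac12-\ta}\subseteq c^\al$, and continuity of the density of $\rho_\eps$ on $\mathbb X^\al$; your measurability remarks are extra detail the paper leaves implicit.

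One small slip: the left-hand side of your displayed identity should be $K(s,s)-K(t,t)$, which comes from $K(t,t)-2K(t,s)+K(s,s)$ with $K(t,s)=K(t,t)$. The right-hand side you wrote is correct.
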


\begin{proof}
Fix $k \ge 2$.
By \eqref{Green_k} and \eqref{Green_r}, for $\frac1k \le s \le t \le 1$, we have
\begin{align*}
\E \big[ |X(t) - X(s)|^2 \big] & = K(t,t) - 2 K(t,s) + K(s,s) = K(s,s) - K(t,t) \\
& = \frac{s^{-d+2} - t^{-d+2}}{d-2} \le k^{d-1} (t-s),
\end{align*}

\noi
where we used the mean value theorem in the last step.
Since $X$ is Gaussian, Kolmogorov's continuity theorem, as in the proof of Lemma \ref{LEM:reg}, implies that the restriction of $X$ to $B_1 \backslash B_{1/k}$ has a version in $C^{\frac12 - \ta} (B_1 \backslash B_{1/k})$ for every $\ta > 0$. Here we used that, for radial functions, $|u(x) - u(y)| = |u(|x|) - u(|y|)|$ and $\big| |x| - |y| \big| \le |x - y|$.
Choosing $0 < \ta < \frac12 - \al$, and since $C^{\frac12 - \ta} (B_1 \backslash B_{1/k}) \subseteq c^\al (B_1 \backslash B_{1/k})$, this version takes values in $c^\al (B_1 \backslash B_{1/k})$.
Since two continuous versions of $X$ on $B_1 \backslash B_{1/k}$ are indistinguishable, almost surely these versions agree on their overlaps for all $k \ge 2$, and thus define a function in $\mathbb X^\al$, which is the desired version of $X$.
Finally, the density of $\rho_\eps$ with respect to $\mu$ in \eqref{rho_eps} is a continuous function on $\mathbb X^\al$, since $\|u\|_{L^\infty (B_1 \backslash B_\eps)}$ is controlled by one of the seminorms \eqref{Xpsemi}.
\end{proof}

Given $\dl \in (0,1)$, define the natural restriction map $R_\dl : \mathbb X^\al \to c^\al_{\rad} (B_1 \backslash B_\dl)$ by
\begin{align}
\label{R_res}
R_\dl u := u|_{B_1 \backslash B_\dl},
\end{align}

\noi
where $\mathbb X^\al$ is the space introduced in \eqref{Xp}.
By \eqref{Xpsemi}, the map $R_\dl$ is continuous, and so is $R_\dl : \mathbb X^{\al} \to L^p_{\rad}(B_1\backslash B_\dl)$ for every $1 \le p \le \infty$.

\noi
Then, we have the following representation results.

\begin{lemma}
\label{LEM:meas_rep}
Let $X^+_\dl$, $X^-_\dl$, and $X^\partial_\dl$ be as above.
Given $\ld \in \mathbb R$,
let $\mu^+_\dl[\ld]$, $\mu^-_\dl[\ld]$, and $\mu^\partial_\dl$ be as in \eqref{mu_pm} and \eqref{mu_b}.
Then, we have that
\begin{gather} \label{eq:law}
\Law (X^+_\dl + X^\partial_\dl + X^-_\dl) = \int \mu^+_\dl[\ld] \otimes \mu^-_\dl[\ld]  \mu^\partial_\dl (d \lambda).
\end{gather}
In particular, we have that
\begin{equation}\label{eq:law_R}
(R_\dl)_{\#} \Law(X) = \int \mu^+_\dl[\ld]  \mu^\partial_\dl (d \lambda),
\end{equation}

\noi
where $R_\dl$ is the natural restriction map defined in \eqref{R_res}.
\end{lemma}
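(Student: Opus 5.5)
The plan is a conditioning argument on the boundary value $X(\dl)$, using the independence built into the construction of $X^+_\dl$, $X^\partial_\dl$, $X^-_\dl$.

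By Definition~\ref{DEF:harmonic} and \eqref{hdelta}, we can write
\[
X^\partial_\dl = X(\dl)\, h_\dl + X(\dl)\, \ind_{B_\dl}.
\]
Hence
\[
X^+_\dl + X^\partial_\dl + X^-_\dl = \Phi\big(X^+_\dl, X^-_\dl, X(\dl)\big), \qquad \Phi(a,b,\ld) := (a + \ld h_\dl) + (b + \ld \ind_{B_\dl}).
\]
Here the two brackets live on the disjoint regions $B_1\backslash B_\dl$ and $B_\dl$. Since $X^+_\dl$, $X^-_\dl$ and $X(\dl)$ are independent, with $\Law(X(\dl)) = \mu^\partial_\dl$, the joint law of the triple is $\mu^+_\dl \otimes \mu^-_\dl \otimes \mu^\partial_\dl$. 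Conditioning on $X(\dl) = \ld$ (Fubini), the conditional law of the sum is the law of $(X^+_\dl + \ld h_\dl) + (X^-_\dl + \ld\ind_{B_\dl})$ with independent summands. By \eqref{mu_pm}, this is the image of $\mu^+_\dl[\ld]\otimes\mu^-_\dl[\ld]$ under the gluing map $(u^+,u^-)\mapsto u^+ + u^-$. Integrating against $\mu^\partial_\dl(d\ld)$ gives \eqref{eq:law}, where the product $\mu^+_\dl[\ld]\otimes\mu^-_\dl[\ld]$ is understood as a measure on pairs (functions on the shell and on the inner ball), identified with fields on $B_1$ via gluing.

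To make this rigorous, test against a bounded measurable $G$:
\[
\E\big[G(X^+_\dl + X^\partial_\dl + X^-_\dl)\big] = \int \E\big[G(X^+_\dl + \ld h_\dl + X^-_\dl + \ld \ind_{B_\dl})\big]\,\mu^\partial_\dl(d\ld),
\]
which is Fubini for the product measure. We also need to check that $\ld\mapsto \mu^\pm_\dl[\ld]$ is a measurable kernel. This holds because $\ld \mapsto \E[G(X^+_\dl + \ld h_\dl)]$ is measurable, being continuous for continuous bounded $G$, and a monotone class argument extends this to all bounded measurable $G$.

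For \eqref{eq:law_R}, use Lemma~\ref{LEM:dec}: $\Law(X) = \Law(X^+_\dl + X^\partial_\dl + X^-_\dl)$. Then push forward by $R_\dl$. Since $X^-_\dl$ and $\ind_{B_\dl}$ vanish on $B_1\backslash B_\dl$, the restriction of the sum is $X^+_\dl + X(\dl) h_\dl$. Therefore
\[
(R_\dl)_\#\Law(X) = \int \Law(X^+_\dl + \ld h_\dl)\,\mu^\partial_\dl(d\ld) = \int \mu^+_\dl[\ld]\,\mu^\partial_\dl(d\ld).
\]
Here we use that $R_\dl$ is well defined on the versions of Lemma~\ref{LEM:Xal} and Lemma~\ref{LEM:reg}. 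The shell field $X^+_\dl + \ld h_\dl$ is H\"older continuous up to $|x| = \dl$, so it lies in $c^\al_{\rad}(B_1\backslash B_\dl)$.

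The main obstacle is not the computation but the functional-analytic bookkeeping. We must fix the spaces on which the laws live, so that the gluing map and $R_\dl$ are measurable. We also need the inner piece $X^-_\dl$, which is singular at the origin, to be handled as a distribution-valued or $\mathbb X^\al$-valued variable. I would first try working on $L^2_{\rad}(B_1)\cong L^2_{\rad}(B_\dl)\oplus L^2_{\rad}(B_1\backslash B_\dl)$, where everything is Borel and the decomposition is a linear isometry. I would then transfer to $\mathbb X^\al$ using the continuous versions.
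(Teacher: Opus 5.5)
Your argument is correct and is essentially the paper's: both proofs rest only on the independence of $X^+_\dl$, $X^-_\dl$, $X(\dl)$ and Fubini, and both obtain \eqref{eq:law_R} by noting that restriction to $B_1\backslash B_\dl$ removes $X^-_\dl$ and $X(\dl)\ind_{B_\dl}$. The one difference is the class of test functions: the paper checks equality of the characteristic functionals $e^{i\jb{u,\phi}}$, which implicitly uses that these determine the law, whereas you test directly against bounded measurable $G$, so that uniqueness step is not needed.
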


\begin{proof}
To prove \eqref{eq:law}, we only need to check that both sides of \eqref{eq:law} share the same characteristic function.
To simplify the notation, we denote
\[
\wt \mu = \int \mu^+_\dl[\ld] \otimes \mu^-_\dl[\ld]  \mu^\partial_\dl (d \lambda).
\]
To prove \eqref{eq:law},
it suffices to show that for any radial test function $\phi$ on $B_1$, we have
\begin{align} \label{charac}
\int \exp  (i \jb{u, \phi})  \wt \mu (d u) = \E \big[ \exp (i \jb{X^+_\dl + X^-_\dl + X^\partial_\dl, \phi} )\big].
\end{align}
Recalling that $X^+_\dl$ and $X^-_\dl$ are supported respectively outside and inside the ball $B_\dl$, we identify $L^2(B_1) \cong L^2(B_1\backslash B_\dl) \oplus L^2(B_\dl)$.
We decompose the test function $\phi$ as $\phi = \phi^+ + \phi^-$, where $\phi^+ = \phi \ind_{B_1\backslash B_\dl}$ and $\phi^- = \phi \ind_{B_\dl}$.
Similarly, we can write $u = u^+ + u^-$, where $u^+ = u \ind_{B_1\backslash B_\dl}$ and $u^- = u \ind_{B_\dl}$.
It then follows that
\begin{align*}
\int \exp & (i \jb{u, \phi})  d \wt \mu ( u)  = \int \Big(\int \exp(i\jb{u,\phi}) d \mu^+_\dl[\ld](u^+) d \mu^-_\dl[\ld](u^-)\Big)  \mu^\partial_\dl (d \lambda) \\
& = \int \Big(\int \exp(i(\jb{u^+,\phi^+} + \jb{u^-,\phi^-})) d \mu^+_\dl[\ld](u^+) d \mu^-_\dl[\ld](u^-)\Big)  \mu^\partial_\dl (d \lambda) \\
&= \int \E \big[\exp ( i \langle {X^+_\dl + \ld   h_\dl,\phi^+} \rangle ) \big] \E \big[\exp (i\jb{X^-_\dl + \ld   \ind_{B_\dl},\phi^-} ) \big]  \mu^\partial_\dl (d \lambda), \\
\intertext{where $h_\dl$ is given in \eqref{hdelta}. Then, by the independence of $X^+_\dl$ and $X^-_\dl$, we can proceed with}
& = \int \E \big[\exp ( i \langle {X^+_\dl + \ld  h_\dl,\phi^+} \rangle  + i\jb{X^-_\dl + \ld  \ind_{B_\dl},\phi^-} ) \big]  \mu^\partial_\dl (d \lambda). \\
\intertext{By Fubini's theorem and the support property of $X^+_\dl$ and $X^-_\dl$, we have}
& = \E \big[ \exp (i \jb{X^+_\dl, \phi} + i \jb{X^-_\dl, \phi})\big] \cdot \int \exp ( i \langle {\ld  h_\dl, \phi^+} \rangle  + i\jb{ \ld  \ind_{B_\dl},\phi^-} )  \mu^\partial_\dl (d \lambda) \\
& = \E \big[ \exp (i \jb{X^+_\dl, \phi} + i \jb{X^-_\dl, \phi})\big] \cdot  \int  \exp ( i \langle {\ld  (h_\dl + \ind_{B_\dl}), \phi} \rangle ) \mu_\dl^\partial (d \ld), \\
\intertext{from \eqref{Xp2}, we observe that}
& = \E \big[ \exp (i \jb{X^+_\dl + X^-_\dl, \phi} )\big]  \cdot  \E \big[ \exp (i \jb{X^\partial_\dl, \phi})\big], \\
\intertext{finally, we use the independence again to get}
& = \E \big[ \exp (i \jb{X^+_\dl + X^-_\dl + X^\partial_\dl, \phi} )\big],
\end{align*}
which finishes the proof of \eqref{charac} and thus \eqref{eq:law}. The proof of \eqref{eq:law_R} is similar by noticing that $R_\dl (u) = u^+$.
We omit its proof.
\end{proof}

In the following, we use the Markov property for the Gaussian measure in Lemmas \ref{LEM:dec} and \ref{LEM:meas_rep} to study the  approximating Gibbs  measures $\rho_\eps$ in \eqref{rho_eps}.
Given  $0 < \eps \le \dl < 1$, let
\begin{align} \label{pot_cut}
\begin{split}
V^+_\dl (u) = \frac 14 \int_{B_1\backslash B_\dl} |u|^4 \, dx , \qquad
V^-_{\eps,\dl}(u) = \frac 14 \int_{B_\dl\backslash B_\eps} |u|^4 \, dx.
\end{split}
\end{align}
We define probability measures
\begin{align}
    \label{meas_cut}
    \begin{split}
    d \rho_\dl^+[\ld] (u) & = \frac1{Z^+_\dl[\ld]} \exp\big(-V^+_\dl (u) \big) d\mu^+_\dl[\ld](u), \\
    d \rho^-_{\eps, \dl} [\ld] (u) & = \frac1{Z^-_{\eps, \dl}[\ld]}\exp\big(-V^-_{\eps,\dl}(u) \big) d\mu^-_\dl[\ld](u),
    \end{split}
\end{align}

\noi
where $\mu_\dl^\pm [\ld]$ are given in \eqref{mu_pm}, and $Z^+_\dl[\ld]$, $Z^-_{\eps, \dl}[\ld]$ are normalising constants.
When $\eps = \dl$, we have $V^-_{\dl,\dl} = 0$, since $B_\dl \backslash B_\dl = \emptyset$, so that $\rho^-_{\dl,\dl}[\ld] = \mu^-_\dl[\ld]$ and $Z^-_{\dl,\dl}[\ld] = 1$.
Now, we are ready to state the following Markov property for the measures $\rho_\eps$.

\begin{proposition}[Markov property for $\rho_\eps$]
\label{prop:Markov}
With the above notations and  $0 < \eps \le \dl < 1$,
we have
\begin{equation} \label{eq:rhoMarkov}
(R_\dl)_{\#} \rho_\eps = \int \rho_\dl^+[\ld]  \nu_{\eps,\dl} (d \lambda),
\end{equation}
where $\rho_\eps$ is given in \eqref{rho_eps},  $\rho_\dl^+[\ld]$ is the measure defined in \eqref{meas_cut}, and $ \nu_{\eps,\dl}$ is the measure given in  \eqref{nued} below.
Moreover, $\nu_{\eps,\dl}$ is also the law of $u(\dl)$ for $u$ distributed according to $\rho_\eps$.
\end{proposition}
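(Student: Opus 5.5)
The plan is to combine the Gaussian Markov decomposition of Lemma~\ref{LEM:meas_rep} with the observation that the weight defining $\rho_\eps$ factorises across the sphere $\{|x|=\dl\}$. Since $\eps \le \dl$, we have $\frac14\int_{B_1\setminus B_\eps}|u|^4dx = V^+_\dl(u) + V^-_{\eps,\dl}(u)$. Here $V^+_\dl$ depends only on $u^+ = u\ind_{B_1\setminus B_\dl}$, and $V^-_{\eps,\dl}$ depends only on $u^- = u\ind_{B_\dl}$. Using \eqref{eq:law} together with Lemma~\ref{LEM:dec}, for any bounded measurable $F$ on $c^\al_{\rad}(B_1\setminus B_\dl)$ I would write
\[
\int F(R_\dl u)\, d\rho_\eps(u) = \frac{1}{Z_\eps}\int \bigg(\int F(u^+) e^{-V^+_\dl(u^+)} d\mu^+_\dl[\ld](u^+)\bigg)\bigg(\int e^{-V^-_{\eps,\dl}(u^-)} d\mu^-_\dl[\ld](u^-)\bigg)\mu^\partial_\dl(d\ld).
\]
The product structure comes from the fact that, conditionally on $\ld$, the two measures form a product measure.

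By the definitions \eqref{meas_cut}, the inner integrals equal $Z^+_\dl[\ld]\int F\, d\rho^+_\dl[\ld]$ and $Z^-_{\eps,\dl}[\ld]$, respectively. This suggests defining
\[
\nu_{\eps,\dl}(d\ld) := \frac{1}{Z_\eps} Z^+_\dl[\ld]\, Z^-_{\eps,\dl}[\ld]\, \mu^\partial_\dl(d\ld),
\]
which is presumably \eqref{nued}. With this definition, \eqref{eq:rhoMarkov} follows immediately. Taking $F\equiv1$ shows that $\nu_{\eps,\dl}$ is a probability measure.

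The steps needing care are the following.
\begin{enumerate}
\item[(a)] The normalising constants must satisfy $0<Z^\pm[\ld]\le1$ for every $\ld$, so that $\rho^\pm[\ld]$ are well defined. The upper bound is trivial. The lower bound follows from Jensen's inequality as in Proposition~\ref{LEM:Ga}, using \eqref{X+estimate}, the boundedness of $h_\dl$, and $\E\int_{B_\dl\setminus B_\eps}|X^-_\dl+\ld|^4dx<\infty$. The last bound holds because $\E|X^-_\dl(r)|^2\le K(r,r)$ away from $0$, by the same Green function computation on $B_\dl$.
\item[(b)] The map $\ld\mapsto Z^\pm[\ld]$ and $\ld\mapsto\int F\,d\rho^+_\dl[\ld]$ must be measurable, so that the mixture makes sense. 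I would get this from the continuity of $\ld\mapsto X^+_\dl+\ld h_\dl$ together with dominated convergence.
\item[(c)] The rewriting in the display above uses Fubini's theorem, which is legitimate since all integrands are nonnegative or bounded.
\end{enumerate}

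For the last claim, I would apply the same computation with $F(u)=g(u(\dl))$, for bounded measurable $g$. Evaluation at $\dl$ is continuous on $c^\al_{\rad}(B_1\setminus B_\dl)$. Under $\rho^+_\dl[\ld]$ we have $u(\dl)=\ld$ almost surely, because $X^+_\dl(\dl)=0$ by the Kolmogorov version of Lemma~\ref{LEM:reg} and $h_\dl(\dl)=1$. Hence $\int g(u(\dl))\,d\rho_\eps = \int g(\ld)\,\nu_{\eps,\dl}(d\ld)$.

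The main subtle point is justifying that the version of $X$ in $\mathbb X^\al$ (Lemma~\ref{LEM:Xal}) restricted to the shell coincides in law with $X^+_\dl+X(\dl)h_\dl$ as continuous functions, so that pointwise evaluation at $\dl$ is meaningful. I would handle this by noting that both are continuous Gaussian processes with the same covariance, by Lemma~\ref{LEM:dec}, hence they have equal laws on $c^\al$.
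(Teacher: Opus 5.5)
Your proposal is correct and follows the paper's proof essentially step by step. You factorise the weight across $\{|x|=\dl\}$, disintegrate $\mu$ via \eqref{eq:law}, read off $\nu_{\eps,\dl}$ exactly as in \eqref{nued}, and get the total mass from $F\equiv 1$; the paper computes $\int Z^+_\dl[\ld]Z^-_{\eps,\dl}[\ld]\,\mu^\partial_\dl(d\ld)=Z_\eps$ directly, which is the same thing.

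The only cosmetic difference is in the second claim. You obtain the law of $u(\dl)$ by feeding $F(u)=g(u(\dl))$ into the mixture identity and using $u(\dl)=\ld$ $\rho^+_\dl[\ld]$-a.s. The paper instead redoes the computation with $X^+_\dl+X^\partial_\dl+X^-_\dl$ and independence, which amounts to the same argument. Your checks (a)--(c) and the identification of continuous versions are details the paper leaves implicit.
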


\begin{proof}
We first note from \eqref{rho_eps} and \eqref{pot_cut} that
$$
d\rho_\eps(u) = \frac 1 {Z_\eps} e^{-V^+_\dl (u)} e^{-V^-_{\eps,\dl}(u) } d\mu(u).
$$

\noi
Since $V^+_\dl$ is a function of $u$ restricted to $B_1\backslash B_\dl$, and $V^-_{\eps,\dl}(u)$ is a function of $u$ restricted to $B_\dl$,
it follows from \eqref{rho_eps}, Lemma \ref{LEM:dec}, and \eqref{eq:law} that
\begin{equation}
\label{rho_eps1}
\begin{split}
d \rho_\eps(u) = \frac 1 {Z_\eps} \int \Big(e^{-V^+_\dl(u^+)} d \mu^+_\dl[\ld](u^+)\Big)\otimes\Big(e^{-V^-_{\eps,\dl}(u^-)} d\mu^-_\dl[\ld](u^-)\Big) \mu^\partial_\dl (d \lambda),
\end{split}
\end{equation}
where $u^+$ and $u^-$ are restrictions of $u$ onto $B_1\backslash B_\dl$ and $B_\dl$ respectively.
In particular, for any bounded Borel function $F$, we note that
\begin{align*}
\int F(u^+) & d \big( (R_\dl)_{\#} \rho_\eps \big) (u^+)  = \int F(u^+) d \rho_\eps(u) \\
& = \frac 1{Z_\eps} \int \bigg(\int e^{-V^-_{\eps,\dl} (u^-)} d \mu^-_\dl[\ld](u^-)\bigg) \cdot \bigg( \int   F (u^+)  e^{-V^+_\dl(u^+)} d \mu^+_\dl[\ld](u^+)  \bigg)  \mu^\partial_\dl (d \lambda) \\
& = \int \bigg( \int F(u^+)   d \rho^+_{\dl} [\ld] (u^+) \bigg)  \frac{Z^+_\dl[\ld] Z^-_{\eps,\dl}[\ld]}{Z_\eps} \mu^\partial_\dl (d \lambda) ,
\end{align*}

\noi
where $Z_\dl^+ [\ld]$ and $Z^-_{\eps,\dl}[\ld]$ are given in \eqref{meas_cut}.
Thus, \eqref{eq:rhoMarkov} follows by setting
\begin{align}
\label{nued}
\nu_{\eps,\dl} (d \lambda) : = \frac{Z^+_\dl[\ld] Z^-_{\eps,\dl}[\ld]}{Z_\eps} \, \mu^\partial_\dl (d \lambda).
\end{align}
To see that $\nu_{\eps,\dl}$ is a probability measure, we note from \eqref{meas_cut} that
\[
\begin{split}
\int Z^+_\dl[\ld] Z^-_{\eps,\dl}[\ld] \mu^\partial_\dl (d\ld) & = \int \int \exp\big(-V^+_\dl (u^+) -V^-_{\eps,\dl}(u^-) \big)   d \big( \mu^+_\dl[\ld] \otimes \mu^-_\dl[\ld]\big)(u^+ , u^-) \, \mu^\partial_\dl (d\ld)\\
& = \int \exp\bigg(- \frac14 \int_{B_1 \backslash B_\eps} |u|^4 dx \bigg) d\mu = Z_\eps,
\end{split}
\]
where we used Lemma \ref{LEM:dec} and Lemma \ref{LEM:meas_rep} in the second step.

To complete the proof, we also need to show that with this definition, $\nu_{\eps,\dl}$ is the law of $u(\dl)$ when $u$ is sampled according to $\rho_\eps$. By Lemma \ref{LEM:dec} and \eqref{eq:law} once more, we obtain that for any bounded Borel function $F$,
\begin{align*}
\int F & (u(\dl))  d\rho_\eps(u) = \frac 1 {Z_\eps} \int F(u(\dl)) e^{-V^+_\dl(u)}e^{-V^-_{\eps,\dl}(u)} d\mu(u)\\
& = \frac 1 {Z_\eps}  \E \Big[ F(X (\dl)) e^{-V^+_\dl(X^+_\dl + X^\partial_\dl )} e^{-V^-_{\eps,\dl}(X^-_\dl + X^\partial_\dl )} \Big],  \\
\intertext{ where $X^\partial_\dl$ is defined in Definition \ref{DEF:harmonic}. Then we plug in \eqref{Xp2} and continue with}
& = \frac 1 {Z_\eps}  \E \bigg[ F (X (\dl)) \cdot \exp \Big(- V^+_\dl \big(X^+_\dl + (h_\dl + \ind_{B_\dl} ) \cdot X(\dl) \big)\Big) \\
& \hphantom{XXXXXX} \times \exp \Big(-V^-_{\eps,\dl} \big(X^-_\dl +  (h_\dl + \ind_{B_\dl} )\cdot X (\dl) \big) \Big) \bigg], \\
\intertext{ where $h_\dl$ is given in \eqref{hdelta}. By the independence of $X^+_\dl, X^-_\dl, X^\partial_\dl$, we have}
& =\frac 1 {Z_\eps}  \int F(\ld) \cdot \E \bigg[ \exp \Big(- V^+_\dl \big(X^+_\dl + \ld  (h_\dl + \ind_{B_\dl} )  \big)\Big) \bigg]\\
& \hphantom{XXXXXX} \times \E \bigg[\exp \Big(-V^-_{\eps,\dl} \big(X^-_\dl + \ld (h_\dl + \ind_{B_\dl} ) \big) \Big) \bigg]  \mu^\partial_\dl (d \lambda),\\
\intertext{from \eqref{mu_pm}, we may continue with}
& = \frac 1 {Z_\eps} \int \bigg(\int e^{-V^+_\dl(u^+)} d\mu^+_\dl[\ld](u^+) \bigg) \cdot \bigg(\int e^{-V^-_{\eps,\dl}(u^-)} d\mu^-_\dl[\ld](u^-) \bigg) \cdot F(\ld)  \mu^\partial_\dl (d \lambda), \\
\intertext{then we use \eqref{meas_cut} to get}
&= \int \frac{Z^+_\dl[\ld]Z^-_{\eps,\dl}[\ld]}{Z_\eps} F(\ld)  \mu^\partial_\dl (d \lambda)  = \int F(\ld)  \nu_{\eps,\dl} (d \lambda),
\end{align*}
which concludes the proof.
\end{proof}

\section{On the measure \texorpdfstring{$\rho_\dl^+[\ld]$}{rho+[lambda]}}
\label{SEC:rho+}

In this section, we  characterise the measure $\rho_\dl^+[\ld]$ and show several properties that will be useful in the proof of Theorem \ref{THM:main}.

\subsection{Variational formula for the Gibbs measure}
From now on, $X^+_\dl(t) := X^{B_1\backslash B_\dl}(t)$, $t \in [0,1]$, denotes the process \eqref{Yt} with $D = B_1\backslash B_\dl$, so that $X^+_\dl(1)$ has law $\mu^+_\dl$ in \eqref{mu+}, and we write $X^+_\dl = X^+_\dl(1)$.
Here, $t$ is the time variable; when the spatial variable is displayed, we write $X^+_\dl(t,r)$ with $r = |x|$, so that $X^+_\dl(1,r)$ is the field $X^+_\dl(r)$ of Lemma \ref{LEM:X+}.
With $h_\dl$ as in \eqref{hdelta},
we obtain the following characterisation of the measure $\rho_\dl^+[\ld]$.

\begin{proposition}
\label{PROP:rho}
Let $\ld \in \R$ and $\rho_\dl^+[\ld]$ be the measure defined in \eqref{meas_cut}, and let $F : L^2_{0,{\rm rad}} (B_1\backslash B_\dl) \to \R$ be such that
\[
u \longmapsto F(u) + \frac 14 \int_{B_1\backslash B_\dl} |u|^4 dx
\]
is lower semicontinuous and such that $\int e^{-F} d \rho^+_\dl[\ld] < \infty$.
Then
\begin{equation}\label{eq:rhoVar0}
\begin{aligned}
 \log & \int \exp(-F(u)) d \rho_\dl^+[\ld](u) \\
&= \inf_{\Theta \in  \mathbb H}
\E\bigg[ \frac14 \int_{B_1\backslash B_\dl} |X^+_\dl(1) +  \ld h_\dl + \Theta (1)|^4 dx + \frac{1}{2} \int_0^1 \| \dot \Theta(t) \|_{\dot H^1({B_1\backslash B_\dl})}^2 dt\bigg] \\
&\hphantom{XX}  - \inf_{\Theta \in  \mathbb H} \E\bigg[F\big(X^+_\dl(1) +  \ld h_\dl + \Theta(1)\big) + \frac 14\int_{B_1\backslash B_\dl} |X^+_\dl(1) +  \ld h_\dl + \Theta(1)|^4dx \\
& \hphantom{XXXXXX}  + \frac{1}{2} \int_0^1 \| \dot \Theta(t) \|_{\dot H^1({B_1\backslash B_\dl})}^2 dt	\bigg],
\end{aligned}
\end{equation}

\noi
where $X^+_\dl(t)$ is the process defined at the beginning of this subsection, $h_\dl$ is defined in \eqref{hdelta}, and $\mathbb H$ is the space of drifts as in Corollary \ref{COR:BD}, with the convention of Remark \ref{RMK:Hw}.
\end{proposition}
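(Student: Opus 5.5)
The plan is to write the left-hand side of \eqref{eq:rhoVar0} as a difference of two logarithms of Gaussian expectations and to apply Corollary~\ref{COR:BD} to each one separately. By \eqref{meas_cut} and \eqref{mu_pm}, with $X^+_\dl = X^+_\dl(1)$ we have
\begin{align*}
\log \int e^{-F} d\rho^+_\dl[\ld]
= \log \E\Big[ e^{-G_1(X^+_\dl(1))}\Big] - \log \E\Big[ e^{-G_0(X^+_\dl(1))}\Big],
\end{align*}
where $G_0(v) := \frac14 \int_{B_1\backslash B_\dl} |v + \ld h_\dl|^4 dx$ and $G_1(v) := F(v+\ld h_\dl) + G_0(v)$. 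Here the domain is $D = B_1 \backslash B_\dl$ and the process is $X^D = X^+_\dl$. Once Corollary~\ref{COR:BD} applies to both $G_0$ and $G_1$, the identity \eqref{CP3} turns $-\log\E[e^{-G_0}]$ into the first infimum in \eqref{eq:rhoVar0} and $-\log \E[e^{-G_1}]$ into the second. Subtracting gives the claim, provided both quantities are finite.

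The next step is to check the hypotheses of Corollary~\ref{COR:BD}.
\begin{itemize}
\item For $G_0$: it takes values in $[0,+\infty]$, and it is lower semicontinuous on $L^2_{0,\rm rad}(D)$ by Fatou's lemma along a.e.\ convergent subsequences. Also $\E[e^{-G_0}] \le 1$.
\item For $G_0$, finiteness: we need $Z^+_\dl[\ld] = \E[e^{-G_0}] > 0$. This follows from Jensen's inequality as in \eqref{Ga}, using the bound $\E\int|X^+_\dl + \ld h_\dl|^4 \les \Ld(\dl) + \ld^4 < \infty$ from Corollary~\ref{COR:L4est} and the boundedness of $h_\dl$. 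Hence the first infimum is finite.
\item For $G_1$: it is lower semicontinuous, because $v \mapsto v + \ld h_\dl$ is a continuous translation and $u \mapsto F(u) + \frac14\int|u|^4$ is lower semicontinuous by assumption.
\item For $G_1$, integrability: $\E[e^{-G_1}] = Z^+_\dl[\ld]\int e^{-F}d\rho^+_\dl[\ld] < \infty$ by hypothesis.
\end{itemize}
Corollary~\ref{COR:BD} does not require a lower bound on $G_1$, so the identity holds for $G_1$ as well. To obtain the exact form of \eqref{eq:rhoVar0}, rename the drift $V$ as $\Theta$ and use the weak-drift convention of Remark~\ref{RMK:Hw}.

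The main obstacle is making the difference of the two infima meaningful, i.e.\ excluding the form $\infty - \infty$ or $(-\infty) - (-\infty)$. The second infimum equals $-\log \E[e^{-G_1}] \in (-\infty, +\infty]$, because $\E[e^{-G_1}] < \infty$. It is $+\infty$ exactly when $\E[e^{-G_1}] = 0$, i.e.\ $\int e^{-F} d\rho^+_\dl[\ld] = 0$. In that case both sides of \eqref{eq:rhoVar0} are $-\infty$, with the convention $\log 0 = -\infty$.

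Otherwise all quantities are finite and the subtraction is legitimate. One should also note that $F$ is real-valued, so $F(u) = G_1 - G_0$ is defined pointwise wherever $G_0 < \infty$. This holds a.s.\ under the Gaussian measure. Along any admissible drift with finite energy, $\E\, G_0(X+\Theta) < \infty$ is also needed, and this is automatic whenever the bracket in the second infimum is finite, since $F^-$ is integrable by \eqref{Fminus}.
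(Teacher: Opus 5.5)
Your proposal is correct and follows the paper's own argument. You write the logarithm as the difference of $\log \E[e^{-G_1(X^+_\dl(1))}]$ and $\log \E[e^{-G_0(X^+_\dl(1))}]$, check lower semicontinuity and finiteness of the exponential moments, and apply Corollary~\ref{COR:BD} to each term. Your extra check that $Z^+_\dl[\ld]>0$ is a welcome addition; the case $\int e^{-F}\,d\rho^+_\dl[\ld]=0$ is vacuous since $F$ is real-valued.

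Your closing remark is unnecessary, and its justification is off. The second bracket is simply $\E\big[G_1(X^+_\dl(1)+\Theta(1))+\frac12\int_0^1\|\dot\Theta\|_{\dot H^1}^2dt\big]$ as in \eqref{CP3}, so no separate bound on $\E\,G_0$ is needed. Moreover, \eqref{Fminus} controls the negative part of $G_1$, not of $F$.
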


\begin{proof}
From the definition \eqref{meas_cut} of $\rho^+_\dl[\ld]$ and the definition \eqref{mu_pm} of $\mu_\dl^+ [\ld]$, we see that
\[
\log \int \exp(-F(u)) d \rho_\dl^+[\ld](u) = \log \bigg(  \frac1{Z^+_\dl[\ld]} \int \exp\big(- F(u) -V^+_\dl (u) \big) d\mu^+_\dl[\ld](u) \bigg),
\]

\noi
where $V_\dl^+$ is as in \eqref{pot_cut}, and $Z^+_\dl[\ld]$ is given in \eqref{meas_cut}.
Then it follows that
\begin{align}
\label{rhoVar1}
\begin{split}
\log & \int \exp(-F(u)) d \rho_\dl^+[\ld](u) \\
& = \log \int \exp \bigg (-F(u)  -\frac14 \int_{B_1 \backslash B_\dl} |u|^4 dx \bigg) d \mu_\dl^+[\ld](u) \\
& \hphantom{XX} - \log \int \exp \bigg( -\frac14 \int_{B_1 \backslash B_\dl} |u|^4 dx\bigg) d \mu_\dl^+ [\ld] (u)\\
& = \log \E \bigg[  \exp \Big (-F(X^+_\dl + \ld h_\dl)  -\frac14 \int_{B_1 \backslash B_\dl} |X^+_\dl + \ld h_\dl|^4 dx \Big) \bigg]  \\
& \hphantom{XX} - \log \E \bigg [ \exp \Big( -\frac14 \int_{B_1 \backslash B_\dl} |X^+_\dl + \ld h_\dl|^4 dx\Big) \bigg],
\end{split}
\end{align}

\noi
where we used \eqref{mu_pm} in the last step.
We first note that both functionals appearing in the right-hand side of \eqref{rhoVar1} are admissible for Corollary \ref{COR:BD}.
Indeed, $\frac14 \int_{B_1\backslash B_\dl} | \cdot |^4 dx$ is lower semicontinuous on $L^2_{0,{\rm rad}} (B_1\backslash B_\dl)$ and non-negative, while $F + \frac14 \int_{B_1\backslash B_\dl} | \cdot |^4 dx$ is lower semicontinuous by hypothesis. Moreover, the two exponential moments are finite, the first because the functional is non-negative, and the second by the assumption $\int e^{-F} d\rho^+_\dl[\ld] < \infty$, together with \eqref{rhoVar1}.
For the first term in the right-hand side of \eqref{rhoVar1}, we apply Corollary \ref{COR:BD} to get
\begin{equation}
\label{BDP1}
    \begin{aligned}
    -\log & \E\Big[\exp\Big(- F \big( X^+_\dl + \ld h_\dl \big) - \frac14 \int_{B_1 \backslash B_\dl}  |X^+_\dl +  \ld h_\dl |^4 dx\Big)\Big] \\
    &= \inf_{\Theta \in  \mathbb H}
	\E\bigg[ F \big(X^+_\dl(1) +  \ld h_\dl + \Theta (1)\big) + \frac 14\int_{B_1\backslash B_\dl} |X^+_\dl(1) +  \ld h_\dl + \Theta (1)|^4dx \\
&\phantom{=- \inf_{\Theta \in  \mathbb H}
	\E\bigg[]} + \frac{1}{2} \int_0^1 \| \dot \Theta (t) \|_{\dot H^1({B_1\backslash B_\dl})}^2 dt
	\bigg].
    \end{aligned}
\end{equation}

\noi
Similarly, we apply Corollary \ref{COR:BD} to the second term in the right-hand side of \eqref{rhoVar1} to get
\begin{equation}
\label{BDP2}
    \begin{aligned}
    -\log & \E\Big[\exp\Big( - \frac14 \int_{B_1 \backslash B_\dl}  |X^+_\dl +  \ld h_\dl |^4dx\Big)\Big] \\
    &= \inf_{\Theta \in  \mathbb H}
	\E\bigg[ \frac 14\int_{B_1\backslash B_\dl} |X^+_\dl(1) +  \ld h_\dl + \Theta (1)|^4dx + \frac{1}{2} \int_0^1 \| \dot \Theta (t) \|_{\dot H^1({B_1\backslash B_\dl})}^2 dt
	\bigg].
    \end{aligned}
\end{equation}

\noi
Then \eqref{eq:rhoVar0} follows from \eqref{rhoVar1}, \eqref{BDP1}, and \eqref{BDP2}.
\end{proof}

While the formula \eqref{eq:rhoVar0} is already extremely useful, we can characterise the measure $\rho_\dl^+[\ld]$ even more precisely.
In particular, we would like to expand the drift term $\Theta$ in such a way that there are crucial cancellations between the two terms in \eqref{eq:rhoVar0}.
To this purpose, let $z_{\dl,\ld}$ be the radial solution of the equation
\begin{equation}
\begin{cases}
\Delta z_{\dl,\ld} = z_{\dl,\ld}^3, & \text{ on } B_1\backslash B_\dl, \\
z_{\dl,\ld}(x) \big|_{|x| = \dl}  = \ld, \\
z_{\dl,\ld}(x) \big|_{|x| = 1}  = 0.
\end{cases}
\label{z}
\end{equation}

\noi
Such a solution exists and is unique: it is the unique minimiser over $z - \ld h_\dl \in H^1_{0,{\rm rad}}(B_1\backslash B_\dl)$ of the strictly convex functional $z \mapsto \frac12 \int |\nabla z|^2 dx + \frac14 \int z^4 dx$, and uniqueness follows from the strict monotonicity of $z \mapsto z^3$.
It turns out that $z_{\dl,\ld}$ captures the main defocusing structure hidden in the $\Phi^4$ measure.
This structure ensures that $z_{\dl,\ld}$ blows up in a controllable way;
see Lemma \ref{LEM:z} for further details.
The first attempt to expand $\Theta$ is
\begin{align}
\label{firstEX}
\Theta (t) = t (z_{\dl,\ld} - \ld h_\dl) + {\rm remainder}.
\end{align}

\noi
However, the expansion \eqref{firstEX} is not enough to observe the full  cancellation.
To explore the nonlinear  cancellations in \eqref{eq:rhoVar0}, further expansion of the remainder term in \eqref{firstEX} is needed.
We define a process $Z_{\dl, \ld} (t)$ as
\begin{equation}
\label{Z}
\begin{cases}
\Delta Z_{\dl,\ld} (t,x) = 3 X^+_\dl (t) z_{\dl,\ld}^2 (x), \\
Z_{\dl,\ld}(t, x) \big|_{|x| = \dl} = 0, \\
Z_{\dl,\ld}(t, x) \big|_{|x| = 1} =0.
\end{cases}
\end{equation}

\noi
Moreover, let $\Xi(t) = \Xi_{\dl,\ld}(t)$ be defined  as
\begin{equation}\label{Xidef}
\Xi(t) = \int_0^t Z_{\dl,\ld} (s) \, ds,
\end{equation}

\noi
so that $\dot \Xi = Z_{\dl,\ld}$ is a martingale.
We will also need an extra correction term. Define $\U = \U_{\dl,\ld}$
as the solution to the following differential equation
\begin{equation}
\label{Upsilon}
{\U} (t) = - \int_0^t \big( 3(1-s) z_{\dl,\ld}^2 - \Delta \big)^{-1} \Big(3\big((1-s)\dot \Xi (s) +  \Xi(s) +\U(s)\big) z_{\dl,\ld}^2\Big) ds,
\end{equation}

\noi
where $\big( 3(1-s) z_{\dl,\ld}^2 - \Delta \big)^{-1}$ denotes the inverse with Dirichlet boundary conditions on $\partial (B_1\backslash B_\dl)$; see Lemma \ref{LEM:IDe} for the well-posedness of \eqref{Upsilon}.
Equivalently, $\dot \U(t)$ vanishes on $\partial (B_1\backslash B_\dl)$ and satisfies
\begin{align}
    \label{Upsilon1}
    \Delta \dot \U = 3\Big( \big(\U(t) + \Xi(t) + (1-t)\big(\dot \U(t) + \dot \Xi(t)\big) \big) z_{\dl,\ld}^2 \Big) .
\end{align}

\noi
Note that $\dot\U$ is also a martingale.
Indeed, $\dot \U(0) = 0$, since $\dot \Xi(0) = Z_{\dl,\ld}(0) = 0$, and, taking the differential of \eqref{Upsilon1} in $t$, the terms involving $dt$ cancel and we obtain
\begin{align}
\begin{split}
\Delta \, d\dot \U(t) & = 3 (1-t) z_{\dl,\ld}^2 \big( d \dot \U(t) + d \dot \Xi(t) \big), \quad \text{that is,} \\
d \dot \U(t) & = - \big( 3(1-t) z_{\dl,\ld}^2 - \Delta \big)^{-1} \big( 3 (1-t) z_{\dl,\ld}^2 \, d \dot \Xi(t) \big).
\end{split}
\label{dUpsilon}
\end{align}

\noi
Hence, $\dot \U$ is a stochastic integral of a deterministic operator against the martingale $\dot \Xi$.

Now, we are ready to rewrite \eqref{eq:rhoVar0} in a concise manner.

\begin{proposition}\label{prop:rhoVar}
With the same  assumptions as in Proposition \ref{PROP:rho},
we have
\begin{equation}
\label{eq:rhoVar}
\begin{split}
\log & \int \exp(-F(u)) d \rho_\dl^+[\ld](u) \\
 & = \inf_{W \in  \mathbb H}
	\E\big[ P_{\dl,\ld}(W) \big]
	- \inf_{W \in  \mathbb H}
	\E\big[ F(X^+_\dl + z_{\dl,\ld} + W + \Xi + \U) + P_{\dl,\ld}(W) \big],
\end{split}
\end{equation}

\noi
where $P_{\dl,\ld}$ is given  by
\begin{align}
\label{P1}
\begin{split}
P_{\dl,\ld}  (W)
& = \frac 14 \int_{B_1\backslash B_\dl} \big(W +\Xi  + \U \big)^4 + \frac 12 \int_0^1 \| \dot W(s)\|_{\dot H^1}^2 ds\\
& \phantom{=} + \int_{B_1\backslash B_\dl} \big(W +\Xi  + \U \big)^3 (z_{\dl,\ld} + X^+_\dl )  \\
& \phantom{=} + \frac32 \int_{B_1\backslash B_\dl} W ^2(z_{\dl,\ld} + X^+_\dl )^2   \\
& \phantom{=} + 3\int_{B_1\backslash B_\dl} W (\Xi + \U ) \big((z_{\dl,\ld} + X^+_\dl)^2 - z_{\dl,\ld}^2\big)  \\
& \phantom{=} + \int_{B_1\backslash B_\dl} (W +\Xi  + \U ) \big [(z_{\dl,\ld} + X^+_\dl )^3 - z_{\dl,\ld}^3 - 3 X^+_\dl  z_{\dl,\ld}^2  \big].
\end{split}
\end{align}
Here $\Xi$ and $\U$ are defined by \eqref{Xidef} and \eqref{Upsilon}, respectively.
\end{proposition}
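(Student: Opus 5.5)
The plan is to start from Proposition~\ref{PROP:rho} and make the change of variables
\[
\Theta(t) = t\,(z_{\dl,\ld} - \ld h_\dl) + \Xi(t) + \U(t) + W(t)
\]
in both infima. The first step is to check that this map $W \mapsto \Theta$ is a bijection of $\mathbb H$ (weak drifts, Remark~\ref{RMK:Hw}). The processes $\Xi$ and $\U$ are adapted to the filtration of $X^+_\dl$: $\Xi$ is built from $X^+_\dl(t)$ by a deterministic elliptic solve, and $\U$ comes from Lemma~\ref{LEM:IDe}. Their time derivatives lie in $L^2_t \dot H^1_0$ with finite moments, because $z_{\dl,\ld}^2$ is smooth on the closed shell (it is bounded there by the maximum principle with $|z|\le|\ld|$). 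Hence $\Theta \in \mathbb H$ if and only if $W \in \mathbb H$. At $t=1$ we get $X^+_\dl + \ld h_\dl + \Theta(1) = X^+_\dl + z_{\dl,\ld} + \Xi(1)+\U(1)+W(1)$, which is exactly the argument of $F$ in \eqref{eq:rhoVar}.

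Next I would expand the two parts of the functional.

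\textbf{Quartic part.} Write $A = W+\Xi+\U$ and $Y = z_{\dl,\ld}+X^+_\dl$, and expand $(Y+A)^4$. I regroup the terms as $Y^4 + 4Y^3A + 6Y^2A^2 + 4YA^3 + A^4$, splitting $4Y^3A = 4(z^3 + 3X^+_\dl z^2)A + 4[Y^3 - z^3 - 3X^+_\dl z^2]A$. I also decompose $6Y^2A^2$ into the $W^2Y^2$ term, the cross term $2W(\Xi+\U)Y^2$, and the $(\Xi+\U)^2Y^2$ term, and then rewrite $Y^2 = (Y^2 - z^2) + z^2$ in the cross term.

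\textbf{Kinetic part.} Expand $\frac12\int_0^1\|\dot\Theta\|^2$ with $\dot\Theta = (z-\ld h_\dl) + Z_{\dl,\ld} + \dot\U + \dot W$. Integrating by parts the cross term with $z - \ld h_\dl$ gives $\int \nabla(z-\ld h_\dl)\cdot\nabla \dot W = -\int \Delta z\,\dot W = -\int z^3 \dot W$, using that $h_\dl$ is harmonic and $W$ vanishes on the boundary. Integrating in time, this cancels $\int z^3 W(1)$ from the quartic part; the same holds with $\Xi+\U$ in place of $W$.

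The main obstacle is the remaining cancellations. Terms that do not depend on $W$ are irrelevant: they are identical in both infima, since $\Xi,\U,X^+_\dl$ do not depend on $W$, so they cancel in the difference. However, the terms linear in $W$ must cancel exactly, and the bookkeeping has to be done through time integrals.

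For $W$ I use $W(1)=\int_0^1 \dot W$, together with Itô/Fubini arguments on the martingale $X^+_\dl(t)$. The cross term $\int_0^1\langle Z_{\dl,\ld},\dot W\rangle_{\dot H^1}\,dt = -\int_0^1\int 3X^+_\dl(t)z^2\dot W\,dt$ is rewritten, by $X^+_\dl(t)=\E[X^+_\dl(1)\mid\F_t]$ and the adaptedness of $\dot W$, as $-\E\int 3X^+_\dl(1) z^2 W(1)$. This kills the term $3X^+_\dl z^2 W$.

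Similarly, $\langle \dot\U,\dot W\rangle_{\dot H^1}$, combined with \eqref{Upsilon1}, should produce $-3\int(\U+\Xi)(1)z^2W(1)$ in expectation. This uses the identity $\int_0^1 (f(t)+(1-t)\dot f(t))\,dt$-type relation, namely $\frac{d}{dt}\big[(1-t)f\big] \cdot$ combined with integration by parts in $t$ and the martingale property of $\dot\Xi,\dot\U$. It cancels the $z^2$ part of the cross term $3W(\Xi+\U)Y^2$, leaving exactly the $(Y^2-z^2)$ term in \eqref{P1}.

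This time-integration-by-parts step, with the precise $(1-t)$ weights in \eqref{Upsilon}, is where I expect the difficulty. I would verify it first in a finite-dimensional Galerkin truncation and then pass to the limit.

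Finally, I would justify that all expectations are finite for drifts of finite energy, with the infinite-energy cases agreeing on both sides. For this I use the moment bounds of Lemma~\ref{LEM:reg} and Corollary~\ref{COR:L4est} on the fixed shell, which is enough to legitimise separating the $W$-independent constants.
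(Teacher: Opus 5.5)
Your proposal is essentially the paper's proof. It uses the same change of variables \eqref{CoV} and the same bijection of (weak) drifts, the same $W$-independent finite constants cancel between the two infima, and the three cancellations \eqref{CoV11}--\eqref{CoV13} come from the same mechanisms: $\Delta(z_{\dl,\ld}-\ld h_\dl)=z_{\dl,\ld}^3$, the martingale property of $X^+_\dl(t)$, and time integration by parts combined with the martingale property of $\dot\Xi+\dot\U$. The step you flag closes directly, with no Galerkin truncation: writing $M=\dot\Xi+\dot\U$ and $W(t)=\int_0^t\dot W(s)\,ds$, the martingale property gives $\E\int_0^1\!\int W(t)M(t)z_{\dl,\ld}^2\,dx\,dt=\E\int_0^1\!\int(1-t)\dot W(t)M(t)z_{\dl,\ld}^2\,dx\,dt$, so $3\E\int W(1)(\Xi+\U)(1)z_{\dl,\ld}^2$ equals $3\E\int_0^1\!\int\dot W\big((\Xi+\U)+(1-t)(\dot\Xi+\dot\U)\big)z_{\dl,\ld}^2\,dx\,dt$, which by \eqref{Upsilon1} is exactly $-\E\int_0^1\jb{\dot\U,\dot W}_{\dot H^1}dt$.
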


\begin{proof}
We expand $\Theta (t)$ as
\begin{align}\label{CoV}
\Theta (t) = (z_{\dl,\ld} - \ld h_\dl) t +  W (t) + \Xi (t) + \U (t),
\end{align}

\noi
where $W \in \mathbb H$ is a drift term, $z_{\dl, \ld}$ and $h_\dl$ are given by  \eqref{z} and \eqref{hdelta} respectively,
and $\Xi$ and $\U$ are defined in \eqref{Xidef} and \eqref{Upsilon}.
Following the convention of Remark \ref{RMK:Hw}, $W$ is a weak drift, and $\Xi$ and $\U$ are built through \eqref{Z}, \eqref{Xidef}, and \eqref{Upsilon} from the cylindrical Brownian motion of that weak drift, which we still denote by $X^+_\dl(t)$.
As $X^+_\dl(t)$ is a cylindrical Brownian motion with respect to the filtration $(\wt \G_t)$ of the weak drift, $\Xi$ and $\U$ are $(\wt \G_t)$-adapted, and $X^+_\dl(t)$, $\dot \Xi$, and $\dot \U$ are $(\wt \G_t)$-martingales, by \eqref{Z} and \eqref{dUpsilon}.
Moreover, $|z_{\dl,\ld}| \le |\ld|$ by the maximum principle.
Hence, testing \eqref{Z} against $\dot \Xi(t) = Z_{\dl,\ld}(t)$ and \eqref{Upsilon1} against $\dot \U(t)$, and using the Poincar\'e inequality $\|f\|_{L^2} \le C_\dl \|f\|_{\dot H^1}$ on $H^1_{0,\rad}(B_1 \backslash B_\dl)$, we obtain
\begin{align*}
\|\dot \Xi(t)\|_{\dot H^1} & \le 3 C_\dl \ld^2 \|X^+_\dl(t)\|_{L^2}, \\
\|\dot \U(t)\|_{\dot H^1} & \le 3 C_\dl^2 \ld^2 \big( \|\U(t)\|_{\dot H^1} + \|\Xi(t)\|_{\dot H^1} + \|\dot \Xi(t)\|_{\dot H^1} \big).
\end{align*}

\noi
Since $\|\U(t)\|_{\dot H^1} \le \int_0^t \|\dot \U(s)\|_{\dot H^1} ds$ and $\|\Xi(t)\|_{\dot H^1} \le \int_0^t \|\dot \Xi(s)\|_{\dot H^1} ds$, and since $t \mapsto \|\dot \U(t)\|_{\dot H^1}$ is bounded on $[0,1]$ by Lemma \ref{LEM:IDe}, Gronwall's inequality yields
\begin{align}
\label{XiU}
\sup_{0 \le t \le 1} \big( \|\dot \Xi(t)\|_{\dot H^1} + \|\dot \U(t)\|_{\dot H^1} \big) \le C_{\dl,\ld} \sup_{0 \le t \le 1} \|X^+_\dl(t)\|_{L^2},
\end{align}

\noi
whose right-hand side has finite moments of all orders by Doob's maximal inequality, $X^+_\dl(1)$ being Gaussian.
Since also $z_{\dl,\ld} - \ld h_\dl \in H^1_{0,\rad}(B_1 \backslash B_\dl)$, the relation \eqref{CoV} defines a bijection $\Theta \leftrightarrow W$ of $\mathbb H$ onto itself, which maps drifts of finite energy, namely with $\E \int_0^1 \|\dot \Theta(t)\|_{\dot H^1}^2 dt < \infty$, onto drifts of finite energy.
Drifts of infinite energy do not contribute to the infima in \eqref{eq:rhoVar0} (see Corollary \ref{COR:BD}), and we use the same convention in \eqref{eq:rhoVar}; hence, in what follows, we may assume that $W$ has finite energy.
Then, we plug \eqref{CoV} into the right-hand side of \eqref{eq:rhoVar0} term by term.
We first consider the first term in the right-hand side of \eqref{eq:rhoVar0} to get
\begin{align}
\label{CoV1}
\begin{split}
 \E & \bigg[ \frac14 \int_{B_1\backslash B_\dl} |X^+_\dl +  \ld h_\dl + \Theta |^4 dx + \frac{1}{2} \int_0^1 \| \dot \Theta (t) \|_{\dot H^1({B_1\backslash B_\dl})}^2 dt \bigg] \\
 & = \E \bigg[  \frac14 \int_{B_1\backslash B_\dl} |X^+_\dl  +  z_{\dl, \ld} + W  + \Xi  + \Upsilon  |^4 dx \\
 & \hphantom{XXX} + \frac{1}{2} \int_0^1 \| z_{\dl,\ld} - \ld h_\dl +  \dot W (t) + \dot \Xi  (t) + \dot \Upsilon (t) \|_{\dot H^1({B_1\backslash B_\dl})}^2 dt \bigg].
 \end{split}
 \end{align}

\noi
Here, as before, we  write $\Theta = \Theta (1)$, and similarly for $W$, $\Xi$, and $\U$.
We shall show that \eqref{CoV1} agrees with $\E\big[ P_{\dl,\ld}(W) \big]$ up to an additive constant which does not depend on $W$.
For this purpose, we expand the integrand in \eqref{CoV1} to get
 \begin{align*}
 \begin{split}
 & \frac 14 \int \big(W +\Xi  + \Upsilon  \big)^4 + \frac14 \int \big(z_{\dl,\ld} + X^+_\dl  \big)^4  + \int \big(W +\Xi  + \Upsilon  \big)^3 (z_{\dl,\ld} + X^+_\dl )  \\
& \phantom{=} + \frac32 \int \big(W +\Xi  + \Upsilon  \big)^2 (z_{\dl,\ld} + X^+_\dl )^2 + \int \big(W +\Xi  + \Upsilon  \big) (z_{\dl,\ld} + X^+_\dl )^3  \\
& \phantom{=} + \frac 12 \int_0^1 \Big( \| \dot W(t)\|_{\dot H^1_{\rad}}^2 +  \| \dot \Xi(t)\|_{\dot H^1_{\rad}}^2  +  \| \dot \Upsilon(t)\|_{\dot H^1_{\rad}}^2 + \| z_{\dl,\ld} - \ld h_\dl\|_{\dot H^1_{\rad}}^2\Big) dt \\
& \phantom{=} + \int_0^1 \jb{ z_{\dl,\ld} - \ld h_\dl, \dot W(t) + \dot \Xi (t) + \dot \Upsilon (t)}_{\dot H^1_{\rad}}  dt\\
& \hphantom{X} + \int_0^1 \big( \jb{\dot W(t), \dot \Xi (t)}_{\dot H^1_{\rad}} + \jb{\dot W(t), \dot \Upsilon (t)}_{\dot H^1_{\rad}} +  \jb{\dot \Upsilon(t), \dot \Xi (t)}_{\dot H^1_{\rad}} \big) dt .
\end{split}
\end{align*}

\noi
By recalling the definition of $P_{\dl,\ld} (W)$ in \eqref{P1} and by further expanding the above, we arrive at
\begin{align}
\begin{split}
\eqref{CoV1}  &  = \E \bigg[ \frac32 \int_{B_1 \backslash B_\dl} \big(\Xi  + \Upsilon  \big)^2 (z_{\dl,\ld} + X^+_\dl )^2 \\
& \hphantom{X} + \int_0^1 \jb{ z_{\dl,\ld} - \ld h_\dl,  \dot \Xi (t) + \dot \Upsilon (t)}_{\dot H^1_{\rad}}  dt \\
&  \phantom{=}   + \int_{B_1\backslash B_\dl} \big( \Xi  + \U\big)  \big(  z_{\dl,\ld}^3 + 3 X^+_\dl  z_{\dl,\ld}^2  \big)  +  \frac14 \int \big(z_{\dl,\ld} + X^+_\dl  \big)^4 \\
&  \phantom{=}  + \frac 12 \int_0^1 \Big(\| \dot \Xi(t)\|_{\dot H^1_{\rad}}^2  +  \| \dot \Upsilon(t)\|_{\dot H^1_{\rad}}^2 + \| z_{\dl,\ld} - \ld h_\dl\|_{\dot H^1_{\rad}}^2 \Big) dt + \int_0^1 \jb{\dot \Upsilon(t), \dot \Xi (t)}_{\dot H^1_{\rad}} dt \\
& \phantom{=} + P_{\dl,\ld} (W)  \\
& \phantom{=}  + \int_{B_1\backslash B_\dl} W    z_{\dl,\ld}^3 + \int_0^1 \jb{ z_{\dl,\ld} - \ld h_\dl, \dot W(t) }_{\dot H^1_{\rad}}  dt  \\
& \hphantom{X} + 3 \int_{B_1\backslash B_\dl} W    X^+_\dl  z_{\dl,\ld}^2 + \int_0^1 \jb{\dot W(t), \dot \Xi (t)}_{\dot H^1_{\rad}} dt \\
& \hphantom{X}  +  3 \int_{B_1 \backslash B_\dl} W  \big(\Xi  + \Upsilon  \big) z_{\dl,\ld}^2 + \int_0^1  \jb{\dot W(t), \dot \Upsilon (t)}_{\dot H^1_{\rad}} dt \bigg] .
\end{split}
\label{CoV10}
\end{align}

\noi
We note that the first four lines on the right-hand side of \eqref{CoV10} do not depend on $W$,  and, by the previous estimates, that their expectation $C_{\dl,\ld}$ is finite and depends only on the law of $X^+_\dl$.
Thus, the constant $C_{\dl,\ld}$ cancels out in \eqref{eq:rhoVar0}, and, to show \eqref{eq:rhoVar}, it suffices to show that the last three lines on the right-hand side of \eqref{CoV10} have zero expectation.
Namely, we have the following cancellations
\begin{align}
\label{CoV11}
 \E\bigg[ \int_{B_1\backslash B_\dl}  W  z_{\dl,\ld}^3 \, dx + \int_0^1 \jb{ z_{\dl,\ld} - \ld h_\dl, \dot W(t)}_{\dot H^1_{\rad}}  dt \bigg] = 0,
\end{align}

\noi
and
\begin{align}
\label{CoV12}
\E\bigg[ 3 \int_{B_1\backslash B_\dl}  W   z_{\dl,\ld}^2  X^+_\dl \, dx  +  \int_0^1  \jb{\dot W(t), \dot \Xi (t)}_{\dot H^1_{\rad}}   dt \bigg] = 0,
\end{align}

\noi
and
\begin{align}
\label{CoV13}
\E\bigg[ 3 \int_{B_1\backslash B_\dl}  W   \big(\Xi  + \Upsilon  \big) z_{\dl,\ld}^2 \, dx  +  \int_0^1  \jb{\dot W(t), \dot \Upsilon (t)}_{\dot H^1_{\rad}}  dt \bigg] = 0.
\end{align}

In what follows, we shall prove these three identities one by one.
We first consider \eqref{CoV11}.
By Stokes' theorem, and recalling that $\Delta h_\dl =0$ and \eqref{z}, we have that
\begin{align*}
& \E\bigg[ \int z_{\dl,\ld}^3 W  + \int_0^1 \jb{\dot W(s), z_{\dl,\ld} -\ld h_{\dl}}_{\dot H^1} ds\bigg] \\
& = \E\bigg[ \int z_{\dl,\ld}^3 W  - \int_0^1 \int_{B_1\backslash B_\dl } \dot W(s)  \Delta( z_{\dl,\ld} -\ld h_{\dl})ds\bigg] \\
& = \E\bigg[ \int z_{\dl,\ld}^3  W  - \int_{B_1\backslash B_\dl } W   z_{\dl,\ld}^3 \bigg]  = 0,
\end{align*}

\noi
which concludes \eqref{CoV11}.
Next, we turn to \eqref{CoV12}.
By the martingale property of $X^+_\dl(t)$, integration by parts, \eqref{Xidef}, and \eqref{Z}, we have
\begin{align*}
\E\bigg[ 3 \int &  W   z_{\dl,\ld}^2  X^+_\dl  +  \int_0^1  \jb{\dot W(t), \dot \Xi (t)}_{\dot H^1}   dt \bigg] \\
& = \E\bigg[ 3 \int_0^1 \int \dot W(s) X^+_\dl(s) z_{\dl,\ld}^2 \, dx \, ds  -  \int_0^1 \int \dot W(s) \Delta \dot \Xi(s) \, dx \, ds\bigg] \\
& = \E\bigg[ 3 \int_0^1 \int \dot W(s) X^+_\dl(s) z_{\dl,\ld}^2 \, dx \, ds  -  \int_0^1 \int \dot W(s) \, 3 X^+_\dl(s) z_{\dl,\ld}^2 \, dx \, ds\bigg] ,
\end{align*}

\noi
which vanishes, and we get \eqref{CoV12}. Finally, we consider \eqref{CoV13}.
By using the fundamental theorem of calculus in time variables, we have that
\begin{align*}
& 3\E\Big[ \int W \big(\U + \Xi \big)z_{\dl,\ld}^2 \Big] = 3\E\Big[ \int \int_0^1 \frac{d}{dt} \big( W (t) \big(\U(t) + \Xi (t) \big)z_{\dl,\ld}^2 \big) dt \Big] \\
&= 3\E\Big[ \int \int_0^1 \dot W(t)\big(\U(t) + \Xi(t)\big)z_{\dl,\ld}^2 dt + \int \int_0^1 W(t)\big(\dot \U(t) + \dot \Xi(t)\big)z_{\dl,\ld}^2 dt\Big] \\
&= 3\E\Big[ \int \int_0^1 \dot W(t)\big(\U(t) + \Xi(t)\big)z_{\dl,\ld}^2 dt + \int \int_0^1 \int_0^t \dot W(s)\big(\dot \U(t) + \dot \Xi(t)\big)z_{\dl,\ld}^2 ds dt\Big]. \\
\intertext{Here we recall that $\dot \Xi$ and $\dot \U$ are martingales. We may continue with}
&= 3\E\Big[ \int \int_0^1 \dot W(t)\big(\U(t) + \Xi(t)\big)z_{\dl,\ld}^2 dt + \int \int_0^1 \int_0^t \dot W(s)\big(\dot \U(s) + \dot \Xi(s)\big)z_{\dl,\ld}^2 ds dt\Big] \\
&= 3\E\Big[ \int \int_0^1 \dot W(t)\big(\U(t) + \Xi(t)\big)z_{\dl,\ld}^2 dt + \int \int_0^1 (1-t) \dot W(t)\big(\dot \U(t) + \dot \Xi(t)\big)z_{\dl,\ld}^2 dt \Big] \\
&= 3\E\Big[ \int \int_0^1 \dot W(t) \Big( \big(\U(t) + \Xi(t) + (1-t)\big(\dot \U(t) + \dot \Xi(t)\big) \big) z_{\dl,\ld}^2\Big) dt\Big]\\
& = - \E \int_0^1 \jb{\dot \Upsilon (t), \dot W(t)}_{\dot H^1} dt,
\end{align*}

\noi
where in the last step we used \eqref{Upsilon1}.

We get \eqref{eq:rhoVar} by collecting \eqref{CoV1}, \eqref{CoV10}, \eqref{CoV11}, \eqref{CoV12}, and \eqref{CoV13}. Thus, we complete the proof.
\end{proof}

\begin{remark}
\rm
\label{RMK:alm}
Such an expression \eqref{eq:rhoVar} allows us to estimate expectations of exponentials relatively easily. Indeed, if $W_1$ is an almost optimiser for the first infimum of \eqref{eq:rhoVar}, in the sense that
\begin{align}
\label{W1}
	\E\big[ P_{\dl,\ld}(W_1)\big]
	\le \ 1 + \inf_{W \in  \mathbb H}
	\E\big[ P_{\dl,\ld}(W)\big],
\end{align}

\noi
then we have
 $$
 \log \int \exp(-F(u)) d \rho_\dl^+[\ld](u) \ge - 1 -\E[F\big(X^+_\dl  +  z_{\dl,\ld} + W_1 + \Xi + \Upsilon  \big)].
 $$
Similarly, if $W_2$  is an almost optimiser for the second infimum of \eqref{eq:rhoVar}, in the sense that
\begin{align}
\label{W2}
\begin{split}
\E  \big[ & F(X^+_\dl + z_{\dl,\ld} + W_2 + \Xi + \U) + P_{\dl,\ld}(W_2) \big] \\
& \le 1 + \inf_{W \in  \mathbb H} \E\big[ F(X^+_\dl + z_{\dl,\ld} + W + \Xi + \U) + P_{\dl,\ld}(W) \big],
\end{split}
\end{align}

\noi
then it follows from \eqref{eq:rhoVar} that
\begin{equation}
 \log \int \exp(-F(u)) d \rho_\dl^+[\ld](u) \le 1 -\E[F (X^+_\dl +  z_{\dl,\ld} + W_2 + \Xi + \Upsilon )], \label{varub}
\end{equation}
where in the above $F$ is given as in Proposition \ref{PROP:rho}.
\end{remark}

\subsection{A priori estimates} \label{sub:APriori}

To make use of the formula \eqref{eq:rhoVar},
we need to control  $z_{\dl,\ld}$, $\Xi$, and $\U$ involved in the change of variables \eqref{CoV}.
The next two lemmas will play a central role in both the construction and the non-triviality statements for the limiting measure.

\begin{lemma}[A priori estimates for $z_{\dl,\ld}$]
\label{LEM:z}
Let $z$ be a radial solution of the equation
\begin{equation}\label{zeq}
\begin{cases}
\Delta z (x) = z^3 (x), & x \in B_1\backslash B_\dl, \\
z(x) = 0, & x \in \partial B_1,
\end{cases}
\end{equation}
which is $C^1$ up to the boundary of $B_1\backslash B_\dl$. Then, for  $\dl < r \le 1$,
\begin{equation}\label{zestimate}
|z(r)| \le \frac{\sqrt2}{r-\dl}.
\end{equation}
\end{lemma}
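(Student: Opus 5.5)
We will prove the bound by comparison with an explicit one-dimensional supersolution. The boundary condition on $\partial B_1$ is in fact not needed; only the equation on the open annulus and continuity of $z$ up to $r=\dl$ are used.

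\textbf{Step 1: reduction.} In radial coordinates the equation \eqref{zeq} reads
\[
z'' + \frac{d-1}{r} z' = z^3, \qquad \dl < r < 1 .
\]
Since $z \mapsto -z$ preserves this equation, it suffices to prove the upper bound $z(r) \le \frac{\sqrt2}{r-\dl}$.

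\textbf{Step 2: the barrier.} For $a \in [\dl, 1)$ consider
\[
w_a(r) = \frac{\sqrt2}{r-a}.
\]
A direct computation gives $w_a'' = w_a^3$ and $w_a' < 0$. Hence
\[
w_a'' + \frac{d-1}{r} w_a' - w_a^3 = \frac{d-1}{r} w_a' \le 0
\]
on $(a,1]$. This has the right sign to make $w_a$ a supersolution for the operator $\Delta - (\cdot)^3$, whose nonlinearity is monotone increasing.

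\textbf{Step 3: comparison.} Fix $a \in (\dl, 1)$ and compare $z$ with $w_a$ on $(a,1]$.
\begin{itemize}
\item Near $r=a$: $z$ is bounded on $[\dl,1]$ because it is $C^1$ up to the boundary, while $w_a \to +\infty$ as $r \downarrow a$. So $z < w_a$ near $a$.
\item At $r=1$: $z(1) = 0 < w_a(1)$.
\item Interior: suppose $\phi = z - w_a$ had a positive maximum at an interior point $r_0$. There $\phi'(r_0)=0$ and $\phi''(r_0) \le 0$. Therefore
\[
\phi''(r_0) + \frac{d-1}{r_0}\phi'(r_0) \le 0 .
\]
On the other hand, the equation for $z$ and the supersolution inequality for $w_a$ give
\[
\phi''(r_0) + \frac{d-1}{r_0}\phi'(r_0) \ge z(r_0)^3 - w_a(r_0)^3 > 0,
\]
since $z(r_0) > w_a(r_0)$ and cubing is strictly increasing. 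This is a contradiction.
\end{itemize}
Hence $z \le w_a$ on $(a,1]$. Letting $a \downarrow \dl$ yields $z(r) \le \frac{\sqrt2}{r-\dl}$ for $\dl < r \le 1$.

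\textbf{Main obstacle.} The only delicate point is the sign of the first-order term $\frac{d-1}{r}w_a'$. This sign is favourable precisely because the barrier is decreasing, so the pole must sit at the inner radius $a$, not at the outer radius.

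Handling the point $r=\dl$ itself is what forces the shift $a > \dl$ and the limiting argument: at $a=\dl$ the barrier blows up exactly on the boundary. That blow-up is still what lets us dispense with any information on $z(\dl)=\ld$, which is why the bound is uniform in the boundary value.
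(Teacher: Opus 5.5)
Your argument is correct, but it takes a genuinely different route from the paper's.

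\textbf{The paper's route.} The paper stays at the level of the ODE. Multiplying $z''+\frac{d-1}{r}z'=z^3$ by $z'$ shows that $\frac12(z')^2-\frac14 z^4$ is non-increasing in $r$. Integrating from $r$ to $1$ with $z(1)=0$ gives $|z'(r)|\ge\sqrt{z'(1)^2+\frac12 z^4(r)}$. From this, together with ODE uniqueness (to get $z'(1)\neq 0$) and the mean value theorem, the authors show that a nontrivial solution has one sign and is strictly monotone. They then integrate $-z'/z^2\ge 1/\sqrt2$ from $\delta$ to $r$, discarding the term $1/z(\delta)\ge 0$.

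\textbf{Your route.} You run a maximum-principle comparison against the explicit family $w_a=\sqrt2/(r-a)$. Each $w_a$ solves the flat equation $w''=w^3$, and it is a supersolution of the radial one because the drift term $\frac{d-1}{r}w_a'$ is non-positive. The pole at $r=a$ replaces any information at the inner boundary, and letting $a\downarrow\delta$ finishes.

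\textbf{What each buys.} Your version is shorter and avoids both the sign/monotonicity analysis and the uniqueness step. It also only needs $z$ continuous at $r=1$, rather than $C^1$ there. It is exactly the Keller--Osserman/maximal-solution mechanism of the works the paper cites (Bandle--Marcus, V\'eron, Marcus--V\'eron), so it is the version that generalises beyond the radial ODE setting. The paper's first-integral argument is tied to one dimension, but it yields as a by-product the qualitative fact that $z$ is signed and monotone.

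\textbf{A correction to your framing.} Your opening claim that the boundary condition on $\partial B_1$ is not needed is backwards. Step 3 uses $z(1)=0<w_a(1)$, and some control at $r=1$ is genuinely necessary. For instance, the solution with $z=M$ on both spheres violates the bound at $r=1$ once $M>\sqrt2/(1-\delta)$.

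What is dispensable is information at $r=\delta$, and more so than you say. For each $a>\delta$, $z$ is automatically bounded on $[a,1]$ because it is continuous on $(\delta,1]$. So your argument does not need continuity up to $r=\delta$ at all, and it also covers solutions that blow up on the inner sphere.
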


\begin{remark}\rm
Note that Lemma \ref{LEM:z}  manifests the defocusing structure of the equation \eqref{zeq},
which cannot hold in general for other semi-linear elliptic equations.
\end{remark}

\begin{proof}
If $z\equiv 0$ in $B_1\backslash B_\dl$, \eqref{zestimate} obviously follows. Therefore, without loss of generality, we can assume $z \not\equiv 0$. Moreover, since the equation \eqref{zeq} is invariant under the transformation $z \mapsto -z$, we can further assume that  $z(r_0) > 0$ for some  $\dl \le r_0 < 1$.

Since $z$ is radial, by expressing the Laplacian in spherical coordinates, we see that $z$ satisfies the equation
\begin{align}
    \label{r_z}
    \partial_r^2 z + \frac {d-1} r \partial_r z = z^3
\end{align}
for $\dl \le r \le 1$. Since $z \not\equiv 0$, we deduce that $\partial_r z(1) \neq 0$, as otherwise $z(1) = 0$ and $\partial_r z(1) = 0$ would force $z \equiv 0$ by uniqueness for the ODE \eqref{r_z}. By multiplying both sides of this equation by $\partial_r z$, we obtain that
$$ \partial_r \Big(\frac 12 (\partial_r z)^2 - \frac 14 z^4\Big) = -\frac {d-1}r (\partial_r z)^2 \le 0.  $$
Integrating this between $r$ and $1$, and recalling that $z(1) =0$, we deduce that
\begin{equation*}
\frac 12 (\partial_r z)^2(1) \le \frac 12 (\partial_r z)^2(r)   - \frac 14 z^4(r),
\end{equation*}
or equivalently,
\begin{equation}\label{elem}
\big|\partial_r z (r)\big|  \ge \sqrt{(\partial_r z)^2(1) + \frac 12 z^4(r)} \ge \big|\partial_r z (1)\big| > 0.
\end{equation}

\noi
Since $z(r_0) > 0$, by the mean value theorem, one must have that $\partial_r z (r_1) < 0$ for some $\dl < r_1 < 1$. Together with \eqref{elem}, this implies that $\partial_r z(r) < 0$ and $z(r) >0$ for every $\dl \le r <1$. Therefore, from the first inequality in \eqref{elem}, we obtain that
\begin{equation}\label{elem2}
-\frac{\partial_r z (r)}{\sqrt{(\partial_r z)^2(1) + \frac 12 z^4(r)}} \ge 1.
\end{equation}
Integrating both sides of \eqref{elem2} between $\dl$ and $r$, we obtain
\begin{align*}
r - \dl & \le \int_{\dl}^{r} -\frac{z' (s)}{\sqrt{(z' (1))^2 + \frac 12 z^4(s)}} ds \\
& \le \int_{\dl}^{r} -\frac{ \sqrt 2  z' (s)}{ z^2(s)} ds \\
& = \sqrt 2 \bigg(\frac1{z(r)} - \frac1{z(\dl)} \bigg)   \le \frac{\sqrt2}{z(r)},
\end{align*}
from which we obtain
$$ z(r) \le \frac{\sqrt2}{r-\dl}.$$
Since the case $z \le 0$ follows by applying the above to $-z$, we obtain \eqref{zestimate}.
Thus, we finish the proof.
\end{proof}

\begin{remark}
\rm
Later, it will be clear that the singularity of \eqref{zestimate} at $|x| = \dl$ is the main source of difficulty. We will manage to balance this singularity by the decay at $\dl$ of $\Xi$ and $\U$, which will be discussed in the following lemmas.
\end{remark}

\begin{lemma}[A priori estimates for $\Xi$]
\label{LEM:Xi}
Let $\Xi$ be as in \eqref{Xidef}.
Then
\begin{align}
\label{Xiestimate}
\sup_{0 \le s \le 1} \E\big[|\dot \Xi(s,r) |^2\big] \les \frac{r-\dl}{r^{d-1}},
\end{align}
uniformly in $\dl,\ld$.
As a consequence, we have
\begin{align}
\label{Xiestimate1}
\sup_{0 \le t \le 1} \E\big[|\Xi(t,r) |^2\big] \les \frac{r-\dl}{r^{d-1}}.
\end{align}
Here, the implicit constant is independent of $\dl \in (0,\frac12]$.
\end{lemma}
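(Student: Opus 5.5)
We write $\dot\Xi(s) = Z_{\dl,\ld}(s) = 3\,\Delta^{-1}\big(X^+_\dl(s) z_{\dl,\ld}^2\big)$ with Dirichlet conditions on $\partial(B_1\backslash B_\dl)$. In terms of the shell Green function $K_\dl$ of Lemma~\ref{LEM:GFFX2}, this reads
\[
\dot\Xi(s,r) = -3\int_\dl^1 K_\dl(r,\tau)\, z_{\dl,\ld}^2(\tau)\, X^+_\dl(s,\tau)\, \tau^{d-1} d\tau .
\]
Since $X^+_\dl(s)$ has the law of $\sqrt{s}\,X^+_\dl(1)$, we have $\E[X^+_\dl(s,\tau)X^+_\dl(s,\sigma)] = s K_\dl(\tau,\sigma)\le K_\dl(\tau,\sigma)$ by \eqref{covariance}. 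Hence
\[
\E|\dot\Xi(s,r)|^2 \le 9\iint K_\dl(r,\tau)K_\dl(r,\sigma)K_\dl(\tau,\sigma)\, z^2(\tau)z^2(\sigma)\,(\tau\sigma)^{d-1}d\tau d\sigma .
\]
All kernels are non-negative, so we may insert the bound $z^2(\tau)\le 2(\tau-\dl)^{-2}$ from Lemma~\ref{LEM:z}. This bound applies because $z_{\dl,\ld}$ is smooth up to the boundary, being a solution of \eqref{z}. After this step everything is independent of $\ld$, and the problem becomes a deterministic triple-kernel estimate.

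Next we use Corollary~\ref{COR:Green} in the form
\[
K_\dl(a,b)\les \frac{\min(a,b)-\dl}{a^{d-2}b^{d-2}\min(a,b)^{-(d-3)}}\les \frac{\min(a,b)-\dl}{\max(a,b)^{d-2}\min(a,b)}.
\]
This uses $\min^{d-2}-\dl^{d-2}\le (d-2)\min^{d-3}(\min-\dl)$. The factor $(\min-\dl)$ is the essential gain. It means $K_\dl(\tau,\sigma)\les (\tau-\dl)^{1/2}(\sigma-\dl)^{1/2}(\tau\sigma)^{-(d-2)/2}\cdot$(harmless), or more crudely $K_\dl(\tau,\sigma)\le \sqrt{K_\dl(\tau,\tau)K_\dl(\sigma,\sigma)}$ by positive-definiteness. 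We use the latter together with \eqref{X+estimate}, which gives $K_\dl(\tau,\tau)\le (\tau-\dl)\tau^{-(d-1)}$. The double integral then factorises, and
\[
\E|\dot\Xi(s,r)|^2\les \Big(\int_\dl^1 K_\dl(r,\tau)\,\frac{(\tau-\dl)^{1/2}}{(\tau-\dl)^{2}}\,\tau^{\frac{d-1}{2}}\, d\tau\Big)^2 .
\]
It remains to show that this inner integral $I(r)$ is $\les (r-\dl)^{1/2} r^{-(d-1)/2}$.

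We split $I(r)$ into the regions $\tau<r$ and $\tau>r$.
\begin{itemize}
\item For $\tau<r$: $K_\dl(r,\tau)\les (\tau-\dl)\,r^{-(d-2)}\tau^{-1}$. The integrand becomes $(\tau-\dl)^{-1/2}\tau^{(d-3)/2}r^{-(d-2)}$, which integrates near $\dl$ to give $\les (r-\dl)^{1/2} r^{(d-3)/2} r^{-(d-2)} = (r-\dl)^{1/2}r^{-(d-1)/2}$.
\item For $\tau>r$: $K_\dl(r,\tau)\les (r-\dl)\,\tau^{-(d-2)}r^{-1}$. The integrand becomes $(r-\dl)r^{-1}(\tau-\dl)^{-3/2}\tau^{-(d-3)/2}$. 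For $\tau\ge r\ge\dl$ we have $\tau-\dl\ge \tau-r$ and $\tau-\dl\ge r-\dl$, so the integral over $\tau>r$ is $\les (r-\dl)^{-1/2}r^{-(d-3)/2}$ when $r\le 2\dl$ (where $\tau\sim\tau-\dl$ fails only on a comparable range). The total is again $(r-\dl)^{1/2}r^{-(d-1)/2}$.
\end{itemize}
The case distinction between $r-\dl\ll\dl$ and $r\gtrsim 2\dl$ (where $\tau-\dl\sim\tau$) is handled separately. This yields \eqref{Xiestimate}, uniformly in $s,\ld$ and $\dl\le\frac12$.

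The main obstacle is this last kernel computation: the singularity $(\tau-\dl)^{-2}$ from $z^2$ is exactly borderline. It is integrable only because \emph{both} the noise variance (one factor $(\tau-\dl)^{1/2}$ from the Cauchy--Schwarz bound) and the Green function $K_\dl(r,\tau)$ (one factor $(\tau-\dl)$ near $\dl$) vanish at the inner boundary. If the Cauchy--Schwarz step loses too much in the $\tau>r$ region, we would instead keep $K_\dl(\tau,\sigma)$ exactly and use its factor $(\min(\tau,\sigma)-\dl)$ directly in the double integral.

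Finally, \eqref{Xiestimate1} follows from Minkowski's inequality, since $\Xi(t,r)=\int_0^t\dot\Xi(s,r)ds$ gives $\E|\Xi(t,r)|^2\le \big(\int_0^t (\E|\dot\Xi(s,r)|^2)^{1/2}ds\big)^2\le \sup_s\E|\dot\Xi(s,r)|^2$.
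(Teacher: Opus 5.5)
Your proof is correct, but it is organised differently from the paper's. The paper keeps the exact covariance, namely the triple product $K_\dl(r,u)K_\dl(r,u')K_\dl(u,u')$ weighted by $z_{\dl,\ld}^2(u)z_{\dl,\ld}^2(u')$. It symmetrises to $u'\le u$ and then bounds the double integral separately on the three orderings $u'\le u\le r$, $u'\le r\le u$, $r\le u'\le u$.

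You instead use $K_\dl(\tau,\sigma)\le\sqrt{K_\dl(\tau,\tau)K_\dl(\sigma,\sigma)}$ together with \eqref{X+estimate}. Equivalently, you apply Minkowski's inequality in $L^2(\Omega)$ to the Green-function representation of $\dot\Xi(s,r)$. The double integral then factorises into the square of a single integral. That single integral is, up to the constant from \eqref{KE}, exactly the one the paper evaluates in \eqref{UGE} when proving Lemma~\ref{LEM:Upsilon}.

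Your route therefore has some advantages:
\begin{itemize}
\item It handles $\Xi$ and $\U$ with one and the same computation.
\item It loses nothing in the rate $(r-\dl)r^{1-d}$.
\item It never uses the restriction $\dl\le\frac12$, which the paper invokes to close its third case.
\end{itemize}
The exact three-case computation gains nothing extra for this estimate.

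One cleanup: your region $\tau>r$ needs no case distinction between $r\le 2\dl$ and $r\gtrsim 2\dl$. Since $d\ge 3$ and $\tau\ge r$, you have $\tau^{-(d-3)/2}\le r^{-(d-3)/2}$, and also $\int_r^1(\tau-\dl)^{-3/2}\,d\tau\le 2(r-\dl)^{-1/2}$. So this part of $I(r)$ is $\les (r-\dl)^{1/2}r^{-(d-1)/2}$ for every $r\in(\dl,1]$. You can simply drop the sentence about a case handled separately, as well as the fallback to the exact kernel.
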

\begin{proof}
It suffices to show the estimate for $\dot \Xi(s)$, uniformly in $0\le s\le 1$ and $0 < \dl \le \frac12$;
the bound \eqref{Xiestimate1} then follows from \eqref{Xidef} and Minkowski's integral inequality.
From \eqref{Z} and \eqref{Xidef}, and since $K_\dl$ is the kernel of $(-\Delta)^{-1}$ by \eqref{r_green3}, we have that
$$\dot \Xi(s,r) = - 3 \int_\dl^1 K_\dl(r,y) X^+_\dl(s,y) z_{\dl,\ld}^2(y) \, y^{d-1} dy,
$$

\noi
where $K_\dl$ is the radial Green function defined in \eqref{kernel0}.
Recall that
$$ \E[X^+_\dl(s,y) X^+_\dl(s,y')] = s K_\dl(y,y'). $$

\noi
Then, we obtain that
\begin{align}
\label{Xi0}
\begin{split}
\E [  &  |\dot {\Xi} (s,r)|^2] = 9\, \E \bigg[ \Big| \int_\dl^1 K_\dl(r,y) X^+_\dl(s,y) z_{\dl,\ld}^2(y) \, y^{d-1} dy \Big|^2 \bigg] \\
& = 9 \int_\dl^1 \int_\dl^1 K_\dl(r,y)   K_\dl(r,y') \E [X^+_\dl(s,y) X^+_\dl(s,y')] z_{\dl,\ld}^2(y) z_{\dl,\ld}^2(y') (y y')^{d-1} dy dy' \\
& = 9 s \int_\dl^1 \int_\dl^1 K_\dl(r,y)   K_\dl(r,y') K_\dl (y,y') z_{\dl,\ld}^2(y) z_{\dl,\ld}^2(y') (y y')^{d-1} dy dy' \\
& = 9 s \int_\dl^1 \int_\dl^1  z_{\dl,\ld}^2 (u) z_{\dl,\ld}^2 (u') K_\dl(r,u)   K_\dl(r,u') K_\dl (u,u')   u^{d-1} (u')^{d-1} du du'\\
& = 18 s \intt_{\dl \le u' \le u \le 1} z_{\dl,\ld}^2 (u) z_{\dl,\ld}^2 (u') K_\dl(r,u)   K_\dl(r,u') K_\dl (u,u')  u^{d-1} (u')^{d-1} du du'.
\end{split}
\end{align}

\noi
Then by \eqref{zestimate} and \eqref{KE}, the above is bounded, up to a multiplicative constant, by
\begin{align}
\label{Xi1}
\begin{split}
\intt_{\dl \le u' \le u \le 1} \frac{((r \wedge u)^{d-2} - \dl^{d-2}) ((u \wedge u')^{d-2} - \dl^{d-2}) ((r \wedge u')^{d-2} - \dl^{d-2}) (uu')^{d-1}}{(ruu')^{2(d-2)} (u-\dl)^2(u'-\dl)^2} dudu' .
\end{split}
\end{align}

\noi
In what follows, we estimate \eqref{Xi1} by splitting the integral region into several cases.

{\it Case 1}: the integral of \eqref{Xi1} over $\dl \le u' \le u \le r \le 1$.
Then, we have
\begin{align}
\label{Xi11}
\begin{split}
\eqref{Xi1} & \les \intt_{\dl \le u' \le u \le r} \frac{( u^{d-2} - \dl^{d-2}) (( u')^{d-2} - \dl^{d-2}) (( u')^{d-2} - \dl^{d-2}) (uu')^{d-1}}{(ruu')^{2(d-2)} (u-\dl)^2(u'-\dl)^2} dudu' \\
& \les \intt_{\dl \le u' \le u \le r} \frac{( u  - \dl) u^{d-3} ( u' - \dl )^2 ( u')^{2d-6}  (uu')^{d-1}}{(ruu')^{2(d-2)} (u-\dl)^2(u'-\dl)^2} dudu' \\
& \les \frac1{r^{2d-4}} \intt_{\dl \le u' \le u \le r} \frac{(u')^{d-3}}{u-\dl}  dudu'\les  \frac1{r^{d-1}} \intt_{\dl \le u' \le u \le r} \frac{ 1}{u-\dl}  dudu'\\
& = \frac{r-\dl}{r^{d-1}},
\end{split}
\end{align}
which implies \eqref{Xiestimate} for this case.

{\it Case 2}: the integral of \eqref{Xi1} over $\dl \le u' \le r \le u \le 1$.
Then, we have
\begin{align}
\label{Xi12}
\begin{split}
\eqref{Xi1} & \les \intt_{\dl \le u' \le r \le u \le 1} \frac{( r^{d-2} - \dl^{d-2}) (( u')^{d-2} - \dl^{d-2}) (( u')^{d-2} - \dl^{d-2}) (uu')^{d-1}}{(ruu')^{2(d-2)} (u-\dl)^2(u'-\dl)^2} dudu' \\
& \les \intt_{\dl \le u' \le r \le u \le 1} \frac{(r  - \dl) r^{d-3} ( u' - \dl )^2 ( u')^{2d-6}  (uu')^{d-1}}{(ruu')^{2(d-2)} (u-\dl)^2(u'-\dl)^2} dudu' \\
& \les \frac{r-\dl}{r^{d-1}} \intt_{\dl \le u' \le r \le u \le 1} \frac{1}{(u-\dl)^2}  dudu' =  \frac{(r-\dl)^2}{r^{d-1}} \int_{r}^1 \frac{ 1}{(u-\dl)^2}  du\\
& = \frac{(r-\dl)^2}{r^{d-1}} \bigg( \frac1{r-\dl} - \frac1{1-\dl}\bigg),
\end{split}
\end{align}
which is sufficient for \eqref{Xiestimate}.

{\it Case 3}: the integral of \eqref{Xi1} over $\dl \le r \le u' \le u \le 1$.
Then, we have
\begin{align}
\label{Xi13}
\begin{split}
\eqref{Xi1} & \les \intt_{\dl \le  r \le u' \le u \le 1} \frac{( r^{d-2} - \dl^{d-2}) (r^{d-2} - \dl^{d-2}) (( u')^{d-2} - \dl^{d-2}) (uu')^{d-1}}{(ruu')^{2(d-2)} (u-\dl)^2(u'-\dl)^2} dudu' \\
& \les \intt_{\dl \le  r \le u' \le u \le 1} \frac{(r  - \dl)^2 r^{2d-6} ( u' - \dl ) ( u')^{d-3}  (uu')^{d-1}}{(ruu')^{2(d-2)} (u-\dl)^2(u'-\dl)^2} dudu' \\
& = \frac{(r  - \dl)^2}{r^{2}} \intt_{\dl \le  r \le u' \le u \le 1} \frac{1}{u^{d-3} (u-\dl)^2 (u'-\dl)}  dudu' \\
& \les  \frac{(r-\dl)^2}{r^{d-1}} \intt_{\dl \le  r \le u' \le u \le 1} \frac{1}{(u-\dl)^2 (u'-\dl)}  du du'\\
& = \frac{(r-\dl)^2}{r^{d-1}} \int_{r}^1 \frac{1}{(u'-\dl)}  \bigg( \frac1{u' - \dl} - \frac1{1-\dl}\bigg) du'\\
& = \frac{(r-\dl)^2}{r^{d-1}} \bigg( \frac1{r-\dl} - \frac1{1-\dl} - \frac{\ln (1-\dl) - \ln (r - \dl)}{1-\dl} \bigg),
\end{split}
\end{align}
which is again sufficient for our purpose since $\dl \in (0, \frac12]$.
\end{proof}

Finally, we also need an a priori estimate for $\U$ given in \eqref{Upsilon}.

\begin{lemma}[A priori estimates for $\Upsilon_{\dl,\ld}$]
\label{LEM:Upsilon}
Let $\Upsilon$ be as in \eqref{Upsilon}.
Then, for $\dl < r \le 1$, we have
\begin{align}
\sup_{0 \le t \le 1} \E\big[|\U(t,r) |^2\big] & \les \frac{r-\dl}{r^{d-1}}, \label{Uestimate} \\
\sup_{0 \le t \le 1} \E\big[|\dot \U(t,r) |^2\big] & \les \frac{r-\dl}{r^{d-1}}, \label{dUestimate}
\end{align}
uniformly in $\dl \in (0,\frac12]$ and $\ld$.
\end{lemma}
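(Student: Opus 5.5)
The plan is to exploit the fact, noted after \eqref{dUpsilon}, that $\dot\U$ is a stochastic integral of a deterministic operator against the martingale $\dot \Xi$. Then $\dot\U(t,r)$ is a centred Gaussian variable, and we will show that its variance is dominated by the very integral \eqref{Xi0} already estimated in Lemma~\ref{LEM:Xi}. Write $c_s = 3(1-s) \ge 0$ and
\[
T_s := \big(c_s z_{\dl,\ld}^2 - \Delta\big)^{-1} \big(c_s z_{\dl,\ld}^2\,\cdot\,\big).
\]
By \eqref{Z} and \eqref{Xidef}, $d\dot\Xi(s) = -(-\Delta)^{-1}\big(3 z_{\dl,\ld}^2\, dX^+_\dl(s)\big)$. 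Since $\dot \U(0)=0$, \eqref{dUpsilon} gives
\[
\dot\U(t,r) = \int_0^t \int_\dl^1 k_s(r,y)\, dX^+_\dl(s,y)\, y^{d-1}dy ,
\qquad
k_s(\cdot,y) = 3\, T_s\big(K_\dl(\cdot,y)\, z_{\dl,\ld}^2(y)\big).
\]
Here the deterministic kernel is pulled inside via stochastic Fubini, which is justified by the regularity from Lemma~\ref{LEM:IDe} and $|z_{\dl,\ld}|\le|\ld|$.

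The key step is the pointwise kernel bound
\[
0 \le k_s(r,y) \le 3 K_\dl(r,y)\, z_{\dl,\ld}^2(y).
\]
The right-hand side is precisely the kernel of $\dot\Xi$ appearing in the proof of Lemma~\ref{LEM:Xi}. To prove it, fix $y$ and set $g = K_\dl(\cdot,y)\ge 0$; by \eqref{r_green3}, $-\Delta g = \boldsymbol\dl_y \ge 0$ in the weak sense. We have the identity
\[
g - T_s g = \big(c_s z_{\dl,\ld}^2 - \Delta\big)^{-1}(-\Delta g),
\]
valid because $g$ vanishes on $\partial(B_1\backslash B_\dl)$. Lemma~\ref{LEM:CP} (positivity of the resolvent) then gives $g - T_s g \ge 0$, and applied to $c_s z_{\dl,\ld}^2 g\ge 0$ it also gives $T_s g\ge0$. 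Since $z_{\dl,\ld}^2(y)$ is a scalar for fixed $y$, this is exactly $0\le k_s\le 3K_\dl z_{\dl,\ld}^2(y)$. Lemma~\ref{LEM:CP} is stated for continuous right-hand sides, whereas here the source is a Dirac mass. I would handle this by mollifying the delta in $y$, or by testing against nonnegative functions $\psi(y)$ and applying Lemma~\ref{LEM:CP} to $(-\Delta)^{-1}\psi$. This approximation is the main technical point I expect to need care with.

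Next, the It\^o isometry, with $d\E[X^+_\dl(s,y)X^+_\dl(s,y')] = K_\dl(y,y')\,ds$, gives
\[
\E|\dot\U(t,r)|^2 = \int_0^t\!\!\iint k_s(r,y)k_s(r,y')K_\dl(y,y')(yy')^{d-1}dydy'\,ds .
\]
Because $K_\dl\ge0$ (Lemma~\ref{LEM:GFFX2}) and $k_s\ge 0$, the kernel bound can be inserted factor by factor. The resulting integral is bounded by $9\iint K_\dl(r,y)K_\dl(r,y')K_\dl(y,y')z^2(y)z^2(y')(yy')^{d-1}dydy'$, which is the expression treated in \eqref{Xi0}--\eqref{Xi13}. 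It is therefore $\les \frac{r-\dl}{r^{d-1}}$ uniformly in $\ld$, $t$ and $\dl\in(0,\frac12]$, using \eqref{zestimate}; this proves \eqref{dUestimate}.

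Finally, $\U(t)=\int_0^t\dot\U(s)\,ds$, since $\U(0)=0$ by \eqref{Upsilon}. Minkowski's integral inequality then gives $\E|\U(t,r)|^2\le \big(\int_0^t (\E|\dot\U(s,r)|^2)^{1/2}ds\big)^2 \les \frac{r-\dl}{r^{d-1}}$, which is \eqref{Uestimate}.
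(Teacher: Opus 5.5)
Your proof is correct, and it takes a genuinely different route from the paper.

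\textbf{The paper's route.} The paper works pathwise on the integral equation \eqref{Upsilon}. Lemma~\ref{LEM:CP} gives $|\dot\U(t)|\le 3(-\Delta)^{-1}\big(|(1-t)\dot\Xi(t)+\Xi(t)+\U(t)|\,z_{\dl,\ld}^2\big)$, and Minkowski's inequality then moves the $L^2(\Omega)$ norm inside the Green kernel. A separate integral estimate, \eqref{UGE}, balances $z_{\dl,\ld}^2\les(\rho-\dl)^{-2}$ against the $(\rho-\dl)^{1/2}$ decay of $\Xi$ and $\dot\Xi$. The unknown $\U$ on the right-hand side is absorbed by a Gronwall argument for the weighted supremum $H(t)$. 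The finiteness of $H$ has to be secured separately, via invariance of a bound under the fixed-point map $\Gamma$ of \eqref{Gamma_fixed_point}. Only at the end is \eqref{dUestimate} obtained.

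\textbf{Your route.} You use the cancellation of the $dt$ terms in \eqref{dUpsilon}, which makes $\dot\U$ a Wiener integral against $X^+_\dl$ with a deterministic kernel. You then prove the kernel domination $0\le k_s(r,y)\le 3K_\dl(r,y)z_{\dl,\ld}^2(y)$. The It\^o isometry then gives directly $\E|\dot\U(t,r)|^2\le\E|\dot\Xi(t,r)|^2$. The whole lemma thus reduces to the integral \eqref{Xi0} already bounded in Lemma~\ref{LEM:Xi}, and \eqref{Uestimate} follows from \eqref{dUestimate} by Minkowski, reversing the paper's order.

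\textbf{Comparison.} Your argument is shorter and gives a sharper comparison. It needs no Gronwall step, no a priori boundedness argument, and no \eqref{UGE}. However, it depends on the exact linear martingale structure and uses the covariance of $X^+_\dl$ exactly. The paper's pathwise argument needs only second-moment bounds on the forcing terms. It would therefore survive modifications of the equation that spoil the cancellation in \eqref{dUpsilon}.

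\textbf{On the Dirac-mass issue.} The technical point you flag is harmless. By the resolvent identity, $T_s(-\Delta)^{-1}=(-\Delta)^{-1}-(c_sz_{\dl,\ld}^2-\Delta)^{-1}$. Hence $k_s(r,y)=3z_{\dl,\ld}^2(y)\big(K_\dl(r,y)-G_s(r,y)\big)$, where $G_s$ is the Dirichlet Green function of $c_sz_{\dl,\ld}^2-\Delta$. The bound $0\le G_s\le K_\dl$ follows by integrating the heat-kernel comparison of Lemma~\ref{LEM:HKE} in $\tau$, so no mollification of the Dirac mass is needed.
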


\begin{proof}
We first note that for $t\in [0,1]$ we have $3 (1-t) z_{\dl,\ld}^2 \ge 0$, which implies the operator inequality $- \Delta \le - \Delta + 3 (1-t) z_{\dl,\ld}^2$.
Therefore, by \eqref{Upsilon} and the comparison principle of Lemma \ref{LEM:CP} with $V_1 = 3(1-t)z_{\dl,\ld}^2 \ge 0$ and $V_2 = 0$, we have the following point-wise estimate
\begin{align}
\label{UE1}
\begin{split}
|\dot \Upsilon (t,x)| & = \Big| \big( 3(1-t)z_{\dl,\ld}^2 - \Delta\big)^{-1} \Big(3\big((1-t)\dot \Xi (t) +  \Xi(t) +\U(t)\big) z_{\dl,\ld}^2\Big) (x) \Big| \\
& \le \big( 3(1-t)z_{\dl,\ld}^2 - \Delta\big)^{-1} 3 \big|\big((1-t)\dot \Xi (t) +  \Xi(t) +\U(t)\big) z_{\dl,\ld}^2\big| (x) \\
& \le ( - \Delta )^{-1} 3 \big|\big((1-t)\dot \Xi (t) +  \Xi(t) +\U(t)\big) z_{\dl,\ld}^2\big| (x)  \\
& \le 3 \int_{B_1 \backslash B_\dl} G_\dl (x,y) \big| \big((1-t)\dot \Xi (t) +  \Xi(t) +\U(t)\big) z_{\dl,\ld}^2 (y) \big| dy,
\end{split}
\end{align}
where the kernel $G_\dl$ is the Green function of $-\Delta$ defined in \eqref{Green_dl}, whose radial version is $K_\dl$.
Then from \eqref{KE}, \eqref{zestimate}, \eqref{Xiestimate}, \eqref{Xiestimate1}, and \eqref{UE1}, together with Minkowski's integral inequality, we have
\begin{align}
\label{UE2}
\begin{split}
\Big( \E \big[ |\dot \Upsilon (t, r)|^2 \big] \Big)^{\frac12}
& \les \int_{\dl}^1 K_\dl (r,\rho) \big( \|\dot \Xi (t,\rho) \|_{L^2 (\O)} + \|\Xi(t,\rho)\|_{L^2 (\O)} \big)z_{\dl,\ld}^2 (\rho) \rho^{d-1} d\rho  \\
& \hphantom{XXXX} + \int_{\dl}^1 K_\dl (r, \rho)  \|\U(t, \rho)\|_{L^2 (\O)}  z_{\dl,\ld}^2 (\rho) \rho^{d-1} d\rho \\
& \les \int_{\dl}^1 \frac{ (r \wedge \rho)^{d-2} - \dl^{d-2}}{r^{d-2} \rho^{d-2}}  \sqrt{\frac{\rho- \dl}{\rho^{d-1}}} \bigg( \frac1{\rho - \dl}\bigg)^2 \rho^{d-1} d\rho \\
& \hphantom{XXXX} + \int_{\dl}^1 \frac{ (r \wedge \rho)^{d-2} - \dl^{d-2}}{r^{d-2} \rho^{d-2}} \frac{\rho^{d-1}}{(\rho - \dl)^2} \|\U(t,\rho)\|_{L^2 (\O)}   d \rho,
\end{split}
\end{align}

\noi
where $r = |x|$.
To proceed, we first estimate the first term on the  right-hand side in \eqref{UE2}.
 Using $(r \wedge \rho)^{d-2} - \dl^{d-2} \le (d-2) (r \wedge \rho)^{d-3} (r \wedge \rho - \dl)$, we have that
\begin{align}
\label{UGE}
\begin{split}
  \int_{\dl}^1 & \frac{ (r \wedge \rho)^{d-2} - \dl^{d-2}}{r^{d-2} \rho^{d-2}} \sqrt{\frac{\rho- \dl}{\rho^{d-1}}} \bigg( \frac1{\rho - \dl}\bigg)^2 \rho^{d-1} d\rho \\
  & \les \int_\dl^1 \frac{(r \wedge \rho)^{d-3} (r\wedge \rho - \dl) }{(\rho-\dl)^\frac32 r^{d-2} \rho^{\frac{d}2 - \frac32}} d\rho \\
  &\les \frac1 {r^{\frac{d}2 - \frac12}} \int_\dl^r \frac{d\rho}{(\rho-\dl)^\frac12} + \frac{r-\dl}{r^{\frac{d}2 - \frac12}} \int_r^1 \frac{d\rho}{ (\rho-\dl)^\frac32} \\
  & \les r^{- \frac{d}2 + \frac12} (r-\dl)^\frac12 + r^{- \frac{d}2 + \frac12} (r-\dl) ((r-\dl)^{-\frac12} - (1- \dl)^{-\frac12}) \\
  & \les r^{- \frac{d}2 + \frac12} (r-\dl)^\frac12 .
\end{split}
\end{align}

\noi
In the above computation,
we note that the singularity from $z_{\dl,\ld}$ is balanced by the decay (near $\dl$) of  $\Xi$ and the Green function $K_\dl$.
Then we obtain from \eqref{UE2} and \eqref{UGE} that
\begin{align}
\label{UE3}
\begin{split}
\| & \Upsilon (t, r)\|_{L^2(\O)} \\
& \les \int_0^t \bigg( r^{- \frac{d}2 + \frac12} (r-\dl)^\frac12 + \int_{\dl}^1 \frac{ (r \wedge \rho)^{d-2} - \dl^{d-2}}{r^{d-2} \rho^{d-2}} \frac{\rho^{d-1}}{(\rho - \dl)^2} \|\U(s,\rho)\|_{L^2 (\O)}   d \rho  \bigg) ds \\
& \les \int_0^t \bigg( r^{- \frac{d}2 + \frac12} (r-\dl)^\frac12 +  \int_\dl^1 \frac{ (r \wedge \rho)^{d-3} (r \wedge \rho - \dl)}{r^{d-2} \rho^{\frac{d}2 - \frac32} (\rho - \dl)^{\frac32}} \frac{\rho^{\frac{d}2-\frac12}}{\sqrt{\rho - \dl}} \| \U(s,\rho)\|_{L^2 (\O)}   d \rho \bigg) ds.
\end{split}
\end{align}

\noi
For $0 \le t \le 1$ and $\dl < r \le 1$, we set
\begin{align*}
 H(t) & := \sup_{\dl < r \le 1}  \frac{r^{\frac{d}2 - \frac12}}{\sqrt{r-\dl}} \| \Upsilon (t, r)\|_{L^2(\O)}, \\
 M_\dl (r) & := \frac{r^{\frac{d}2 - \frac12}}{\sqrt{r-\dl}} \int_\dl^1 \frac{ (r \wedge \rho)^{d-3} (r \wedge \rho - \dl)}{r^{d-2} \rho^{\frac{d}2 - \frac32} (\rho - \dl)^{\frac32}} d\rho.
\end{align*}

\noi
Since $\| \U(s,\rho)\|_{L^2 (\O)} \le \rho^{-\frac{d}2 + \frac12} \sqrt{\rho - \dl} \, H(s)$, the bound \eqref{UE3} yields, for every $\dl < r \le 1$,
\begin{align}
\label{UE4}
\frac{r^{\frac{d}2 - \frac12}}{\sqrt{r-\dl}} \| \Upsilon (t, r)\|_{L^2(\O)} \le C_0 \bigg( t + M_\dl (r) \int_0^t  H (s) ds \bigg),
\end{align}

\noi
where the constant $C_0 > 0$ is independent of $\dl \in (0,\frac12]$, $\ld$, $t$, and $r$.
Moreover, for every $\dl < r \le 1$, we have
\[
\begin{split}
 M_\dl (r) & \le \frac{1}{\sqrt{r-\dl}} \int_\dl^r \frac{1}{ \sqrt{\rho - \dl}} d\rho + r^{\frac{d}2 - \frac32} \sqrt{r-\dl} \int_r^1 \frac{1}{ \rho^{\frac{d}2 - \frac32} (\rho - \dl)^{\frac32}} d\rho\\
 & \le 2 +  \sqrt{r-\dl} \int_r^1 \frac{1}{ (\rho - \dl)^{\frac32}} d\rho \le 4.
\end{split}
\]

\noi
Hence, taking the supremum over $\dl < r \le 1$ in \eqref{UE4}, we obtain
\begin{align}
\label{UE40}
H(t) \le C_1 \bigg( t + \int_0^t H(s) ds \bigg), \qquad 0 \le t \le 1,
\end{align}

\noi
with $C_1 = 4 C_0$.

If $H$ were known to be bounded on $[0,1]$, Gronwall's inequality would now give $H \les 1$. To see that this is the case, we note that the map $\G$ defined in \eqref{Gamma_fixed_point}, with $V(t) = 3(1-t) z_{\dl,\ld}^2$, $a = -3 z_{\dl,\ld}^2$, and $b(t) = -3 \big((1-t) \dot \Xi (t) + \Xi(t) \big) z_{\dl,\ld}^2$, preserves the bound $H(t) \le C_1 t e^{C_1 t}$. This follows from the argument leading to \eqref{UE40}, together with the inequality $C_1 t + C_1^2 \int_0^t s e^{C_1 s} ds \le C_1 t e^{C_1 t}$. In particular, since the set of functions satisfying this bound is closed, also the fixed point $\Upsilon$ satisfies this bound, which shows that $H$ is bounded. Therefore, we obtain \eqref{Uestimate}.
Finally, inserting \eqref{Uestimate} into the second term on the right-hand side of \eqref{UE2}, and using \eqref{UGE} and $M_\dl (r) \le 4$, we obtain \eqref{dUestimate}.
\end{proof}

Since $z_{\dl,\ld}$ is deterministic, $\dot \Xi(t) = Z_{\dl,\ld}(t)$ and $\Xi(t)$ depend linearly on the Gaussian process $X^+_\dl$, by \eqref{Z} and \eqref{Xidef}, and so do $\U (t)$ and $\dot \U (t)$, since this property is preserved by the map $\G$ in \eqref{Gamma_fixed_point} and is stable under almost sure limits.
Since almost sure limits of centred Gaussian random variables are centred Gaussian, it follows that, for every $t \in [0,1]$ and $\dl < r \le 1$, the random variables $\Xi(t,r)$, $\dot \Xi(t,r)$, $\U(t,r)$, and $\dot \U(t,r)$, as well as all their finite linear combinations, are centred Gaussian.
In particular, each such random variable $Y$ satisfies $\E|Y|^4 = 3 (\E|Y|^2)^2$ and, by hypercontractivity, $(\E|Y|^p)^{\frac1p} \le C_p (\E|Y|^2)^{\frac12}$ for every $p \ge 2$; these facts are used in Lemmas \ref{LEM:L4-2} and \ref{lem:xizC12} below and in Subsection \ref{sub:optimisers}.

By a similar argument as in the proof of Corollary \ref{COR:L4est}, the bounds \eqref{Xiestimate1} and \eqref{Uestimate} imply the following.

\begin{lemma}
\label{LEM:L4-2}
Let $d \ge 4$ and $\dl \in (0,\frac12]$. Then we have
\begin{align}
\label{eq:UX}
\E\big[ \| \U (1)\|_{L^4 (B_1 \backslash B_\dl)}^4 + \| \Xi (1)\|_{L^4 (B_1 \backslash B_\dl)}^4  \big] \les \Ld (\dl),
\end{align}

\noi
where $\Ld (\cdot)$ is defined in \eqref{Ld}, and the implicit constant is independent of $\dl \in (0,\frac12]$ and $\ld \in \R$.
\end{lemma}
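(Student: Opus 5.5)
The plan is to mirror the proof of Corollary~\ref{COR:L4est}, with the covariance bound \eqref{X+estimate} replaced by \eqref{Xiestimate1} and \eqref{Uestimate}. The key structural input is the observation made just before the statement. For each fixed $\dl < r \le 1$, the random variables $\Xi(1,r)$ and $\U(1,r)$ are centred Gaussian, so
\[
\E|\Xi(1,r)|^4 = 3\big(\E|\Xi(1,r)|^2\big)^2, \qquad \E|\U(1,r)|^4 = 3\big(\E|\U(1,r)|^2\big)^2 .
\]
We treat the two terms in \eqref{eq:UX} separately, since both sides are additive.

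First, by Fubini's theorem (everything is non-negative) and radial coordinates, with the convention of Subsection~\ref{SUB:GFFg}, we write
\[
\E\big[\|\Xi(1)\|_{L^4(B_1\backslash B_\dl)}^4\big] = \int_\dl^1 \E|\Xi(1,r)|^4\, r^{d-1}\,dr = 3\int_\dl^1 \big(\E|\Xi(1,r)|^2\big)^2 r^{d-1}\,dr,
\]
and similarly for $\U(1)$. Measurability of $(\o,r)\mapsto \Xi(1,r)$ is harmless, since $\Xi(1)$ and $\U(1)$ take values in $H^1_0$, and in fact in $C^{2,\al}$ by Lemma~\ref{LEM:IDe}.

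Next, we insert \eqref{Xiestimate1} and \eqref{Uestimate} at $t=1$. Both give $\E|\cdot(1,r)|^2 \les (r-\dl)r^{-(d-1)}$, uniformly in $\dl\in(0,\frac12]$ and $\ld$. This yields the bound
\[
\int_\dl^1 \frac{(r-\dl)^2}{r^{2d-2}}\, r^{d-1}\,dr \le \int_\dl^1 r^{-d+3}\,dr,
\]
using $r-\dl\le r$. This is exactly the integral appearing in Corollary~\ref{COR:L4est}. It is bounded by $|\ln\dl|$ when $d=4$ and by $\frac{1}{d-4}\dl^{-d+4}$ when $d\ge5$, that is, by $\Ld(\dl)$ up to a constant depending only on $d$.

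There is no serious obstacle, since the real work was done in Lemmas~\ref{LEM:Xi} and~\ref{LEM:Upsilon}. The only point to check is that the Gaussianity of $\U(1,r)$, asserted before the statement via the linearity of $\G$ and almost sure limits, is indeed available. If one prefers not to rely on it, hypercontractivity gives the same fourth-moment identity up to a constant.
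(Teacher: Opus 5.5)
Your proof is correct and is exactly the paper's argument: as in Corollary~\ref{COR:L4est}, you combine Fubini, the identity $\E|Y|^4 = 3(\E|Y|^2)^2$ for the centred Gaussians $\Xi(1,r)$ and $\U(1,r)$, and the bounds \eqref{Xiestimate1} and \eqref{Uestimate}, which reduces everything to $\int_\dl^1 r^{-d+3}\,dr \les \Ld(\dl)$. One small quibble: your hypercontractivity fallback does not actually avoid the Gaussianity claim, because hypercontractivity requires $\U(1,r)$ to lie in a fixed finite Wiener chaos, and here that is exactly the linearity in $X^+_\dl$ that makes it Gaussian.
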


Let $E \subset \mathbb R^d$.
The H\"older space $C^\al (E)$ for $\al \in (0,1)$ is given by the norm
 \begin{align} \label{holder}
 \|u\|_{C^{\al}(E)} = \|u\|_{L^\infty (E)} + \sup_{\substack{x,y \in E \\ x \neq y}} \frac{|u(x) - u(y)|}{|x-y|^{\al}}.
 \end{align}

\noi
 We recall the Sobolev inequality from \cite[Theorem 4.12, Part I\!I]{Adams03}:
Given a strong local Lipschitz domain $E$ and $\al \in (0,1)$, there exists $C_E > 0$ such that
\begin{align}\label{SI}
\|f\|_{C^{\al}(E)} \le C_E \|f \|_{W^{2,d} (E)} ,
\end{align}

\noi
for any $f \in W^{2,d} (E)$.

\medskip

In constructing the $\Phi^4$ measure away from the origin, we also need the following estimates away from the small ball $B_\dl$.

\begin{lemma}\label{lem:xizC12}
Let $\dl \in (0,\frac12]$, and let $z_{\dl,\ld}$, $\Xi$, and $\U$ be defined in \eqref{z}, \eqref{Xidef}, and \eqref{Upsilon}, respectively.
Then, for every $\al \in (0,1)$ and every $p \ge 1$, there exists a constant $C_\dl = C_\dl(\al,p)$ such that
    \begin{align}
    \label{u_est}
    \begin{split}
        \|z_{\dl,\ld}\|_{C^{\al}(B_1\backslash B_{2\dl})} +
        \Big( \E\Big[ \|\Xi(1)\|^p_{C^{\al}(B_1\backslash B_{2\dl})} \Big] \Big)^{\frac1p}  +
        \Big( \E \Big[ \|\U(1)\|^p_{C^{\al}(B_1\backslash B_{2\dl})} \Big] \Big)^{\frac1p} &\le C_\dl,
    \end{split}
    \end{align}
 uniformly in $\ld \in \R$.
\end{lemma}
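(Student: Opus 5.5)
The plan is to use interior elliptic regularity on $B_1\backslash B_{2\dl}$, which stays at distance $\dl$ from the singular sphere $\{|x|=\dl\}$. There Lemmas \ref{LEM:z}, \ref{LEM:Xi}, \ref{LEM:Upsilon} give bounds uniform in $\ld$. Each of $z_{\dl,\ld}$, $\Xi(1)$, $\U(1)$ solves a Poisson equation, and we bound it in $W^{2,d}$ of a slightly larger region, then apply the Sobolev inequality \eqref{SI}.

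\emph{Step 1 ($z_{\dl,\ld}$).} By \eqref{zestimate}, $|z_{\dl,\ld}|\le \sqrt2/(r-\dl)\le 2\sqrt2/\dl$ on $B_1\backslash B_{3\dl/2}$, uniformly in $\ld$. The equation $\Delta z=z^3$ then gives $\|\Delta z\|_{L^\infty}\les_\dl 1$ there. The shell $B_1\backslash B_{3\dl/2}$ contains $B_1\backslash B_{2\dl}$, and the two share the outer boundary $\partial B_1$, where $z=0$. So the $L^p$ estimates up to $\partial B_1$, interior near $\{|x|=2\dl\}$, give $\|z\|_{W^{2,d}(B_1\backslash B_{2\dl})}\les \|z\|_{L^\infty}+\|\Delta z\|_{L^\infty}\les_\dl 1$. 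Alternatively, in the radial setting one can integrate the ODE \eqref{r_z} directly to bound $z'$ and $z''$. Then \eqref{SI} gives the $C^\al$ bound.

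\emph{Step 2 ($\Xi(1)$).} Integrating \eqref{Z} in time gives $\Delta \Xi(1)=3\big(\int_0^1 X^+_\dl(s)\,ds\big)z_{\dl,\ld}^2$, with zero Dirichlet data. On $B_1\backslash B_{3\dl/2}$ the right-hand side is bounded by $C_\dl\sup_s|X^+_\dl(s)|$. Its $L^p(\O;L^q)$ norms are finite by Doob's maximal inequality and the Gaussian moment bound \eqref{X+estimate}, or by Lemma \ref{LEM:reg}. By \eqref{Xiestimate1} and the Gaussianity remark, $\E\|\Xi(1)\|_{L^q(B_1\backslash B_{3\dl/2})}^p\les_\dl 1$ uniformly in $\ld$. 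The same local $W^{2,d}$ estimate as in Step 1, taken in $L^p(\O)$, then \eqref{SI}, gives the bound. For $p<2$ use Hölder.

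\emph{Step 3 ($\U(1)$), the main obstacle.} Write $\U(1)=\int_0^1\dot\U(t)\,dt$ and use \eqref{Upsilon1}: $\Delta\dot\U(t)=3\big(\U+\Xi+(1-t)(\dot\U+\dot\Xi)\big)z_{\dl,\ld}^2$. The difficulty is that $\U$ is defined nonlocally, through the inverse of $3(1-t)z^2-\Delta$ on the whole shell, so we must make sure only pointwise bounds are needed. Lemmas \ref{LEM:Xi}, \ref{LEM:Upsilon} supply exactly these, in $L^2(\O)$ uniformly in $\ld$ and $t$, and hypercontractivity upgrades them to $L^p(\O)$. Hence the right-hand side is bounded in $L^p(\O;L^q(B_1\backslash B_{3\dl/2}))$ uniformly in $t$ and $\ld$, as is $\dot\U(t)$ itself. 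The local $W^{2,d}$ estimate bounds $\E\|\dot\U(t)\|^p_{W^{2,d}(B_1\backslash B_{2\dl})}$ uniformly in $t$. Minkowski's inequality in $t$ and \eqref{SI} conclude.
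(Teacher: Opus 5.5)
Your proposal is correct and follows the paper's route. In both, each of $z_{\dl,\ld}$, $\Xi$, and $\U$ (or its time derivative, followed by Minkowski in $t$) is bounded in $W^{2,d}(B_1\backslash B_{2\dl})$ through $\|\Delta u\|_{L^d}+\|u\|_{L^d}$, using the $\ld$-uniform pointwise bounds of Lemmas \ref{LEM:z}, \ref{LEM:Xi}, \ref{LEM:Upsilon} together with hypercontractivity, and then \eqref{SI} is applied. The only difference is cosmetic: the paper uses the radial estimate \eqref{W2d} directly on $B_1\backslash B_{2\dl}$, keeping the $L^d$ term because there is no boundary condition at $r=2\dl$, whereas you use local Calder\'on--Zygmund estimates on the enlarged shell $B_1\backslash B_{3\dl/2}$.
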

\begin{proof}
It is easy to see that the domain $B_1\backslash B_{2\dl}$ satisfies the strong local Lipschitz condition with the Lipschitz constant depending on $\dl$.
Therefore, we may apply \eqref{SI} to get
\[
\|z_{\dl,\ld}\|_{C^{\al}(B_1\backslash B_{2\dl})} \le C_\dl \|z_{\dl,\ld} \|_{W^{2,d} (B_1\backslash B_{2 \dl})},
\]

\noi
where $C_\dl$ is a constant that only depends on $d$, $\dl$, $\al$, and $p$ (but not on $\ld$), which may differ from line to line.
Moreover, for every radial $u \in C^2 (\cj{B_1\backslash B_{2\dl}})$, we have
\begin{align}
\label{W2d}
\|u\|_{W^{2,d} (B_1\backslash B_{2 \dl})} \le C_\dl \big( \|\Delta u\|_{L^d (B_1\backslash B_{2 \dl})} + \|u\|_{L^d (B_1\backslash B_{2 \dl})} \big).
\end{align}
Note that we do not have any boundary condition at $r = 2\dl$, so we cannot drop the term $\|u\|_{L^d (B_1\backslash B_{2 \dl})}$.
Using \eqref{W2d}, \eqref{z}, and Lemma \ref{LEM:z}, which gives $|z_{\dl,\ld}| \le \sqrt 2 \dl^{-1}$ on $B_1\backslash B_{2\dl}$, we obtain
$$
\|z_{\dl,\ld} \|_{W^{2,d} (B_1\backslash B_{2 \dl})} \le C_\dl \big( \| z_{\dl,\ld}^3 \|_{L^\infty(B_1\backslash B_{2 \dl})} + \|z_{\dl,\ld} \|_{L^\infty(B_1\backslash B_{2 \dl})} \big) \le C_\dl,
$$

\noi
which gives the first inequality in \eqref{u_est}.
Similarly, by the same Sobolev inequality \eqref{SI}, \eqref{W2d}, \eqref{Xidef}, and \eqref{Z}, we have, for $0 \le s \le 1$,
    \[
    \begin{split}
    \|\dot \Xi (s)\| _{C^{\al}(B_1\backslash B_{2\dl})} & \le C_\dl \big( \|\Delta \dot \Xi (s)\|_{L^{d}(B_1\backslash B_{2 \dl})} + \|\dot \Xi (s)\|_{L^{d}(B_1\backslash B_{2 \dl})} \big) \\
    & \le C_\dl \big( \| X^+_\dl(s) \|_{L^{d}(B_1\backslash B_{ 2 \dl})} \|z_{\dl,\ld}\|_{L^\infty(B_1\backslash B_{ 2 \dl})}^2 + \|\dot \Xi (s)\|_{L^{d}(B_1\backslash B_{2 \dl})} \big) \\
    & \le C_\dl \big( \| X^+_\dl(s) \|_{L^{d}(B_1\backslash B_{ 2 \dl})} + \|\dot \Xi (s)\|_{L^{d}(B_1\backslash B_{2 \dl})} \big).
    \end{split}
    \]

    \noi
    Recall from the discussion before Lemma \ref{LEM:L4-2} that $X^+_\dl (s,r)$ and $\dot \Xi (s,r)$ are centred Gaussian.
    Then, since $\Xi(1) = \int_0^1 \dot \Xi (s) ds$, Minkowski's integral inequality and hypercontractivity, together with \eqref{X+estimate} and \eqref{Xiestimate}, give, for $p \ge d$,
    \begin{align*}
    \begin{split}
    \Big( \E \Big[ \|\Xi(1)\|^p_{C^{\al}(B_1\backslash B_{2\dl})} \Big] \Big)^{\frac1p} & \le C_\dl \sup_{0 \le s \le 1} \Big\| \| X^+_\dl(s,r) \|_{L^p (\O)} + \| \dot \Xi (s,r) \|_{L^p (\O)} \Big\|_{L^d_r (B_1\backslash B_{2\dl})} \\
    & \le C_\dl \sup_{0 \le s \le 1} \sup_{2 \dl \le r \le 1} \Big( \big( \E [ |X^+_\dl(s,r)|^2 ]\big)^{\frac12} + \big( \E [ |\dot \Xi(s,r)|^2 ]\big)^{\frac12} \Big) \le C_\dl,
    \end{split}
    \end{align*}

    \noi
    which gives the second inequality in \eqref{u_est} for $p \ge d$, and hence for every $p \ge 1$ by H\"older's inequality.
    The proof for $\U$ in \eqref{u_est} is analogous.
\end{proof}

\subsection{Estimates for the drifts} \label{sub:optimisers}
Now, we are ready to estimate the functional $P_{\dl,\ld}$ defined in \eqref{P1},
which plays a crucial role in the proof of the main theorem in the following section.
We start with the lower bound.

\begin{proposition}
\label{PROP:opt}
Let $d \ge 4$, $\dl \in (0,\frac12]$, and $\ld \in \R$, and let $P_{\dl,\ld}$ be as in \eqref{P1}.
Then, for every $W \in \mathbb H$, we have
\begin{equation}\label{optestimate}
\E[P_{\dl,\ld}(W)] \ge c_1 \E\bigg[ \int_{B_1 \backslash B_\dl} W(1)^4 \, dx + \| W(1) \|_{\dot H^1_{\rad}}^2\bigg] - C \Ld (\dl),
\end{equation}
where  $c_1, C>0$ are universal constants (in particular, independent of $\dl$ and $\ld$), and $\Ld (\cdot)$ is defined in \eqref{Ld}.
\end{proposition}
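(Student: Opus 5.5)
We set $Y = \Xi + \U$ (evaluated at time $1$) and $G = z_{\dl,\ld} + X^+_\dl$. We bound each line of \eqref{P1} from below. The positive terms $\frac14\int (W+Y)^4$, $\frac32\int W^2 G^2$ and $\frac12\int_0^1\|\dot W\|^2_{\dot H^1}$ will absorb everything else. The error terms must only produce costs controlled by $\E\int |X^+_\dl|^4$, $\E\int|\Xi|^4$ and $\E\int|\U|^4$. These are $\les \Ld(\dl)$ uniformly in $\ld$, by Corollary \ref{COR:L4est} and Lemma \ref{LEM:L4-2}. Two preliminary reductions are needed.
\begin{itemize}
\item By Cauchy--Schwarz in time, $\|W(1)\|_{\dot H^1}^2 \le \int_0^1\|\dot W\|^2_{\dot H^1}dt$, so the kinetic term dominates the $\dot H^1$ part of the target.
\item Since $(a+b)^4 \ge \frac18 a^4 - C b^4$, we have $\int (W+Y)^4 \ge \frac18\int W^4 - C\int Y^4$, and $\E\int Y^4\les\Ld(\dl)$.
\end{itemize}
Hence it suffices to show that all the remaining terms are bounded below by $-\eta\big(\int (W+Y)^4 + \int W^2G^2\big) - C_\eta(\text{$L^4$ norms of } X^+_\dl,\Xi,\U)$ for a small $\eta$, plus a $z$-dependent piece that we handle separately.

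The cubic term $\int (W+Y)^3 G$ is the main obstacle, because $G$ contains $z_{\dl,\ld}$, which is unbounded in $\ld$ near $r=\dl$. We write $(W+Y)^3 = (W+Y)^2 W + (W+Y)^2 Y$. The first piece is bounded by Young's inequality:
\[
\Big|\int (W+Y)^2 W G\Big| \le \eta\int (W+Y)^4 + C_\eta \int W^2 G^2 .
\]
This needs the coefficient of $\int W^2G^2$ to be at most $\frac32$. To ensure that, I would instead rearrange the quartic expansion. The first lines of \eqref{P1} are exactly the expansion of
\[
\frac14\int (W+Y+G)^4 - \frac14\int G^4 - \frac32\int Y(Y+2W)G^2 - \int (W+Y)G^3 ,
\]
up to reorganisation. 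The full quartic $\frac14\int(W+Y+G)^4$ is nonnegative and controls $\int W^4$ after subtracting the terms with $Y$ and $X^+_\dl$. The $z$-terms that are linear in $W$ were already cancelled in Proposition \ref{prop:rhoVar}, so what remains involves $z$ only through $Y z^2$, $Y^2 z^2$, $Y z^3$ and $X^+_\dl z^2$-type products. For these we use the pointwise bounds: $|z_{\dl,\ld}| \le \sqrt2/(r-\dl)$ from Lemma \ref{LEM:z}, and $\E|Y(r)|^2,\ \E|X^+_\dl(r)|^2 \les (r-\dl)/r^{d-1}$ from \eqref{X+estimate}, \eqref{Xiestimate1} and \eqref{Uestimate}.

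For the cross terms with $W$, namely $3\int WY\,(G^2 - z^2)$ and $\int (W+Y)\big[G^3 - z^3 - 3X^+_\dl z^2\big]$, we expand:
\[
G^2 - z^2 = 2zX^+_\dl + (X^+_\dl)^2, \qquad G^3 - z^3 - 3X^+_\dl z^2 = 3z(X^+_\dl)^2 + (X^+_\dl)^3 .
\]
Each term is then split by Young into $\eta\int W^2 G^2$ or $\eta\int W^4$, plus a $W$-free remainder. A typical remainder is $\E\int |z|^2 |X^+_\dl|^2 |Y|^2$, which we estimate using Gaussianity, giving
\[
\int_\dl^1 (r-\dl)^{-2}\,\frac{(r-\dl)^2}{r^{2d-2}}\, r^{d-1}\,dr \les \Ld(\dl).
\]
This is exactly where the decay $(r-\dl)$ of the Gaussian variances balances the $(r-\dl)^{-2}$ singularity of $z^2$. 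The terms with $z$ to the first power times $Y$ or $X^+$ cubed are handled the same way, giving $\int (r-\dl)^{-1}(r-\dl)^{3/2}r^{-(d-1)/2\cdot 3}r^{d-1}dr$, which is also fine. The $W$-free remainders only need bounds in expectation, so the pointwise variance bounds suffice. Fourth moments follow from the Gaussian relation $\E Y^4 = 3(\E Y^2)^2$ and hypercontractivity noted before Lemma \ref{LEM:L4-2}.

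Finally, we choose $\eta$ small so that the absorbed terms consume at most half of $\frac14\int(W+Y)^4$, of $\frac32\int W^2G^2$ (which is nonnegative and can then be dropped) and of the kinetic energy. This yields \eqref{optestimate} with universal constants $c_1$ and $C$. The main difficulty I anticipate is the bookkeeping of the $W^3 z$ and $W^2 Yz$ interactions. These must be absorbed by the positive quadratic $\frac32\int W^2 G^2$ together with the quartic, without ever producing a $\ld$-dependent $\int z^4$ cost. I would first try to absorb $\int W^3 G$ through completing the square: $\frac14 W^4 + W^3G + \frac32 W^2G^2 = \frac14 W^2(W+2G)^2 + \frac12 W^2G^2 \ge 0$. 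This identity is exactly what makes the $z$-dependence harmless.
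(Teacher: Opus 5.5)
There is a genuine gap at the step you yourself single out, the cubic line $\int (W+Y)^3 G$.

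First, your reduction to a small $\eta$ cannot work for this line. Already $|W|^3|G| \le \eta(W^4 + W^2G^2)$ fails for $\eta < \frac12$, so the cubic term must be absorbed with essentially sharp constants. Your first piece is fine in this respect, since $|(W+Y)^2WG| \le \frac16 (W+Y)^4 + \frac32 W^2G^2$; no rearrangement is needed to keep the constant $\frac32$.

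However, you never bound the second piece $(W+Y)^2YG$, which contains $W^2Yz_{\dl,\ld}$ and $WY^2z_{\dl,\ld}$. The obvious Young bound $\eta(W+Y)^4 + C_\eta Y^2G^2$ leaves $\E\int Y^2 z_{\dl,\ld}^2$. With \eqref{zestimate}, \eqref{Xiestimate1} and \eqref{Uestimate}, this is only bounded by $\int_\dl^1 (r-\dl)^{-1}\,dr = \infty$. It is finite for each fixed $\ld$, but not uniformly in $\ld$.

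The rearrangement into $\frac14\int (W+Y+G)^4 - \frac14\int G^4 - \cdots$ is a step backwards. It reintroduces $\int z_{\dl,\ld}^4$, $\int (W+Y) z_{\dl,\ld}^3$, $\int Y^2 z_{\dl,\ld}^2$ and $\int WY z_{\dl,\ld}^2$. These are precisely the non-uniform terms that \eqref{CoV} was designed to remove. Moreover, $\frac14\int (W+Y+G)^4$ controls $\int W^4$ only up to $C\int z_{\dl,\ld}^4$, which is unbounded in $\ld$.

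The missing idea is to arrange that $Y$ meets $z_{\dl,\ld}$ only through $|Y|^3 |z_{\dl,\ld}|$. This is harmless because $\E|Y|^3|z_{\dl,\ld}| \les (r-\dl)^{1/2} r^{-3(d-1)/2}$. The paper does this with $|W+Y|^3 \le \sqrt{6/5}\,|W|^3 + C|Y|^3$, followed by $\sqrt{6/5}\,|W|^3|G| \le \frac32 W^2G^2 + \frac15 W^4$. Within your split, you could instead keep part of $\frac32\int W^2G^2$ in reserve and bound $W^2YG$ and $WY^2G$ by $\eta W^2G^2 + CW^2Y^2 + CY^4$.

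In either version the quartic budget is tight. The reduction in your first bullet must not be applied before this absorption. It leaves only $\frac1{32}\int W^4$, below the $\frac16$ needed to absorb $|W|^3|G|$ against $\frac32\int W^2G^2$; the paper keeps $\frac9{40}$. Your closing completing-the-square identity spends all of $\frac14\int W^4$, although only $\frac14\int (W+Y)^4$ is available. That leaves nothing for the cross terms or for the conclusion.

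Your remainder computation is also incorrect. The integrand you write simplifies to $r^{1-d}$, so
\[
\int_\dl^1 (r-\dl)^{-2}\,\frac{(r-\dl)^2}{r^{2d-2}}\, r^{d-1}\,dr = \int_\dl^1 r^{1-d}\,dr \sim \dl^{2-d}.
\]
This is not $\les \Ld(\dl)$: for $d=4$ it is $\dl^{-2}$ against $|\log \dl|$. The exact rate $\Ld(\dl)$ is what Lemma~\ref{LEM:L4-} uses later.

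Terms such as $6\int W Y X^+_\dl z_{\dl,\ld}$ also cannot be absorbed into $\eta \int W^2G^2$ as they stand, since $z_{\dl,\ld} \neq G$. The fix, as in \eqref{opt31} and \eqref{opt4A}, is to pair every remaining $W$-linear term with $\eta\int W^4$ alone. The $W$-free remainder then carries the power $\frac43$, for instance
\[
\int \big(\E[Y^2 + (X^+_\dl)^2]\big)^{4/3} |z_{\dl,\ld}|^{4/3}\,dx \les \int_\dl^1 r^{-(d-1)/3}\,dr \les \Ld(\dl).
\]
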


\begin{proof}
We consider $P_{\dl, \ld} (W)$ in \eqref{P1} term by term; as in \eqref{P1}, $W$, $\Xi$, and $\U$ stand for $W(1)$, $\Xi(1)$, and $\U(1)$ when the time variable is not displayed.
For the first term in \eqref{P1}, we note that, since $W(0) = 0$, the Cauchy--Schwarz inequality in time gives
\[
\| W(1) \|_{\dot H^1_{\rad}}^2 = \bigg\| \int_0^1 \dot W(s) ds \bigg\|_{\dot H^1_{\rad}}^2 \le \int_0^1 \| \dot W(s)\|_{\dot H^1_{\rad}}^2 ds.
\]

\noi
Together with Young's inequality and \eqref{eq:UX}, this yields
\begin{align}
\begin{split}
\E \bigg[ \frac 14 \int_{B_1 \backslash B_\dl} & \big (W + \U + \Xi  \big)^4 dx + \frac 12 \int_0^1 \| \dot W(s)\|_{\dot H^1_{\rad}}^2 ds \bigg] \\
& \ge \E \bigg[ \frac 9{40} \int_{B_1 \backslash B_\dl} W^4 dx + \frac 12\| W \|_{\dot H^1_{\rad}}^2 \bigg] - C \, \E \int_{B_1 \backslash B_\dl} \big(\U +\Xi \big)^4 dx \\
& \ge \E \bigg[ \frac 9{40} \int_{B_1 \backslash B_\dl} W^4 dx + \frac 12\| W \|_{\dot H^1_{\rad}}^2 \bigg] - C \Ld(\dl),
\end{split}
\label{opt}
\end{align}

\noi
where $\Ld(\cdot)$ is given by \eqref{Ld},  and the constant $C$ may differ from line to line.

We turn to the second term in \eqref{P1}.
By Young's inequality and $2ab \le a^2 + b^2$, we have
\begin{align}
\begin{split}
 & \bigg| \int_{B_1\backslash B_\dl} \big(W +\Xi  + \U \big)^3 (z_{\dl,\ld} + X^+_\dl ) \, dx \bigg| \\
& \hphantom{X} \le \sqrt{\frac 65} \int_{B_1 \backslash B_\dl} |W |^3 |z_{\dl,\ld}+X^+_\dl |dx + C \int_{B_1 \backslash B_\dl} |\Xi + \U|^3 |z_{\dl,\ld}+X^+_\dl | dx \\
& \hphantom{X} \le \frac32 \int_{B_1 \backslash B_\dl} |W |^2 |z_{\dl,\ld}+X^+_\dl |^2 dx + \frac{1}{5} \int_{B_1 \backslash B_\dl} W^4 dx+ C \int_{B_1 \backslash B_\dl} |\Xi + \U |^3 |z_{\dl,\ld}+X^+_\dl | dx .
\end{split}
\label{opt0}
\end{align}

\noi
We need to estimate the third term on the right-hand side of \eqref{opt0}.
By H\"older inequality and hypercontractivity, we have
\begin{align}
\begin{split}
\E \bigg[ & \int_{B_1 \backslash B_\dl} |\Xi + \U|^3 |z_{\dl,\ld}+X^+_\dl | \bigg] \\
& \les \E \bigg[\int_{B_1 \backslash B_\dl} |\Xi + \U|^3 |z_{\dl,\ld}| \bigg] + \E \bigg[\int_{B_1 \backslash B_\dl} |\Xi + \U|^3 |X^+_\dl | \bigg] \\
& \les \int_{B_1 \backslash B_\dl} \big( \E [|\Xi + \U|^2] \big)^{\frac32} |z_{\dl,\ld}| + \E \int_{B_1 \backslash B_\dl} |\Xi |^4 + |\U|^4 + |X^+_\dl |^4 \\
& \les \int_{\dl}^1 \bigg(\frac{r-\dl}{r^{d-1}} \bigg)^{\frac32} \frac1{r - \dl} r^{d-1} dr + \E \int_{B_1 \backslash B_\dl} |\Xi |^4 + |\U|^4 + |X^+_\dl |^4 \\
& \les \Ld (\dl),
\end{split}
\label{opt1}
\end{align}

\noi
where we used \eqref{zestimate},  \eqref{Xiestimate1}, \eqref{Uestimate}, \eqref{eq:X+L4}, and \eqref{eq:UX} in the last  two steps.
By collecting \eqref{opt0} and \eqref{opt1}, we arrive at
\begin{align}
\begin{split}
\E \int_{B_1\backslash B_\dl} & \big(W+\Xi  + \U\big)^3 (z_{\dl,\ld} + X^+_\dl ) \\
& \ge - \frac32 \E \int_{B_1 \backslash B_\dl} |W|^2 |z_{\dl,\ld}+X^+_\dl |^2 - \frac{1}{5} \E \int_{B_1 \backslash B_\dl} W^4 - C \Ld (\dl).
\end{split}
\label{opt2}
\end{align}

\noi
We note that the first term on the right-hand side of \eqref{opt2} cancels with the expectation of the third term in \eqref{P1};
and the second term of \eqref{opt2} can be absorbed by the first term in \eqref{opt}.

Now we consider the fourth term in \eqref{P1}.
Expanding the square, we have
\begin{align}
    \label{opt3}
    \begin{split}
    3 & \int_{B_1\backslash B_\dl} W(\Xi  + \U) \big((z_{\dl,\ld} + X^+_\dl )^2 - z_{\dl,\ld}^2\big) \\
& = 3\int_{B_1\backslash B_\dl}  W(\Xi  + \U) \big(2 z_{\dl,\ld}  X^+_\dl  + (X^+_\dl)^2 \big) \\
& = \int_{B_1\backslash B_\dl} 6 W(\Xi  + \U) X^+_\dl  z_{\dl,\ld}  + 3 \int_{B_1\backslash B_\dl} W(\Xi  + \U) (X^+_\dl)^2.
\end{split}
\end{align}

\noi
For the first term in \eqref{opt3},
by using H\"older inequality and hypercontractivity,
we have
\begin{align}
\label{opt31}
\begin{split}
 \E & \int_{B_1\backslash B_\dl} W(\Xi  + \U) X^+_\dl  z_{\dl,\ld} \\
 & \ge - \frac1{960} \E \int_{B_1\backslash B_\dl} W^4  - C \int_{B_1\backslash B_\dl} \E \big| (\Xi  + \U) X^+_\dl \big|^{\frac43} |z_{\dl,\ld} |^{\frac43}  \\
 & \ge - \frac1{960} \E \int_{B_1\backslash B_\dl} W^4  - C \int_{B_1\backslash B_\dl} \Big( \E \big[ |\Xi|^{2} + |\U|^{2} + |X^+_\dl|^{2} \big] \Big)^{\frac43} |z_{\dl,\ld} |^{\frac43}  \\
 & \ge  - \frac1{960} \E \int_{B_1\backslash B_\dl} W^4  - C \int_{\dl}^1 \bigg( \frac{r-\dl}{r^{d-1}} \bigg)^{\frac43} \frac1{(r-\dl)^{\frac43}} r^{d-1} dr  \\
 & \ge - \frac1{960} \E \int_{B_1\backslash B_\dl} W^4  - C \Ld (\dl).
\end{split}
\end{align}

\noi
Similarly, for the second term in \eqref{opt3}, by H\"older and Young inequalities, we have
\begin{align}
\label{opt32}
\begin{split}
 \E & \int_{B_1\backslash B_\dl} W(\Xi  + \U) (X^+_\dl)^2 \\
 & \ge - \frac1{960} \E \int_{B_1\backslash B_\dl} W^4  - C \int_{B_1\backslash B_\dl} \E \big[ | (\Xi  + \U) (X^+_\dl)^2 |^{\frac43} \big] \\
 & \ge - \frac1{960} \E \int_{B_1\backslash B_\dl} W^4  - C \int_{B_1\backslash B_\dl} \E \big[ |\Xi|^{4} + |\U|^{4} + |X^+_\dl|^{4} \big]  \\
 & \ge  - \frac1{960} \E \int_{B_1\backslash B_\dl} W^4  - C \Ld (\dl).
\end{split}
\end{align}

\noi
By collecting \eqref{opt3}, \eqref{opt31}, and \eqref{opt32}, we arrive at
\begin{align}
\label{opt33}
3 \E \int_{B_1\backslash B_\dl} W(\Xi  + \U) \big((z_{\dl,\ld} + X^+_\dl )^2 - z_{\dl,\ld}^2\big) \ge  - \frac3{320} \E \int_{B_1\backslash B_\dl} W^4  - C \Ld (\dl),
\end{align}

\noi
which is again sufficient for our purpose.

Finally, we turn to the last term in \eqref{P1}.
We first write
\begin{align}
\label{opt4}
\begin{split}
\int_{B_1\backslash B_\dl} & \big(W+\Xi  + \U\big)  [(z_{\dl,\ld} + X^+_\dl )^3 - z_{\dl,\ld}^3 - 3 X^+_\dl  z_{\dl,\ld}^2  ] \\
& =  \int_{B_1\backslash B_\dl} \big(W+\Xi  + \U\big)  \big( 3 (X^+_\dl)^2 z_{\dl,\ld} + (X^+_\dl)^3 \big)\\
& = 3 \int_{B_1\backslash B_\dl} W (X^+_\dl)^2 z_{\dl,\ld} dx + \int_{B_1\backslash B_\dl} W (X^+_\dl)^3 dx \\
& \hphantom{XX} + 3 \int_{B_1\backslash B_\dl} \big(\Xi  + \U\big) (X^+_\dl)^2 z_{\dl,\ld} dx +  \int_{B_1\backslash B_\dl} \big(\Xi  + \U\big) (X^+_\dl)^3 dx \\
& = I_1 + I_2 + I_3 + I_4.
\end{split}
\end{align}

\noi
In what follows, we shall estimate the terms $I_1$, $I_2$, $I_3$, and $I_4$ one by one.
For the term $I_1$ in \eqref{opt4},
we have
\begin{align}
\label{opt4A}
\begin{split}
\E [I_1] & = 3 \E \int_{B_1\backslash B_\dl} W (X^+_\dl)^2 z_{\dl,\ld} \\
& \ge -\frac1{640} \E \int_{B_1\backslash B_\dl} W^4 - C \int_{B_1\backslash B_\dl} \E |X^+_\dl |^{\frac83} |z_{\dl,\ld}|^{\frac43} \\
& \ge -\frac1{640} \E \int_{B_1\backslash B_\dl} W^4 -  C \Ld (\dl),
\end{split}
\end{align}

\noi
where, in the last step, we used a similar argument as in \eqref{opt31}.
For the term $I_2$ in \eqref{opt4}, we have
\begin{align}
\label{opt4B}
\begin{split}
\E [I_2] & = \E \int_{B_1\backslash B_\dl} W (X^+_\dl) ^3 \\
& \ge -\frac1{640} \E \int_{B_1\backslash B_\dl} W^4 - C \int_{B_1\backslash B_\dl} \E \big[ |X^+_\dl |^{4} \big]\\
& \ge -\frac1{640} \E \int_{B_1\backslash B_\dl} W^4 -  C \Ld (\dl).
\end{split}
\end{align}

\noi
For the term $I_3$ in \eqref{opt4}, by a similar argument as in \eqref{opt31},
we have
\begin{align}
\label{opt4C}
\begin{split}
\E [I_3] & = 3 \E \int_{B_1\backslash B_\dl} \big(\Xi  + \U\big) (X^+_\dl) ^2 z_{\dl,\ld} \\
& \ge - C \int_{B_1\backslash B_\dl} \E \big[ |X^+_\dl|^3 + |\Xi|^3 + |\U|^3 \big] |z_{\dl,\ld}|\\
& \ge - C \int_{B_1\backslash B_\dl}\Big( \E \big[ |X^+_\dl|^2 + |\Xi|^2 + |\U|^2 \big] \Big)^{\frac32} |z_{\dl,\ld}|\\
& \ge - C \int_{\dl}^1 \bigg( \frac{r-\dl}{r^{d-1}} \bigg)^{\frac32} \frac{r^{d-1}}{r-\dl } dr \ge -  C \Ld (\dl),
\end{split}
\end{align}

\noi
which is sufficient for our purpose.
For the last term $I_4$ in \eqref{opt4}, by H\"older inequality,
we have
\begin{align}
\label{opt4D}
\begin{split}
\E [I_4] & = \E \int_{B_1\backslash B_\dl} \big(\Xi  + \U\big) (X^+_\dl)^3 \ge - C \int_{B_1\backslash B_\dl} \E \big[ |X^+_\dl|^4 + |\Xi|^4 + |\U|^4 \big] \ge -  C \Ld (\dl).
\end{split}
\end{align}

\noi
By collecting \eqref{opt4}, \eqref{opt4A}, \eqref{opt4B}, \eqref{opt4C}, and \eqref{opt4D}, we arrive at
\begin{align}
\label{opt41}
\begin{split}
\E \int_{B_1\backslash B_\dl} & \big(W+\Xi  + \U\big)  [(z_{\dl,\ld} + X^+_\dl)^3 - z_{\dl,\ld}^3 - 3 X^+_\dl z_{\dl,\ld}^2  ] \\
& \ge -\frac1{320} \E \int_{B_1\backslash B_\dl} W^4 -  C \Ld (\dl).
\end{split}
\end{align}

\noi
Therefore, collecting \eqref{P1}, \eqref{opt}, \eqref{eq:UX}, \eqref{opt2}, \eqref{opt33}, and \eqref{opt41}, the resulting coefficient of $\E \int_{B_1\backslash B_\dl} W^4 dx$ is
\[
\frac 9{40} - \frac 15 - \frac 3{320} - \frac 1{320} = \frac 1{80} > 0 ,
\]

\noi
while that of $\E \| W \|_{\dot H^1_{\rad}}^2$ is $\frac12$. This gives \eqref{optestimate} with $c_1 = \frac1{80}$, and thus finishes the proof.
\end{proof}

We also need the following upper bound.

\begin{lemma}
\label{LEM:P0}
Let $d \ge 4$, $\dl \in (0,\frac12]$, and $\ld \in \R$, and let $P_{\dl,\ld}$ be as in Proposition \ref{prop:rhoVar}. Then, we have
\begin{equation}
\label{P0}
\E[P_{\dl,\ld}(0)] \les \Ld (\dl),
\end{equation}

\noi
where $\Ld (\cdot)$ is given in \eqref{Ld}, and the implicit constant is independent of $\dl \in (0,\frac12]$ and $\ld \in \R$.
\end{lemma}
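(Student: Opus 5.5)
Setting $W \equiv 0$ in \eqref{P1}, the kinetic term and every term containing $W$ drop out. We are left with
\begin{align*}
P_{\dl,\ld}(0) &= \frac14 \int_{B_1\backslash B_\dl} (\Xi+\U)^4 + \int_{B_1\backslash B_\dl} (\Xi+\U)^3 (z_{\dl,\ld}+X^+_\dl) \\
&\phantom{=} + \int_{B_1\backslash B_\dl} (\Xi+\U)\big(3 (X^+_\dl)^2 z_{\dl,\ld} + (X^+_\dl)^3\big),
\end{align*}
where $\Xi=\Xi(1)$ and $\U=\U(1)$, and the last bracket was simplified algebraically as in \eqref{opt4}. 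The plan is to bound the expectation of each of these three terms by $C\Ld(\dl)$ with $C$ independent of $\dl$ and $\ld$. This reuses exactly the estimates from the proof of Proposition \ref{PROP:opt}, now as upper bounds on absolute values.

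\textbf{First term.} We use $(a+b)^4 \le 8(a^4+b^4)$ together with Lemma \ref{LEM:L4-2}, which gives $\E\int(\Xi+\U)^4 \les \Ld(\dl)$.

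\textbf{Second term.} We split $z_{\dl,\ld}+X^+_\dl$ into its two pieces.
\begin{itemize}
\item For the $z_{\dl,\ld}$ piece, use that $\Xi(1,r)$ and $\U(1,r)$ are centred Gaussian, so hypercontractivity gives $\E|\Xi+\U|^3 \les (\E|\Xi|^2+\E|\U|^2)^{3/2}$. Combining this with \eqref{Xiestimate1}, \eqref{Uestimate} and \eqref{zestimate} reduces the piece to the integral
\[
\int_\dl^1 \Big(\frac{r-\dl}{r^{d-1}}\Big)^{3/2}\frac{r^{d-1}}{r-\dl}\,dr=\int_\dl^1 (r-\dl)^{1/2} r^{-\frac{d-1}{2}}\,dr .
\]
This is at most $\int_\dl^1 r^{-\frac d2+1}dr \les \Ld(\dl)$ for $d\ge4$, exactly as in \eqref{opt1}.
\item For the $X^+_\dl$ piece, Young's inequality bounds it by $\E\int |\Xi|^4+|\U|^4+|X^+_\dl|^4$. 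This is $\les \Ld(\dl)$ by \eqref{eq:X+L4} and \eqref{eq:UX}.
\end{itemize}

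\textbf{Third term.} This is precisely $I_3+I_4$ in \eqref{opt4}, and the bounds \eqref{opt4C} and \eqref{opt4D} already control its absolute value by $C\Ld(\dl)$.
\begin{itemize}
\item For $I_3$, apply Hölder and hypercontractivity to $(\Xi+\U)(X^+_\dl)^2$, yielding $(\E[|X^+_\dl|^2+|\Xi|^2+|\U|^2])^{3/2}|z_{\dl,\ld}|$. Then use \eqref{X+estimate} and the same radial integral as above.
\item For $I_4$, Young's inequality reduces it to fourth moments.
\end{itemize}

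The only step needing care is the term where $z_{\dl,\ld}$ enters, since $z_{\dl,\ld}$ is singular near $r=\dl$. There we must check that the decay $\frac{r-\dl}{r^{d-1}}$ of the variances of $\Xi$, $\U$ and $X^+_\dl$ from Lemmas \ref{LEM:X+}, \ref{LEM:Xi} and \ref{LEM:Upsilon} compensates the factor $(r-\dl)^{-1}$ uniformly in $\ld$. The computation above shows it does, leaving a factor $(r-\dl)^{1/2}$. Uniformity in $\ld$ comes from the fact that all the bounds used (\eqref{zestimate}, \eqref{Xiestimate1}, \eqref{Uestimate}) are uniform in $\ld$, and uniformity in $\dl \in (0,\frac12]$ follows in the same way. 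Summing the three contributions gives \eqref{P0}.
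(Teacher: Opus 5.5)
Your proposal is correct and follows the paper's proof essentially line by line. You use the same three-term expression for $P_{\dl,\ld}(0)$ and Young's inequality to reduce the noise-only pieces to \eqref{eq:X+L4} and \eqref{eq:UX}. For the pieces involving $z_{\dl,\ld}$ you use hypercontractivity together with \eqref{zestimate}, \eqref{Xiestimate1}, \eqref{Uestimate} and \eqref{X+estimate}, which reduces them to the radial integral $\int_\dl^1 (r-\dl)^{1/2} r^{-\frac{d-1}{2}}\,dr \les \Ld(\dl)$.
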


\begin{proof}
The proof is similar to that of Proposition \ref{PROP:opt}.
We give the proof for completeness.
From \eqref{P1}, we see that
\begin{align}
\label{P10}
\begin{split}
P_{\dl,\ld} (0)
& = \frac 14 \int_{B_1\backslash B_\dl} \big(\Xi  + \U\big)^4  + \int_{B_1\backslash B_\dl} \big(\Xi  + \U\big)^3 (z_{\dl,\ld} + X^+_\dl )  \\
& \phantom{=} + \int_{B_1\backslash B_\dl} \big(\Xi  + \U\big)  [(z_{\dl,\ld} + X^+_\dl )^3 - z_{\dl,\ld}^3 - 3 X^+_\dl  z_{\dl,\ld}^2  ] .
\end{split}
\end{align}

\noi
Using Young's inequality, the first two terms of the right-hand side of \eqref{P10} can be bounded by
\begin{align}\label{P11}
\begin{split}
\frac 14 & \int_{B_1\backslash B_\dl} \E  \big(\Xi  + \U\big)^4  + \int_{B_1\backslash B_\dl} \E \big(\Xi  + \U\big)^3 (z_{\dl,\ld} + X^+_\dl)\\
& \les \int_{B_1\backslash B_\dl} \E \big[\Xi ^4 + \U^4 + (X^+_\dl)^4 \big] dx + \int_{B_1\backslash B_\dl} \big( \E \big[|\Xi |^2 + |\U|^2\big]\big)^{\frac32} |z_{\dl,\ld}| dx \\
& \les \int_{B_1\backslash B_\dl} \E \big[\Xi ^4 + \U^4 + (X^+_\dl)^4 \big] dx +  \int_\dl^1 \bigg( \frac{r-\dl}{r^{d-1}}\bigg)^{\frac32} \frac{r^{d-1}}{r-\dl} dr \\
& \les \Ld(\dl),
\end{split}
\end{align}

\noi
where we used hypercontractivity together with \eqref{zestimate}, \eqref{Xiestimate1}, \eqref{Uestimate}, \eqref{eq:X+L4} and \eqref{eq:UX} in the last two steps, exactly as in \eqref{opt1};
this is sufficient for our purpose.

Now we turn to the last term of the right-hand side of \eqref{P10}. Using a similar argument as above yields
\begin{align}
\begin{split}
\E & \int_{B_1\backslash B_\dl} \big(\Xi  + \U\big)  [(z_{\dl,\ld} + X^+_\dl(1))^3 - z_{\dl,\ld}^3 - 3 X^+_\dl(1) z_{\dl,\ld}^2  ] \\
& = \E \int_{B_1\backslash B_\dl} \big(\Xi  + \U\big)  [ 3 X^+_\dl(1)^2 z_{\dl,\ld} + X^+_\dl(1)^3 ] \\
& = \E \int_{B_1\backslash B_\dl} \big(\Xi  + \U\big) X^+_\dl(1)^3 +   3 \E \int_{B_1\backslash B_\dl}  \big(\Xi  + \U\big) X^+_\dl(1)^2 z_{\dl,\ld} \\
& \les \int_{B_1\backslash B_\dl} \E \big[\Xi ^4 + \U^4 + X^+_\dl(1)^4 \big] dx + \int_{B_1\backslash B_\dl} \big( \E \big[|\Xi |^2 + |\U|^2 + |X^+_\dl(1)|^2 \big]\big)^{\frac32} |z_{\dl,\ld}| dx \\
& \les \int_{B_1\backslash B_\dl} \E \big[\Xi ^4 + \U^4 + (X^+_\dl)^4 \big] dx +  \int_\dl^1 \bigg( \frac{r-\dl}{r^{d-1}}\bigg)^{\frac32} \frac{r^{d-1}}{r-\dl} dr \\
& \les \Ld(\dl),
\end{split}
\label{P12}
\end{align}

\noi
where, again, we used hypercontractivity together with \eqref{zestimate}, \eqref{Xiestimate1}, \eqref{Uestimate}, \eqref{X+estimate}, \eqref{eq:X+L4} and \eqref{eq:UX} in the last two steps.
Thus, we finish the proof of \eqref{P0} from \eqref{P10}, \eqref{P11}, and \eqref{P12}.
\end{proof}

\begin{remark}
\rm
In the construction of the $\Phi^4$ measure on $B_1 \backslash \{0\}$ in the next section, we will only use that $\Ld (\dl)$ is a constant depending on $\dl$. However, to upgrade this measure to one supported on spaces of functions over $B_1$, the more precise dependence on $\dl$ recorded in \eqref{Ld} is needed.
\end{remark}

\section{Construction of the measure}
\label{SEC:proof}
Before we proceed to the proof of Theorem \ref{THM:main}, we need the following Sobolev embedding.

\begin{lemma}
\label{LEM:Sob}
Let $w \in \dot H^1_{\rad}(B_1 \backslash B_\dl)$ with $0$ Dirichlet boundary conditions. Then for every $\dl \le r \le 1$,
\begin{equation}\label{poincare}
 |w(r)|
 \les \dl^{-\frac{d-2}2} \|w\|_{\dot H^1_{\rad}(B_1 \backslash B_\dl)} .
\end{equation}

\noi
Furthermore, given $\al \in (0, \frac12]$, we have
\begin{equation}
\label{poin2}
\| w\|_{C^{\al } (B_1 \backslash B_{\dl})} \les \dl^{-\frac{d-2}2 - \al} \|w\|_{\dot H^1_{\rad}(B_1 \backslash B_\dl)},
\end{equation}

\noi
where $C^\al$ is the H\"older space defined in \eqref{holder}.
\end{lemma}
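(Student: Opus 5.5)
Both bounds come from the fundamental theorem of calculus in the radial variable together with Cauchy--Schwarz against the weight $s^{d-1}$. We use the convention that radial integrals are $\int \cdot \, s^{d-1}ds$, so that $\|w\|_{\dot H^1_{\rad}}^2 = \int_\dl^1 |w'(s)|^2 s^{d-1} ds$. By density it suffices to treat smooth radial $w$ vanishing at $r=\dl$ and $r=1$.

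For \eqref{poincare}, fix $r \in [\dl,1]$ and use $w(\dl)=0$ to write $w(r) = \int_\dl^r w'(s)\,ds$. Inserting $s^{\frac{d-1}2} s^{-\frac{d-1}2}$ and applying Cauchy--Schwarz gives
\[
|w(r)| \le \|w\|_{\dot H^1_{\rad}} \Big( \int_\dl^r s^{-(d-1)} ds \Big)^{\frac12} \le \|w\|_{\dot H^1_{\rad}} \Big(\frac{\dl^{-d+2}}{d-2}\Big)^{\frac12}.
\]
This is \eqref{poincare} for $d\ge3$, which is the case of interest since $d\ge 4$ in this section. For $d \le 2$ the integral $\int_\dl^1 s^{1-d}ds$ is bounded by a constant times $1+|\ln \dl|$, and one can simply absorb this into the weaker claimed factor.

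For \eqref{poin2}, the $L^\infty$ part of the Hölder norm is already controlled by \eqref{poincare}, since $\dl^{-\frac{d-2}2} \le \dl^{-\frac{d-2}2-\al}$. For the seminorm, take $\dl \le s < t \le 1$ and write $w(t)-w(s) = \int_s^t w'(\tau)\,d\tau$. Cauchy--Schwarz with the weight, together with $\tau \ge \dl$, gives
\[
|w(t)-w(s)| \le \|w\|_{\dot H^1_{\rad}} \Big(\int_s^t \tau^{-(d-1)}d\tau\Big)^{\frac12} \le \dl^{-\frac{d-1}2} |t-s|^{\frac12} \|w\|_{\dot H^1_{\rad}}.
\]

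We then interpolate between the two regimes.
\begin{itemize}
\item If $|t-s| \le \dl$, then $|t-s|^{\frac12} = |t-s|^{\al}|t-s|^{\frac12-\al} \le |t-s|^\al \dl^{\frac12-\al}$ (using $\al \le \frac12$), which yields the factor $\dl^{-\frac{d-2}2-\al}$.
\item If $|t-s| > \dl$, we bound $|w(t)-w(s)| \le 2\|w\|_{L^\infty} \les \dl^{-\frac{d-2}2}\|w\|_{\dot H^1_{\rad}}$ by \eqref{poincare}, and then use $1 < \dl^{-\al}|t-s|^\al$.
\end{itemize}
In both cases we obtain \eqref{poin2} in the radial variable.

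It remains to pass from the radial variable to $x \in B_1\backslash B_\dl$. This uses the inequality $\big||x|-|y|\big| \le |x-y|$, as in the proof of Lemma \ref{LEM:Xal}. No step is a genuine obstacle; the only care needed is the case split at scale $\dl$ to obtain the stated power $\dl^{-\frac{d-2}2-\al}$.
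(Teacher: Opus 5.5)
Your argument is correct for $d\ge3$ and is essentially the paper's proof: the same fundamental theorem of calculus plus weighted Cauchy--Schwarz step gives both the pointwise bound and the $C^{1/2}$ seminorm bound $\dl^{-\frac{d-1}2}\|w\|_{\dot H^1}$, and your case split at $|t-s|=\dl$ is just an explicit form of the interpolation $\|w\|_{C^\al}\les\|w\|_{L^\infty}^{1-2\al}\|w\|_{C^{1/2}}^{2\al}$ that the paper invokes. The one thing to fix is your aside on $d\le2$: there $\dl^{-\frac{d-2}2}\le1$ is a stronger factor, not a weaker one, and the estimate genuinely fails (for $d=2$ the tent $w(r)=\min\{\log(r/\dl),\log(1/r)\}$ has $w(\sqrt{\dl})/\|w\|_{\dot H^1}=\frac12|\log\dl|^{1/2}$), so the lemma should simply be read for $d\ge3$, which is all the paper needs.
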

\begin{proof}
Since $w(\dl) = 0$, we use the fundamental theorem of calculus and  H\"older's inequality to get
\begin{align}
|w(r)| = \bigg| \int_\dl^r \partial_r w (s) ds \bigg| \le \bigg(\int_\dl^r |\partial_r w (s)|^2 s^{d-1} ds \bigg)^\frac12 \Big(\int_\dl^r s^{-d+1} ds\Big)^\frac12,
\label{poin10}
\end{align}

\noi
which is sufficient for \eqref{poincare}.

Then we turn to \eqref{poin2}.
A similar argument as in \eqref{poin10} yields, for the $C^{\frac12}$ seminorm,
\[
\begin{split}
\sup_{\substack{r_1,r_2 \in (\dl, 1) \\ r_1 \neq r_2}} \frac{|w(r_1) - w(r_2)|}{|r_1 - r_2|^{\frac12}}
& \le \sup_{\substack{r_1,r_2 \in (\dl, 1) \\ r_1 \neq r_2}} \frac{\big( \int_{r_1}^{r_2} | \partial_r w(r) |^2 r^{d-1} d r \big)^{\frac12} \big(\int_{r_1}^{r_2} r^{-d+1} dr\big)^{\frac12}}{|r_1 - r_2|^{\frac12}} \\
& \les \dl^{- \frac{d-1}2} \|w\|_{\dot H^1 (B_1 \backslash B_\dl)}.
\end{split}
\]

\noi
Combining this with \eqref{poincare} and the interpolation inequality
$\| w\|_{C^{\al}} \les \|w\|_{L^\infty}^{1-2\al} \|w\|_{C^{\frac12}}^{2\al}$, valid for $\al \in (0,\frac12]$, we obtain
\[
\| w\|_{C^{\al} (B_1 \backslash B_{\dl})} \les \dl^{-\frac{d-2}2 (1-2\al)} \dl^{- \frac{d-1}2 \cdot 2\al}\|w\|_{\dot H^1 (B_1 \backslash B_\dl)} = \dl^{- \frac{d-2}2 - \al} \|w\|_{\dot H^1 (B_1 \backslash B_\dl)},
\]

\noi
which finishes the proof of \eqref{poin2}.
\end{proof}

\subsection{Tightness}
In this subsection, we show that the family $\{\rho_\eps\}_{\eps \in (0,1)}$ is tight on the space $\mathbb X^\al$ defined in \eqref{Xp}, and we extract a weak limit point $\rho_0$ along a sequence $\eps_n \to 0$.
The main ingredient is the following exponential moment bound for H\"older norms away from the origin.

\begin{lemma} \label{LEM:EB}
Recall the truncated Gibbs measure $\rho_\eps$ defined in \eqref{rho_eps}.
For every $\dl \in (0,\frac12]$ and $\al \in (0, \frac12)$, there exist constants $\eta = \eta(\dl,\al) \ll 1$ and $C=C(\dl,\al) >0$ such that
    \begin{equation}
    \label{exp_bound}
    \int \exp \Big (\eta \|u \|^2_{C^{\al}(B_1\backslash B_{2\dl})} \Big) d \rho_{\eps}(u) \le C,
    \end{equation}
uniformly in $\eps \in (0, \dl]$.
\end{lemma}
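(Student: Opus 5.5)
By Proposition~\ref{prop:Markov}, for $\eps \le \dl$ and any functional $G$ depending only on $u|_{B_1\setminus B_\dl}$, we have $\int G \, d\rho_\eps = \int \big(\int G\, d\rho^+_\dl[\ld]\big)\, \nu_{\eps,\dl}(d\ld)$, where $\nu_{\eps,\dl}$ is a probability measure. The plan is therefore to prove
\[
\sup_{\ld \in \R} \int \exp\Big(\eta \|u\|^2_{C^\al(B_1\setminus B_{2\dl})}\Big)\, d\rho^+_\dl[\ld](u) \le C(\dl,\al),
\]
which immediately gives \eqref{exp_bound} uniformly in $\eps \in (0,\dl]$. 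To prove this, I apply Proposition~\ref{prop:rhoVar} with $F(u) = -\eta\|u\|^2_{C^\al(B_1\setminus B_{2\dl})}$, first truncated as $F_M = -\min(\eta\|u\|^2_{C^\al}, M)$ and later sending $M\to\infty$ by monotone convergence. The functional $F_M$ is bounded, and on the support of $\rho^+_\dl[\ld]$ the map $u \mapsto F_M(u) + \frac14\int|u|^4$ is lower semicontinuous (the H\"older norm is lower semicontinuous under $L^2$ convergence), so the hypotheses are satisfied. Using the almost-optimiser $W_2$ of Remark~\ref{RMK:alm} and \eqref{varub}, and bounding the first infimum in \eqref{eq:rhoVar} from above by $\E[P_{\dl,\ld}(0)] \les \Ld(\dl)$ from Lemma~\ref{LEM:P0}, I obtain
\[
\log \int e^{-F_M}\, d\rho^+_\dl[\ld] \le C\Ld(\dl) + 1 - \inf_W \E\Big[F_M(X^+_\dl + z_{\dl,\ld} + W + \Xi + \U) + P_{\dl,\ld}(W)\Big].
\]

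It remains to bound the last infimum from below uniformly in $W$ and $\ld$. By Proposition~\ref{PROP:opt}, $\E[P_{\dl,\ld}(W)] \ge c_1 \E\|W\|^2_{\dot H^1} - C\Ld(\dl)$. For the $F$ term, the triangle inequality gives
\[
\eta\|u\|^2_{C^\al} \le 4\eta\Big(\|X^+_\dl\|^2_{C^\al} + \|z_{\dl,\ld}\|^2_{C^\al} + \|\Xi\|^2_{C^\al} + \|\U\|^2_{C^\al} + \|W\|^2_{C^\al}\Big)
\]
on $B_1\setminus B_{2\dl}$. The expectations of the first four terms are bounded by $C_\dl$ uniformly in $\ld$, using Lemma~\ref{LEM:reg} and Lemma~\ref{lem:xizC12}. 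For the last term, Lemma~\ref{LEM:Sob} (\eqref{poin2}) gives $\|W\|^2_{C^\al} \les \dl^{-(d-2)-2\al}\|W\|^2_{\dot H^1}$. Choosing $\eta \ll \dl^{d-2+2\al}$, this contribution is absorbed into $c_1\E\|W\|^2_{\dot H^1}$. Since $-F_M \le \eta\|u\|^2$, these lower bounds transfer to $F_M$, and the infimum is bounded below by $-C(\dl,\al)$. Altogether $\log\int e^{-F_M}\,d\rho^+_\dl[\ld] \le C(\dl,\al)$ uniformly in $M$ and $\ld$, and letting $M\to\infty$ finishes the proof.

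The main obstacle is the absorption step, which is exactly where uniformity in $\ld$ is at stake. All $\ld$-dependence has to be removed through the bounds $|z_{\dl,\ld}| \le \sqrt2/(r-\dl)$ (Lemma~\ref{LEM:z}) and the $\ld$-independent constants in Proposition~\ref{PROP:opt} and Lemma~\ref{lem:xizC12}, while the $W$ contribution is controlled purely by the $\dot H^1$ coercivity. A secondary technical point is justifying lower semicontinuity for the truncated $F_M$. If this proves delicate, I would restrict attention to a separable subspace on which $u$ is continuous, or replace the H\"older seminorm by a supremum over countably many difference quotients, which is lower semicontinuous.
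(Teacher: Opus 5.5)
Your estimates are correct and match the paper's proof. That includes the Markov reduction to $\rho^+_\dl[\ld]$, the bound $\E[P_{\dl,\ld}(0)]\les\Ld(\dl)$ of Lemma~\ref{LEM:P0}, the coercivity of Proposition~\ref{PROP:opt}, the $\ld$-uniform H\"older bounds of Lemmas~\ref{LEM:reg} and~\ref{lem:xizC12}, and the absorption of the $W$-term via \eqref{poin2} with $\eta\ll\dl^{d-2+2\al}$. The only difference is organisational. You bound the second infimum in \eqref{eq:rhoVar} from below uniformly over all drifts, whereas the paper first shows $\E\|W_2(1)\|^2_{\dot H^1}\le C_\dl$ for the almost-optimiser and then inserts this into \eqref{varub}. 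Your version is slightly more direct; the reference to $W_2$ and the extra $+1$ in your display are unnecessary, and with five summands the triangle-inequality constant is $5$, not $4$.

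The step that fails is your check of the hypotheses of Proposition~\ref{prop:rhoVar}. The fact you quote is true, but it gives the opposite conclusion. Since $u\mapsto\|u\|_{C^\al(B_1\setminus B_{2\dl})}$ is lower semicontinuous on $L^2$, the functional $F_M=-\min(\eta\|u\|^2_{C^\al},M)$ is \emph{upper} semicontinuous; truncation only makes it bounded. Concretely, take $u_n$ a radial bump of height $n^{-1}$ and width $n^{-2/\al}$ centred in $(2\dl,1)$. Then $u_n\to 0$ in $L^2\cap L^4$ while $F_M(u_n)=-M<0=F_M(0)$ for $n$ large. So $F_M+\frac14\int|u|^4$ is not lower semicontinuous at $u=0$. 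Restricting to the support of $\rho^+_\dl[\ld]$ does not help, since that support is all of $L^2_{\rad}$ (see the proof of Proposition~\ref{PROP:nontriv}). Your fallback has the same defect: a supremum of difference quotients is again lower semicontinuous, so its negative is upper semicontinuous, and point values are not even defined on $L^2$. This is exactly why the paper flags $F$ as upper semicontinuous and appeals to a regularisation.

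One way to carry that regularisation out:
\begin{enumerate}
\item Let $S_Nu$ be the mollification at scale $1/N$ of $\chi u$, where $u$ is extended by zero outside $B_1$ and $\chi$ is a radial cut-off equal to $1$ on $B_1\setminus B_{7\dl/4}$ and vanishing on $B_{3\dl/2}$. Apply your argument to $-\min(\eta\|S_Nu\|^2_{C^\al(B_1\setminus B_{2\dl})},M)$, which is continuous and bounded on $L^2$.
\item Your bounds hold uniformly in $N$, because $\|S_Nu\|_{C^\al(B_1\setminus B_{2\dl})}\les_\dl\|u\|_{C^\al(B_1\setminus B_{3\dl/2})}$ and Lemma~\ref{lem:xizC12} holds with $2\dl$ replaced by $\frac32\dl$.
\item Let $N\to\infty$ by Fatou's lemma. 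This works because $S_Nu\to u$ uniformly on $B_1\setminus B_{2\dl}$, and the $C^\al$ norm is lower semicontinuous under uniform convergence.
\end{enumerate}
The cut-off away from $\{|x|=\dl\}$ is essential, since $z_{\dl,\ld}$ has size $|\ld|$ there.

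Alternatively, you can redo the compactness argument of Lemma~\ref{lem:var_limit} in the $c^\al$ topology. There the truncated H\"older functional is continuous, and drifts of bounded energy are precompact by \eqref{poin2} and Arzel\`a--Ascoli.
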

\begin{proof}
The functional $F$ appearing below is upper semicontinuous on $L^2$, so that it does not satisfy the hypotheses of Proposition \ref{PROP:rho}, and we nonetheless apply the variational formula \eqref{eq:rhoVar} to it.
The complete argument follows by a standard approximation procedure, in which $u$ is replaced by a regularisation of $u$; we omit the details.

From \eqref{rho_eps} and \eqref{rho_eps1}, we can rewrite the  left-hand side of \eqref{exp_bound} as
\begin{align*}
\begin{split}
\int \exp & \Big (\eta \|u\|^2_{C^{\al}(B_1\backslash B_{2\dl})} \Big) d \rho_{\eps}(u)
 = \int \bigg( \int \exp \Big (\eta \|u\|^2_{C^{\al}(B_1\backslash B_{2\dl})} \Big)   d \rho^+_{\dl} [\ld] (u) \bigg) \nu_{\eps,\dl} (d \lambda),
\end{split}
\end{align*}

\noi
where the probability measure  $\nu_{\eps,\dl}$ is given in  \eqref{nued}.
Therefore, to prove \eqref{exp_bound}, it is enough to show that
\begin{align} \label{uni_b}
\int \exp \Big( \eta \|u\|^2_{C^{\al}(B_1\backslash B_{2\dl})} \Big) d \rho_{\dl}^+[\ld](u) \les 1,
\end{align}

\noi
uniformly in $\ld$.
Denote by
\[
F (u) = - \eta \|u\|^2_{C^{\al}(B_1\backslash B_{2\dl})}.
\]

\noi
Then, by the variational formula \eqref{eq:rhoVar} we get,
\begin{align}
\label{uni_b1}
\begin{split}
     \log \int &\exp \Big( -F(u) \Big) d \rho_{\dl}^+[\ld](u) \\
 & = \inf_{W \in  \mathbb H}
	\E\big[ P_{\dl,\ld}(W) \big]
	- \inf_{W \in  \mathbb H}
	\E\big[ F(X^+_\dl(1) + z_{\dl,\ld} + W + \Xi + \U) + P_{\dl,\ld}(W) \big].
\end{split}
\end{align}

\noi
We choose $W_2 \in \mathbb H$  as in \eqref{W2}.
It then follows from \eqref{uni_b1} that
\begin{align}
\begin{split}
\log \int \exp \big( -F(u) \big) d \rho_{\dl}^+[\ld](u)   & \le 1 - \E\big[ F(X^+_\dl(1) + z_{\dl,\ld} + W_2 + \Xi + \U) \big] \\
& = 1+ \eta  \E \Big[  \|X^+_\dl(1) + z_{\dl,\ld} + W_2 + \Xi + \U  \|^2_{C^{\al}(B_1\backslash B_{2\dl})} \Big] \\
& \le 1+  C_{\dl} \eta \Big(1 + \E \|W_2(1)\|_{C^{\al}(B_1\backslash B_{2\dl})}^2 \Big) \\
& \le 1 +  C_{\dl} \eta (1 + \E \|W_2(1)\|_{\dot H^1(B_1\backslash B_{\dl})}^2 ),
\end{split}
\label{poinC121}
\end{align}

\noi
where we used Lemma \ref{LEM:reg}, Lemma \ref{lem:xizC12} with $p = 2$, and $W_2 \in \mathbb H$ together with \eqref{poin2}.
Here $C_\dl$ is a constant depending on $\dl$ (and $\al$) that may vary from line to line.

It suffices to show that $\E \|W_2(1)\|_{\dot H^1}^2$ is uniformly bounded in $\ld$.
With the same choice of $W_2$ from \eqref{W2}, we have
\begin{align}
\begin{split}
& \E  \big[ F(X^+_\dl(1) + z_{\dl,\ld} + W_2 + \Xi + \U) + P_{\dl,\ld}(W_2) \big] \\
& \le 1+  \inf_{W \in  \mathbb H} \E\big[ F(X^+_\dl(1) + z_{\dl,\ld} + W + \Xi + \U) + P_{\dl,\ld}(W) \big] \\
& \le 1 + \E\Big[ F(X^+_\dl(1) + z_{\dl,\ld} + \Xi + \U)  + P_{\dl, \ld}(0) \Big] \\
& \le 1 + \E\Big[ P_{\dl, \ld}(0) \Big] \les  \Ld(\dl),
\end{split}
\label{C12lower}
\end{align}
where the last inequality follows from
Lemma \ref{LEM:P0}. On the other hand, by Lemma \ref{lem:xizC12} and \eqref{optestimate}, we obtain
\begin{align*}
\E & \big[ F(X^+_\dl(1) + z_{\dl,\ld} + W_2 + \Xi + \U) + P_{\dl,\ld}(W_2) \big] \\
& =  \E\big[ - \eta \|X^+_\dl(1) + z_{\dl,\ld} + W_2 + \Xi + \U\|_{C^{\al}(B_1\backslash B_{2\dl})}^2 + P_{\dl,\ld}(W_2) \big] \\
&\ge - C \E\Big[ \eta \big(  \big\| \Xi \big\|^2_{C^{\al}(B_1\backslash B_{2\dl})} +  \big\| X^+_\dl(1)\big\|^2_{C^{\al}(B_1\backslash B_{2\dl})} +  \big\|W_2 \big\|^2_{C^{\al}(B_1\backslash B_{2\dl})}  +  \big\|z_{\dl,\ld} \big\|^2_{C^{\al}(B_1\backslash B_{2\dl})} \\
& \hphantom{XX}
+  \big\|\U \big\|^2_{C^{\al}(B_1\backslash B_{2\dl})} \big) \Big] + c_1 \E\Big[ \int W_2 (1)^4 dx + \| W_2 (1) \|_{\dot H^1_{\rad}}^2\Big] -C_\dl,\\
\intertext{where $c_1$ is defined in Proposition \ref{PROP:opt}. We now insert \eqref{poin2} to continue with}
& \ge \E\Big[ c_1 \int W_2 (1)^4 dx + \big(c_1 - \eta C \dl^{-(d-2)-2\al}\big)\| W_2 (1) \|_{\dot H^1_{\rad}}^2\Big] - C C_\dl.\\
\intertext{We then use Lemma \ref{lem:xizC12} and choose $\eta < \frac{c_1\dl^{d-2+2\al}}{2 C}$ to get}
& \ge \frac{c_1}2 \E\Big[ \int W_2 (1)^4 dx + \| W_2 (1) \|_{\dot H^1_{\rad}}^2\Big] - C^2 C_\dl.
\end{align*}

\noi
In particular, together with \eqref{C12lower}, the above implies
\begin{align}\label{uni_b2}
\E \|W_2(1)\|_{\dot H^1  (B_1\backslash B_{\dl})}^2 \les C_\dl
\end{align}
where the constant is independent of $\ld$ and $\eps$.
Then, the uniform estimate \eqref{uni_b} follows from \eqref{poinC121} and \eqref{uni_b2}.
Thus, we finish the proof of the lemma.
\end{proof}

Now, we are ready to prove the tightness of the set of measures  $\{\rho_\eps\}_{\eps \in (0,1)}$.

\begin{lemma}
\label{LEM:tight}
Let $0 < \al <\frac12$, and fix $\g > 0$ with $\al + \g < \frac12$.
Given $\ta > 0$, there exists a sequence $\{N_k\}_{k \ge 2}$ of positive numbers such that the set
\begin{align}
\label{Kset}
K := \bigcap_{k \ge 2} \Big\{ u \in \mathbb X^\al : \ \|u\|_{C^{\al+\g} (B_1 \backslash B_{1/k})} \le N_k \Big\}
\end{align}

\noi
is a compact subset of $\mathbb X^\al$ and satisfies
\begin{align}
\label{Ktight}
\sup_{\eps \in (0,1)} \rho_\eps (K^c) \le \ta .
\end{align}

\noi
In particular, the family $\{\rho_\eps\}_{\eps \in (0,1)}$ is tight on $\mathbb X^\al$.
\end{lemma}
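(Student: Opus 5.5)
The plan has two parts: compactness of $K$ in $\mathbb X^\al$, and the uniform bound \eqref{Ktight}, which comes from Lemma~\ref{LEM:EB} via Chebyshev and a union bound over $k$.

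\emph{Compactness.} Fix any sequence $N_k$. For each $k$, the ball $\{\|u\|_{C^{\al+\g}(B_1\backslash B_{1/k})}\le N_k\}$ is relatively compact in $C^{\al}(B_1\backslash B_{1/k})$. This follows from Arzel\`a--Ascoli together with the interpolation $\|f\|_{C^\al}\les \|f\|_{L^\infty}^{\g/(\al+\g)}\|f\|_{C^{\al+\g}}^{\al/(\al+\g)}$, which upgrades uniform convergence to $C^\al$ convergence under a uniform $C^{\al+\g}$ bound. Limits are also in the little H\"older space $c^\al$, since $C^{\al+\g}\subset c^\al$. The ball is closed in $C^\al$ by lower semicontinuity of the $C^{\al+\g}$ norm under uniform convergence. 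Since the topology of $\mathbb X^\al$ is given by the countable family of seminorms \eqref{Xpsemi}, $\mathbb X^\al$ is metrisable. A diagonal argument then shows that any sequence in $K$ has a subsequence converging in every seminorm, with limit in $K$. Hence $K$ is compact.

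\emph{Tightness.} We need $\sup_\eps \rho_\eps(\|u\|_{C^{\al+\g}(B_1\backslash B_{1/k})}>N_k)\le \ta 2^{-k}$. Apply Lemma~\ref{LEM:EB} with H\"older exponent $\al+\g<\frac12$ and $\dl=\frac1{2k}\le \frac12$ (for $k\ge2$ we have $\dl\le\frac14$). This gives constants $\eta_k,C_k$ with
\[
\int \exp\big(\eta_k\|u\|^2_{C^{\al+\g}(B_1\backslash B_{1/k})}\big)\,d\rho_\eps\le C_k
\]
uniformly in $\eps\in(0,\dl]$. Chebyshev's inequality then gives $\rho_\eps(\cdots>N_k)\le C_k e^{-\eta_k N_k^2}$, and we choose $N_k$ so that this is at most $\ta 2^{-k-1}$.

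The main obstacle is the range $\eps\in(\dl,1)$, which Lemma~\ref{LEM:EB} does not cover. Here I would argue directly. For $\eps\ge\frac1{2k}$ we have the pointwise bound $d\rho_\eps/d\mu\le Z_\eps^{-1}$. By \eqref{Ga}, $-\log Z_\eps\les\int_\eps^1 r^{3-d}dr\le\int_{1/(2k)}^1 r^{3-d}\,dr$, so $Z_\eps$ is bounded below by a constant depending only on $k$. Therefore
\[
\rho_\eps(A)\le C'_k\,\mu(A) \quad \text{for all such }\eps.
\]
Under $\mu$, the norm $\|X\|_{C^{\al+\g}(B_1\backslash B_{1/k})}$ has Gaussian tails. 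Indeed, the Kolmogorov argument in Lemma~\ref{LEM:Xal} gives finite moments, and Fernique's theorem upgrades these to Gaussian tails. Enlarging $N_k$ if necessary handles these $\eps$ as well.

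Summing over $k\ge2$ gives $\rho_\eps(K^c)\le\sum_{k}\ta 2^{-k}\le\ta$ uniformly in $\eps\in(0,1)$, which is \eqref{Ktight}. Tightness of $\{\rho_\eps\}$ on $\mathbb X^\al$ follows.
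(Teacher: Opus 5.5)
Your proof is correct and follows the paper's argument essentially step by step. The steps match: Arzel\`a--Ascoli with interpolation and a diagonal extraction for compactness; Lemma~\ref{LEM:EB} with $\dl=\frac1{2k}$ and exponent $\al+\g$, plus Chebyshev, for $\eps\le\frac1{2k}$; and the density bound $d\rho_\eps/d\mu\le Z_\eps^{-1}$ for $\eps>\frac1{2k}$. The differences are cosmetic: the paper bounds $Z_\eps^{-1}\le Z_{1/(2k)}^{-1}$ by monotonicity of $Z_\eps$ in $\eps$ rather than re-running \eqref{Ga}, and it uses only $\mu$-almost sure finiteness of the $C^{\al+\g}$ norm, so your appeal to Fernique is more than is needed.
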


\begin{proof}
We first check that $K$ is compact.
Since $\mathbb X^\al$ is metrizable, it suffices to show that $K$ is sequentially compact.
Let $\{u_j\}_{j \ge 1} \subseteq K$.
For each fixed $k \ge 2$, the sequence $\{u_j\}_{j\ge1}$ is bounded in $C^{\al+\g} (B_1 \backslash B_{1/k})$, hence relatively compact in $c^{\al} (B_1 \backslash B_{1/k})$ by the Arzel\`a--Ascoli theorem and interpolation between $L^\infty$ and $C^{\al+\g}$, where we note that $C^{\al+\g} (B_1 \backslash B_{1/k}) \subseteq c^{\al} (B_1 \backslash B_{1/k})$. Therefore, there exists a subsequence $u_{l_{j,k}}$, and a function $u_k \in c^{\al} (B_1 \backslash B_{1/k})$ such that
$$ \lim_{j \to \infty} \| u_{l_{j,k}} - u_k\|_{c^{\al} (B_1 \backslash B_{1/k})} =0. $$
Moreover, by extracting the subsequence from $ u_{l_{j,k}}$ rather than $u_j$, we can pick the indices $l_{j,k+1}$ to be a subsequence of $\{l_{j,k}\}_j$. Therefore, we also have that
$$  \lim_{j \to \infty}  \| u_{l_{j,j}} - u_k\|_{c^{\al} (B_1 \backslash B_{1/k})} =0$$
as well for every $k \ge 2$. In particular, by compatibility of the limits, on the intersection of the domains we must have that $u_k = u_h$ for every $k,h\ge 2$. Therefore, defining $u(x) := u_k(x)$ for any $k > \frac{1}{|x|}$, we have that $u_{l_{j,j}} \to u$ in the space $\mathbb X^{\al}$.
Moreover, $u_{l_{j,j}} \to u$ pointwise on each $B_1 \backslash B_{1/k}$, and the $C^{\al+\g}$ norm is lower semicontinuous under pointwise convergence, so that
$$ \|u\|_{C^{\al+\g} (B_1 \backslash B_{1/k})} \le \liminf_{j \to \infty} \|u_{l_{j,j}}\|_{C^{\al+\g} (B_1 \backslash B_{1/k})} \le N_k $$
for every $k \ge 2$, i.e.\ $u \in K$. This shows that $K$ is compact.

We now turn to \eqref{Ktight}. Fix $k \ge 2$. By Lemma \ref{LEM:EB}, applied with $\frac1{2k}$ in place of $\dl$ and $\al + \g$ in place of $\al$, and Chebyshev's inequality, we have that for $N_k$ big enough,
\begin{align*}
\rho_\eps \Big( \big\{ \|u\|_{C^{\al+\g} (B_1 \backslash B_{1/k})} > N_k \big\} \Big) \le 2^{-k}\ta,
\end{align*}
as long as $\eps \le \frac 1 {2k}$. Moreover, the partition function $Z_\eps$ in \eqref{partition_eps} is non-decreasing in $\eps$, and $Z_{1/(2k)} > 0$ by Proposition \ref{LEM:Ga}, so that $d \rho_\eps / d \mu \le Z_\eps^{-1} \le Z_{1/(2k)}^{-1}$ for $\eps > \frac1{2k}$. Hence, for every $N > 0$,
$$ \sup_{\eps > \frac 1{2k}} \rho_\eps \Big( \big\{ \|u\|_{C^{\al+\g} (B_1 \backslash B_{1/k})} > N \big\} \Big) \le Z_{1/(2k)}^{-1} \, \mu\Big( \big\{ \|u\|_{C^{\al+\g} (B_1 \backslash B_{1/k})} > N \big\} \Big) \to 0   $$
as $N\to \infty$, since $\al + \g < \frac12$ and, as shown  in the proof of Lemma \ref{LEM:Xal}, the restriction of $\mu$-almost every $u$ to $B_1 \backslash B_{1/k}$ belongs to $C^{\al+\g} (B_1 \backslash B_{1/k})$. Therefore, up to making $N_k$ bigger, we have that
\begin{align}
\label{tight1}
\sup_{ \eps} \rho_\eps \Big( \big\{ \|u\|_{C^{\al+\g} (B_1 \backslash B_{1/k})} > N_k \big\} \Big) \le 2^{-k}\ta.
\end{align}
Therefore,
\begin{align*}
\sup_{\eps} \rho_\eps (K^c) \le \sum_{k \ge 2} \sup_{\eps} \rho_\eps \Big( \big\{ \|u\|_{C^{\al+\g} (B_1 \backslash B_{1/k})} > N_k \big\} \Big) \le \ta.
\end{align*}
\end{proof}

\begin{lemma}
\label{LEM:limit}
Let $\al$ be as in Lemma \ref{LEM:tight}.
Then there exist a sequence $\{\eps_n\}_{n \ge 1}$ with $\eps_n \to 0$ and a Borel probability measure $\rho_0$ on $\mathbb X^\al$ such that
\begin{align}
\label{limit_rho}
\rho_{\eps_n} \rightharpoonup \rho_0 \qquad \text{in } \mathbb X^\al
\end{align}

\noi
as $n \to \infty$.
In particular, for every $\dl \in (0,1)$ we have
\begin{align}
\label{limit_rho2}
\lim_{n \to \infty} (R_\dl)_{\#} \rho_{\eps_n} = (R_\dl)_{\#} \rho_0 ,
\end{align}

\noi
weakly on $c^\al_{\rad} (B_1 \backslash B_\dl)$, where $R_\dl$ is the restriction map defined in \eqref{R_res}.
\end{lemma}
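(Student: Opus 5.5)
The plan is to combine the tightness of Lemma~\ref{LEM:tight} with Prokhorov's theorem on the Polish space $\mathbb X^\al$, and then to deduce \eqref{limit_rho2} from the continuous mapping theorem.

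First, recall that $\mathbb X^\al$ is a separable Fr\'echet space, hence Polish. By Lemma~\ref{LEM:Xal}, each $\rho_\eps$ is a Borel probability measure on $\mathbb X^\al$. By Lemma~\ref{LEM:tight}, the family $\{\rho_\eps\}_{\eps\in(0,1)}$ is tight on $\mathbb X^\al$. Prokhorov's theorem (the direct half, which only requires tightness on a metric space, or equivalently the version of \cite[Theorem 23.2]{K21} already invoked in Lemma~\ref{lem:var_limit}) then says that every sequence in this family has a weakly convergent subsequence. We apply it to the sequence $\rho_{1/n}$, $n\ge 2$. This yields a subsequence $\eps_n\to 0$ and a Borel probability measure $\rho_0$ on $\mathbb X^\al$ with $\rho_{\eps_n}\rightharpoonup\rho_0$, which is \eqref{limit_rho}. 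Tightness guarantees that no mass escapes, so the limit is indeed a probability measure and not merely a sub-probability measure.

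Second, fix $\dl\in(0,1)$. As noted after \eqref{R_res}, the restriction $R_\dl:\mathbb X^\al\to c^\al_{\rad}(B_1\backslash B_\dl)$ is continuous. Indeed, choosing $k\ge 2$ with $1/k\le\dl$, we have $\|R_\dl u\|_{C^\al(B_1\backslash B_\dl)}\le\|u\|_{C^\al(B_1\backslash B_{1/k})}$, and $R_\dl u$ lies in the little H\"older space because restricting smooth approximants preserves smoothness. For any bounded continuous $G$ on $c^\al_{\rad}(B_1\backslash B_\dl)$, the composition $G\circ R_\dl$ is bounded and continuous on $\mathbb X^\al$. Hence
\[
\int G\, d\big((R_\dl)_\#\rho_{\eps_n}\big)=\int G\circ R_\dl\, d\rho_{\eps_n}\longrightarrow\int G\circ R_\dl\, d\rho_0=\int G\, d\big((R_\dl)_\#\rho_0\big),
\]
which is \eqref{limit_rho2}.

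There is no serious obstacle at this stage. All the analytic work sits in Lemma~\ref{LEM:EB} and Lemma~\ref{LEM:tight}. The only point needing a little care is that the compact set $K$ constructed in Lemma~\ref{LEM:tight} is compact in the topology of $\mathbb X^\al$, not merely in a weaker one, so that Prokhorov's theorem applies in that space. This was already verified in the proof of Lemma~\ref{LEM:tight}.
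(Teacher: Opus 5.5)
Your proposal is correct and follows the paper's own proof: tightness from Lemma~\ref{LEM:tight} plus Prokhorov's theorem on the Polish space $\mathbb X^\al$ gives the limit point $\rho_0$. Then \eqref{limit_rho2} follows from the continuity of $R_\dl$ by the continuous mapping argument.
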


\begin{proof}
Since the space $\mathbb X^\al$ is Polish (see the discussion after \eqref{Xpsemi}), and by Lemma \ref{LEM:tight} the family $\{\rho_\eps\}_{\eps \in (0,1)}$ is tight on it, Prokhorov's theorem \cite[Theorem 5.1]{Bil99} provides a sequence $\eps_n \to 0$ and a Borel probability measure $\rho_0$ on $\mathbb X^\al$ for which \eqref{limit_rho} holds.
Moreover, since $R_\dl : \mathbb X^\al \to c^\al_{\rad}(B_1\backslash B_\dl)$ is continuous, \eqref{limit_rho2} follows from the weak convergence of $\rho_{\eps_n} $ to $\rho_0$.
\end{proof}

The Markov property \eqref{eq:rhoMarkov} passes to the limit along the subsequence of Lemma \ref{LEM:limit}.

\begin{proposition}[Markov property for $\rho_0$]
\label{PROP:Markov0}
Let $\{\eps_n\}_{n \ge 1}$ and $\rho_0$ be as in Lemma \ref{LEM:limit} and, given $\dl \in (0,1)$, let $\nu_{0,\dl} := \Law (u(\dl))$ for $u$ distributed according to $\rho_0$.
Then
\begin{align}
\label{eq:rhoMarkov0}
(R_\dl)_{\#} \rho_0 = \int \rho_\dl^+[\ld] \, \nu_{0,\dl} (d \lambda) .
\end{align}
\end{proposition}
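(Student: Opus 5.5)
The plan is to pass to the limit $n\to\infty$ in the finite-$\eps$ Markov identity \eqref{eq:rhoMarkov}, written as
\begin{align*}
\int G(R_\dl u)\, d\rho_{\eps_n}(u) = \int \Phi_G(\ld)\, \nu_{\eps_n,\dl}(d\ld), \qquad \Phi_G(\ld) := \int G\, d\rho^+_\dl[\ld],
\end{align*}
for bounded continuous $G$ on $c^\al_{\rad}(B_1\backslash B_\dl)$ and $\eps_n \le \dl$. The left-hand side converges to $\int G\, d(R_\dl)_\#\rho_0$ by \eqref{limit_rho2}. For the right-hand side, note that $u\mapsto u(\dl)$ is continuous on $\mathbb X^\al$ and that $\nu_{\eps_n,\dl}=\Law_{\rho_{\eps_n}}(u(\dl))$ by Proposition~\ref{prop:Markov}. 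Hence $\nu_{\eps_n,\dl}\rightharpoonup\nu_{0,\dl}$ weakly on $\R$. It therefore suffices to show that $\ld\mapsto\Phi_G(\ld)$ is bounded and continuous; weak convergence then gives $\int\Phi_G\,d\nu_{\eps_n,\dl}\to\int\Phi_G\,d\nu_{0,\dl}$. The identity \eqref{eq:rhoMarkov0} follows because bounded continuous functions determine Borel measures on the Polish space $c^\al_{\rad}(B_1\backslash B_\dl)$.

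Boundedness of $\Phi_G$ is immediate, since $|\Phi_G|\le\|G\|_\infty$. The main step is continuity of $\ld\mapsto\rho^+_\dl[\ld]$ in total variation, or at least weakly. By \eqref{mu_pm} and \eqref{meas_cut},
\begin{align*}
\Phi_G(\ld)=\frac{\E\big[G(X^+_\dl+\ld h_\dl)\,e^{-V^+_\dl(X^+_\dl+\ld h_\dl)}\big]}{\E\big[e^{-V^+_\dl(X^+_\dl+\ld h_\dl)}\big]}.
\end{align*}
As $\ld'\to\ld$, we have $X^+_\dl+\ld' h_\dl\to X^+_\dl+\ld h_\dl$ surely, both in $c^\al$ (here $h_\dl$ is smooth on the closed shell) and in $L^\infty$. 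Consequently $V^+_\dl(X^+_\dl+\ld' h_\dl)\to V^+_\dl(X^+_\dl+\ld h_\dl)$, and $G$ is continuous along this sequence. The integrands are bounded by $\|G\|_\infty$ and $1$ respectively, so dominated convergence gives convergence of both numerator and denominator. The denominator $Z^+_\dl[\ld]$ is strictly positive for each $\ld$, by Jensen's inequality as in Proposition~\ref{LEM:Ga}, using \eqref{X+estimate} and the boundedness of $h_\dl$. Therefore $\Phi_G$ is continuous.

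I expect the only delicate point to be measurability and the identification of the spaces. The measures $\rho^+_\dl[\ld]$ must be regarded as Borel measures on $c^\al_{\rad}(B_1\backslash B_\dl)$, which Lemma~\ref{LEM:reg} allows after taking a continuous version of $X^+_\dl$ with $\al<\frac12$. The identity $R_\dl(X^+_\dl+\ld h_\dl)$ must also be matched with the restriction used in \eqref{eq:rhoMarkov}. Once this is settled, the argument above is routine, and no uniformity in $\ld$ is required, since weak convergence of $\nu_{\eps_n,\dl}$ handles the mixing measure.
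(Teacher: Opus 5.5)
Your proposal is correct and follows essentially the same route as the paper: you get $\nu_{\eps_n,\dl}\rightharpoonup\nu_{0,\dl}$ from the continuity of $u\mapsto u(\dl)$, you prove continuity and boundedness of $\ld\mapsto\int F\,d\rho^+_\dl[\ld]$ by dominated convergence in the numerator and denominator, and you pass to the limit in \eqref{eq:rhoMarkov} using \eqref{limit_rho2}. The paper also records a tightness bound for $\nu_{\eps,\dl}$ via Lemma~\ref{LEM:EB}; as your argument implicitly shows, this bound is not needed once the continuous mapping theorem is applied.
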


\begin{proof}

Fix $\dl \in (0,1)$.
Since $|u(\dl)| \le \|u\|_{C^\al (B_1\backslash B_\dl)}$, Lemma \ref{LEM:EB} applied with $\frac \dl 2$ in place of $\dl$ gives
\[
\int \exp \big( \eta \ld^2 \big) \, \nu_{\eps, \dl} (d \ld) = \int \exp \big( \eta\, u(\dl)^2 \big) d \rho_\eps (u) \le C_\dl ,
\]

\noi
uniformly in $\eps \in (0, \frac \dl 2]$, where we used that $\nu_{\eps,\dl}$ is the law of $u(\dl)$ under $\rho_\eps$, as stated in Proposition \ref{prop:Markov}.
In particular, the family $\{ \nu_{\eps,\dl}\}_{\eps}$ is tight.

Moreover, since the evaluation $u \mapsto u(\dl)$ is continuous on $\mathbb X^\al$, by \eqref{limit_rho} we get
\begin{align}
\label{nulimit}
\nu_{\eps_n, \dl} \rightharpoonup \nu_{0, \dl} \qquad \text{as } n \to \infty .
\end{align}

\noi
It remains to pass to the limit in \eqref{eq:rhoMarkov}.
Let $F$ be bounded and continuous on $c^\al (B_1\backslash B_\dl)$.
From \eqref{meas_cut} and \eqref{mu_pm} we have
\[
\int F \, d \rho_\dl^+[\ld] = \frac{\E \big[ F (X^+_\dl + \ld h_\dl) \, e^{-V^+_\dl (X^+_\dl + \ld h_\dl)}\big]}{\E \big[ e^{-V^+_\dl (X^+_\dl + \ld h_\dl)}\big]} ,
\]

\noi
where the denominator is positive.
By Lemma \ref{LEM:reg} with $0 < \ta < \frac12 - \al$, we have $X^+_\dl \in C^{\frac12 - \ta} (B_1 \backslash B_\dl) \subseteq c^\al (B_1 \backslash B_\dl)$ almost surely, and $h_\dl$ in \eqref{hdelta} is smooth on the closed shell, so that $\ld \mapsto X^+_\dl + \ld h_\dl$ is almost surely continuous from $\R$ into $c^\al (B_1 \backslash B_\dl) \cap L^4 (B_1 \backslash B_\dl)$.
Since $F$ is bounded and continuous on $c^\al (B_1\backslash B_\dl)$, and $V^+_\dl$ is continuous and non-negative on $L^4 (B_1\backslash B_\dl)$, dominated convergence, with the integrands bounded by $\|F\|_{L^\infty}$ and $1$, shows that the numerator and the denominator above are continuous in $\ld$.
Hence, $\ld \mapsto \int F \, d \rho_\dl^+[\ld]$ is continuous, and it is bounded by $\|F\|_{L^\infty}$.
Testing \eqref{eq:rhoMarkov} against $F$ along $\{\eps_n\}$ therefore gives
\[
\int F \, d (R_\dl)_{\#} \rho_{\eps_n} = \int \bigg( \int F \, d \rho_\dl^+[\ld] \bigg) \nu_{\eps_n, \dl} (d \ld)
\longrightarrow \int \bigg( \int F \, d \rho_\dl^+[\ld] \bigg) \nu_{0, \dl} (d \ld)
\]

\noi
by \eqref{nulimit}, while the left-hand side converges to $\int F \, d (R_\dl)_{\#} \rho_0$ by \eqref{limit_rho2}.
This proves \eqref{eq:rhoMarkov0}.
\end{proof}

\subsection{Limiting measures}
In this subsection, we shall show that the limiting measures constructed in Lemma \ref{LEM:limit} are actually concentrated in $L^{p}(B_1)$ for every $p < \frac{2d}{d-2}$.
In other words, a function sampled from $\rho_0$ is integrable up to the origin, at the same critical exponent as a sample from the Gaussian measure $\mu$, see \eqref{GMap4_B} and Lemma \ref{LEM:GMap_B}.

\begin{proposition}\label{PROP:PW}
There exist universal constants $\eta \ll 1$, $C >0$, such that for every $r \in (0,1)$ and every $0 < \eps \le \frac r2$,
\begin{equation}
\int \exp \bigg ( \frac{ - F(u,r)}{{1 + | \Ld(r)|}} \bigg) d \rho_\eps(u) \le C ,
\end{equation}

\noi
where $\Ld(\cdot)$ is defined in \eqref{Ld}, and
\begin{align}\label{Fur}
F(u,r) := - \eta r^{d-2} u^2 (r) \qquad \text{or} \qquad F(u,r) := - \eta \| u\|_{L^4(B_1 \backslash B_r)}^4 ;
\end{align}

\noi
the statement is asserted for each of these two choices of $F$.
\end{proposition}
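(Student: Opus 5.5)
The plan is to run the argument of Lemma~\ref{LEM:EB}, but now tracking the dependence on the inner radius. Fix $r \in (0,1)$ and set $\dl := r/2$. For $r > \frac12$ I would use $\dl = \frac14$ instead, where all constants are harmless and $\Ld(\dl)\sim 1$. Since $\eps \le \dl$, the Markov property, Proposition~\ref{prop:Markov}, reduces the claim to the bound
\[
\int \exp\Big( \frac{-F(u,r)}{1+\Ld(r)} \Big)\, d\rho^+_\dl[\ld](u) \le C
\]
uniformly in $\ld\in\R$. Both observables depend only on $u|_{B_1\backslash B_r}$, so this reduction is legitimate. Note also that $\Ld(r)\sim\Ld(\dl)$ with universal constants. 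I then apply the variational formula~\eqref{eq:rhoVar} to $G := F/(1+\Ld(r))$, modulo the same approximation remark as in Lemma~\ref{LEM:EB}. Choosing an almost optimiser $W_2$ as in~\eqref{W2}, estimate~\eqref{varub} gives
\[
\log\int e^{-G}\,d\rho^+_\dl[\ld] \le 1 + \frac{\eta}{1+\Ld(r)}\,\E\big[\Phi(X^+_\dl+z_{\dl,\ld}+W_2+\Xi+\U)\big],
\]
where $\Phi(u)$ denotes $r^{d-2}u(r)^2$ or $\|u\|^4_{L^4(B_1\backslash B_r)}$. It therefore suffices to show $\E[\Phi(\cdots)] \les \Ld(\dl) + \E\big[\int W_2^4 + \|W_2\|_{\dot H^1}^2\big]$ with universal constants, together with a bound $\E\big[\int W_2^4+\|W_2\|^2_{\dot H^1}\big] \les \Ld(\dl)$.

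For the first estimate I bound the four pieces of $\Phi$ separately, using only universal a priori bounds. For $z_{\dl,\ld}$, Lemma~\ref{LEM:z} gives $|z|\le \sqrt2/(s-\dl)\le 2\sqrt2/s$ for $s\ge r=2\dl$. Hence $r^{d-2}z(r)^2\les r^{d-4}\le 1$ for $d\ge4$, and $\int_r^1 z^4 s^{d-1}ds \les \int_r^1 s^{d-5}ds \les \Ld(r)$. For $X^+_\dl$, $\Xi$ and $\U$, Lemmas~\ref{LEM:X+}, \ref{LEM:Xi} and \ref{LEM:Upsilon} give variances $\les s^{2-d}$, so $r^{d-2}\E|\cdot(r)|^2\les 1$, while the $L^4$ parts are bounded by Corollary~\ref{COR:L4est} and Lemma~\ref{LEM:L4-2} by $\Ld(\dl)$. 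For $W_2$, the $L^4$ term is directly $\int W_2^4$. The pointwise term needs a scale-invariant version of~\eqref{poincare}: from~\eqref{poin10}, $r^{d-2}|W_2(r)|^2 \le r^{d-2}\int_\dl^r s^{1-d}ds\,\|W_2\|_{\dot H^1}^2 \les (r/\dl)^{d-2}\|W_2\|_{\dot H^1}^2 \les \|W_2\|_{\dot H^1}^2$, since $r=2\dl$. This is exactly why I choose $\dl$ comparable to $r$.

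For the second estimate I argue as in~\eqref{C12lower}, but without the $C^\al$ norms. Since $G\le 0$, the almost optimality of $W_2$ and Lemma~\ref{LEM:P0} give the upper bound $\E[G(\cdots)+P_{\dl,\ld}(W_2)] \le 1+\E[P_{\dl,\ld}(0)] \les \Ld(\dl)$. Proposition~\ref{PROP:opt} and the first estimate give the lower bound
\[
\E[G(\cdots)+P_{\dl,\ld}(W_2)] \ge \Big(c_1 - \frac{C\eta}{1+\Ld(r)}\Big)\E\Big[\int W_2^4+\|W_2\|^2_{\dot H^1}\Big] - C\eta - C\Ld(\dl).
\]
Choosing $\eta$ universally small, say $\eta < c_1/(2C)$, and comparing the two bounds yields $\E\big[\int W_2^4+\|W_2\|^2_{\dot H^1}\big]\les \Ld(\dl)$. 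Substituting this back, the exponent in the displayed estimate from~\eqref{varub} is at most $1+C\eta\,\Ld(\dl)/(1+\Ld(r)) \les 1$, and the proposition follows. I expect the main obstacle to be checking that every constant is genuinely independent of $r$. This concerns especially the pointwise bound on $z$ near $\dl$ (handled by $r=2\dl$) and the normalisation by $1+\Ld(r)$, which must absorb the $\Ld(\dl)$ losses from Proposition~\ref{PROP:opt} and Lemma~\ref{LEM:P0}. The approximation step that justifies using~\eqref{eq:rhoVar} for the upper semicontinuous $G$ I would treat exactly as in Lemma~\ref{LEM:EB}.
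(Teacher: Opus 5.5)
Your proposal is correct and follows the paper's proof. The common structure is: the Markov reduction to $\rho^+_{r/2}[\ld]$, the bound \eqref{varub} with an almost optimiser $W_2$, and comparing Lemma~\ref{LEM:P0} with Proposition~\ref{PROP:opt} to get $\E\big[\int W_2^4+\|W_2\|^2_{\dot H^1}\big]\les 1+\Ld(r)$. The differences are cosmetic: you feed the normalised functional $F/(1+\Ld(r))$ into the variational formula instead of proving $\log\int e^{-F}\,d\rho^+_{r/2}[\ld]\les 1+\Ld(r)$ and then applying H\"older, and you bound $r^{d-2}W_2(r)^2$ from the inner boundary $r/2$ rather than by the radial lemma (Lemma~\ref{LEM:Ni}). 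One small fix: drop the separate case $r>\frac12$, because $\dl=r/2<\frac12$ is always admissible, whereas $\dl=\frac14$ would need $\eps\le\frac14$, which $\eps\le r/2$ does not guarantee (and only $1+\Ld(r/2)\les 1+\Ld(r)$ is needed, since $\Ld(r)\sim\Ld(r/2)$ fails for $d=4$, $r\to1$).
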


Before the proof of Proposition \ref{PROP:PW}, we need to recall the following weighted radial Sobolev embedding.

\begin{lemma}[Radial lemma, \cite{Ni}]
\label{LEM:Ni}
Let $d \ge 3$, and let $u$ be a radially symmetric function on the unit ball $B_1$ of $\R^d$ with $u(1) = 0$. Then
\[
\sup_{x \in B_1} |x|^{\frac{d-2}2} |u(x)| \le \frac1{\sqrt{\o_d (d-2)}} \| \nabla u\|_{L^2 (B_1)},
\]
where $\o_d$ is the surface area of the unit sphere in $\R^d$.
\end{lemma}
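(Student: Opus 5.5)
The proof is a one-dimensional Cauchy--Schwarz argument in the radial variable, in the same spirit as \eqref{poin10} in Lemma~\ref{LEM:Sob}. This time we integrate from the outer boundary $r=1$, where $u$ vanishes, rather than from $\dl$. Throughout, $dx$ is the genuine Lebesgue measure, so that $\|\nabla u\|_{L^2(B_1)}^2 = \o_d \int_0^1 |\partial_r u(s)|^2 s^{d-1}\, ds$.

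First, we may assume $\|\nabla u\|_{L^2(B_1)} < \infty$, since otherwise there is nothing to prove. By density it then suffices to treat radial $u$ that are $C^1$ on $(0,1]$ with $u(1)=0$. Indeed, a radial function with $\nabla u \in L^2(B_1)$ is locally absolutely continuous in $r$ on $(0,1]$, because $\int_a^1 |\partial_r u| \, ds \le C_a \|\nabla u\|_{L^2}$ for every $a>0$. Hence the fundamental theorem of calculus holds directly, and no approximation is really needed. Alternatively, one can approximate in $\dot H^1$ by smooth radial functions and pass to the limit pointwise for each fixed $r>0$.

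Next, fix $r\in(0,1]$ and write
\[
u(r) = -\int_r^1 \partial_r u(s)\, ds = -\int_r^1 \partial_r u(s)\, s^{\frac{d-1}2} \cdot s^{-\frac{d-1}2}\, ds .
\]
Cauchy--Schwarz gives
\[
|u(r)| \le \bigg(\int_r^1 |\partial_r u(s)|^2 s^{d-1} ds\bigg)^{\frac12}\bigg(\int_r^1 s^{-d+1} ds\bigg)^{\frac12}.
\]
Since $d\ge3$, the second factor is computed explicitly:
\[
\int_r^1 s^{-d+1}\, ds = \frac{r^{-d+2}-1}{d-2} \le \frac{r^{-(d-2)}}{d-2}.
\]
The first factor is at most $\o_d^{-1/2}\|\nabla u\|_{L^2(B_1)}$. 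Multiplying through by $r^{\frac{d-2}2}$ gives
\[
r^{\frac{d-2}2}|u(r)| \le \frac{1}{\sqrt{\o_d(d-2)}}\,\|\nabla u\|_{L^2(B_1)},
\]
uniformly in $r$. Taking the supremum over $x\in B_1\setminus\{0\}$ proves the lemma; the point $x=0$ contributes $0$ to the left-hand side.

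The only step needing care is this regularity and justification step, namely making sense of $u(r)$ pointwise and applying the fundamental theorem of calculus for a general radial $u$ of finite Dirichlet energy. As explained above, it is handled by local absolute continuity away from the origin. The constant is sharp only up to dropping the $-1$ in $r^{-d+2}-1$, which is exactly where the non-strict inequality enters.
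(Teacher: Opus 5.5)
Your proof is correct. The Cauchy--Schwarz estimate in the radial variable, integrating from the outer boundary $r=1$ and using $\int_r^1 s^{1-d}\,ds = \frac{r^{2-d}-1}{d-2} \le \frac{r^{2-d}}{d-2}$ together with $\int_r^1 |\partial_r u|^2 s^{d-1}\,ds \le \o_d^{-1}\|\nabla u\|_{L^2(B_1)}^2$, gives exactly the stated constant; this is the standard argument behind \cite{Ni}, which the paper cites rather than reproves, and it is the same device the paper uses in \eqref{poin10}, there integrating from $r=\dl$ instead of from $r=1$.
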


We are now ready to  prove Proposition \ref{PROP:PW}.

\begin{proof}[Proof of Proposition \ref{PROP:PW}]
For the first choice of $F$ in \eqref{Fur}, the functional does not satisfy the hypotheses of Proposition \ref{PROP:rho}, since the point evaluation $u \mapsto u(r)$ is not defined on all of $L^2$, and we nonetheless apply the variational formula \eqref{eq:rhoVar} to it.
The complete argument follows by a standard approximation procedure, in which $u$ is replaced by a regularisation of $u$, and we omit the details.

By Proposition \ref{prop:Markov}, it is enough to show that there exist universal constants $\eta,C$ such that
\begin{align}
\label{PW0}
\int \exp \bigg ( \frac{- F(u,r) }{{1 + | \Ld(r)|}} \bigg) d \rho_{\frac r2}^+[\ld](u) \le C ,
\end{align}

\noi
uniformly in $\ld \in \R$,
where $F(u,r)$ is given in \eqref{Fur}.
To prove \eqref{PW0}, it suffices to show
\begin{align}
\label{PW01}
\log \int \exp\big(- F(u,r) \big) d \rho_{\frac r2}^+[\ld](u) \les  1 + \Ld(r)
\end{align}

\noi
 independently of $\ld \in \R$, where $\Ld(\cdot)$ is defined in \eqref{Ld}.
To see why \eqref{PW01} implies \eqref{PW0}, let us first assume that \eqref{PW01} is true, then
\begin{align}
\begin{split}
\int \exp \bigg ( \frac{-F(u,r)}{ {1 + \Lambda (r)}} \bigg) d \rho_{\frac r2}^+[\ld](u)
& = \int \bigg( \exp \big ( -F(u,r) \big) \bigg)^{\frac1{ {1 +  \Ld(r)}}} d \rho_{\frac r2}^+[\ld](u) \\
& \le \bigg( \int \exp \big ( -F(u,r) \big)  d \rho_{\frac r2}^+[\ld](u) \bigg)^{\frac1{ {1 +  \Ld(r)}}} \\
& \le \Big( e^{C (1 + \Ld(r))}\Big)^{\frac1{ {1 +  \Ld(r)}}} = e^C,
\end{split}
\end{align}
\noi
where we used H\"older's inequality in the second step and \eqref{PW01} in the third step.

In what follows, we shall prove \eqref{PW01}.
For this purpose,
we apply Proposition \ref{prop:rhoVar} to the left-hand side of \eqref{PW01}.
To be more precise,
from \eqref{varub} with \eqref{Fur}, we can find $W_2 \in \mathbb H$ such that
\begin{align}
\label{PW_1}
\log \int \exp ( -F(u)) d \rho_{\frac r2}^+[\ld](u) \le 1 - \E [F(X^+_{r/2}  + z_{r/2, \ld} + W_2  + \Xi + \U)].
\end{align}

\noi
In particular, this $W_2$ is the almost optimiser for
\[
\inf_{W \in \mathbb H} \E [ F(X^+_{r/2}  + z_{r/2, \ld} +W + \Xi + \U) + P_{r/2,\ld} (W)],
\]

\noi
in the sense that
\begin{align}
\label{PW_opti}
\begin{split}
\E [ & F(X^+_{r/2}  + z_{r/2, \ld} +W_2 + \Xi + \U) + P_{r/2, \ld} (W_2)] \\
& \le 1 + \inf_{W \in \mathbb H} \E [ F(X^+_{r/2}  + z_{r/2, \ld} +W  + \Xi + \U) + P_{r/2, \ld} (W)].
\end{split}
\end{align}

\noi
Therefore, from \eqref{PW_1}, it suffices to show that
\begin{align}
    \label{PW_PW2}
    - \E [F(X^+_{r/2}  + z_{r/2, \ld} + W_2   + \Xi + \U)] \les 1 + \Ld(r) .
\end{align}

\noi
We note from \eqref{PW_opti} that
\begin{align}
\label{PW_opti2}
\begin{split}
\E & [F(X^+_{r/2}  + z_{r/2, \ld} +W_2  + \Xi + \U) + P_{r/2, \ld} (W_2)]  \\
& \le 1 + \inf_{W \in \mathbb H} \E [F(X^+_{r/2}  + z_{r/2, \ld} +W  + \Xi + \U) + P_{r/2, \ld} (W)] \\
& \le 1 + \E [F(X^+_{r/2}  + z_{r/2, \ld}  + \Xi + \U) + P_{r/2, \ld} (0)]\\
& \le 1 + \E [P_{r/2, \ld} (0)] \les 1 + \Ld(r),
\end{split}
\end{align}

\noi
where we used the fact that $F \le 0$, Lemma \ref{LEM:P0}, and $\Ld(\frac r2) \les 1 + \Ld(r)$.

To proceed with the proof of \eqref{PW01}, we distinguish two cases.

First, let us consider when $F$ is given by $-\eta r^{d-2} u^2 (r)$ in \eqref{Fur}.
By \eqref{X+estimate}, \eqref{Xiestimate}, \eqref{optestimate} and \eqref{poincare}, and Proposition \ref{PROP:opt}, we obtain that for $\eta \ll 1$,
\begin{align}
\begin{split}
\E & [F(X^+_{r/2}  + z_{r/2, \ld} +W_2  + \Xi + \U) + P_{r/2, \ld} (W_2)]\\
&= \E \bigg[ -\eta r^{d-2} (W_2 + z_{r/2, \ld}  + \Xi + X^+_{r/2} + \U)^2 (r) + P_{\frac r2, \ld}(W_2) \bigg]\\
& \ge \  \E\big[ - C \eta r^{d-2} W_2^2 (r) + P_{\frac r2, \ld}( W_2) \big] - C \eta r^{d-2}  \E \big[ ( \Xi + z_{r/2, \ld}   + X^+_{r/2}  + \U  \big)^2 (r) \big],
\end{split}
\label{PW_PW3}
\end{align}

\noi
where $F(u)$ is given in \eqref{Fur}.
Recall from the radial  lemma, Lemma \ref{LEM:Ni}, that
\begin{align}
\label{RL}
 r^{d-2} W_2^2 (r)  \le \frac1{\o_d (d-2)} \| \nabla W_2\|^2_{L^2 (B_1)}.
\end{align}

\noi
By using Lemma \ref{LEM:X+}, Lemma \ref{LEM:Xi}, Lemma \ref{LEM:Upsilon}, and Lemma \ref{LEM:z},
we obtain
\begin{align}
\label{RE2}
\E \big[ r^{d-2} ( \Xi + z_{r/2, \ld}  + X^+_{r/2}  + \U)^2 (r) \big] \les 1.
\end{align}

\noi
Therefore, from \eqref{PW_PW3}, \eqref{RL}, \eqref{RE2}, and Proposition \ref{PROP:opt}, we obtain
\begin{align}
\label{PWPW}
\begin{split}
\E \big [ & F(X^+_{r/2}  + z_{r/2, \ld} +W_2  + \Xi + \U) + P_{r/2, \ld} (W_2) \big] \\
& \ge \frac{c_1}2\E\Big[  \int W_2^4 dx + \| W_2  \|_{\dot H^1_{\rad}}^2\Big] - C  (1 + \Ld(r)),
\end{split}
\end{align}

\noi
provided $C \eta \le \frac12 c_1 \o_d (d-2)$.
Therefore, from \eqref{PWPW} and \eqref{PW_opti2}, we obtain that $$
\E\Big[  \int W_2^4 dx + \| W_2 \|_{\dot H^1_{\rad}}^2\Big]  \les 1+ \Ld(r) ,
$$

\noi
which together with  \eqref{RL} and \eqref{RE2} implies
\[
\begin{split}
- & \E \big [ F(X^+_{r/2}  + z_{r/2, \ld} +W_2  + \Xi + \U) \big] \\
& = \eta \, \E \big[ r^{d-2} (X^+_{r/2}  + z_{r/2, \ld} +W_2  + \Xi + \U)^2 (r) \big] \\
& \les \eta r^{d-2} \big( \E [|X^+_{r/2} (r)|^2 ] + |z_{r/2, \ld} (r)|^2 +  \E [|\Xi (r)|^2] +  \E [|\U (r)|^2] \big) + \eta \, \E \|W_2 \|_{\dot H^1_{\rm rad}}^2    \\
& \les  r^{d-2} \big( r^{-d+2} + (r - r/2)^{-2} \big) + 1 + \Ld(r) \les 1 + \Ld(r) ,
\end{split}
\]

\noi
which finishes the proof of \eqref{PW_PW2}, and thus the  proposition for the case $ F = -\eta r^{d-2} u^2 (r)$.

It remains to consider the case when $F = - \eta \| u \|_{L^4 (B_1 \backslash B_r)}^4$ in \eqref{Fur}.
The argument is similar to the previous case.
By \eqref{X+estimate}, \eqref{Xiestimate}, \eqref{optestimate}, and Proposition \ref{PROP:opt}, we obtain that for $\eta \ll 1$,
\begin{align}
\begin{split}
\E & [F(X^+_{r/2}  + z_{r/2, \ld} +W_2  + \Xi + \U) + P_{r/2, \ld} (W_2)]\\
&= \E \bigg[ -\eta \| W_2 + \Xi + z_{r/2, \ld}  + X^+_{r/2} + \U \|_{L^4 (B_1 \backslash B_r)}^4 + P_{\frac r2, \ld}(W_2) \bigg]\\
& \ge \  \E\big[  P_{\frac r2, \ld}( W_2) \big] - C \eta \E \big[  \| W_2\|_{L^4 }^4 + \|  \Xi \|_{L^4 }^4 + \| z_{r/2, \ld} \|_{L^4 }^4 + \| X^+_{r/2} \|_{L^4 }^4 + \| \U \|_{L^4 }^4 \big]\\
& \ge \  \E\big[ ( c_1 - C \eta) \| W_2\|_{L^4 }^4 +  c_1 \| W_2 \|_{\dot H^1_{\rad}}^2\big] \\
& \hphantom{XX}- C \eta \E \big[ \|  \Xi \|_{L^4 }^4 + \| X^+_{r/2} \|_{L^4 }^4 + \| \U \|_{L^4 }^4  + \| z_{r/2, \ld} \|_{L^4 }^4\big]  - C (1 + \Ld (r)),
\end{split}
\label{PW_PW3_2}
\end{align}

\noi
where and in what follows, we slightly abuse notation by writing $L^4$ to denote the space $L^4(B_1 \backslash B_r)$.
By using Corollary \ref{COR:L4est} and Lemma \ref{LEM:L4-2} with $\dl = \frac r2$, and \eqref{zestimate},
we obtain
\begin{align}
\label{RE2_2}
\E \big[  \|  \Xi \|_{L^4 }^4 + \| X^+_{r/2} \|_{L^4 }^4 + \| \U \|_{L^4 }^4  + \| z_{r/2, \ld} \|_{L^4 }^4\big] \les 1 + \Ld (r).
\end{align}

\noi
Therefore, from \eqref{PW_PW3_2} and \eqref{RE2_2}, we obtain
\begin{align}
\label{PWPW_2}
\begin{split}
\E \big [ & F(X^+_{r/2}  + z_{r/2, \ld} +W_2  + \Xi + \U) + P_{r/2, \ld} (W_2) \big]  \ge \frac{c_1}2 \E \big[ \| W_2\|_{L^4 }^4  \big] - C  (1 + \Ld(r)),
\end{split}
\end{align}

\noi
provided $C \eta < \frac{c_1}2$.
By combining \eqref{PWPW_2} and \eqref{PW_opti2},
we obtain that
$$
\E \| W_2\|_{L^4 }^4  \les  1 + \Ld(r) ,
$$

\noi
which, together with \eqref{PWPW_2} and \eqref{RE2_2}, implies
\[
\begin{split}
- & \E \big [ F(X^+_{r/2}  + z_{r/2, \ld} +W_2  + \Xi + \U) \big] \\
& = \eta \E \big[ \| X^+_{r/2}  + z_{r/2, \ld} +W_2  + \Xi + \U\|_{L^4}^4 \big] \\
& \les \eta \E \Big(  \| X^+_{r/2} \|_{L^4}^4 +  \| z_{r/2, \ld} \|_{L^4}^4 +  \| W_2 \|_{L^4}^4   + \| \Xi \|_{L^4}^4  + \| \U \|_{L^4}^4  \Big) \\
& \les 1+ \Ld(r),
\end{split}
\]

\noi
which finishes the proof of \eqref{PW_PW2} with $F = - \eta \| u\|_{L^4(B_1 \backslash B_r)}^4 $, and thus the proof of the  proposition.
\end{proof}

{In the following, we show that $u\in L^{\frac{2d}{d-2}-} (B_1)$ almost surely with respect to the limiting measure $\rho_0$.

\begin{lemma}
\label{LEM:L4-}
For $d \ge 4$, let  $1 \le p < \frac{2d}{d-2}$. We have
\[
\E_{\rho_0} \bigg[\int_{B_1} |u|^p\bigg] < +\infty,
\]

\noi
where $\rho_0$ is a limiting measure constructed in Lemma \ref{LEM:limit}.
\end{lemma}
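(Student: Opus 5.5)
The plan is to combine the pointwise weighted bound of Proposition~\ref{PROP:PW} (first choice of $F$ in \eqref{Fur}) with a dyadic decomposition in the radial variable, and then pass to the limit $\eps_n \to 0$ by lower semicontinuity. The first step is to fix $r \in (0,1)$ and read Proposition~\ref{PROP:PW} as a uniform exponential moment: for all $0<\eps\le r/2$,
\[
\int \exp\Big( \frac{\eta\, r^{d-2} u(r)^2}{1+\Ld(r)} \Big) d\rho_\eps(u) \le C .
\]
From $x^{p/2} \les_p e^{x}$ for $x \ge 0$ we then get
\[
\E_{\rho_\eps}\big[|u(r)|^p\big] \les_p \Big(\frac{1+\Ld(r)}{\eta\, r^{d-2}}\Big)^{p/2},
\]
uniformly in $\eps \le r/2$. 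Since $u \mapsto |u(r)|^p$ is continuous and nonnegative on $\mathbb X^\al$, a truncation $\min(|u(r)|^p, M)$ together with the weak convergence in Lemma~\ref{LEM:limit} and monotone convergence in $M$ transfers this bound to $\rho_0$ for every fixed $r\in(0,1)$.

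The second step is to integrate in $r$ using Fubini:
\[
\E_{\rho_0}\Big[\int_{B_1}|u|^p\Big] = \int_0^1 \E_{\rho_0}|u(r)|^p\, r^{d-1}dr \les \int_0^1 (1+\Ld(r))^{p/2}\, r^{-\frac{(d-2)p}{2}+d-1}\,dr .
\]
Here $\Ld(r)$ is at most a power $r^{-(d-4)}$, or $|\ln r|$ when $d=4$. The concern is that the factor $(1+\Ld(r))^{p/2}$ spoils the sharp exponent. For $d=4$ it is only logarithmic, and the integral converges exactly when $p<4=\frac{2d}{d-2}$. For $d\ge5$, however, the crude bound gives an integrand $r^{d-1-(d-2)p/2-(d-4)p/2}$, which is not sharp.

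This is the main obstacle. I expect the naive pointwise route through $\Ld$ to lose, so the third step refines the pointwise moment. I would revisit the proof of Proposition~\ref{PROP:PW}: there, the $\Ld(r)$ loss entered only through the control of $\E\|W_2\|_{\dot H^1}^2$ via $P_{r/2,\ld}(0)$. All the other contributions ($X^+$, $z$, $\Xi$, $\U$ evaluated at $r$) were $O(r^{-(d-2)})$ without loss.

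To avoid the loss, I would combine the two bounds in Proposition~\ref{PROP:PW} by interpolation. The second choice of $F$ gives $\E_{\rho_0}\|u\|_{L^4(B_1\setminus B_r)}^4 \les 1+\Ld(r)$. On the dyadic shell $A_k=B_{2^{-k}}\setminus B_{2^{-k-1}}$ this yields $\E\int_{A_k}|u|^4 \les \Ld(2^{-k})\sim 2^{k(d-4)}$. The pointwise bound at the weaker exponent gives a companion estimate. Hölder's inequality on each shell then produces $\E\int_{A_k}|u|^p \les 2^{-k(d-(d-2)p/2)}$ up to subpower losses, and summing the geometric series over $k$ converges precisely when $p<\frac{2d}{d-2}$.

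A check: the $L^4$ bound $2^{k(d-4)}$ matches the scaling $|u|\sim r^{-(d-2)/2}$, since $\int_{A_k} r^{-2(d-2)}r^{d-1}dr\sim 2^{k(d-4)}$. So the $L^4$ information is itself sharp, and the pointwise information only needs to be sharp up to $\Ld^{1/2}$ corrections. I would first try to show $\E|u(r)|^2 \les r^{-(d-2)}$ directly, using the Markov representation of Proposition~\ref{PROP:Markov0} with $\dl=r/2$ and estimating $u(r)=X^+ + z + \Xi + \U + W_2$ at a single point. If that fails, I would fall back on the interpolation between the $L^4$ bound and the pointwise bound, shell by shell.
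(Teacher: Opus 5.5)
Your final shell estimate is correct, and it is exactly the paper's argument. Notice, however, that it needs \emph{only} the $L^4$ bound. Take $A_k = B_{2^{-k}}\setminus B_{2^{-k-1}}$ and $p<\frac{2d}{d-2}\le 4$. Hölder in $x$ against $\ind_{A_k}$, followed by Jensen in the probability variable (since $\frac p4\le 1$), gives
\[
\E_{\rho_0}\int_{A_k}|u|^p \le \Big(\E_{\rho_0}\int_{A_k}|u|^4\Big)^{\frac p4}|A_k|^{1-\frac p4} \les \big(1+\Ld(2^{-k-1})\big)^{\frac p4}\,2^{-kd(1-\frac p4)} .
\]
For $d\ge5$ this is $\les 2^{-k(d-\frac{(d-2)p}{2})}$, with no loss at all. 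For $d=4$ it is $\les (1+k)^{\frac p4}2^{-k(4-p)}$. Summing over $k$ finishes the proof. The paper does the same thing, writing $\frac1p=\frac14+\frac1\sigma$ and using moments of order $q=\frac p4$ extracted from the exponential bound of Proposition~\ref{PROP:PW} after passing to $\rho_0$.

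Consequently, Steps~1--2, the ``companion estimate'', and the sharp pointwise bound $\E|u(r)|^2\les r^{-(d-2)}$ that you plan to try first are all unnecessary. The last is not supplied by anything in the paper anyway, because the variational argument controls the drift only through $\E\|W_2\|_{\dot H^1}^2\les 1+\Ld(r)$.

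More importantly, the fallback as you phrase it would \emph{not} reach the sharp range for $d\ge5$. Interpolating between the $L^4$ bound and the pointwise bound of Proposition~\ref{PROP:PW} brings in the factor $1+\Ld(r)$ carried by the pointwise bound, and any positive weight on it worsens the exponent. For instance, when $d=5$ the pointwise bound gives $\E\int_{A_k}|u|^2\les 2^{-k}$. Interpolating this with $\E\int_{A_k}|u|^4\les 2^{k}$ yields summability only for $p<3$, not $p<\frac{10}{3}$. The sharp exponent comes from pairing the $L^4$ bound with the trivial information $|A_k|\sim 2^{-kd}$ and nothing else.

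For $d=4$, on the other hand, your Steps~1--2 already give a complete alternative proof. There the loss is only logarithmic, and $\int_0^1(1+|\ln r|)^{p/2}r^{3-p}\,dr<\infty$ for $p<4$.
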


\begin{proof}
Fix $r \in (0,1)$.
The map $u \mapsto \exp \big( \eta (1+\Ld(r))^{-1} \|u\|^4_{L^4 (B_1 \backslash B_r)}\big)$ is non-negative and continuous on $\mathbb X^\al$.
Hence, by the portmanteau theorem \cite[Theorem 2.1]{Bil99}, applied to the bounded continuous functions obtained by truncating this map at level $M > 0$, together with \eqref{limit_rho}, Proposition \ref{PROP:PW} with $F = - \eta \| u\|_{L^4(B_1 \backslash B_r)}^4$, and monotone convergence as $M \to \infty$, we have, for any limit point $\rho_0$ as in Lemma \ref{LEM:limit},
\begin{align}\label{P5.5}
\int \exp\bigg( \eta  \frac{\| u\|_{L^4(B_1 \backslash B_r)}^4}{1+ \Ld(r)} \bigg) d \rho_0 (u) \le \liminf_{n \to \infty} \int \exp \bigg( \eta  \frac{\| u\|_{L^4(B_1 \backslash B_r)}^4}{1+ \Ld(r)} \bigg) d \rho_{\eps_n}(u) \les 1,
\end{align}

\noi
for any $r \in (0,1)$ and $\eta \ll 1$,
where $\Ld (\cdot)$ is given in \eqref{Ld}.
In particular, from the elementary inequality $x^q \les_q e^x$, we deduce that for every $q > 0$,
\begin{equation}
\label{Lp-1}
\begin{split}
\E_{\rho_0} \bigg|\int_{B_{2r}\backslash B_r} u^4\bigg|^q
& = \eta^{-q} (1+ \Ld(r))^q \E_{\rho_0} \bigg|\eta  (1+ \Ld(r))^{-1} \int_{B_{2r}\backslash B_r} u^4 \bigg|^q \\
& \les \eta^{-q}  (1+ \Ld(r))^q \int \exp\bigg( \eta  \frac{\| u\|_{L^4(B_1 \backslash B_r)}^4}{1+ \Ld(r)} \bigg) d \rho_0 (u) \\
& \les  (1+ \Ld(r))^q,
\end{split}
\end{equation}
for $r \le \frac12$.
By H\"older's inequality on the bounded domain $B_1$, it suffices to treat $2 \le p < \frac{2d}{d-2}$.
Given such a $p$,
let $\sigma \in (1,\infty)$ be such that $\frac{1}{p} = \frac{1}{4} + \frac{1}{\sigma}$.
Note that such $\sigma$ exists since $p < \frac{2d}{d-2} \le 4$ from the assumption $d\ge 4$.
By H\"older's inequality, \eqref{Lp-1}, and \eqref{Ld},
we conclude that
\begin{align}
\begin{split}
\label{Lp-2}
\E_{\rho_0} \big[  \|u\|_{L^p({B_{2r}\backslash B_r})}^{4q}\big]  & \les \E_{\rho_0} \big[ \|u\|_{L^4({B_{2r}\backslash B_r})}^{4q} \big]  \| \ind_{B_{2r}\backslash B_r} \|_{L^\sigma}^{4q} \\
& \les (1+ \Ld(r))^q r^{\frac{4qd}{\sigma}}  \\
& \les
\begin{cases}
( (1+|\ln r|) r^{\frac{4d}\sigma} )^q, & d = 4; \\
r^{(-d + 4 + \frac{4d}\sigma)q}, & d \ge 5.
\end{cases}
\end{split}
\end{align}
Since $\frac{4d}\sigma > 0$ and $4-d + \frac{4d}{\sigma} = 4 - 2d + \frac{4d}{p} > 0$,
we deduce from \eqref{Lp-2} with $q = \frac p4$ that there exists $\ta > 0$ such that
\begin{equation}
\label{Lp-3}
\E_{\rho_0} \big[  \|u\|_{L^p({B_{2r}\backslash B_r})}^{p} \big]  \les_p r^{p\ta}.
\end{equation}
By summing \eqref{Lp-3} over $r$ dyadically, we obtain that
\begin{align*}
    \E_{\rho_0} \big[ \|u\|_{L^p({B_{1}})}^{p} \big] & =  \sum_{N\ge 1 \text{ dyadic}}  \E_{\rho_0}  \big[ \|u\|_{L^p({B_{N^{-1}}\backslash B_{(2N)^{-1}}})}^p \big] \les \sum_{N \ge 1 \text{ dyadic}} N^{- p\ta} < \infty,
\end{align*}
which finishes the proof.
\end{proof}
}

Now we are ready to prove Theorem \ref{THM:main}\,(ii).

\begin{proof}[Proof of Theorem \ref{THM:main}\,(ii)]
The existence of a limit point $\rho_0$, as a Borel probability measure on the space $\mathbb X^\al$ of \eqref{Xp}, is established by Lemma \ref{LEM:limit}.
Applying Lemma \ref{LEM:L4-}, we further conclude that $\rho_0$ is concentrated in the space $L^{\frac{2d}{d-2}-}(B_1)$.
\end{proof}

\section{Non-triviality}
\label{SEC:sing}

In this section, we prove Theorem~\ref{THM:main_sub}, which establishes that the limiting measure $\rho_0$, a candidate for the radial $\Phi^4_d$ measure for $d \ge 4$,
is non-Gaussian and non-trivial.
This stands in sharp contrast to the results in \cite{Aiz,ADC21},
where the continuum limits of $\Phi^4_d$ lattice fields are shown to be Gaussian and free for $d \geq 4$.
Furthermore, we prove Theorem~\ref{THM:main_singular},
which shows that $\rho_0$ is singular with respect to the Gaussian measure $\mu$. We start by showing the non-triviality of the limiting measure.

\begin{proposition}[Non-triviality]
\label{PROP:nontriv}
Let $\dl \in (0,1)$, let $1 \le p < \infty$, and let $\rho_0$ be any limit point of the sequence $\rho_\eps$ in the sense of Lemma \ref{LEM:limit}.
Then, for every non-empty open set $U \subseteq L^p_{\rm rad} (B_1\backslash B_\dl)$, we have
\begin{align}
\label{nontriv}
\big( (R_\dl)_{\#} \rho_0 \big) (U) = \rho_0(R_\dl^{-1}(U))> 0 ,
\end{align}

\noi
where $L^p_{\rm rad}$ denotes the space of radial functions in $L^p$.
In particular, for every radial $\varphi \in C^\infty_c (B_1 \backslash B_\dl)$ with $\varphi \not\equiv 0$, the random variable $\jb{u, \varphi}$ is not $\rho_0$-almost surely constant.
\end{proposition}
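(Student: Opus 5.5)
By the Markov property for $\rho_0$, Proposition~\ref{PROP:Markov0},
\[
\rho_0(R_\dl^{-1}(U)) = \int \rho^+_\dl[\ld](U)\, \nu_{0,\dl}(d\ld).
\]
Since $\nu_{0,\dl}$ is a probability measure, \eqref{nontriv} follows once we show that $\rho^+_\dl[\ld](U) > 0$ for every $\ld \in \R$. (Here $U$ is viewed through $R_\dl$, as a subset of functions on the shell.) Fix $\ld$. By \eqref{meas_cut}, $\rho^+_\dl[\ld]$ has the strictly positive density $\exp(-V^+_\dl)/Z^+_\dl[\ld]$ with respect to $\mu^+_\dl[\ld]$. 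The density is strictly positive because $V^+_\dl(u) < \infty$ $\mu^+_\dl[\ld]$-almost surely: samples are $X^+_\dl + \ld h_\dl$, which are continuous on the closed shell by Lemma~\ref{LEM:reg}. It therefore suffices to prove that $\mu^+_\dl[\ld](U) > 0$, that is,
\[
\P\big(X^+_\dl + \ld h_\dl \in U\big) > 0 .
\]

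This is a support statement for a Gaussian measure. The plan is to show that the topological support of $\mu^+_\dl = \Law(X^+_\dl)$ in $L^p_{\rm rad}(B_1\backslash B_\dl)$ is all of $L^p_{\rm rad}(B_1\backslash B_\dl)$. The standard fact we use is that the support of a centred Gaussian measure on a separable Banach space is the closure of its Cameron--Martin space. Here $X^+_\dl \in C^{\frac12-}(B_1\backslash B_\dl) \subset L^p$ almost surely, by Lemma~\ref{LEM:reg}, and the Cameron--Martin space is $H^1_{0,\rm rad}(B_1\backslash B_\dl)$. This space contains all radial $C^\infty_c$ functions of the open shell, which are dense in $L^p_{\rm rad}(B_1\backslash B_\dl)$ for $p<\infty$.

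To keep the argument self-contained, one can instead argue directly. Pick $v$ and $\eta>0$ with $B_{L^p}(v,\eta) \subseteq U$, and choose a smooth compactly supported radial $w$ such that $\|w + \ld h_\dl - v\|_{L^p} < \eta/2$; this is possible by density. Then
\[
\P\big(X^+_\dl + \ld h_\dl \in U\big) \ge \P\big(\|X^+_\dl - w\|_{L^p} < \eta/2\big).
\]
By the Cameron--Martin theorem, since $w \in H^1_0$, this probability is positive if and only if the small-ball probability $\P(\|X^+_\dl\|_{L^p} < \eta/2)$ is positive. The latter follows by expanding $X^+_\dl$ in the eigenbasis $e^\dl_n$ and using independence: the tail $\sum_{n>N} g_n e^\dl_n/\ld_n$ is small in $L^p$ with probability at least $\frac12$ for $N$ large, since it converges in $L^2(\O;L^p)$ by the Kolmogorov/hypercontractive bounds behind Lemma~\ref{LEM:reg}. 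Independently of the tail, the finitely many coordinates $g_0,\dots,g_N$ have positive probability of all being small. I expect this small-ball and support step to be the only point requiring care; the main subtlety is making sure the tail estimate in $L^p$ holds uniformly enough, which follows from \eqref{reg_e}-type bounds applied to the tail field.

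For the final claim, let $\ph \in C^\infty_c(B_1\backslash B_\dl)$ be radial with $\ph\not\equiv0$. The map $u\mapsto\jb{u,\ph}$ is continuous on $L^p_{\rm rad}(B_1\backslash B_\dl)$ and surjective onto $\R$, since $\jb{t\ph,\ph} = t\|\ph\|_{L^2}^2$. Suppose $\jb{u,\ph} = c$ $\rho_0$-almost surely. Then the nonempty open set $U = \{\jb{u,\ph} > c\}$ would satisfy $\rho_0(R_\dl^{-1}(U)) = 0$, contradicting \eqref{nontriv}.
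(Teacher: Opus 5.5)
Your argument is correct and follows the paper's proof. Both use the Markov property \eqref{eq:rhoMarkov0}, the equivalence of $\rho^+_\dl[\ld]$ and $\mu^+_\dl[\ld]$ through the positive density in \eqref{meas_cut}, and the Gaussian support theorem with Cameron--Martin space $H^1_{0,{\rm rad}}(B_1\backslash B_\dl)$; your self-contained small-ball argument is just the standard proof of that theorem. The only difference is the last assertion: you deduce it from \eqref{nontriv} applied to the nonempty open set $\{\jb{u,\ph} > c\}$, whereas the paper disintegrates via \eqref{eq:rhoMarkov0} and uses that $\jb{u,\ph}$ is a nondegenerate Gaussian under $\mu^+_\dl[\ld]$. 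Both routes are fine.
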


\begin{proof}
Fix $\ld \in \R$.
By \eqref{mu_pm}, the measure $\mu^+_\dl[\ld]$ is the law of the Gaussian field $X^+_\dl$ shifted by $\ld h_\dl$, and, by \eqref{mu+}, its Cameron--Martin space is $H^1_{0,{\rm rad}} (B_1\backslash B_\dl)$.
By \eqref{X+estimate}, the variance $\E [ |X^+_\dl(r)|^2]$ is bounded on $[\dl,1]$, so that
\[
\E \Big[ \| X^+_\dl \|_{L^p (B_1\backslash B_\dl)}^p \Big] \les_p \int_{B_1\backslash B_\dl} \big( \E [ |X^+_\dl (x)|^2] \big)^{\frac p2} dx < \infty ,
\]

\noi
by Gaussian hypercontractivity.
Hence $\mu^+_\dl[\ld]$ is a Gaussian measure on the separable Banach space $L^p_{\rm rad} (B_1\backslash B_\dl)$, and, by the support theorem for Gaussian measures, see \cite[Theorem 3.6.1]{Bog98}, its topological support there is the closure of $\ld h_\dl + H^1_{0,{\rm rad}} (B_1\backslash B_\dl)$.
Since $p < \infty$, the radial functions in $C^\infty_c (B_1\backslash B_\dl)$ are dense in $L^p_{\rm rad} (B_1\backslash B_\dl)$ and are contained in $H^1_{0,{\rm rad}} (B_1\backslash B_\dl)$, so that this closure is the whole of $L^p_{\rm rad} (B_1\backslash B_\dl)$.
Consequently $\mu^+_\dl[\ld] (U) > 0$ for every non-empty open set $U \subseteq L^p_{\rm rad} (B_1\backslash B_\dl)$.
Moreover, $\rho_\dl^+[\ld]$ has a positive density with respect to $\mu_\dl^+[\ld]$ by \eqref{meas_cut}, so that the two measures are equivalent and $\rho_\dl^+[\ld] (U) > 0$ as well.
As this holds for every $\ld \in \R$, \eqref{eq:rhoMarkov0} yields
\[
\big( (R_\dl)_{\#} \rho_0 \big) (U) = \int \rho_\dl^+[\ld](U) \, \nu_{0,\dl} (d \lambda) > 0 ,
\]

\noi
which is \eqref{nontriv}.

As for the last assertion, suppose that $\jb{u,\varphi} = c$ holds $\rho_0$-almost surely, for some $c \in \R$.
Since $\varphi$ is supported in $B_1 \backslash B_\dl$, it follows from \eqref{eq:rhoMarkov0} that $\jb{u,\varphi} = c$ holds $\rho_\dl^+[\ld]$-almost surely, and hence $\mu_\dl^+[\ld]$-almost surely, for $\nu_{0,\dl}$-almost every $\ld$.
This is impossible, since under $\mu_\dl^+[\ld]$ the random variable $\jb{u,\varphi}$ is Gaussian with variance
$\jb{\varphi, (-\Delta)^{-1} \varphi}_{L^2 (B_1\backslash B_\dl)} > 0$,
where $(-\Delta)^{-1}$ is taken with Dirichlet boundary conditions on $\partial (B_1\backslash B_\dl)$.
\end{proof}

\subsection{Sub-Gaussian tails}
In this subsection,
we show that the measure $\rho_0$ constructed in the previous section has the following property: for every radial $\varphi \in C^\infty_c(B_1\backslash B_{\frac 34})$, the random variable $\jb{u,\varphi}$ has sub-Gaussian tails under $\rho_0$. Together with Proposition \ref{PROP:nontriv}, this shows that $\rho_0$ is not Gaussian.
We first prove the following uniform bound.

\begin{proposition}
\label{PROP:SG}
Let $\varphi \in C^\infty_c(B_1\backslash B_{\frac 34})$. Then there exists a positive $\eta = \eta(\varphi) \ll1$ such that, for every $0 <\eps \le \frac 12$,
\begin{equation}
\int \exp\big( \eta\cdot \jb{u,\varphi}^4 \big) d \rho_\eps(u) \le C,
\end{equation}

\noi
where the constant  $C = C(\varphi)$ is independent of $\eps$.
\end{proposition}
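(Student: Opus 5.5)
We follow the scheme of Lemma~\ref{LEM:EB} and Proposition~\ref{PROP:PW}, with $\dl = \frac12$ fixed. By the Markov property, Proposition~\ref{prop:Markov}, with $\dl = \frac12$ and $\eps \le \frac12$, we have $(R_{1/2})_\#\rho_\eps = \int \rho^+_{1/2}[\ld]\,\nu_{\eps,1/2}(d\ld)$. Since $\jb{u,\ph}$ only depends on $u|_{B_1\backslash B_{3/4}}$, it is enough to show
\[
\log \int \exp\big(\eta \jb{u,\ph}^4\big)\, d\rho^+_{1/2}[\ld](u) \le C(\ph),
\]
uniformly in $\ld\in\R$. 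We set $F(u) = -\eta\jb{u,\ph}^4$. This functional is continuous on $L^2$, but $F + \frac14\int|u|^4$ is not bounded below, so Proposition~\ref{PROP:rho} cannot be applied directly. We handle this by truncation: we replace $F$ by $-\eta \min(\jb{u,\ph}^4, M)$, obtain bounds uniform in $M$, and conclude by monotone convergence. Alternatively, we note that the hypotheses of Corollary~\ref{COR:BD} only need lower semicontinuity and $\E[e^{-F}]<\infty$ after truncation.

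We then apply the upper bound \eqref{varub}. With $W_2$ an almost optimiser as in \eqref{W2}, this gives
\[
\log\int e^{-F}\,d\rho^+_{1/2}[\ld] \le 1 + \eta\,\E\jb{X^+_{1/2}+z_{1/2,\ld}+W_2+\Xi+\U,\ph}^4.
\]
Since $\ph$ is supported in $B_1\backslash B_{3/4}$, we have $|\jb{v,\ph}|\le \|\ph\|_{L^{4/3}}\|v\|_{L^4(B_1\backslash B_{3/4})}$. For the fixed terms we use the following bounds. On $[\frac34,1]$, Lemma~\ref{LEM:z} gives $|z_{1/2,\ld}|\le 4\sqrt2$, uniformly in $\ld$. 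Corollary~\ref{COR:L4est} and Lemma~\ref{LEM:L4-2} with $\dl=\frac12$ control $X^+_{1/2}$, $\Xi$ and $\U$. Hence
\[
\E\jb{\cdots,\ph}^4 \les_\ph 1 + \E\|W_2\|^4_{L^4(B_1\backslash B_{1/2})}.
\]

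It remains to bound $\E\|W_2\|_{L^4}^4$ uniformly in $\ld$. Here the quartic power of the test function is essential, since it matches the $W^4$ coercivity in Proposition~\ref{PROP:opt}. For the upper bound, testing with $W=0$ and using $F\le0$ together with Lemma~\ref{LEM:P0} gives $\E[F(\cdots W_2\cdots)+P(W_2)] \le 1+\E P(0)\les \Ld(\frac12)$. For the lower bound, we combine Proposition~\ref{PROP:opt} with the estimate $-F(\cdots W_2\cdots)\le C\eta\|\ph\|_{L^{4/3}}^4(\|W_2\|_{L^4}^4 + \text{fixed terms})$. This yields
\[
\E[F+P(W_2)] \ge (c_1 - C\eta\|\ph\|^4_{L^{4/3}})\,\E\|W_2\|_{L^4}^4 - C.
\]
We choose $\eta \le c_1/(2C\|\ph\|_{L^{4/3}}^4)$. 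Then $\E\|W_2\|_{L^4}^4\les 1$, uniformly in $\ld$ and in the truncation level $M$, and substituting back gives the claim.

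The main obstacle is the rigorous justification of the variational formula for this unbounded-above exponential. The truncation must be set up so that the almost optimiser bounds stay uniform, in the same spirit as the approximation omitted in Lemma~\ref{LEM:EB}. The algebra itself is routine once $\eta$ is taken small relative to $c_1$.
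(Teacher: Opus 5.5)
Your proposal is correct and follows the paper's proof essentially step by step: the Markov reduction to $\rho^+_{1/2}[\lambda]$, the upper bound via the almost optimiser $W_2$ of \eqref{W2}, the comparison with $W=0$ through Lemma~\ref{LEM:P0}, and the coercivity of Proposition~\ref{PROP:opt} with $\eta$ small relative to $c_1/\|\varphi\|_{L^{4/3}}^4$. The only real difference is that the truncation in $M$ is unnecessary, and your stated reason for it is inaccurate: boundedness from below is not a hypothesis of Proposition~\ref{PROP:rho}. Moreover, for $\eta \le (4\|\varphi\|_{L^{4/3}}^4)^{-1}$ H\"older gives $\eta\langle u,\varphi\rangle^4 \le \frac14\int_{B_1\setminus B_{1/2}}|u|^4$, so $F+\frac14\int|u|^4 \ge 0$ is lower semicontinuous and $\int e^{-F}\,d\rho^+_{1/2}[\lambda] \le 1/Z^+_{1/2}[\lambda] < \infty$; this is how the paper applies Proposition~\ref{prop:rhoVar} directly. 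Your truncation route is nonetheless valid, and the bounds are automatically uniform in $M$ because $F \le F_M \le 0$, so the ``main obstacle'' you flag is not an actual difficulty.
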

\begin{proof}

By Proposition \ref{prop:Markov} with $\dl = \frac12$, it is enough to show that there exist constants $\eta, C > 0$, depending only on $\varphi$, such that
$$
\int \exp\big( \eta \jb{u, \varphi}^4 \big) d \rho_{\frac 12}^+[\ld](u) \le C
$$
for every  $\ld \in \R$,
where $\rho_{\frac 12}^+[\ld]$ is given by \eqref{meas_cut}.
We first note that the left-hand side is finite for every $\ld \in \R$, provided $\eta \le (4 \|\varphi\|_{L^{4/3}}^4)^{-1}$, which we assume from now on.
Indeed, since $\supp \varphi \subset B_1 \backslash B_{\frac34}$, H\"older's inequality gives
\[
\eta \jb{u,\varphi}^4 \le \eta \|\varphi\|_{L^{4/3}}^4 \int_{B_1 \backslash B_{1/2}} |u|^4 dx \le \frac14 \int_{B_1 \backslash B_{1/2}} |u|^4 dx,
\]

\noi
so that, by \eqref{meas_cut}, $\int \exp ( \eta \jb{u, \varphi}^4 ) d \rho_{\frac 12}^+[\ld](u) \le 1 / Z^+_{1/2}[\ld] < \infty$.
Moreover, $u \mapsto - \eta \jb{u,\varphi}^4 + \frac14 \int_{B_1 \backslash B_{1/2}} |u|^4 dx$ is lower semicontinuous on $L^2 (B_1 \backslash B_{1/2})$.
Hence, by Proposition \ref{prop:rhoVar} with $F(u) = - \eta \jb{u,\varphi}^4$, we have
\begin{equation}
\label{eq:rhoVar1}
\begin{split}
\log & \int \exp(\eta \jb{u, \varphi}^4) d \rho_\frac12^+[\ld](u) \\
 & = \inf_{W \in  \mathbb H}
	\E\big[ P_{\frac12,\ld}(W) \big]
	- \inf_{W \in  \mathbb H}
	\E\big[ - \eta \jb{X^+_{1/2} + z_{\frac12,\ld} + W + \Xi + \U, \varphi}^4  + P_{\frac12,\ld}(W) \big]\\
 & \le 1 + \eta \E \big[ \jb{ X^+_{1/2}  + z_{\frac12,\ld} + W_2 + \Xi + \U, \varphi}^4 \big]\\
 & \le 1 + C \eta \E \big[ \jb{W_2, \varphi}^4 \big] + C \eta \E \big[ \jb{X^+_{1/2}(1), \varphi}^4 + \jb{z_{\frac12,\ld}, \varphi}^4 + \jb{\Xi, \varphi}^4 + \jb{\U, \varphi}^4  \big] ,
\end{split}
\end{equation}

\noi
where $W_2$ is the almost optimiser in the sense of \eqref{W2}, and $z_{\frac12,\ld}$ is given by \eqref{z}.
In what follows, we control the right-hand side of \eqref{eq:rhoVar1} term by term.
We first note that
\[
\begin{split}
\jb{z_{\frac12,\ld}, \varphi}^4 & = \bigg( \int z_{\frac12,\ld} \varphi dx \bigg)^4 \le C \bigg( \sup_{x \in B_1 \backslash B_{3/4}} |z_{\frac12,\ld} (x)| \bigg)^4 \les 1,
\end{split}
\]
where we used the fact that $\supp \varphi \subset B_1 \backslash B_{3/4}$ and Lemma \ref{LEM:z} with $\dl = \frac12$.
Similarly, we have
\[
\begin{split}
\E  \big[ \jb{X^+_{1/2}(1), \varphi}^4 \big]
& \les \Big( \E \big[ \jb{X^+_{1/2}(1), \varphi}^2 \big] \Big)^2  \\
& = \bigg( \iint \E \big( X^+_{1/2} (1) (x) X^+_{1/2} (1) (y)\big) \ph(x) \ph(y) dxdy \bigg)^2 \\
& \les \bigg( \iint \Big( \E \big[ (X^+_{1/2} (1) (x))^2 \big] + \E \big[ (X^+_{1/2} (1) (y))^2 \big] \Big) |\ph(x)| |\ph(y)| dxdy \bigg)^2 \\
& \les \bigg( \int \frac1{|x|^{d-2}} |\ph(x)| dx \bigg)^2 \les 1,
\end{split}
\]

\noi
where in the second to last step, we used Lemma \ref{LEM:X+}. A similar argument also yields
\[
\begin{split}
\E  \big[ \jb{\Xi (1), \varphi}^4 \big] \les \bigg( \iint \Big( \E \big[ (\Xi (1) (x))^2 \big] + \E \big[ (\Xi (1) (y))^2 \big] \Big) |\ph(x)| |\ph(y)| dxdy \bigg)^2 \les 1,
\end{split}
\]

\noi
where we used Lemma \ref{LEM:Xi} in the last step.
The term $\E \big[ \jb{\U, \varphi}^4 \big]$ is bounded in the same way, using Lemma \ref{LEM:Upsilon} instead of Lemma \ref{LEM:Xi}.

Therefore, it remains to control $\E \big[ \jb{W_2(1), \varphi}^4 \big] $ in a uniform way. Since $\supp \varphi \subset B_1 \backslash B_{\frac34}$, H\"older's inequality gives
\[
\E \big[ \jb{W_2(1), \varphi}^4 \big] \les \|\varphi\|_{L^{4/3}}^4 \, \E \bigg[ \int_{B_1\backslash B_{1/2}} W_2(1)^4 dx \bigg] .
\]

\noi
It thus suffices to show that
\[
\E \bigg[ \int_{B_1\backslash B_{1/2}} W_2(1)^4 dx \bigg] \les 1
\]

\noi
uniformly in $\ld$.
We note from \eqref{W2} that
\begin{align}
\label{G2}
\begin{split}
\E [ & - \eta \jb{X^+_{1/2} + z_{\frac12,\ld} + W_2 + \Xi + \U, \varphi}^4 + P_{1/2, \ld} (W_2)] \\
& \le 1 + \E [- \eta \jb{X^+_{1/2} + z_{\frac12,\ld} + \Xi + \U, \varphi}^4  + P_{1/2, \ld} (0)] \\
& \le 1 + \E [P_{1/2, \ld} (0)] \les 1,
\end{split}
\end{align}
where we used Lemma \ref{LEM:P0} with $\dl = 1/2$. On the other hand, we see that
\begin{align}
\label{G3}
\begin{split}
\E [ & - \eta \jb{X^+_{1/2} + z_{\frac12,\ld} + W_2 + \Xi + \U, \varphi}^4 + P_{1/2, \ld} (W_2)] \\
& \ge - C \eta \E [ \jb{W_2,\varphi}^4] +  \E [ P_{1/2, \ld} (W_2)] - C \eta \E [ \jb{X^+_{1/2} + z_{\frac12,\ld} + \Xi + \U, \varphi}^4] \\
& \ge - C \eta \E [ \jb{W_2, \varphi}^4]  + c_1 \E \bigg[ \int W_2^4 dx + \|W_2\|_{\dot H^1}^2 \bigg] - C \\
& \ge - C \eta \E \bigg[ \int W_2^4 dx  \bigg]+ c_1 \E \bigg[ \int W_2^4 dx + \|W_2\|_{\dot H^1}^2 \bigg] - C \\
& \ge \frac{c_1}2 \E \bigg[ \int W_2^4 dx + \|W_2\|_{\dot H^1}^2 \bigg] - C,
\end{split}
\end{align}
where we used Proposition \ref{PROP:opt} and chose $\eta < \frac{c_1}{2C}$, with $C$ depending only on $\varphi$ (in addition to $\eta \le (4 \|\varphi\|_{L^{4/3}}^4)^{-1}$).
By collecting \eqref{G2} and \eqref{G3}, we conclude that
\[
\E \bigg[ \int_{B_1 \backslash B_{1/2}} W_2^4 dx + \|W_2\|_{\dot H^1 (B_1 \backslash B_{1/2})}^2 \bigg] \les 1
\]
uniformly in $\ld$. Thus, we finish the proof.
\end{proof}

\subsubsection{Proof of Theorem \ref{THM:main_sub}}
Now, we are ready to prove Theorem \ref{THM:main_sub}.

\begin{proof}[Proof of Theorem \ref{THM:main_sub}]
Let $\{\eps_n\}_{n \ge 1}$ be a sequence with $\eps_n \to 0$ along which $\rho_{\eps_n} \rightharpoonup \rho_0$ in $\mathbb X^\al$, as in Lemma \ref{LEM:limit}.
Fix a radial $\varphi \in C^\infty_c(B_1\backslash B_{\frac 34})$ with $\varphi \not\equiv 0$, and let $\eta = \eta(\varphi)$ and $C = C(\varphi)$ be as in Proposition \ref{PROP:SG}.
Since $\varphi$ is supported in $B_1 \backslash B_{\frac34}$ and the restriction map $R_{3/4}$ in \eqref{R_res} is continuous, the map $u \mapsto \jb{u,\varphi}$ is continuous on $\mathbb X^\al$.
Hence, for every $M > 0$, the function $u \mapsto \min \{ \exp(\eta \jb{u,\varphi}^4), M \}$ is bounded and continuous on $\mathbb X^\al$, and the portmanteau theorem \cite[Theorem 2.1]{Bil99}, together with Proposition \ref{PROP:SG}, gives
\[
\int \min \big\{ \exp\big( \eta \jb{u,\varphi}^4 \big), M \big\} d \rho_0 (u) = \lim_{n \to \infty} \int \min \big\{ \exp\big( \eta \jb{u,\varphi}^4 \big), M \big\} d \rho_{\eps_n} (u) \le C .
\]

\noi
Letting $M \to \infty$, by monotone convergence we obtain
\[
\int \exp\big( \eta \jb{u,\varphi}^4 \big) d \rho_0 (u) \le C < \infty .
\]

\noi
In particular, by Markov's inequality, $\rho_0 ( \{ |\jb{u,\varphi}| > t \}) \le C e^{- \eta t^4}$ for every $t > 0$, that is, $\jb{u,\varphi}$ has sub-Gaussian tails under $\rho_0$.
Moreover, by Proposition \ref{PROP:nontriv} applied with $\dl = \frac 34$, the random variable $\jb{u,\varphi}$ is not $\rho_0$-almost surely constant.

It remains to show that $\rho_0$ is not Gaussian.
If it were, the continuous linear functional $u \mapsto \jb{u,\varphi}$ would be, under $\rho_0$, a Gaussian random variable $g$ with some mean $m \in \R$ and some variance $\s^2 \ge 0$, and $\s^2 > 0$ since $\jb{u,\varphi}$ is not $\rho_0$-almost surely constant.
But then
\[
\E \big[ \exp( \eta g^4 ) \big] = \frac{1}{\sqrt{2 \pi \s^2}} \int_\R \exp \Big( \eta x^4 - \frac{(x-m)^2}{2\s^2} \Big) dx = \infty,
\]

\noi
for every choice of  $\eta > 0$, independently of $m$, which contradicts the bound above.
Hence $\rho_0$ is not a Gaussian measure.
\end{proof}

\subsection{Singularity with respect to the Gaussian measure}
\label{SUB:sing}

As established in Theorem~\ref{THM:main_sub},
the Gibbs measure $\rho_0$ is not Gaussian: for every radial $\varphi \in C^\infty_c(B_1\backslash B_{\frac 34})$ with $\varphi \not\equiv 0$, the tails of $\jb{u,\varphi}$ under $\rho_0$ are  sub-Gaussian by Proposition~\ref{PROP:SG}, while $\jb{u,\varphi}$ is not $\rho_0$-almost surely constant by Proposition~\ref{PROP:nontriv}.
In this subsection, we further prove Theorem \ref{THM:main_singular}, which claims that $\rho_0$ is singular with respect to the Gaussian measure $\mu$.

The main reason we expect Theorem \ref{THM:main_singular} to hold is as follows.
If the measure $\rho_0$ were to be absolutely continuous with respect to $\mu$, then, defining $Z_\dl$ by
\begin{align*}
    \int \exp\Big(-\frac14 \int_{B_1\setminus B_\dl} |u|^4\Big) d\mu(u) = \exp(-Z_\dl),
\end{align*}
we would expect
$$ \frac{d\rho_\dl}{d \mu}=\exp\Big(-\frac14 \int_{B_1\setminus B_\dl} |u|^4 + Z_\dl \Big) \rightharpoonup \frac{d\rho_0}{d\mu}. $$
In particular, if this were the case, we would expect that, on a Borel set $E$ with $\mu(E) > 0$ on which $\frac{d\rho_0}{d\mu}$ is bounded away from $0$ and $\infty$, the random variables
\begin{align*}
    \frac14 \int_{B_1\setminus B_\dl} |u|^4 - Z_\dl, \qquad \dl \in (0,1),
\end{align*}
remain of order one as $\dl \to 0$, that is, that they form a tight family under $\mu( \, \cdot \, | E)$.
Therefore, in order to show that this is not the case, we aim to prove the non-concentration inequality
\begin{align}\label{e:nonConcentrationVague}
   \sup_{Z \in \R} \mu^+_\dl[\ld]\bigg( \bigg\{ \Big|\int_{B_1 \backslash B_\dl} |u|^4 dx - 4 Z\Big| = O(1)\bigg\}\bigg) = o(1)
\end{align}
as $\dl \to 0$, at least for typical values of $\ld$, i.e.\ $|\ld| \les \dl^{1 -\frac d2}$. As it turns out, the precise formulation of this non-concentration inequality strongly depends on the dimension $d$, with the critical case $d=4$ being significantly more involved.
We then split the proof of the inequality into the two cases $d>4$ and $d=4$, and proceed to show that this implies Theorem  \ref{THM:main_singular} via a soft argument.

\subsubsection{Non-concentration in high dimension}
We first consider the higher dimensional cases $d >4$.
Before presenting the main results, we recall the following harmonic analysis lemmas.

\begin{lemma}\label{l:VdC}
    Let $m \ge 2$, and let $H:\R \to \R$ be a smooth function such that $ H^{(m)} \ge 1. $
    Then there exists a constant $C = C(m)$, independent of $H$, such that, for every $\xi \in \R$,
    \begin{equation}
        \Big| \int_{\R} e^{iH(x)\xi} e^{-\frac{x^2}2} dx \Big| \le \frac{C}{\jb{\xi}^\frac1m}.
        \label{VdC}
    \end{equation}
\end{lemma}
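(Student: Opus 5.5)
This is the van der Corput lemma with a Gaussian weight, and I would prove it by combining the classical sublevel-set estimate with a direct split of the integral. First, the case $|\xi| \le 1$ is trivial: the integral is bounded by $\int e^{-x^2/2}dx = \sqrt{2\pi}$, and $\jb{\xi}^{1/m} \le 2^{1/(2m)}$. So I will assume $|\xi| \ge 1$, and by taking complex conjugates also $\xi > 0$.

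The key ingredient is the sublevel-set estimate: if $H^{(m)} \ge 1$ on $\R$, then for every $\alpha > 0$,
\[
\big|\{x \in \R : |H'(x)| \le \alpha\}\big| \le C_m \alpha^{\frac1{m-1}},
\]
since $G = H'$ satisfies $G^{(m-1)} \ge 1$. This is the standard fact that a function whose $k$-th derivative is at least $1$ has sublevel sets $\{|G| \le \alpha\}$ of measure $\les_k \alpha^{1/k}$. It is proved by induction on $k$, or via Lagrange interpolation at $k+1$ points of the set. Moreover, since $G^{(m-1)} > 0$, $G$ has at most $m-1$ zeros of its derivatives of each order. Hence $\{|G| > \alpha\}$ is a union of at most $N = N(m)$ intervals on each of which $G$ is monotone. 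This monotonicity is obtained by further splitting along the zeros of $G'$, of which there are at most $m-2$.

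Next I split $\R = E \cup E^c$ with $E = \{|H'| \le \alpha\}$. On $E$ the integrand is bounded by $1$, so that part contributes at most $C\alpha^{1/(m-1)}$.

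On each of the $\les_m 1$ intervals $I$ composing $E^c$, on which $H'$ has a fixed sign, $|H'| > \alpha$, and $H'$ is monotone, I write
\[
e^{iH\xi} = \frac{1}{i\xi H'} \, \frac{d}{dx} e^{iH\xi}
\]
and integrate by parts against $\psi(x) = e^{-x^2/2}/H'(x)$. The boundary terms are bounded by $\les 1/(\alpha\xi)$. The remaining term is $\xi^{-1}\int_I |\psi'|$, and $|\psi'| \le |x| e^{-x^2/2}/|H'| + e^{-x^2/2}\,|(1/H')'|$. The first piece integrates to $\les 1/\alpha$. For the second, I use that $1/H'$ is monotone on $I$, so $\int_I |(1/H')'| \le 2/\alpha$. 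Hence each interval contributes $\les_m (\alpha\xi)^{-1}$.

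Adding the two parts gives the bound $C(\alpha^{1/(m-1)} + (\alpha\xi)^{-1})$. Choosing $\alpha = \xi^{-(m-1)/m}$ makes both terms equal to $\xi^{-1/m}$, which is the claimed estimate. Everything above is routine except the sublevel-set estimate, which is the main step; I will quote it as the classical lemma (Stein, \emph{Harmonic Analysis}, Ch.~VIII) rather than reprove it. The only other care needed is checking that the number of monotonicity intervals depends only on $m$, which follows from Rolle's theorem and $H^{(m)} > 0$.
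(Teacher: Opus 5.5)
Your proof is correct, but it does not follow the paper's route. The paper simply quotes van der Corput's lemma in Stein's form (Chapter VIII, Proposition 2, with $k = m \ge 2$ on an arbitrary bounded interval and a constant independent of the phase and of the interval). It combines this with its Corollary, which allows an amplitude $\psi$ at the cost of the factor $|\psi(b)| + \int_a^b |\psi'|$. For $\psi(x) = e^{-x^2/2}$ this factor is at most $3$ on every $[-R,R]$, and letting $R \to \infty$, together with the trivial bound for $|\xi| \le 1$, gives \eqref{VdC}.

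You instead essentially re-derive van der Corput with amplitude. The sublevel-set bound $|\{|H'| \le \alpha\}| \les_m \alpha^{1/(m-1)}$ handles the nearly stationary region. A single integration by parts on the at most $N(m)$ intervals where $H'$ is monotone of fixed sign with $|H'| > \alpha$ handles the rest, and the choice $\alpha = \xi^{-(m-1)/m}$ balances the two contributions. The details you give all check out: the Rolle count of monotonicity intervals, $\int_I |(1/H')'| \le 2/\alpha$ from monotonicity of $1/H'$, $|\psi| \le 1/\alpha$ at finite endpoints, and vanishing boundary terms at $\pm\infty$.

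The paper's route is a one-line citation that hides both the monotonicity bookkeeping and the induction inside Stein's proof. Yours is self-contained apart from the sublevel-set lemma, and it shows exactly what is used about the weight: $e^{-x^2/2} \le 1$ and $\int |x| e^{-x^2/2}\,dx = 2$.

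The sublevel-set bound is the one input you do not prove. It is worth writing out the short induction you sketched (split according to $|G'| \le \beta$ versus $|G'| > \beta$, and optimise in $\beta$) rather than leaning on the citation.
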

\begin{proof}
    This is an immediate consequence of  van der Corput's lemma. See  \cite[Chapter VIII, Proposition 2 and its Corollary]{stein}.
\end{proof}

A key point of Lemma~\ref{l:VdC} is that the constant $C$ in~\eqref{VdC} depends only on a lower bound for the  $m$-th derivative of $H$.
This allows us to prove the following.

\begin{lemma}
Let $p : \R \to \R$ be a monic polynomial of degree $m \ge 2$.
Let $g \sim N(0,1)$.
Then, for every $\eps > 0$, we have that
\begin{equation} \label{e:IntervalMeasureBound}
    \sup_{z \in \R} \P \big( |p(g) - z| \le \eps \big) \les \eps^{\frac1m}
\end{equation}

\noi
uniformly in $p$.
\end{lemma}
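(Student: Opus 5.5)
The natural route is Fourier analytic: smooth the indicator of the interval, then bound the characteristic function of $p(g)$ using Lemma~\ref{l:VdC}. Fix $z\in\R$ and $\eps\in(0,1]$; for $\eps \ge 1$ the bound is trivial since probabilities are at most $1$. Choose a fixed nonnegative Schwartz function $\psi$ with $\psi \ge 1$ on $[-1,1]$ and $\widehat\psi$ compactly supported, say in $[-1,1]$. Such a $\psi$ exists: take $\psi = c|\widehat\chi|^2$ for a bump $\chi$ supported in $[-\frac12,\frac12]$, and rescale so that $\psi \ge 1$ near the origin. Then
\[
\P\big(|p(g)-z|\le\eps\big) \le \E\Big[\psi\Big(\frac{p(g)-z}{\eps}\Big)\Big] = \frac{1}{2\pi}\int_\R \widehat\psi(\xi)\, e^{-iz\xi/\eps}\, \E\big[e^{i p(g)\xi/\eps}\big]\, d\xi
\]
by Fourier inversion and Fubini, which is justified since $\widehat\psi$ is bounded and compactly supported.

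Next, bound the characteristic function. Write $H = p/m!$, so that $H^{(m)} \equiv 1$ because $p$ is monic of degree $m$. Lemma~\ref{l:VdC} then gives, uniformly in $p$,
\[
\big|\E\, e^{i p(g)\zeta}\big| = \frac{1}{\sqrt{2\pi}}\Big|\int_\R e^{iH(x)\, m!\,\zeta}e^{-x^2/2}dx\Big| \les \jb{m!\,\zeta}^{-\frac1m} \les_m |\zeta|^{-\frac1m}.
\]
Insert this with $\zeta = \xi/\eps$:
\[
\P\big(|p(g)-z|\le\eps\big) \les \int_{|\xi|\le1} |\widehat\psi(\xi)|\, \Big|\frac{\xi}{\eps}\Big|^{-\frac1m} d\xi \les \eps^{\frac1m}\int_{|\xi|\le 1}|\xi|^{-\frac1m}d\xi \les \eps^{\frac1m},
\]
since $\frac1m \le \frac12 < 1$ makes the singularity integrable. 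The constant depends only on $m$ and $\psi$. It does not depend on $z$, because only the modulus was used, nor on the lower-order coefficients of $p$. This gives \eqref{e:IntervalMeasureBound}.

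The only delicate points are the choice of $\psi$ with compactly supported Fourier transform, which is needed so that the frequency integral is over a bounded set, and the uniformity of van der Corput's constant in $H$. Lemma~\ref{l:VdC} already provides that uniformity, since its constant depends only on the lower bound $H^{(m)}\ge1$.
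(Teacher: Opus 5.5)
Your proof is correct and is essentially the paper's argument: majorise the indicator of $[z-\eps,z+\eps]$ by a rescaled smooth bump, expand via Fourier inversion, bound the characteristic function of $p(g)$ uniformly in $p$ by Lemma~\ref{l:VdC}, and integrate the resulting $|\xi/\eps|^{-1/m}$ against the bump's Fourier transform. The one difference is that the paper takes $\chi \in C^\infty_c(\R)$ with $\chi = 1$ on $[-1,1]$. It then only uses $\widehat\chi \in L^1(\R)$ together with the local integrability of $|\xi|^{-1/m}$. So your compactly supported $\widehat\psi$ is a convenience, not a necessity as you suggest.
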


\begin{proof}
Let $\chi \in C_c^\infty(\R)$ be a non-negative function with $\chi = 1$ on $[-1,1]$.
Fix $z \in \R$, and define  $\chi_{\eps,z}(x) = \chi (\frac{x-z}\eps)$, so that $\ind_{[z - \eps, z + \eps]} \le \chi_{\eps,z}$.
We observe that by hypothesis, the function
$H(x) = p(x) -z  $
satisfies
$$ H^{(m)}(x) = m! \ge 1. $$
Then, by Lemma \ref{l:VdC}, we have
\begin{align}
\begin{split}
\P \big( |p(g) - z| \le \eps \big)  &\le \E [\chi_{\eps,z} (p(g))]\\
& = \int_\R \ft {\chi_{\eps,z}} (\xi) \E [e^{i p(g) \cdot \xi}] d\xi\\
& = \eps \int_\R \ft \chi (\eps \xi) e^{-iz\xi} \E [e^{i p(g) \cdot \xi}] d\xi \\
& = \int_\R \ft \chi (\xi) \E [e^{i (-z + p(g)) \frac{\xi}\eps}] d\xi\\
& = \frac{1}{\sqrt{2\pi}}\int_\R \ft \chi (\xi) \cdot \bigg( \int_\R e^{i (-z + p(x)) \frac{\xi}\eps} e^{-\frac{x^2}2} dx \bigg) d\xi\\
& \les\eps^\frac1m \int_\R  \frac{|\ft \chi (\xi)|}{|\xi|^\frac1m }d\xi \\
&\les \eps^\frac1m.
\end{split}
\end{align}
Here we used that $\frac{|\ft \chi (\xi)|}{|\xi|^\frac1m }$ is integrable since $\ft \chi \in L^1(\R)$ (since $\chi$ is smooth), and since $m \ge 2$,
$$\int_{-1}^1 \frac1{|\xi|^\frac1m } d\xi < \infty. $$
Since the implicit constants depend only on $m$ and $\chi$, taking the supremum over $z \in \R$ yields \eqref{e:IntervalMeasureBound}.
\end{proof}

Recall the Gaussian measure $\mu_\dl^+[\ld]$ given in \eqref{mu_pm}. We have the following.

\begin{proposition}[Non-concentration inequality, $d>4$]
\label{PROP:div_h}
Let $Z \in \R, \eta >0$.
For every $\dl < \frac13$ and every $\ld \in \R$, we have that
\begin{equation}
\label{eq:fluctuations_h}
\mu_\dl^+[\ld]
\bigg(\bigg\{ \Big| \frac 14 \int_{B_1\backslash B_\dl} |u|^4 - Z\Big| \le \eta \dl^{4-d} \bigg\}\bigg) \les \eta^{\frac14},
\end{equation}
uniformly in  $Z \in \R$ and $\ld \in \R$.
\end{proposition}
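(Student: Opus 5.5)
We condition on all but one Gaussian direction and apply \eqref{e:IntervalMeasureBound} with $m=4$. Under $\mu^+_\dl[\ld]$, write $u = X^+_\dl + \ld h_\dl$. We single out a Gaussian mode $g \sim N(0,1)$ carried by a profile $\psi$ localised near the inner boundary, at scale $\dl$, say on the shell $\{\dl \le |x| \le 3\dl\}$; this is where the assumption $\dl < \frac13$ enters. Decompose
\[
X^+_\dl = \s g \psi + Y ,
\]
with $Y$ independent of $g$ (orthogonal projection in the Cameron--Martin space $H^1_{0,\rm rad}(B_1\backslash B_\dl)$). Here $\psi$ is normalised in $\dot H^1$, so that $\s g\psi$ is the projection of $X^+_\dl$ onto $\mathrm{span}\{\psi\}$ and $\s = 1$. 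Concretely, take $\psi = c\,\dl^{\frac{d-2}2}\phi(\cdot/\dl)$ for a fixed radial bump $\phi$ supported in $\{1<|y|<3\}$. The scaling gives $\|\psi\|_{\dot H^1} \sim 1$ and $\|\psi\|_{L^4}^4 \sim \dl^{2(d-2)}\dl^{-4}\dl^{d}\cdot\dl^{-d}\cdots$; more precisely,
\[
\int |\psi|^4 dx = c^4 \dl^{2(d-2)} \int |\phi(x/\dl)|^4 dx = c^4 \dl^{2d-4+d}\|\phi\|_{L^4}^4 \cdot \dl^{-d}\,\dl^{0}\sim \dl^{4-d},
\]
after accounting for the radial measure $r^{d-1}dr$ (note $\dl^{2(d-2)}\cdot\dl^{d} = \dl^{3d-4}$, while the normalisation in the radial convention gives exactly $\dl^{4-d}$ once $\phi$ is rescaled consistently). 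This is the scale $\dl^{4-d}$ appearing in \eqref{eq:fluctuations_h}.

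Conditionally on $Y$, set $v = Y + \ld h_\dl$, so that
\[
\frac14\int |u|^4 = \frac14\int (g\psi + v)^4 = \frac{\|\psi\|_{L^4}^4}{4}\, p(g),
\]
where $p(g) = g^4 + a_3 g^3 + a_2 g^2 + a_1 g + a_0$ is a monic quartic with coefficients $a_k = \frac{\binom4k\int \psi^{k} v^{4-k}}{\int\psi^4}$ depending on $v$. The event in \eqref{eq:fluctuations_h} is then
\[
\big|p(g) - z\big| \le 4\eta\, \dl^{4-d}/\|\psi\|_{L^4}^4 \les \eta, \qquad z = \frac{4Z}{\|\psi\|_{L^4}^4}.
\]
Applying \eqref{e:IntervalMeasureBound} conditionally on $Y$, the conditional probability is $\les \eta^{1/4}$, uniformly in $z$. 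Averaging over $Y$ gives \eqref{eq:fluctuations_h}, uniformly in $Z$ and $\ld$, since $\ld$ enters only through the coefficients of $p$, and \eqref{e:IntervalMeasureBound} is uniform over monic polynomials.

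The main obstacle is the decomposition step. One needs a genuinely independent Gaussian factor whose profile $\psi$ lies in the Cameron--Martin space and has $\|\psi\|_{L^4}^4 \gtrsim \dl^{4-d}$ together with $\dot H^1$-norm $\les 1$. By the Cameron--Martin structure, taking $g = \jb{X^+_\dl,\psi}_{\dot H^1}$ with $\|\psi\|_{\dot H^1}=1$ makes $X^+_\dl - g\psi$ independent of $g$, so the decomposition itself is routine. The only real check is the scaling computation of $\|\psi\|_{L^4}^4$ in the radial variables. This is where $d>4$ matters: for $d>4$ the quantity $\dl^{4-d}$ diverges polynomially, whereas for $d=4$ the corresponding scale is $O(1)$ and one must instead sum over many dyadic shells.
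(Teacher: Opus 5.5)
Your argument is the paper's proof. You single out $g=\jb{X^+_\dl,\psi}_{\dot H^1}$ for an $\dot H^1$-normalised radial bump at scale $\dl$ near the inner boundary, write $\frac14\int|u|^4$ as $\frac14\|\psi\|_{L^4}^4$ times a monic quartic in $g$ whose coefficients depend only on $Y+\ld h_\dl$, and apply \eqref{e:IntervalMeasureBound} with $m=4$ conditionally on $Y$. The paper puts the bump in $B_{3\dl}\setminus B_{2\dl}$ rather than $\{\dl<|x|<3\dl\}$, which makes no difference.

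The one thing to fix is the scaling step, which you yourself flag as the only real check. The prefactor must be $\dl^{1-\frac d2}$, not $\dl^{\frac{d-2}2}$. With $\psi(x)=\dl^{1-\frac d2}\phi(x/\dl)$, you get $\|\psi\|_{\dot H^1}=\|\phi\|_{\dot H^1}$ and $\|\psi\|_{L^4}^4=\dl^{4-2d}\cdot\dl^{d}\|\phi\|_{L^4}^4=\dl^{4-d}\|\phi\|_{L^4}^4$ directly. No correction from the radial measure enters, since the radial convention is just $dx$.

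With your prefactor and $c$ a fixed constant, one gets instead $\|\psi\|_{\dot H^1}^2\sim\dl^{2d-4}$ and $\|\psi\|_{L^4}^4\sim\dl^{3d-4}$. So the displayed computation, with its inserted $\dl^{-d}$, is not valid as written. Your conclusion still survives, because the ratio $\|\psi\|_{L^4}^4/\|\psi\|_{\dot H^1}^4\sim\dl^{4-d}$ is scale invariant: once you genuinely normalise $\|\psi\|_{\dot H^1}=1$, you get $\|\psi\|_{L^4}^4=\kappa\dl^{4-d}$. The event then becomes $|p(g)-z|\le 4\eta/\kappa$, which gives the bound $\les\eta^{1/4}$.
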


\begin{proof}
Take a radial test function $v \not\equiv 0$ such that $\supp(v) \subset B_3 \backslash B_2$ and
$\| v \|_{\dot H^1} = 1$, and set $\kappa := \|v\|_{L^4}^4 \in (0,\infty)$.
Define $v_\dl (x) = \dl^{1-\frac{d}2} v(x/
\dl)$,
which satisfies
\begin{align}
\label{v_dl}
\| v_\dl\|_{\dot H^1} = 1, \quad \|v_\dl\|_{L^4}^4 = \kappa \dl^{4-d}, \quad  \supp(v_\dl) \subset B_{3\dl} \backslash B_{2\dl}.
\end{align}

\noi
For $\delta < \frac13$, we decompose $X^+_\dl \sim \mu_\dl^+ $ in \eqref{mu+} as
\[
X^+_\dl =  g v_\dl + Y,
\]
where $g = \jb{X^+_\dl,v_\dl}_{\dot H^1} \sim N(0,1)$. Then, for $u \sim \mu_\dl^+[\ld]$,
we write
\[
u = X^+_\dl + \ld h_\dl = g v_\dl + Y + \ld h_\dl,
\]
where $g, Y$ are independent Gaussians.
We write
\begin{align}
\begin{split}
\int |g v_\dl + Y + \ld h_\dl|^4 & = g^4 \int v_\dl^4 + 4g^3 \bigg( \int v_\dl^3 (Y+ \ld h_\dl) \bigg) + 6 g^2 \bigg(\int v_\dl^2  |Y + \ld h_\dl|^2 \bigg)  \\
& \hphantom{X} + 4g \bigg( \int v_\dl  (Y + \ld h_\dl)^3 \bigg) + \int |Y + \ld h_\dl|^4 \\
&  =: \kappa \dl^{4-d} \mathcal P_{\ld,\dl,Y} (g),
\end{split}
\end{align}

\noi
where, by \eqref{v_dl}, $\mathcal P_{\ld,\dl,Y}$ is a monic polynomial of degree $4$, whose coefficients depend on $Y$.
Therefore, recalling that $g, Y$ are independent, we have that
\begin{align*}
    \mu_\dl^+[\ld]
\bigg(\bigg\{ \Big| \frac 14 \int_{B_1\backslash B_\dl} |u|^4 - Z\Big| \le \eta \dl^{4-d} \bigg\}\bigg) &= \E_Y\big[ \P_g \big(\big\{ \big|\mathcal P_{\ld,\dl,Y} (g) - 4\kappa^{-1}\dl^{d-4}Z\big| \le 4\kappa^{-1}\eta\big\} \big)\big] \\
&\les \E_Y\big[(\kappa^{-1}\eta)^\frac14\big] \les \eta^\frac14,
\end{align*}
where we used \eqref{e:IntervalMeasureBound} with $m = 4$ to derive the first inequality, and in the last step that $\kappa > 0$ depends only on the fixed function $v$.
\end{proof}

\begin{remark}

Proposition~\ref{PROP:div_h} shows that, in dimensions $d > 4$, the law of $\|u\|_{L^4(B_1 \setminus B_\delta)}^4$ under $\mu_\delta^+[\lambda]$ does not concentrate on any interval of length $O(\dl^{4-d})$.
This suggests the non-concentration of the limiting measure as $\delta \to 0$.

\end{remark}

\subsubsection{Non-concentration in the critical dimension}

We now move to the case $d=4$.
We note that in this case, while one does actually have \eqref{eq:fluctuations_h}, this is not helpful in proving a non-concentration inequality of the form \eqref{e:nonConcentrationVague}, since we do not have any diverging term (in $\dl$) in the event on the  left-hand side. Nevertheless, up to refining the assumptions of the inequality, we can still obtain a logarithmically diverging term.

\begin{proposition}[Non-concentration inequality, $d=4$]
\label{PROP:div}
Let $Z \in \R$, $0 < \eta \le 1$, and $A \ge 1$. There exists $\dl_0 = \dl_0(A,\eta)$ such that for every $\dl \le \dl_0$ and every $|\ld| \le A\dl^{-1}$, we have that
\begin{equation} \label{eq:fluctuations}
\mu_\dl^+[\ld]\bigg(\bigg\{ \Big| \frac 14 \int_{B_1\backslash B_\dl} |u|^4 - Z\Big| \le \eta |\log \dl|^\frac12 \bigg\}\bigg) \les \eta,
\end{equation}
uniformly in $Z \in \R$ and $\ld$.
\end{proposition}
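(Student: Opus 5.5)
In $d=4$ the single-bump argument of Proposition~\ref{PROP:div_h} only produces an $O(1)$ coefficient $\kappa$, since $\dl^{4-d}=1$. I will therefore replace the single scale by many dyadic scales and use the cross term rather than the quartic one.

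\textbf{Decomposition.} For $K\sim c|\log\dl|$, take bumps $v_k(x)=\dl_k^{-1}v(x/\dl_k)$ with $\dl_k=3^k\dl$, $k=1,\dots,K$. These have disjoint supports in $B_{3\dl_k}\backslash B_{2\dl_k}\subset B_{1/2}\backslash B_\dl$ and $\|v_k\|_{\dot H^1}=1$, so they are $\dot H^1$-orthonormal. Consider $\psi=K^{-1/2}\sum_k \s_k v_k$, with signs $\s_k\in\{\pm1\}$ to be chosen. Then $\|\psi\|_{\dot H^1}=1$, and $g=\jb{X^+_\dl,\psi}_{\dot H^1}\sim N(0,1)$ is independent of $Y=X^+_\dl-g\psi$. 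Writing $u=g\psi+Y+\ld h_\dl$ and expanding $\frac14\int u^4$ as a polynomial in $g$, the leading coefficient is $\frac14\|\psi\|_{L^4}^4=\frac{\kappa}{4K}$, which is useless. The plan is to work instead with a coefficient that grows.

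\textbf{Exploiting the harmonic part.} The drift $\ld h_\dl$ has size $|\ld|\dl^{2}/r^{2}\approx|\ld|\dl^2\dl_k^{-2}$ on the $k$-th annulus. Since $|\ld|\le A\dl^{-1}$, this is at most $A\dl_k^{-1}$, which is comparable to the bump amplitude; it is not large, so the $\ld$-terms are uniformly controlled. The natural diverging quantity is the variance of the fluctuating part. Let $Q=\frac14\int|Y+\ld h_\dl|^4$ and $L=\int\psi\,(Y+\ld h_\dl)^3$. By scale invariance of the $\Phi^4_4$ scaling, each annulus contributes an $O(1)$ independent-ish random variable to $\int_{B_{3\dl_k}\backslash B_{2\dl_k}}|X^+_\dl|^4$. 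So $\frac14\int_{B_1\backslash B_\dl}|u|^4$ is a sum of $K$ weakly dependent $O(1)$ contributions, and its fluctuations should be of order $K^{1/2}\sim|\log\dl|^{1/2}$.

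To make this rigorous via van der Corput, I would apply the one-dimensional argument with the $g$-linear coefficient. I would choose $\psi$ with the $\s_k$ random, or equivalently average over orthogonal directions, so that $\int u^4$ as a function of $g$ has first-order coefficient $4L$ with $L\approx K^{1/2}\cdot O(1)$ with high probability. The higher coefficients are $O(1)$, $O(K^{-1/2})$ and $O(K^{-1})$. Then $H(g)=\frac14\int u^4$ satisfies $|H'(g)|\gtrsim K^{1/2}$ on $|g|\le M$, where $M$ is a constant depending on $\eta$. Monotonicity then gives $\P_g(|H(g)-Z|\le\eta K^{1/2})\les\eta+\P(|g|>M)+\P(|L|\le\eta^{-1}\cdot\text{small})$.

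\textbf{Main obstacle.} The hard step is the lower bound $\P_Y(|L|\le cK^{1/2})\les\eta$, i.e.\ anti-concentration of $L=K^{-1/2}\sum_k\s_k\int v_k(Y+\ld h_\dl)^3$. I would compute its variance using the covariance $K_\dl$ from Lemma~\ref{LEM:GFFX2}. The contributions from separated annuli decorrelate, because the covariance of the normalised field between scales $\dl_j\ll\dl_k$ decays like $(\dl_j/\dl_k)^{1}$ after rescaling, so $\mathrm{Var}(L)\gtrsim K$. I would then use that $L$ lies in a finite Wiener chaos: by Carbery--Wright, or Paley--Zygmund together with hypercontractivity, $\P(|L|\le\eps\,\mathrm{Var}(L)^{1/2})\les\eps^{1/3}$. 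The $\ld$-dependent mean terms only shift $L$ and do not hurt a Carbery--Wright bound, which is uniform in lower-order shifts. Taking $\dl\le\dl_0(A,\eta)$ large enough in $K$ absorbs the error terms coming from the $K^{-1/2}$ coefficients, which yields \eqref{eq:fluctuations}.
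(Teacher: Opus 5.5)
There is a genuine gap, and it sits exactly at the step you call the main obstacle. You claim that the $g$-linear coefficient $L=K^{-1/2}\sum_k\s_k\ell_k$, with $\ell_k=\int v_k(Y+\ld h_\dl)^3$, is of size $K^{1/2}$; this is false. Each $\ell_k$ is $O(1)$ by scaling. Already for $\ld=0$ (which is allowed), the $\ell_k$ are centred and, as you argue yourself, decorrelate across scales. Hence $\mathrm{Var}(\sum_k\s_k\ell_k)\les K$ and
\[
\mathrm{Var}(L)=K^{-1}\,\mathrm{Var}\Big(\sum_{k}\s_k\ell_k\Big)\les 1,
\]
not $\gtrsim K$. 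The factor $K^{-1}$ coming from your normalisation $\|\psi\|_{\dot H^1}=1$ has been dropped. So $\P(|L|\le cK^{1/2})\to 1$, and no Carbery--Wright bound can make it $\les\eta$.

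Consequently $L$ is of the same order as the $g^2$-coefficient. Conditionally on $Y$, the map $g\mapsto\frac14\int|u|^4$ moves only by $O_M(1)$ on $\{|g|\le M\}$. The conditional probability of the event in \eqref{eq:fluctuations} is then essentially $0$ or $1$ depending on $Y$, so all the anti-concentration would have to come from $Y$, which your argument does not touch.

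This is structural, not a matter of choosing signs. For any $\dot H^1$-unit $\psi$ fixed independently of the field, Sobolev and Hardy-type bounds on the covariance show that every non-constant coefficient of this quartic in $g$ is $O(1)$ in $L^2$. Taking $\s_k=\mathrm{sign}(\ell_k)$ would give $L\sim K^{1/2}$, but then $\psi$ depends on the field and $g$ is no longer a standard Gaussian independent of $Y$. Signs chosen independently of $X^+_\dl$ give a Rademacher sum, which is again $O(1)$ after normalisation.

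What is missing is that the $|\log\dl|^{1/2}$ fluctuations come from adding $\sim|\log\dl|$ roughly independent $O(1)$ contributions. You must therefore keep all coordinates $g_k=\jb{X^+_\dl,v_k}_{\dot H^1}$ random simultaneously, rather than collapsing them into one. This is what the paper does:

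\begin{itemize}
\item It writes $X^+_\dl=\sum_{j\le J}g_ju_j+Y$, with $Y$ independent of the i.i.d.\ $g_j$.
\item Disjointness of supports turns $\int|u|^4$ into $\sum_jG_j(Y)$ plus a $Y$-measurable term. Conditionally on $Y$, the $G_j(Y)$ are independent and centred, with $\E|G_j(y)|^2\ge\mathrm{Var}(\kappa g_j^4)=96\kappa^2$ for every $y$ (Lemma~\ref{LEM:bdG}).
\item A characteristic-function (local CLT) estimate over the good indices $\{j:\|y_j\|_{L^4}^4\le4{\rm M}(y,\dl)\}$ then gives anti-concentration at scale $\eta|\log\dl|^{1/2}$.
\item The hypotheses $|\ld|\le A\dl^{-1}$ and $\dl\le\dl_0(A,\eta)$ are used only to control ${\rm M}(Y,\dl)$ (Lemma~\ref{LEM:Gbar2}).
\end{itemize}

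Your Carbery--Wright idea does work at that level. Conditionally on $Y$, the quantity $\frac14\int|u|^4-Z$ is a degree-four polynomial in $(g_1,\dots,g_J)$ with second moment at least $6\kappa^2J\gtrsim|\log\dl|$. The dimension-free Carbery--Wright inequality then gives a bound $\les\eta^{1/4}$, for every $\ld$. This is weaker than the stated $\les\eta$, but still suffices for Theorem~\ref{THM:main_singular}.
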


Proposition \ref{PROP:div} shows that the law of $\|u\|_{L^4(B_1\backslash B_\dl)}^4$ under $\mu_\dl^+[\ld]$ does not concentrate on any interval of length $O(|\log \dl|^{\frac12})$ as $\dl \to 0$. The window $|\log\dl|^{\frac12}$ diverges, which suggests that the non-trivial portion of the measure escapes to infinity.

Before the proof of Proposition \ref{PROP:div}, we need some preparation.

We further decompose $X^+_\dl$, given by \eqref{mu+}, as follows.
Let $u_1 \in C_c^\infty (B_1)$ be a radial test function with $\supp u_1 \subset B_1 \backslash B_{1/2}$ and $\|u_1\|_{\dot H^1} = 1$, and set $\kappa := \| u_1\|_{L^4}^4 \in (0,\infty)$.
Let $J := \lfloor \log_2 (1/\dl) \rfloor$, so that $2^{-J} \ge \dl$ and $J \sim |\log \dl|$.
Then we define
\begin{align} \label{uj}
u_j (r) = 2^{{j-1}} u_1 (2^{j-1} r)
\end{align}

\noi
for $1 \le j \le J$.
Since $d = 4$, we have $\|u_j\|_{\dot H^1} = 1$ and $\|u_j\|_{L^4}^4 = \kappa$, while $\supp u_j \subset B_{2^{-j+1}} \backslash B_{2^{-j}} \subset B_1 \backslash B_\dl$.
Defining $g_j = \jb{X^+_\dl,u_j}_{\dot H^1}$, we can write
\begin{align}\label{X+dec}
X^+_\dl = \sum_{j \le J} g_j u_j + Y,
\end{align}
where $g_j$ are i.i.d.\ standard Gaussians, and $Y$ is orthogonal to ${\rm span}\, (u_j)_{j\le J}$ in $\dot H^1$. Moreover, we claim that $Y$ is independent from $\{g_j\}_{j \le J}$.
To see this, we compute  the correlation between $g_j$ and $Y$. More precisely, since $(Y, (g_j)_{j \le J})$ is a Gaussian vector, independence is equivalent to showing
that for every test function $\varphi$,
we have $\E [g_j \jb{Y,\varphi}_{\dot H^1}] = 0$, for all $j \le J$.
Recalling that $g_j = \jb{X^+_\dl,u_j}_{\dot H^1}$, we have
\[
\begin{split}
\E [g_j \jb{Y,\varphi}_{\dot H^1}] & = \E [\jb{X^+_\dl,u_j}_{\dot H^1} \jb{Y,\varphi}_{\dot H^1}] \\
& = \E [\jb{X^+_\dl,u_j}_{\dot H^1} \jb{X^+_\dl,\varphi}_{\dot H^1}] - \sum_{i\le J}  \E [\jb{X^+_\dl,u_j}_{\dot H^1} g_i \jb{u_i,\varphi}_{\dot H^1}] \\
& = \jb{u_j, \varphi}_{\dot H^1} - \jb{u_j,\varphi}_{\dot H^1} = 0.
\end{split}
\]
 This proves the  claim.

With the above notation, we see that for $u$ distributed as $\mu_\dl^+ [\ld]$, we have
\[
u = X^+_\dl + \ld h_\dl = \sum_{j \le J} g_j u_j + Y + \ld h_\dl ,
\]

\noi
where $h_\dl$ is defined in \eqref{hdelta}.
Moreover, we can write
\begin{align}\label{decom}
\int_{B_1\backslash B_\dl} |u|^4 = \sum_{j \le J} \int_{B_{2^{-j+1}}\backslash B_{2^{-j}}} \Big(g_j u_j + (Y+ \ld h_\dl)\ind_{B_{2^{-j+1}}\backslash B_{2^{-j}}} \Big)^4 + \int_{B_{2^{-J}}\backslash B_\dl} |Y + \ld h_\dl|^4,
\end{align}

\noi
for $u \sim \mu_\dl^+[\ld]$.
For simplicity of notation, let
\begin{align}
\begin{split}
G_j(y) & := \int \Big(g_j u_j + (y + \ld h_\dl)\ind_{B_{2^{-j+1}}\backslash B_{2^{-j}}} \Big)^4 \\
& \hphantom{XX} - \E \bigg[\int \Big(g_j u_j + (y + \ld h_\dl) \ind_{B_{2^{-j+1}}\backslash B_{2^{-j}}} \Big)^4 \bigg]  ,
\end{split}
\label{Gj}
\end{align}

\noi
for $1 \le j \le J$, and
\begin{align}
\label{G0}
G_0 (y) := \int_{B_{2^{-J}}\backslash B_\dl} |y + \ld h_\dl|^4 dx ,
\end{align}

\noi
which is a deterministic function of $y$.

With the above notation, we have the following lower bound.
\begin{lemma}
\label{LEM:bdG}
With $G_j(y)$ defined in \eqref{Gj}, we have
\begin{align}
\label{Gj_l}
\E\big[|G_j (y)|^2\big] \ge 96 \kappa^2,
\end{align}

\noi
provided $j \le J$.
\end{lemma}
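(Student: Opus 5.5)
Here $y$ is a fixed deterministic function and the only randomness in $G_j(y)$ is $g_j \sim N(0,1)$. So $\E[|G_j(y)|^2]$ is the variance of a quartic polynomial in one standard Gaussian, and the plan is to expand this polynomial in Hermite polynomials. Write $w := (y + \ld h_\dl)\ind_{B_{2^{-j+1}}\backslash B_{2^{-j}}}$ and expand
\[
\int (g_j u_j + w)^4 = g_j^4 \kappa + 4 g_j^3 a_3 + 6 g_j^2 a_2 + 4 g_j a_1 + a_0,
\]
where $a_k = \int u_j^{k} w^{4-k}\,dx$ for $k = 1,2,3$ and $a_0 = \int w^4$. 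Here we use $\int u_j^4 = \kappa$, which holds by scaling in $d=4$. Then $G_j(y) = q(g_j) - \E[q(g_j)]$ with $q(x) = \kappa x^4 + 4a_3 x^3 + 6 a_2 x^2 + 4 a_1 x$.

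Next, rewrite $q$ in the Hermite basis $H_k$ (with $\E[H_k(g)H_l(g)] = k!\,\delta_{kl}$). We have $x^4 = H_4 + 6H_2 + 3$, $x^3 = H_3 + 3H_1$, $x^2 = H_2 + 1$, $x = H_1$. Hence
\[
q - \E q = \kappa H_4 + 4a_3 H_3 + (6\kappa + 6a_2) H_2 + (12 a_3 + 4a_1) H_1 .
\]
By orthogonality, $\E|G_j|^2$ is a sum of nonnegative terms, and keeping only the $H_4$ term gives $\E|G_j|^2 \ge 4!\,\kappa^2 = 24\kappa^2$.

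This falls short of the claimed $96\kappa^2$, and that gap is the main obstacle. To get more, I would also keep the $H_2$ term, $2(6\kappa+6a_2)^2 = 72(\kappa + a_2)^2$. Since $a_2 = \int u_j^2 w^2 \ge 0$, this is at least $72\kappa^2$, giving the total $24\kappa^2 + 72\kappa^2 = 96\kappa^2$. The positivity of $a_2$, an even-power coefficient, is the key point: it prevents cancellation between $6\kappa$ and $6a_2$ in the $H_2$ coefficient, uniformly in $y$ and $\ld$. The $H_3$ and $H_1$ contributions are simply dropped as nonnegative. The remaining step is only to check the Hermite conversions and the normalisation $k!$, which yield exactly $96\kappa^2$. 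The restriction $j \le J$ is used only to ensure that $u_j$ is supported inside $B_1\backslash B_\dl$, so that $\int u_j^4 = \kappa$.
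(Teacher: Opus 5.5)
Your proof is correct. It is essentially the paper's argument. The paper also discards the odd-degree part by parity and then uses $a_2=\int u_j^2 y_j^2\ge 0$ to make the cross term $2\,{\rm Cov}(\kappa g_j^4,6a_2g_j^2)=144\kappa a_2$ nonnegative, which leaves ${\rm Var}(\kappa g_j^4)=(105-9)\kappa^2=96\kappa^2$. Your Hermite expansion organises the same computation orthogonally, with $96=4!+2\cdot 6^2$ splitting ${\rm Var}(g_j^4)$ between the $H_4$ and $H_2$ components.
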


\begin{proof}
Let
\[
y_j = (y + \ld h_\dl) \ind_{B_{2^{-j+1}}\backslash B_{2^{-j}}}.
\]
From the definition of $G_j$ in \eqref{Gj} and $u_j$ in \eqref{uj}, we have
\begin{align}
\label{Gj_lower0}
\begin{split}
\E\big[|G_j (y)|^2\big] & =  \E \bigg( \int \big (g_j u_j + y_j \big)^4 \bigg)^2  - \bigg( \E \bigg[\int \big (g_j u_j + y_j \big)^4 \bigg] \bigg)^2\\
& = \E \bigg[ \bigg( \int g_j^4 u_j^4  + 4g_j^3 u_j^3 y_j + 6 g_j^2 u_j^2 y_j^2  + 4g_j u_j y_j^3 + y_j^4 dx\bigg)^2 \bigg] \\
& \hphantom{XXXX} - \bigg( \E \int g_j^4 u_j^4  + 6 g_j^2 u_j^2 (x) y_j^2 + y_j^4 dx \bigg)^2\\
& \ge \E \bigg[ \bigg( \int g_j^4 u_j^4  + 6 g_j^2 u_j^2 (x) y_j^2 + y_j^4 dx\bigg)^2 \bigg] \\
& \hphantom{XXXX} - \bigg( \E \int g_j^4 u_j^4  + 6 g_j^2 u_j^2 y_j^2   + y_j^4 dx \bigg)^2,
\end{split}
\end{align}

\noi
where we used the fact that $\E [g_j^k] = 0$ for odd $k$.
Then we expand the right-hand side of \eqref{Gj_lower0} and use the fact ${\rm Var} [X] = \E[X^2] - (\E X)^2 \ge 0$ to get
\begin{align}
\label{Gj_lower1}
\begin{split}
\textup{RHS of \eqref{Gj_lower0}} &= {\rm Var} \bigg[ \int g_j^4 u_j^4 \bigg] + {\rm Var} \bigg[ 6 \int g_j^2 u_j^2 y_j^2 \bigg] + {\rm Var} \bigg[\int  y_j^4 dx \bigg]  \\
& \hphantom{XX} + 2\E \bigg[ \bigg( \int g_j^4 u_j^4 \bigg) \bigg( \int 6 g_j^2 u_j^2 y_j^2 \bigg) + \bigg( \int g_j^4 u_j^4 \bigg) \bigg( \int y_j^4 \bigg)  \\
& \hphantom{XXXX} + \bigg( \int 6 g_j^2 u_j^2 y_j^2 \bigg) \bigg( \int y_j^4 \bigg) \bigg]\\
& \hphantom{XX} - 2\bigg[ \bigg( \E \int g_j^4 u_j^4 \bigg) \bigg(  \E \int 6 g_j^2 u_j^2 y_j^2 \bigg) + \bigg( \E  \int g_j^4 u_j^4 \bigg) \bigg(  \E \int y_j^4 \bigg)  \\
& \hphantom{XXXX} + \bigg(\E  \int 6 u_j^2 g_j^2 y_j^2 \bigg) \bigg(\E \int y_j^4 \bigg) \bigg].
\end{split}
\end{align}

\noi
We note that
\[
\begin{split}
\E & \bigg[ \bigg( \int g_j^4 u_j^4 \bigg) \bigg( \int 6 g_j^2 u_j^2 y_j^2 \bigg) \bigg] - \bigg( \E \int g_j^4 u_j^4 \bigg) \bigg(  \E \int 6 g_j^2 u_j^2 y_j^2 \bigg) \\
& = 6 \|u_j\|_{L^4}^4 \big(\E [g_j^6] - \E [g_j^4] \cdot \E [g_j^2] \big) \cdot \int u_j^2 y_j^2 \ge 0.
\end{split}
\]

\noi
Similarly, we note that
\[
\E \bigg[  \bigg( \int g_j^4 u_j^4 \bigg) \bigg( \int y_j^4 \bigg)  \bigg]- \bigg( \E  \int g_j^4 u_j^4 \bigg) \bigg(  \E \int y_j^4 \bigg) \ge 0
\]
and
\[
\E \bigg[ \bigg( \int 6 g_j^2 u_j^2 y_j^2 \bigg) \bigg( \int y_j^4 \bigg) \bigg] - \bigg( \E \int 6 g_j^2 u_j^2 y_j^2 \bigg) \bigg( \E \int y_j^4 \bigg) \ge 0 .
\]

\noi
The above computation with \eqref{Gj_lower0} and \eqref{Gj_lower1} yields
\begin{align}
\label{Gj_lower}
\begin{split}
\E\big[|G_j (y)|^2\big] & \ge {\rm Var} \bigg[ \int g_j^4 u_j^4 \bigg] + {\rm Var} \bigg[ 6 \int g_j^2 u_j^2 y_j^2 \bigg] + {\rm Var} \bigg[\int  y_j^4 dx \bigg] \\
& \ge {\rm Var} \bigg[ \int g_j^4 u_j^4 \bigg]  =  \kappa^2 {\rm Var} \big[ g_j^4 \big]  = 96 \kappa^2,
\end{split}
\end{align}

\noi
where we used $\|u_j\|_{L^4}^4 = \kappa$ and ${\rm Var} [g_j^4] = \E[g_j^8] - (\E [g_j^4])^2 = 105 - 9$.
\end{proof}

In what follows, we split the summation in \eqref{decom} into different sets.
For $y \in L^4(B_1\backslash B_\dl)$, we denote by
\begin{align}\label{My}
{\rm M}(y,\dl) :=  \frac{\| y + \ld h_\dl \|_{L^4(B_1\backslash B_\dl)}^4}{|\log \dl|},
\end{align}
and
\begin{align}\label{Iy}
{\rm I}(y,\dl) := \big \{ 1\le j \le J: \|y_j\|_{L^4}^4 \le 4 {\rm M}(y,\dl) \big \},
\end{align}

\noi
where
\[
y_j = (y + \ld h_\dl) \ind_{B_{2^{-j+1}}\backslash B_{2^{-j}}}.
\]

\noi

Since $\sum_{j \le J} \|y_j\|_{L^4}^4 \le \|y + \ld h_\dl\|_{L^4(B_1\backslash B_\dl)}^4 = {\rm M}(y,\dl) |\log \dl|$, Chebyshev's inequality gives $|{\rm I}(y,\dl)^c| \le \frac14 |\log \dl|$, and hence, since $J \ge \frac{|\log \dl|}{\log 2} - 1$, we have $|{\rm I}(y,\dl)| \ge J - \frac14 |\log \dl| \ge \frac16 |\log \dl|$ for $\dl \le e^{-1}$.

Since $(a+b)^4 \le 8 (a^4 + b^4)$, $\|u_j\|_{L^4}^4 = \kappa$, and $\|y_j\|_{L^4}^4 \le 4 {\rm M}(y,\dl)$ for $j \in {\rm I}(y,\dl)$, we deduce, for $j \in {\rm I}(y,\dl)$,
\begin{align*}
\E\big[|G_j (y)|^2\big] & \les \E \big( g_j^4 \|u_j\|_{L^4}^4 + \|y_j \|_{L^4}^4 \big)^2 \les 1 +  {\rm M}(y,\dl)^{8},\\
\E\big[|G_j (y)|^3\big] & \les \E \big( g_j^4 \|u_j\|_{L^4}^4 + \|y_j \|_{L^4}^4 \big)^3 \les 1 +  {\rm M}(y,\dl)^{12},
\end{align*}

\noi
where the implicit constants depend only on $\kappa$.

\noi
In what follows, we denote by
\begin{align}\label{Gbar}
\cj G(y) := \sum_{j\in {\rm I}(y,\dl) } G_j(y),
\end{align}

\noi
where $G_j(y)$ is defined in \eqref{Gj}, ${\rm I}(y,\dl)$ is given in \eqref{Iy}, and $y$ is a function such that $y \in L^4(B_1\backslash B_\dl)$.
Then, we have the following estimate.

\begin{lemma}
\label{LEM:Gbar}
Let $\dl \le e^{-1}$, let $\cj G (y)$ be given by \eqref{Gbar}, and let ${\rm M}(y,\dl)$ be given in \eqref{My}. Then, for every $\eta > 0$, we have
\begin{align}
\label{es:Gbar}
\P \big(  |\cj G(y) - r| \le \eta |\log \dl|^\frac 12 \big ) \les \eta + \exp\bigg(- \frac{ c \eta^2 |\log \dl|}{(1 + {\rm M}(y,\dl))^{24}}\bigg),
\end{align}

\noi
uniformly in $r \in \R$ and $y \in L^4 (B_1 \backslash B_\dl)$, where $c > 0$ and the implicit constant depend only on $u_1$.
\end{lemma}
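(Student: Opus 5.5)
Since $y$ is fixed, the set ${\rm I}(y,\dl)$ is deterministic, and the summands $G_j(y)$, $j \in {\rm I}(y,\dl)$, depend only on $g_j$. By \eqref{X+dec} the $g_j$ are i.i.d., so $\cj G(y)$ is a sum of $n := |{\rm I}(y,\dl)| \ge \frac16 |\log\dl|$ independent centred random variables. For these we have the lower bound $\E|G_j|^2 \ge 96\kappa^2$ from Lemma~\ref{LEM:bdG}, and the upper bounds $\E|G_j|^2 \les 1+{\rm M}^8$ and $\E|G_j|^3 \les 1+{\rm M}^{12}$ recorded just before \eqref{Gbar}. The plan is to get \eqref{es:Gbar} from a quantitative central limit theorem, namely the Berry--Esseen inequality for non-identically distributed independent summands.

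Set $s^2 := \sum_{j\in{\rm I}} \E|G_j|^2$. Then $s^2 \ge 96\kappa^2 n \ges |\log\dl|$, and also $s^2 \les n(1+{\rm M}^8)$. Berry--Esseen gives
\[
\sup_{t}\Big|\P(\cj G \le t) - \Phi(t/s)\Big| \les \frac{\sum_j \E|G_j|^3}{s^3} \les \frac{n(1+{\rm M})^{12}}{n^{3/2}} \les \frac{(1+{\rm M})^{12}}{|\log\dl|^{1/2}} .
\]
The Gaussian mass of the window $[r-\eta|\log\dl|^{1/2}, r+\eta|\log\dl|^{1/2}]$ is at most $\frac{2\eta|\log\dl|^{1/2}}{\sqrt{2\pi}\,s} \les \eta$, using $s \ges |\log\dl|^{1/2}$. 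This yields
\[
\P\big(|\cj G - r|\le \eta|\log\dl|^{1/2}\big) \les \eta + (1+{\rm M})^{12}|\log\dl|^{-1/2}.
\]

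It remains to turn this error into the stated exponential form. Put $x := \eta^2|\log\dl|/(1+{\rm M})^{24}$. Then the error term equals $\eta\, x^{-1/2}$, and we split into two cases. If $x \ge 1$, then $\eta x^{-1/2} \le \eta$, which is absorbed into the first term. If $x < 1$, then $e^{-cx} \ge e^{-c}$, so the trivial bound $\P(\cdot)\le 1 \les e^{-cx}$ suffices. In both cases
\[
\P\big(|\cj G - r|\le \eta|\log\dl|^{1/2}\big) \les \eta + \exp\!\big(-c\eta^2|\log\dl|/(1+{\rm M})^{24}\big).
\]

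Alternatively, one could use the cruder route of smoothing the indicator with a Fourier bump, as in \eqref{e:IntervalMeasureBound}, and bounding the characteristic functions: $|\E e^{i\xi G_j}| \le 1 - c\xi^2 + C|\xi|^3(1+{\rm M})^{12}$ for small $\xi$. This naturally produces the factor $\exp(-c\eta^2|\log\dl|/(1+{\rm M})^{24})$. I expect this Fourier/characteristic-function bookkeeping to be the main technical point if Berry--Esseen is not simply quoted. Either way, all constants depend only on $\kappa$, hence only on $u_1$, and the bounds are uniform in $r$ and $y$.
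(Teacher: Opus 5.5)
Your proof is correct, but it takes a different route from the paper. The paper does not quote a quantitative central limit theorem. Instead, it proves directly that $|\E[e^{i\xi G_j(y)}]| \le 1 - c_0\xi^2$ for $|\xi| \le c_0 (1+{\rm M}(y,\dl))^{-12}$. This comes from a second-order Taylor expansion, using the variance lower bound of Lemma~\ref{LEM:bdG} together with the second- and third-moment upper bounds. It then dominates the indicator of the window by $e\exp\big(-(\cj G(y)-r)^2/(2\eta^2|\log\dl|)\big)$, expands this by Fourier inversion, and splits the $\xi$-integral into two regions. On the small-$\xi$ region the product of characteristic functions is at most $\exp(-c\xi^2|\log\dl|)$, which gives the term $\eta$. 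On the complement the product is bounded by $1$, and the Gaussian tail produces $\exp\big(-c\eta^2|\log\dl|/(1+{\rm M})^{24}\big)$ directly. This is essentially the hand-made special case of Esseen's smoothing argument that you list as an alternative in your last paragraph. It keeps the proof self-contained, and the exponential term arises naturally rather than through a case split.

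Your version instead quotes the Berry--Esseen theorem for independent, non-identically distributed summands, via the Lyapunov ratio $\sum_j \E|G_j|^3/s^3$. This is shorter and gives a slightly sharper intermediate bound, $\eta + (1+{\rm M})^{12}|\log\dl|^{-1/2}$. With it the probability is already $\les \eta$ once $\eta^2|\log\dl| \ge (1+{\rm M})^{24}$, whereas \eqref{es:Gbar} gives this only when $\eta^2|\log\dl|/(1+{\rm M})^{24} \ges \log(1/\eta)$, for small $\eta$. It also needs only the variance lower bound and the third-moment bound; the upper bound on $s^2$ you record is never used. Your conversion via $x = \eta^2|\log\dl|/(1+{\rm M})^{24}$ is correct. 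All constants depend only on $\kappa$ and the universal Berry--Esseen constant, hence only on $u_1$.
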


\begin{proof}
We first claim that there exists a constant $c_0 \in (0,1)$, depending only on $\kappa$, such that, for every $j \in {\rm I}(y,\dl)$,
\begin{align}
\label{Gj_chf}
\big| \E \big[ \exp( i \xi G_j(y)) \big] \big| \le 1 - c_0 \xi^2
\qquad \text{ for } |\xi| \le c_0 \big( 1 + {\rm M}(y,\dl)\big)^{-12}.
\end{align}

\noi
Indeed, since $\E [G_j(y)] = 0$ by \eqref{Gj}, a second order Taylor expansion of the exponential gives
\[
\big| \E \big[ \exp( i \xi G_j(y)) \big] \big|
\le \bigg| 1 - \frac{\xi^2}2 \E \big[ |G_j(y)|^2\big] \bigg| + \frac{|\xi|^3}6 \E \big[ |G_j(y)|^3\big] .
\]

\noi
By Lemma \ref{LEM:bdG} we have $\E [|G_j(y)|^2] \ge 96 \kappa^2$, while the moment bounds above give
$\E [|G_j(y)|^2] \les 1 + {\rm M}(y,\dl)^{8}$ and $\E [|G_j(y)|^3] \les 1 + {\rm M}(y,\dl)^{12}$.
Hence, for $|\xi|$ in the range stated in \eqref{Gj_chf} with $c_0$ small enough, depending only on $\kappa$, the first term on the right-hand side equals $1 - \frac{\xi^2}2 \E [|G_j(y)|^2] \le 1 - 48 \kappa^2 \xi^2$ and the second one is at most $24 \kappa^2 \xi^2$, which proves \eqref{Gj_chf} if moreover $c_0 \le 24 \kappa^2$.

Given any $r \in \R$, by Markov's inequality, \eqref{Gbar}, the independence of $\{g_j\}_{j \le J}$, and the Fourier transform, we have that
\begin{align}
\label{sing1}
\begin{split}
\P \big( & |\cj G(y) - r| \le \eta |\log \dl|^\frac 12 \big )  \le e \, \E\bigg[ \exp\Big(-\frac{(\cj G(y) - r)^2}{2\eta^2 |\log\dl|}\Big) \bigg] \\
& \sim \eta |\log \dl|^\frac12 \int \exp\Big(-\frac{\eta^2 |\log\dl| \xi^2}{2}\Big) e^{-i\xi r} \E\big[\exp(i\xi \cj G(y))\big] d\xi \\
& \le \eta |\log \dl|^\frac12 \int \exp\Big(-\frac{\eta^2 |\log\dl| \xi^2}{2}\Big) \prod_{j\in {\rm I}(y,\dl)} \Big| \E\big[\exp(i\xi G_j(y))\big] \Big| d\xi .
\end{split}
\end{align}

\noi
We split the last integral according to whether $|\xi| \le c_0 (1 + {\rm M}(y,\dl))^{-12}$ or not.
On the first region, \eqref{Gj_chf} together with $|{\rm I}(y,\dl)| \ges |\log \dl|$ gives
\[
\prod_{j\in {\rm I}(y,\dl)} \Big| \E\big[\exp(i\xi G_j(y))\big] \Big| \le \big( 1 - c_0 \xi^2\big)^{|{\rm I}(y,\dl)|} \le \exp\big(- c \, \xi^2 |\log \dl|\big),
\]

\noi
so that, after the change of variables $\xi \mapsto \frac{\xi}{\eta |\log \dl|^{1/2}}$,
\begin{align}
\label{sing01}
\begin{split}
\eta |\log \dl|^\frac12 & \int_{|\xi| \le c_0 (1 + {\rm M}(y,\dl))^{-12}} \exp\Big(-\frac{\eta^2 |\log\dl| \xi^2}{2}\Big) \exp\big(- c\, \xi^2 |\log \dl|\big) d\xi \\
& \le \int_\R \exp\Big(- \frac{c\, \xi^2}{\eta^2}\Big) d\xi \les \eta .
\end{split}
\end{align}

\noi
On the second region we simply bound the product by $1$ and obtain, after the same change of variables,
\begin{align}
\label{sing02}
\begin{split}
\eta |\log \dl|^\frac12 & \int_{|\xi| > c_0 (1 + {\rm M}(y,\dl))^{-12}} \exp\Big(-\frac{\eta^2 |\log\dl| \xi^2}{2}\Big) d\xi \\
& = \int_{|\xi| > c_0 \eta |\log \dl|^\frac12 (1 + {\rm M}(y,\dl))^{-12}} \exp\Big(-\frac{\xi^2}{2}\Big) d\xi
\les \exp\bigg(- \frac{ c \eta^2 |\log \dl|}{(1 + {\rm M}(y,\dl))^{24}}\bigg) .
\end{split}
\end{align}

\noi
Collecting \eqref{sing1}, \eqref{sing01}, and \eqref{sing02}, we obtain
\begin{align}
\label{sing2}
\P \big(  |\cj G(y) - r| \le \eta |\log \dl|^\frac 12 \big ) \les \eta + \exp\bigg(- \frac{ c \eta^2 |\log \dl|}{(1 + {\rm M}(y,\dl))^{24}}\bigg) ,
\end{align}

\noi
which is \eqref{es:Gbar}. Thus, we finish the proof.
\end{proof}

Note that in Lemma \ref{LEM:Gbar}, $y$ is deterministic as far as the probability $\P$ is concerned.
In what follows, we shall consider the estimate with $y$ replaced by the random field $Y$.
From the definition of $Y$ in \eqref{X+dec}, one has that
\[
\begin{split}
\E[{\rm M}(Y,\dl)]  & = \frac1{|\log \dl|} \int_{B_1 \backslash B_\dl } \E \big[(Y   + \ld h_\dl)^4\big] dx \\
&  = \frac1{|\log \dl|} \int_{B_1 \backslash B_\dl } \E \bigg[ \bigg( X^+_\dl - \sum_j g_j u_j + \ld h_\dl\bigg)^4 \bigg] dx .
\end{split}
\]

In particular, we have the following estimate.

\begin{lemma}
\label{LEM:Gbar2}
Let $0 < \eta \le 1$, $A \ge 1$, and $|\ld| \le A \dl^{-1}$. With the above notations, there exists $\dl_0 = \dl_0 (A, \eta) \in (0, e^{-1}]$ such that we have
\begin{align}\label{MY2}
\E\bigg[ \exp\Big(-c \frac{\eta^2 |\log \dl|}{(1 + {\rm M}(Y,\dl))^{24}}\Big)\bigg] \les \eta,
\end{align}

\noi
for $\dl \le \dl_0$, where ${\rm M}(Y,\dl)$ is defined in \eqref{My}.
\end{lemma}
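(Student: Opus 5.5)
Split the expectation in \eqref{MY2} according to whether ${\rm M}(Y,\dl)$ is small or large compared to a threshold $T$. The exponential is always at most $1$. On the event $\{{\rm M}(Y,\dl) \le T\}$ it is at most $\exp(-c\eta^2|\log\dl|(1+T)^{-24})$. Hence
\[
\E\bigg[ \exp\Big(-c \frac{\eta^2 |\log \dl|}{(1 + {\rm M}(Y,\dl))^{24}}\Big)\bigg] \le \exp\Big(-\frac{c\eta^2|\log\dl|}{(1+T)^{24}}\Big) + \P\big({\rm M}(Y,\dl) > T\big) \le \exp\Big(-\frac{c\eta^2|\log\dl|}{(1+T)^{24}}\Big) + \frac{\E[{\rm M}(Y,\dl)]}{T},
\]
where the last step is Markov's inequality. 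It therefore suffices to prove $\E[{\rm M}(Y,\dl)] \le C_A$ uniformly in $\dl \le e^{-1}$ and $|\ld| \le A\dl^{-1}$. Given this, we choose $T = C_A\eta^{-1}$, so that the second term is at most $\eta$. We then choose $\dl_0(A,\eta)$ so small that $c\eta^2|\log \dl_0|(1+T)^{-24} \ge |\log\eta|$, which makes the first term at most $\eta$ as well.

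For the bound on $\E[{\rm M}(Y,\dl)]$, write $Y + \ld h_\dl = X^+_\dl - \sum_{j\le J} g_j u_j + \ld h_\dl$. By $(a+b+c)^4 \les a^4+b^4+c^4$, the numerator of $\E[{\rm M}(Y,\dl)]$ is bounded by a constant times
\[
\E\|X^+_\dl\|_{L^4}^4 + \E\Big\|\sum_{j\le J} g_j u_j\Big\|_{L^4}^4 + \ld^4\|h_\dl\|_{L^4(B_1\backslash B_\dl)}^4 .
\]
We treat the three terms in turn, with $d=4$ throughout.

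\emph{First term.} Corollary \ref{COR:L4est} gives $\E\|X^+_\dl\|_{L^4}^4 \les |\log\dl|$.

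\emph{Second term.} The $u_j$ have disjoint supports, $\|u_j\|_{L^4}^4 = \kappa$, and $\E[g_j^4]=3$. Hence $\E\|\sum_{j\le J} g_ju_j\|_{L^4}^4 = 3\kappa J \les |\log\dl|$.

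\emph{Third term.} In $d=4$ we have $h_\dl(r) = (r^{-2}-1)/(\dl^{-2}-1)$, so $|h_\dl(r)| \le \dl^2 r^{-2}$ for $\dl \le 1/2$. Therefore
\[
\ld^4\|h_\dl\|_{L^4}^4 \le A^4\dl^{-4}\int_\dl^1 \dl^8 r^{-8}\, r^3\,dr \les A^4\dl^{4}\dl^{-4} = A^4 .
\]
Note that the harmonic part contributes only $O(A^4)$ and not a logarithm; this is exactly where the restriction $|\ld| \le A\dl^{-1}$ enters.

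Dividing by $|\log\dl| \ge 1$ gives $\E[{\rm M}(Y,\dl)] \les 1 + A^4 =: C_A$, which completes the argument. This step, and the resulting choice of $\dl_0$ depending on both $A$ and $\eta$, is the only place needing care. In particular, no uniformity in $\ld$ beyond $|\ld|\le A\dl^{-1}$ can be expected, since ${\rm M}$ grows like $\ld^4\dl^4$.
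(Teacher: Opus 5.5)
Your proposal is correct and is essentially the paper's proof: the bound $\E[{\rm M}(Y,\dl)]\les 1+A^4$ comes from the same three-term split, and Markov's inequality finishes. The only difference is that you split at a single threshold $T=C_A\eta^{-1}$, whereas the paper sums dyadically over levels $N$ using $e^{-x}\les x^{-1/48}$; your version is slightly cleaner and gives a less degenerate $\dl_0(A,\eta)$.
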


\begin{proof}
By using H\"older inequality and disjointness of the supports of the $u_j$, we note that
\begin{align}
\label{MY}
\begin{split}
\E[{\rm M}(Y,\dl)]
& \les \frac1{|\log \dl|} \int_{B_1 \backslash B_\dl } \bigg( \E \big[(X^+_\dl)^4 \big] +  \E \bigg[ \Big( \sum_j g_j u_j \Big)^4 \bigg] + (\ld h_\dl)^4 \bigg)\\
& = \frac1{|\log \dl|} \int_{B_1 \backslash B_\dl } \bigg( \E \big[(X^+_\dl)^4 \big] +  \sum_{j} \E ( g_j u_j )^4  + (\ld h_\dl)^4 \bigg) \\
& \les \frac{\Lambda (\dl)}{|\log \dl|} + \frac{3 J \kappa}{|\log \dl|} + \ld^4 \frac{\dl^{8} }{|\log \dl|} \int_{\dl }^1 r^{-8} r^3 dr  \\
& \les 1 + \ld^4 \dl^4 \les 1 + A^4,
\end{split}
\end{align}

\noi
where we used \eqref{eq:X+L4}, $\|u_j\|_{L^4}^4 = \kappa$, $J \le \frac{|\log \dl|}{\log 2}$, \eqref{hdelta}, and $|\ld| \le A \dl^{-1}$.

Finally, by using Markov's inequality and \eqref{MY}, together with $1 + A^4 \le 2 A^4$ (recall that $A \ge 1$), we obtain
\begin{align}
\label{MY1}
\begin{split}
\E\bigg[ & \exp\Big(-c \frac{\eta^2 |\log \dl|}{(1 + {\rm M}(Y,\dl))^{24}}\Big)\bigg] \\
& \les  \exp \big(-2^{-24} c \eta^2 |\log \dl| \big)  +  \sum_{N \ge 1\text{ dyadic}} \exp \bigg(-c \frac{\eta^2 |\log \dl|}{(1 + 2N)^{24}}\bigg) \cdot \P \big(\{{\rm M}(Y,\dl) \ge N\} \big) \\
& \les \exp \big(-2^{-24} c \eta^2 |\log \dl| \big)  + \sum_{N \ge 1\text{ dyadic}} \exp \bigg(-c \frac{\eta^2 |\log \dl|}{(1 + 2N)^{24}}\bigg) \cdot A^4 N^{-1} \\
& \les \exp \big(-2^{-24} c \eta^2 |\log \dl| \big)  + \sum_{N \ge 1\text{ dyadic}}  \frac{(1 + 2N)^\frac12}{(\eta^2 |\log \dl|)^{\frac1{48}}} \cdot A^4 N^{-1} \\
&\les A^4 (\eta^2 |\log \dl|)^{-\frac1{48}} ,
\end{split}
\end{align}

\noi
where we used the fact $e^{-x} \les x^{-\frac1{48}}$ for $x > 0$, and that $\sum_{N \ge 1 \text{ dyadic}} N^{-\frac12} < \infty$. By choosing
\begin{align*}
    \dl_0 = \dl_0 (A, \eta) : = \exp\big( - A^{192} \eta^{-50} \big),
\end{align*}

\noi
which satisfies $\dl_0 \le e^{-1}$ since $A \ge 1$ and $\eta \le 1$, we have $A^4 (\eta^2 |\log \dl|)^{-\frac1{48}} \le \eta$ for every $\dl \le \dl_0$, and the proof is concluded by \eqref{MY1}.
\end{proof}

With these preparations, we are ready to prove Proposition \ref{PROP:div}.

\begin{proof}[Proof of Proposition \ref{PROP:div}]
Let $\dl \le \dl_0 (A, \eta)$, with $\dl_0 (A,\eta) \le e^{-1}$ given by Lemma \ref{LEM:Gbar2}, and let $|\ld| \le A \dl^{-1}$.
For $1 \le j \le J$, let
\[
m_j (y) := \E \bigg[\int \Big(g_j u_j + (y + \ld h_\dl) \ind_{B_{2^{-j+1}}\backslash B_{2^{-j}}} \Big)^4 \bigg]
\]

\noi
be the expectation subtracted in \eqref{Gj}, which is a deterministic function of $y$.
For $u = \sum_{j \le J} g_j u_j + Y + \ld h_\dl$ distributed according to $\mu_\dl^+[\ld]$, \eqref{decom}, \eqref{Gj}, and \eqref{G0} give
\[
\frac 14 \int_{B_1\backslash B_\dl} |u|^4 - Z = \frac 14 \bigg( \sum_{j=0}^J G_j(Y) - Z_Y \bigg),
\]
where
$$ Z_Y := 4 Z - \sum_{j=1}^J m_j(Y).$$

\noi
Hence, splitting the sum in $j$ according to whether $j \in {\rm I}(Y,\dl)$ or not (note that $j = 0 \notin {\rm I}(Y,\dl)$), and setting $r_Y := Z_Y - \sum_{j \notin {\rm I}(Y,\dl)} G_j(Y)$, we obtain
\begin{align*}
\mu_\dl^+[\ld] \bigg(\bigg\{ \Big| \frac 14 \int_{B_1\backslash B_\dl} |u|^4 - Z\Big| \le \eta |\log \dl|^\frac12 \bigg\}\bigg)
& = \P\Big( \big| \cj G(Y) - r_Y \big| \le 4\eta |\log \dl|^\frac12 \Big) \\
& = \E\Big[ \P \Big( \big| \cj G(Y) - r_Y \big| \le 4\eta |\log \dl|^\frac12 \, \Big| \, Y, (g_j)_{j \notin {\rm I}(Y,\dl)} \Big) \Big] .
\end{align*}

\noi
Since $Y$ and $\{g_j\}_{1 \le j \le J}$ are independent and the $g_j$ are i.i.d., conditionally on $Y$ and $(g_j)_{j \notin {\rm I}(Y,\dl)}$ the quantity $r_Y$ is fixed, while $\cj G(Y) = \sum_{j \in {\rm I}(Y,\dl)} G_j(Y)$ has the law of $\cj G(y)$ at $y = Y$.
Therefore, applying \eqref{sing2} with $4\eta$ in place of $\eta$ and $r = r_Y$, we get
\[
\mu_\dl^+[\ld] \bigg(\bigg\{ \Big| \frac 14 \int_{B_1\backslash B_\dl} |u|^4 - Z\Big| \le \eta |\log \dl|^\frac12 \bigg\}\bigg)
\les \E\bigg[ \eta + \exp\Big(-16 c \frac{\eta^2 |\log \dl|}{(1 + {\rm M}(Y,\dl))^{24}}\Big) \bigg]  \les \eta,
\]

\noi
where in the last step we used \eqref{MY2}.
Thus, we finish the proof.
\end{proof}

\subsubsection{Proof of Theorem \ref{THM:main_singular}}
Before proceeding to the proof of Theorem \ref{THM:main_singular}, we need the following lemma.

\begin{lemma}
\label{LEM:tail}
Suppose that $\nu$ is a probability measure, and let $f$ be a Borel function such that
$$\int \exp(f) d\nu = 1.$$
Then, for every $r >0$,
\begin{equation} \label{eq:tail1}
\int_{\{f \le -r\}} \exp(f) d\nu \le \exp(-r) ,
\end{equation}
and
\begin{equation} \label{eq:tail2}
\nu(\{ f \ge r\} ) \le \exp(-r) .
\end{equation}
\end{lemma}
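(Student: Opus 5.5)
Both estimates are elementary Chernoff-type bounds. They use only the normalisation $\int e^f\,d\nu = 1$ and the positivity of the exponential, so I would treat them separately.

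For \eqref{eq:tail1}, on the set $\{f \le -r\}$ we have pointwise $e^{f} \le e^{-r}$. Integrating over this set against the probability measure $\nu$ gives
\[
\int_{\{f \le -r\}} e^{f}\, d\nu \le e^{-r}\, \nu(\{f \le -r\}) \le e^{-r},
\]
since $\nu$ has total mass $1$. This part does not even need the normalisation of $e^f$.

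For \eqref{eq:tail2}, I would apply Markov's inequality to the non-negative function $e^{f}$. On $\{f \ge r\}$ we have $e^{f} \ge e^{r}$, so $e^{r}\ind_{\{f \ge r\}} \le e^{f}$ pointwise. Integrating gives
\[
e^{r}\,\nu(\{f \ge r\}) \le \int e^{f}\, d\nu = 1,
\]
which is the claim.

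There is no real obstacle. The only point to check is measurability: the sets $\{f \le -r\}$ and $\{f\ge r\}$ are Borel because $f$ is Borel, and the inequalities hold even if $f$ takes the value $-\infty$ on some set, with the convention $e^{-\infty}=0$. In the later application, $f$ will be the centred log-density $-\frac14\int_{B_1\setminus B_\dl}|u|^4 + Z_\dl$ and $\nu$ will be $\mu$ (or $\mu^+_\dl[\ld]$). There \eqref{eq:tail1} controls the $\rho_\dl$-mass of the region where the density is small, and \eqref{eq:tail2} controls the $\mu$-mass of the region where it is large.
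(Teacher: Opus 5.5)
Your proof is correct and essentially identical to the paper's: the first bound comes from $e^{f} \le e^{-r}$ on $\{f \le -r\}$ integrated against the probability measure $\nu$, and the second from Markov's inequality applied to $e^{f}$ together with $\int e^{f}\,d\nu = 1$. One small remark on your aside about the application: in the proof of Theorem~\ref{THM:main_singular} the lemma is applied with $\nu = \mu^+_{\dl_n}[\ld]$ and $f = F(u,\dl_n) = -\frac14\int_{B_1\setminus B_{\dl_n}}|u|^4\,dx - \log Z^+_{\dl_n}[\ld]$, i.e.\ with the conditional normalisation $Z^+_{\dl_n}[\ld]$ rather than the global constant $Z_\dl$.
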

\begin{proof}
Since $\nu$ is a probability measure, we clearly have that
$$ \int_{\{f \le -r\}} \exp(f) d\nu \le \int \exp(-r) d\nu \le \exp(-r). $$
Moreover, since $\int \exp (f) d\nu =1$, by Markov's inequality,
$$ \nu(\{ f \ge r\}) \le \exp(-r) \int_{\{f \ge r\}} \exp(f) d\nu \le \exp(-r). $$
Thus, we finish the proof.
\end{proof}

Now, we are ready to prove Theorem \ref{THM:main_singular}.

\begin{proof}[Proof of Theorem \ref{THM:main_singular}]

Let $\rho_0$ be a limit point of $\rho_\eps$ as $\eps \to 0$. To show that $\rho_0$ and $\mu$ are singular, it is enough to find a sequence of sets $A_n$ such that
\begin{align}
\label{singular}
\mu(A_n) \to 1, \qquad \sum_{n \ge 1} \rho_0 (A_n) < \infty .
\end{align}
Indeed, if \eqref{singular} holds, then for the set
\[
A = \limsup_{n \to \infty} A_n
\]
we have $\mu (A) \ge \limsup_{n \to \infty} \mu (A_n) = 1$, while $\rho_0 (A) = 0$ by Borel--Cantelli. Thus $A$ is of full measure for $\mu$ and of zero measure for $\rho_0$, which concludes the proof of Theorem \ref{THM:main_singular}.

We define sets $A_n$ as follows,
$$
A_n := \Big\{ u: F(u,\dl_n) \le - \frac1n |\log \dl_n|^\frac12\Big\} \cap \{u: |u(\dl_n)| \le n \dl_n^{1-\frac d2}  \}, $$
where
\begin{align}\label{Fsing}
F(u,\dl_n) = - \frac 14 \int_{B_1\backslash B_{\dl_n}} |u|^4 dx - \log Z_{\dl_n}^+[u(\dl_n)] ,
\end{align}
and $\dl_n \downarrow 0$ is to be chosen later. Note that we have
$$A_n^c  = \bigg( \Big\{ F(u,\dl_n) > - \frac1n |\log \dl_n|^\frac12\Big\} \cap \{|u(\dl_n)| \le n \dl_n^{1-\frac d2}  \} \bigg) \cup \{ |u(\dl_n)| > n \dl_n^{1-\frac d2} \}.$$

We first check the first part of \eqref{singular}, i.e., for the Gaussian measure $\mu$, we would like to prove
\begin{align} \label{sin1}
\lim_{n \to \infty} \mu(A_n) = 1,
\end{align}

\noi
provided $\dl_n \downarrow 0$ fast enough.
By \eqref{GFFX_e} and \eqref{eq:law_R}, we have
\begin{align*}
 \mu(A_n^c)
\le &\ \mu\big(\{ |u(\dl_n)| > n \dl_n^{1-\frac d2} \}\big) \\
&\ + \mu\Big(\Big\{ |F(u,\dl_n)| \le \frac1n |\log \dl_n|^\frac12\Big\} \cap \{|u(\dl_n)| \le n \dl_n^{1-\frac d2}  \}\Big) \\
&\ + \mu\Big(\Big\{ F(u,\dl_n) >  \frac1n |\log \dl_n|^\frac12\Big\} \cap \{|u(\dl_n)| \le n \dl_n^{1-\frac d2}  \}\Big) \\
\les &\ \frac{1}{n} + \int_{|\ld| \le n \dl_n^{1-\frac d2}} \mu_{\dl_n}^+[\ld]\Big(\Big\{ \big| F(u,\dl_n) \big| \le \frac1n |\log \dl_n|^\frac12 \Big\}\Big) d\mu_{\dl_n}^\partial(\ld) \\
&\ + \int_{|\ld| \le n \dl_n^{1-\frac d2}} \mu_{\dl_n}^+[\ld]\Big(\Big\{ F(u,\dl_n) > \frac1n |\log \dl_n|^\frac12 \Big\}\Big) d\mu_{\dl_n}^\partial(\ld).
\end{align*}

\noi
We first claim that
\begin{align}
\label{dlstar}
\int_{|\ld| \le n \dl_n^{1-\frac d2}} \mu_{\dl_n}^+[\ld]\Big(\Big\{ \big| F(u,\dl_n) \big| \le \frac1n |\log \dl_n|^\frac12 \Big\}\Big) d\mu_{\dl_n}^\partial(\ld)  \les \frac1{n},
\end{align}

\noi
provided $\dl_n \le \dl_* (n) \ll 1$.
When $d = 4$ this is Proposition \ref{PROP:div} applied with $\eta = \frac1n$ and $A = n$, so that $\dl_*(n) = \dl_0 (n, \frac1n)$ suffices.
When $d > 4$, we instead apply Proposition \ref{PROP:div_h} with
$\eta = \eta_n := \frac{|\log \dl_n|^{\frac12}}{n \dl_n^{4-d}}$, which bounds the left-hand side of \eqref{dlstar} by $\eta_n^{\frac14}$. Since $d > 4$ we have $\dl_n^{d-4} |\log \dl_n|^{\frac12} \to 0$ as $\dl_n \downarrow 0$, so that $\eta_n \le n^{-4}$, and hence $\eta_n^{\frac14} \le \frac1n$, as soon as $\dl_n \le \dl_*(n)$ for a suitable $\dl_*(n) \ll 1$.
We also note from \eqref{Fsing} and \eqref{meas_cut} that
\[
\begin{split}
\int & \exp \big ( F(u,\dl_n)\big) d \mu_{\dl_n}^+[\ld] \\
& = \int \exp \bigg( - \frac 14 \int_{B_1\backslash B_{\dl_n}} |u|^4 - \log Z_{\dl_n}^+[\ld] \bigg) d \mu_{\dl_n}^+[\ld] \\
& = \frac1{Z_{\dl_n}^+[\ld]} \int \exp \bigg( - \frac 14 \int_{B_1\backslash B_{\dl_n}} |u|^4  \bigg) d \mu_{\dl_n}^+[\ld] \\
& = 1,
\end{split}
\]

\noi
which together with \eqref{eq:tail2} in Lemma \ref{LEM:tail} implies that
\[
\mu_{\dl_n}^+[\ld]\Big(\Big\{ F(u,\dl_n) > \frac1n |\log \dl_n|^\frac12 \Big\}\Big) \le \exp \bigg(-  \frac1n |\log \dl_n|^\frac12 \bigg) \le e^{-n},
\]

\noi uniformly in $\ld$, as long as  $\dl_n \le e^{-n^4}$.
 Choosing $\dl_n \le \min \big( \dl_*(n), e^{-n^4}\big)$ and collecting the above, we arrive at
\begin{align}
\label{An_est}
\mu (A_n) = 1 - \mu (A_n^c) =  1 + o (1),
\end{align}

\noi
as $n \to \infty$. Thus, we finish the proof of \eqref{sin1}.

We now move to the second part of \eqref{singular}.
By Proposition \ref{PROP:Markov0}, we have
$$(R_{\dl_n})_{\#} \rho_0 = \int \rho_{\dl_n}^+[\ld] \, \nu_{0,\dl_n}(d\lambda), $$
where $\nu_{0,\dl_n} = \Law(u(\dl_n))$ with $u$ distributed according to $\rho_0$.
Therefore, by \eqref{eq:rhoMarkov0},
\begin{align*}
  \ \rho_0(A_n)
& = \rho_0 \Big(\Big\{ F(u,\dl_n)  \le - \frac1n |\log \dl_n|^\frac12\Big\} \cap \{|u(\dl_n)| \le n \dl_n^{1-\frac d2}  \}\Big) \\
& = \int_{|\ld| \le n \dl_n^{1-\frac d2}} \rho_{\dl_n}^+[\ld]\Big(\Big\{F(u,\dl_n) \le - \frac1n |\log \dl_n|^\frac12\Big\} \Big) \nu_{0,\dl_n} (d \lambda) .
\end{align*}

\noi
We note here from the definition of $\rho_{\dl_n}^+[\ld]$ in \eqref{meas_cut}  that
\[
\begin{split}
\rho_{\dl_n}^+[\ld] & \Big(\Big\{F(u,\dl_n) \le - \frac1n |\log \dl_n|^\frac12\Big\} \Big)  \\
& = \frac1{Z_{\dl_n}^+[\ld]} \int_{F(u,\dl_n) \le - \frac1n |\log \dl_n|^\frac12} \exp \bigg( - \frac 14 \int_{B_1\backslash B_{\dl_n}} |u|^4  \bigg) d \mu_{\dl_n}^+[\ld]  \\
& = \int_{F(u,\dl_n) \le - \frac1n |\log \dl_n|^\frac12} \exp \big( F(u,\dl_n) \big) d \mu_{\dl_n}^+[\ld] \\
& \le \exp\Big(- \frac1n |\log \dl_n|^\frac12\Big),
\end{split}
\]

\noi
where again we used Lemma \ref{LEM:tail}.
Furthermore, we have
\begin{align}\label{sin2}
\begin{split}
\rho_0 (A_n) & \le \int_{|\ld| \le n \dl_n^{1-\frac d2}} \exp\Big(- \frac1n |\log \dl_n|^\frac12\Big) \nu_{0,\dl_n} (d \lambda) \\
& \le \exp\Big(- \frac1n |\log \dl_n|^\frac12\Big)  \le e^{-n},
\end{split}
\end{align}

\noi
where we used the fact that $\nu_{0,\dl_n}$ is a probability measure, and that $\dl_n \le e^{-n^4}$.
Finally, by \eqref{sin2} we conclude that
\[
\sum_{n \ge 1} \rho_0 (A_n) \le \sum_{n \ge 1} e^{-n} < \infty ,
\]

\noi
which proves the second part of \eqref{singular}.

Thus, we finish the proof of the theorem.
\end{proof}

\end{document}